\documentclass[11pt,twoside,a4paper]{article}
\usepackage{amsmath,amssymb,mathabx}
\usepackage{graphicx,caption}
\usepackage{verbatim}
\usepackage{xcolor}
\usepackage{mathrsfs}
\usepackage{authblk}
\usepackage{bm}

\usepackage[numbers,sort&compress]{natbib}

\usepackage{tikz}
\usetikzlibrary{arrows.meta}
\usepackage{amsthm}
\usepackage{thmtools}
\usepackage{hyperref}
\usepackage{cleveref}
\usepackage{orcidlink}
\hypersetup{
    colorlinks=true,
    linkcolor=green,
    citecolor=magenta,
}
\newcommand{\doi}[1]{doi: \begingroup\urlstyle{rm}\href{https://doi.org/#1}{\textcolor{blue!70!black}{\nolinkurl{#1}}}\endgroup}

\declaretheorem[name=Theorem,numberwithin=section]{theorem}
\newtheorem{proposition}[theorem]{Proposition}
\newtheorem{corollary}[theorem]{Corollary}
\newtheorem{lemma}[theorem]{Lemma}

\theoremstyle{definition}
\newtheorem{definition}[theorem]{Definition}
\newtheorem{example}[theorem]{Example}
\newtheorem{remark}[theorem]{Remark}

\usepackage[left=2cm, right=2cm, top=3cm, bottom=3cm]{geometry}

\numberwithin{equation}{section}

\newcommand{\wt}{\tilde}

\def\R{\mathbb{R}}
\def\N{\mathbb{N}}
\newcommand{\SoftMax}{\mathrm{softmax}}

\newcommand{\cC}{{\mathcal C}}
\newcommand{\cI}{{\mathcal I}}
\newcommand{\cS}{{\mathcal S}}
\newcommand{\cW}{{\mathcal W}}
\newcommand{\cX}{{\mathcal X}}
\DeclareMathOperator*{\argmax}{arg\,max}
\DeclareMathOperator*{\argmin}{arg\,min}
\DeclareMathOperator{\osc}{osc}
\DeclareMathOperator{\diag}{diag}

\newcommand{\F}{\mathcal{F}}

\renewcommand{\P}{\mathbb{P}}

\newcommand{\keywords}[1]{\noindent\textbf{Keywords:} #1}
\newcommand{\subjclass}[1]{\noindent\textbf{AMS2020 subject classification:} #1}

\allowdisplaybreaks

\title{ Colorful Exponential Random Graph Models }

\author{Bhaswar B. Bhattacharya\thanks{University of Pennsylvania and National University of Singapore. \texttt{Email:bhaswar@wharton.upenn.edu}}, \quad Pierfrancesco Dionigi\,\orcidlink{0000-0003-2180-8669}\thanks{Barcelona School of Economics, Barcelona, Spain. \texttt{Email:pier.dionigi@bse.eu}}, \quad Ankan Ganguly\,\orcidlink{0000-0001-6233-9390}\thanks{Boston University, Boston, USA. \texttt{Email:ankang@bu.edu}}, \quad Giulio Zucal\,\orcidlink{0009-0000-8261-1291}\thanks{Max Planck Institute of Molecular Cell Biology and Genetics, and Center for Systems Biology, Dresden, Germany.  \texttt{Email:zucal@mpi-cbg.de}} }

\date{}

\begin{document}
\maketitle

\begin{abstract}
In this paper, we initiate the study of \emph{colored exponential random graph
models} (ERGMs), a class of exponential-family models for networks with multiple
types of edge relations. Using the framework of probability graphons, we first derive
a variational representation for the limiting free energy, whose maximizers determine the asymptotic structure of typical samples from the model. Then we identify several general families of colored ERGMs exhibiting replica symmetry, where the variational problem has constant maximizers and the model asymptotically concentrates on product colorings with independent edges. For general colored ERGMs, we derive Euler--Lagrange fixed-point equations for the variational maximizers, which in turn yield a general high-temperature uniqueness criterion. In the complementary zero-temperature regime, we establish a two-level selection principle: the leading energy term determines the ground states, while the lower-order energy terms, combined with entropy, act as a tie-breaker to determine the asymptotic zero-temperature structure of the model. We illustrate this principle through the induced wedge and rainbow triangle ERGMs. Both models have natural interpretations in multitype networks, and their zero-temperature limits exhibit interesting structures that connect to well-known results in extremal combinatorics. We further
establish finite-temperature symmetry breaking for both these models and complement
the rigorous results with numerical experiments.\par\medskip

\keywords{Decorated graphs, extremal graph theory, multilayer networks, large deviations, phase transitions, random networks.}\par\medskip

\subjclass{05C80; 60F10, 60C05, 05C35, 05C15, 05C82, 82B26. } 
\end{abstract}

\section{Introduction}

Exponential random graph models (ERGMs) are probability distributions on
networks in which local statistics, such as counts of triangles, 
stars, or degree-based statistics, appear as sufficient statistics. These models originated from the early works of
\citet{HL81} and \citet{frank1986markov}, and have
since emerged as a flexible framework for modeling dependent relational data  \cite{wasserman1994social,lusher2013exponential,park2004statistical,snijders2006new,
cranmer2020inferential,bhamidi2008mixing}. Using the machinery of graph limit theory (graphons) \cite{BORGS20081801,borgs2011convergentAnnals,LovaszGraphLimits},  \citet{ChatterjeeDiaconisExponential2013} in their seminal paper, developed the theoretical framework of understanding the asymptotic properties of ERGMs, such as the limiting free energy and the typical macroscopic structure of a sampled graph. Their work revealed interesting connections between ERGMs, statistical physics, and extremal combinatorics, which have since inspired many new results on the large-scale structure of exponential random graph models (see \cite{cook2024typical,ganguly2024subcritical,HL81,mukherjee2023statistics,winstein2025concentration,mukherjee2020degeneracy,Winsteinclt25,BCM24,BNN24,EG18,PN04,RR19,RY13,Yin13,FLSW25,FLSZ24,magnanini2026ergmsblockmodels,bhamidiWeightedExponentialRandom2018,demuse2018phase} and references therein).

Many networks arising in applications, however, contain several distinct types of relations. An edge may encode, for example, friendship, collaboration, communication, rivalry, or some combination of these relations. This naturally leads to the notions of colored/multirelational or multiplex network models, where each color corresponds to a different type of relation. 
In the context of ERGMs, models with multiple relations have been studied in \cite{caimo2020multilayer,krivitsky2020multilayer,pattison1999logit,robins1999logit,koehly2005random,chen2021statistical,fritz2025exponential}, among others. Recent work on the limit theory of weighted/decorated graphs shows that multiplex networks can be naturally embedded within the broader framework of colored graphs (see \cite{multiplexconvergence,abraham2023probabilitygraphons} and the references therein). Indeed, a network with $r \geq 1$ distinct types of binary relations (an $r$-layer multiplex network) may be equivalently represented as an edge-coloring of the complete graph with $2^r$ colors, where each color corresponds to a subset of $[r]:=\{1, 2, \ldots, r\}$ that encodes precisely which relations are present between a pair of nodes. Thus, the colored-graph viewpoint provides a unified representation of multiplex data, and colored ERGMs provide natural exponential-family models for capturing interactions among the different relations.
To make this precise, for an integer $n \geq 1$, denote by $K_n$ the complete graph with vertex set $[n] :=\{1, 2, \ldots, n\}$. Let 
$$\binom{[n]}{2} := \{(u,v): 1 \leq u < v \leq n\}$$
the collection of two-element subsets of $[n]$, which we also identify with the edge set of $K_n$. Given a fixed integer $k \geq 1$, a \emph{$k$-coloring} of $K_n$ is a function $\mathscr{X}: \binom{[n]}{2} \to [k]:=\{1, 2, \ldots, k\}$, where $\mathscr{X}((u,v))$ denotes the color of the edge $(u,v)$. For each $a \in [k]$, denote by $E_a(\mathscr{X}) = \{(u, v) \in \binom{[n]}{2}: \mathscr{X}((u, v)) = a \}$, the collection of edges with color $a$ in the coloring $\mathscr{X}$, and $K_n^{(a)} = ([n], E_a(\mathscr{X}))$ the subgraph of $K_n$ containing all the edges with color $a$. Clearly, $\sum_{a=1}^k |E_a(\mathscr{X})| = \binom{n}{2}$ is the total number of edges in $K_n$. Let $\mathcal C_n^k$ denote the collection of all $k$-colorings of the complete graph $K_n$.  A $k$-{\it colored exponential random graph model} (ERGM) is a distribution on $\mathcal C_n^k$ with probability mass function of the following form: For $\mathscr{X} \in \mathcal C_n^k$, 
\begin{align}\label{eq:Fun}
\P_n(\mathscr{X}) = \frac{1}{Z_n} \exp{\left\{ \sum_{s=1}^r \beta_s F_s(\mathscr{X}) \right\} } , 
\end{align}
where $\bm \beta = (\beta_1, \beta_2, \ldots, \beta_r)$ is a vector of real parameters, $F_1, F_2, \ldots, F_r$ are real-valued functions on $\mathcal C_n^k$, and $Z_n$ is the partition function that is determined by the condition $\sum_{\mathscr{X} \in \mathcal C_n^k} \P(\mathscr{X}) = 1$. The functions $\{F_s\}_{s \geq 1}$ are usually taken to be counts of various colored subgraphs in $K_n$ or, more generally, any collection of `continuous functions' on an appropriate limiting space of colored graphs (see Section \ref{sec:notation-prev-results} for the formal definitions).

\begin{remark}[Classical ERGMs]  
The usual uncolored ERGM is recovered when \(k=2\), by interpreting one color as an edge and the other as a non-edge. More formally, when $k=2$, the space $\mathcal C_n^2$ can be identified with $\mathcal G_n$, the collection of all simple graphs on $n$ labeled vertices, through the map $ \mathcal T: \mathcal C_n^2 \rightarrow \mathcal G_n$, where $\mathcal T(\mathscr{X}) = ([n],E_1(\mathscr{X}))$, for $\mathscr{X} \in \mathcal C_n^2$. Note that the inverse map $\mathcal T^{-1}: \mathcal G_n \rightarrow \mathcal C_n^2$ is given by: 
$$\mathcal T^{-1}(G)(u,v) = 
\begin{cases} 
1, & \text{if } (u,v)\in E(G),\\ 
2, & \text{otherwise} ,  
\end{cases} $$ 
for $G \in \mathcal G_n$. In other words, edges colored $1$ in $K_n$ are identified with the present edges of $G$, while edges colored $2$ are identified with the absent edges of $G$. 
\end{remark}

\begin{figure}[htbp]
\centering
\resizebox{0.25\textwidth}{!}{%
\begin{tikzpicture}[
    every node/.style={font=\small},
    vertex/.style={
        circle, draw=black, fill=white, thick,
        minimum size=7mm, inner sep=0pt
    }
]

\definecolor{colone}{RGB}{215,39,40}
\definecolor{coltwo}{RGB}{31,119,180}
\definecolor{colthree}{RGB}{44,160,44}

\node[vertex] (v1) at (0,0) {$1$};
\node[vertex] (v2) at (4,0) {$2$};
\node[vertex] (v3) at (2,3.46) {$3$};

\draw[coltwo, line width=1.5pt] (v1) -- (v2);
\draw[colone, line width=1.5pt] (v2) -- (v3);
\draw[colthree, line width=1.5pt] (v3) -- (v1);

\end{tikzpicture}
}
\caption{ \small{ A rainbow triangle. }}
\label{fig:rainbow_triangle}
\end{figure}

Analogous to the role of graphons in graph limit theory, the natural limiting
objects for dense $k$-colored graphs are $k$-\emph{colored probability
graphons}, which arise as the finite-decoration instance of the emerging notion of
probability graphons
\cite{abraham2023probabilitygraphons,dionigiLargeDeviationsProbability2025}. Formally, a $k$-colored probability graphon is a vector-valued function
$\bm W=(W_1,\ldots,W_k)$, where each
$W_a:[0,1]^2\to[0,1]$ is measurable and symmetric, for $a\in[k]$, and $\sum_{a=1}^k W_a(x,y)=1$, for almost every $(x,y)\in[0,1]^2$. Intuitively, $\bm W(x,y)$ describes the
limiting probability distribution of the color assigned to the edge joining
the continuum vertices $x$ and $y$. Using this framework, we develop a general asymptotic theory for understanding the asymptotic structure of colored graphs sampled from \eqref{eq:Fun}. A particular example that we will use to illustrate our general
results is the \emph{rainbow triangle} ERGM. This is a probability distribution
on \(\mathcal C_n^3\), the space of $3$-edge-colorings of the complete graph on
$n$ vertices, in which triangles whose three edges have pairwise distinct
colors (rainbow triangles; see Figure~\ref{fig:rainbow_triangle}) are rewarded or penalized. More precisely, for parameters
$(\beta_1,\beta_2,\beta_3,\beta_4)\in\mathbb R^4$, one has: 
\begin{align}\label{eq:rbn}
\mathbb P_n(\mathscr X) = \frac{1}{Z_n} \exp\left\{ 2\sum_{a=1}^3\beta_a|E_a(\mathscr X)| + \frac{\beta_4}{n}\triangle_{\mathrm{rb}}(\mathscr X) \right\},
\end{align}
for $\mathscr X \in \cC^3$, where \(\triangle_{\mathrm{rb}}(\mathscr X)\) denotes the number of unordered rainbow triangles in \(\mathscr X\) (see \eqref{eq:tWrainbow} for the formal definition). The normalization of the Hamiltonian ensures a nontrivial scaling
limit, while the constant factors appearing in the model arise naturally from
the corresponding homomorphism-density normalization (see
\eqref{eq:empirical_rainbow_exact}). Note that, when $\beta_4=0$, the model \eqref{eq:rbn} assigns colors to the
edges of $K_n$ independently, with
$$
\P(\mathscr{X}(u,v)=a)
=
\frac{e^{2\beta_a}}
{e^{2\beta_1}+e^{2\beta_2}+e^{2\beta_3}},
$$
with $a\in[3].$ For $\beta_4\neq 0$, the color assignments across edges are no longer
independent, with the parameter $\beta_4$ controlling the extent to which the model
favors or suppresses rainbow triangles. Specifically, positive values of
$\beta_4$ encourage {\it heterogeneous triadic closure}, that is, two-colored
wedges are more likely to close into rainbow triangles. Conversely, negative
values of $\beta_4$ penalize rainbow triangles, and in the limit the
model concentrates on rainbow-avoiding (Gallai-type) colorings (see
Section~\ref{sec:rt} for details). 
Figure~\ref{fig:rainbowbeta} shows four finite-size samples from the model \eqref{eq:rbn}, illustrating these distinct regimes. For $\beta_4=0.5$, the sampled coloring remains visually close to a homogeneous random coloring, with no discernible macroscopic block structure. For $\beta_4=18$, the coloring exhibits a bipodal (2-block) structure: one color dominates inside two vertex sets, while the other two colors occur mainly between them. For $\beta_4=50$, the sample has a pronounced 4-block structure,  with nearly monochromatic connections between distinct
blocks and random-like connections within the blocks. This aligns with the top level of the iterated $K_4$ blowup structure that appears in the positive zero-temperature limit as a maximizer of the rainbow triangle density (see Theorem~\ref{thm:concentrationrainbow}). On the other hand, for $\beta_4=-10$, one color is strongly suppressed and the sampled coloring is nearly bichromatic and, consequently, nearly rainbow-avoiding, as predicted in the negative zero-temperature regime (see Theorem~\ref{thm:concentration_rainbow_negative}).  Thus, the model \eqref{eq:rbn} can capture and distinguish qualitatively different forms of organization in multilayer networks, from homogeneous mixing to structured community formation and, at the opposite extreme, to the segregation or suppression of particular types of interactions.
It is worth noting that the samples shown in Figure~\ref{fig:rainbowbeta} are illustrations of the predicted structures, not conclusions about the mixing time or the relative stationary weights of different metastable regions.

\begin{figure}[tbp]
    \centering
    \makebox[\textwidth][c]{%
    \includegraphics[width=0.26\textwidth]{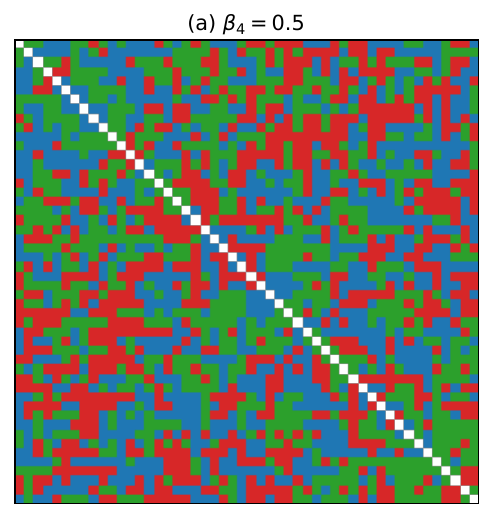}%
    \hspace{0.004\textwidth}%
    \includegraphics[width=0.26\textwidth]{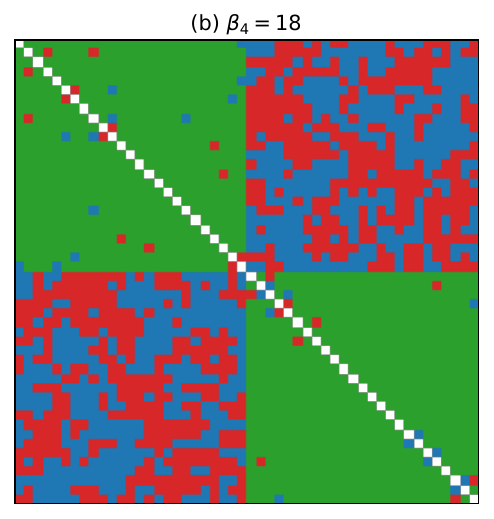}%
    \hspace{0.004\textwidth}%
    \includegraphics[width=0.26\textwidth]{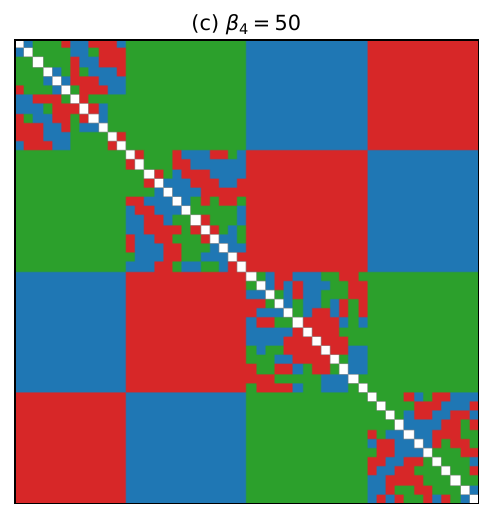}%
    \hspace{0.004\textwidth}%
    \includegraphics[width=0.26\textwidth]{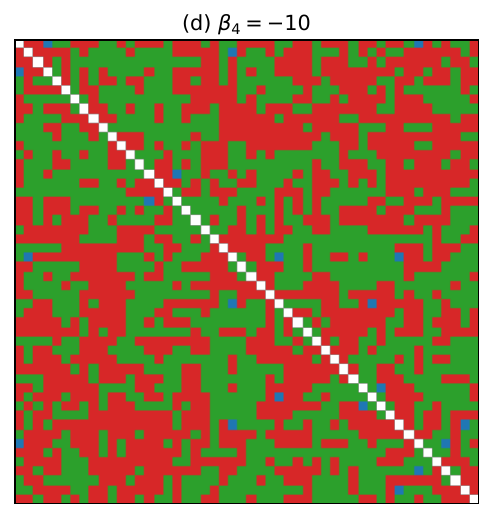}%
    }
    \caption{\small{ Representative configurations from the
    rainbow triangle ERGM
    \eqref{eq:rbn}, with $n=50$, $\beta_1=\beta_2=\beta_3=0$, and
    (a) $\beta_4=0.5$, (b) $\beta_4=18$, (c) $\beta_4=50$, and
    (d) $\beta_4=-10$.  } 
    }
    \label{fig:rainbowbeta}
\end{figure}

\subsection{Summary of Results}

The formal statements of our results are given in Section~\ref{sec:color}.
Here, we provide a high-level overview of the main results and the organization
of the paper.

\begin{itemize} 

\item We begin by showing that the limiting free energy (normalized log-partition function) of colored ERGMs of the form \eqref{eq:Fun} can be expressed as an entropy-tilted variational problem on the space of $k$-color probability graphons (see Theorem \ref{thm:variational_formula}). This is a straightforward consequence of the large deviation principle for finitely decorated probability graphons (recalled in Theorem~\ref{THM:LDP_prob_graphons}), combined with Varadhan's lemma. A further consequence is that a typical coloring sampled from \eqref{eq:Fun} concentrates around the set of maximizers of the limiting variational problem (see Corollary~\ref{cor::expconc}). 
 Thus, understanding the typical structure of a coloring sampled from a colored ERGM reduces to analyzing the maximizers of this variational problem.

\item  A particularly important case in the analysis of the variational problem is the \emph{replica-symmetric} regime, in which the variational problem is solved by constant $k$-color probability graphons. Such graphons correspond to product colorings for which distinct edges are asymptotically independent. Replica symmetry therefore identifies parameter regimes in which the interaction modifies only the limiting color frequencies, without generating nontrivial spatial structure. In contrast with classical uncolored ERGMs, the replica-symmetric regime for colored models is substantially richer, due to the interactions among colors and the competition induced by the simplex constraint. We illustrate this in Section~\ref{sec:starmonotone} by identifying several general families of colored ERGMs for which replica symmetry holds. 

\item Next, in Section~\ref{sec:euler_lagrange}, we study the structure of optimizers for general colored ERGMs, beyond the explicitly solvable replica-symmetric classes, by deriving the Euler--Lagrange equations for the variational problem and establishing a high-temperature uniqueness result. The Euler--Lagrange equations characterize the maximizers through a nonlinear fixed-point map involving the softmax function (see Theorem~\ref{thm:EL-prob-graphon}). Then using a contraction argument in an ``oscillation'' seminorm adapted to the shift invariance of the softmax map, we obtain a general high-temperature uniqueness theorem. In particular, Theorem~\ref{thm:highT_colored} gives an explicit parameter regime in which the variational problem has a unique constant maximizer.

\item In Section \ref{sec:temperatureasymptotics}, we study the complementary zero-temperature regime, in which one of the parameters in \eqref{eq:Fun} diverges to $\pm\infty$. Theorem~\ref{ThmConcentrationBeta} establishes a general two-level selection principle for determining the asymptotic structure of the model in this regime. First, the model concentrates on the graphons that maximize the leading energy term associated with the diverging parameter. Among these ground states, the lower-order energy terms, in competition with the entropy functional, then act as a tie-breaker. Thus, the analysis reduces first to solving an extremal graph problem and then to maximizing the lower-order energy-entropy functional over its solution set. We illustrate this principle with two examples.

\begin{itemize}

\item The first is the \emph{induced wedge ERGM}, in which the sufficient statistic
consists of the induced \(2\)-star density together with a baseline
edge-density term. This model has a natural interpretation in signed networks:
if edges represent negative relations and non-edges represent positive
relations, then an induced wedge corresponds to the structural-balance
principle that ``the enemy of my enemy is my friend'' \cite{fritz2025exponential,feng2022testing} (see
Remark~\ref{remark:wedge} for further discussion). Since the induced wedge
statistic is not monotone with respect to the usual pointwise order on graphons,
nontrivial asymptotic structure can emerge in the zero-temperature limit.
Specifically, our two-step selection principle yields the following. As
\(\beta_2\to+\infty\), the model favors graphons maximizing the induced
\(2\)-star density, and the optimizing graphons converge to the balanced
complete bipartite graphon (see Theorem~\ref{thm:wedge-beta-plus-infty}). In the
opposite regime, as \(\beta_2\to-\infty\), the model favors graphons with zero
induced \(2\)-star density, and the optimizing graphons converge to certain
cluster graphons, that is, disjoint unions of clique graphons (see
Theorem~\ref{thm:wedge-ground-states}).

\item Next, we consider the rainbow triangle ERGM \eqref{eq:rbn}. As \(\beta_4\to+\infty\), the leading-order term favors 3-color probability graphons that maximize the rainbow triangle density. Consequently, invoking  results of \citet{baloghRainbowTrianglesThreecolored2017} on the maximum possible density of rainbow triangles, we show in Theorem~\ref{thm:concentrationrainbow} that the variational maximizers of the rainbow triangle ERGM converge to the limiting iterated \(K_4\)-blowup probability graphon (which is consistent with the simulation in Figure~2(c)). On the other hand, as $\beta_4\to-\infty$, the leading-order constraint is the absence of rainbow triangles, so the ground-state contains graphon limits of Gallai colorings. Once the entropy and edge-density terms are taken into account as the lower-order tie-breaker, the selected optimizers are constant probability graphons supported on only two colors (see Theorem~\ref{thm:concentration_rainbow_negative}). This bichromatic structure is also reflected in Figure~\ref{fig:rainbowbeta} (d). 

\end{itemize}

\item Finally, in Section~\ref{sec:symmetry}, we show that both the induced-wedge and
rainbow triangle ERGMs exhibit finite-temperature symmetry breaking. For each
model, we identify a regime of replica symmetry, obtained either from the
general high-temperature uniqueness theorem described above or from a more
refined model-specific analysis, and a separate regime of symmetry breaking,
established by comparing the optimal constant solution with a suitably chosen
block-structured competitor. In both cases, a nontrivial interval remains
between the rigorous replica-symmetric and symmetry-breaking regimes. Numerical experiments are reported in Section \ref{sec:finite_beta_simulations} to explore the structure of the optimizers in these intermediate regions.

\end{itemize}

In Section~\ref{sec:notation-prev-results}, we review the relevant background
on decorated probability graphons. The remaining sections are devoted to the
proofs of the results described above. Several technical ingredients used
throughout the paper are collected in the appendix.

\section{Background on Finitely Decorated Probability Graphons}\label{sec:notation-prev-results}

To analyze the asymptotic behavior of colored ERGMs, we require the framework of
colored graph convergence, which may be viewed as a special case of the more
general theory of decorated graph convergence. Specifically, given a Polish space $Z$, a $Z$-decoration of a simple graph $F= (V(F), E(F))$ is a map $g: E(F) \rightarrow Z$. This formalism provides a natural language for studying graphs whose edges carry additional structure, such as colors, weights, or other marks. 
The limit theory of decorated graphs was initiated by Lov\'{a}sz and Szegedy 
\cite{lovasz2010limits} (see also \cite[Chapter~17]{LovaszGraphLimits}) and has been subsequently generalized in several directions (see, for example,
\cite{falgasravry2016multicolour,rollin2023dense,KunLovSze22,zucal2024probabilitygraphonsrightconvergence,abraham2023probabilitygraphons}). In particular, the limit of a sequence of $Z$-decorated graphs can be described in terms of a {\it probability graphon} (a term introduced in \cite{abraham2023probabilitygraphons}), which is a measurable and symmetric map: 
\begin{align*}
\bm{W} : [0, 1]^2 \rightarrow  \mathcal P(Z) , 
\end{align*}
where $\mathcal P(Z)$ is the space of probability measures on $Z$. Here, symmetric means $\bm{W}(x, y; A) = \bm{W}(y, x; A)$ for all $x, y \in [0, 1]$ and for every measurable subset $A$ of $Z$. One can interpret a probability graphon as follows: for any two `vertices' $x, y \in [0, 1]$, the weight of the edge between $x$ and $y$ is distributed as the probability measure $\bm{W}(x, y; \mathrm dz)$. 
In the setting of finitely colored graphs, $Z= [k]$, for some integer $k\ge 2$, equipped with the discrete topology, and 
$$\mathcal P([k])\cong \Delta_k := \left\{ \bm \mu =(\mu_1,\dots,\mu_k)\in[0,1]^k: \sum_{a=1}^k \mu_a=1 \right\}.$$ 
Define 
$$
\mathcal W_k:= \left\{ \bm W\colon [0,1]^2\to\Delta_k: \bm W \text{ is measurable and symmetric} \right\}.
$$
The elements of $\mathcal W_k$ will be referred to as {\it $k$-colored probability graphons}. Note that a  $k$-colored probability graphon $\bm W \in \mathcal W_k$ can be equivalently represented as $\bm W(x,y)=(W_1(x,y),\dots,W_k(x,y))$, where 
$W_1, W_2, \ldots, W_k \colon [0,1]^2\to[0,1]$, are a collection of measurable and symmetric functions satisfying $\sum_{a=1}^k W_a(x,y)=1$, for almost every $(x,y)\in[0,1]^2$. Furthermore, given a $k$-coloring $\mathscr X\in\mathcal C_n^k$, we define the empirical $k$-colored probability graphon associated with $\mathscr X$ as $\bm W^{\mathscr X} = \bigl(W^{\mathscr X}_1,W^{\mathscr X}_2,\ldots,W^{\mathscr X}_k\bigr) \in\mathcal W_k$, where, for $1 \leq u \ne v \leq n$ and $a\in[k]$, 
\begin{align}\label{eq:Wa} 
W^{\mathscr X}_a(x,y) := \bm 1\{\mathscr X((u,v))=a\}, \quad \text{ whenever } (x,y)\in \left(\frac{u-1}{n},\frac{u}{n}\right] \times \left(\frac{v-1}{n},\frac{v}{n}\right] . 
\end{align}
Further, when $(x,y)\in \left(\frac{u-1}{n},\frac{u}{n}\right]^2$, for $1\leq u\leq n$, we set $\bm W^{\mathscr X}(x,y) = (\frac1k,\frac1k,\ldots,\frac1k)$.

\begin{remark}\label{remark:graphoncolor}
The usual notion of a graphon is recovered from the colored setting when
$k=2$. Recall that a graphon $W$ is a measurable symmetric function
$W:[0,1]^2\to[0,1]$. A $2$-colored probability graphon
$\bm W\in\mathcal W_2$ can be represented as $\bm W(x,y)=(W(x,y),1-W(x,y))$, 
where $W$ is a graphon. Consequently, $\mathcal W_2$ can be identified with
the standard graphon space of all measurable symmetric functions
$W:[0,1]^2\to[0,1]$. 
\end{remark}

\subsection{Homomorphism Densities and Cut Distance }

Given a finite, simple graph $H=(V(H),E(H))$, a \emph{$k$-coloring} of $H$ is a function $\sigma: E(H) \to [k]:=\{1, 2, \ldots, k\}$, where $\sigma((u,v))$ denotes the color of the edge $(u,v)\in E(H)$. In this case, $H$ is called a \emph{$k$-colored graph}, or simply a
\emph{colored graph} when the value of $k$ is clear from the context, and is
denoted by $\bm H=(H,\sigma)$. We say that a colored graph $\bm H$ is \emph{monochromatic} if all of its edges are assigned the same color. In this case, we will omit the boldface notation and simply write $H$ for $\bm H$, interpreting $H$ either as an
uncolored graph or as a monochromatic colored graph, depending on the context. For each $a\in[k]$, let $E_a(H)=\{(u,v)\in E(H):\sigma((u,v))=a\}$ denote the set of edges of color $a$ in the coloring $\sigma$, and let $H^{(a)}=(V(H),E_a(H))$ denote the subgraph of $H$ consisting of all edges of color $a$. Thus,  $H^{(a)}$ is a monochromatic subgraph of $H$ of color $a$. Clearly, $|E(H)| = \sum_{a=1}^k |E_a(H)|$ is the total number of edges in $H$. 
The homomorphism density of a $k$-colored graph $\bm H = (H, \sigma)$ in a $k$-colored probability graphon $\bm W = (W_1, W_2, \ldots, W_k) \in \mathcal W_k$ is defined as (see \cite[Definition 7.1]{abraham2023probabilitygraphons}): 
\begin{equation}\label{Def:homDens}
 t( \bm H, \bm W)
 := \int_{[0,1]^{V(H)}} \prod_{ (u, v) \in E(H)} W_{\sigma((u, v))} (x_u, x_v) \prod_{v\in V(H)} \mathrm {d} x_v.
\end{equation}

\begin{remark}
Note that if $\bm H = (H, \sigma)$  is a 2-colored graph and $\bm W = (W, 1-W)$ is a 2-colored probability graphon, for some graphon $W: [0, 1]^2\rightarrow [0, 1]$ (recall Remark \ref{remark:graphoncolor}), then
\begin{align}\label{Def:homDensInd}
 t(\bm H, \bm W) & := \int_{[0,1]^{V(H)}} \prod_{\substack{(u, v) \in E(H)\\ \sigma((u, v)) = 1}} W(x_u, x_v)  \prod_{\substack{(u, v) \in E(H)\\ \sigma((u, v)) = 2}} (1- W(x_u, x_v) ) \prod_{v\in V(H)} \mathrm {d} x_v . 
\end{align}
In particular, if we consider the monochromatic coloring $\sigma((u, v)) = 1$ for each $(u, v) \in E(H)$, then 
\begin{align}\label{eq:tHW}
t(\bm H, \bm W) := \int_{[0,1]^{V(H)}} \prod_{(u, v) \in E(H)} W(x_u, x_v) \prod_{v\in V(H)} \mathrm {d} x_v  := t(H, W) , 
\end{align}
is the  homomorphism density of the (uncolored) graph $H$ in the graphon $W$ (see \cite[Section 7.2]{LovaszGraphLimits}). 
Further, if one considers a 2-coloring $\sigma_H$ of the complete graph $K_{R}$, where $R= |V(H)|$, such that $K_{R}^{(1)}$ is isomorphic to $H$, then denoting this colored graph by $\bm K_R$, \eqref{Def:homDensInd} becomes, 
\begin{align}\label{eq:inducedW}
t( \bm K_{R}, \bm W) = \int_{[0,1]^{V(H)}} \prod_{(u, v) \in E(H)} W(x_u, x_v)  \prod_{(u, v) \notin E(H)} (1- W(x_u, x_v) ) \prod_{v\in V(H)} \mathrm {d} x_v =: t_{\mathrm{ind}}(H, W)  , 
\end{align}
which is the induced homomorphism density of the (uncolored) graph $H$ in the graphon $W$ (see \cite[Section 7.2]{LovaszGraphLimits}). Hence, \eqref{Def:homDens} is the natural  analogue of the classical (uncolored) graphon homomorphism density to the colored setting. 
\end{remark}

As in the usual theory of graph convergence, the convergence of a sequence of $k$-colorings is defined in terms of homomorphism densities of colored subgraphs. Specifically, a sequence of $k$-colorings $\{\mathscr{X}_n\}_{n \geq 1}$ is said to be {\it left convergent} if $\lim_{n \rightarrow \infty}t(\bm H, \bm W^{\mathscr{X}_n})$ exists for every colored graph $\bm H$. To metrize this convergence one needs to appropriately define a notion of distance between $k$-colored probability graphons. To this end, recall that the {\it cut norm of a graphon} $W: [0, 1]^2 \rightarrow [0, 1]$ is defined as (see \cite[Section 8.2]{LovaszGraphLimits}): 
$$\|W\|_{\square} :=\sup_{S,T\subset [0,1]}\left|\int_{S\times T} W(x,y) \mathrm dx\mathrm dy\right|.$$
Then the {\it cut norm of a $k$-colored probability graphon} $\bm W = (W_1, W_2, \ldots, W_k) \in \mathcal W_k$ is defined as (see \cite[Section 3.2]{multiplexconvergence}): 
\begin{align*}
\| \bm W \|_{\square} := \sum_{a = 1 }^k \| W_a \|_\square . 
\end{align*}
For $\phi\in\mathcal M$, where $\mathcal M$ denotes the collection of
measure-preserving bijections of $[0,1]$ (understood modulo null sets), define
the relabeling of $\bm W=(W_1,\ldots,W_k)\in\mathcal W_k$ by
\[
\bm W^\phi(x,y)
:=
\bm W\bigl(\phi(x),\phi(y)\bigr),
\qquad\text{that is,}\qquad
W_a^\phi(x,y):=W_a\bigl(\phi(x),\phi(y)\bigr),
\quad \text{ for } a\in[k].
\]
Then the cut distance between $\bm U,\bm W\in\mathcal W_k$ is the
pseudometric
\begin{align*}
\delta_\square(\bm U,\bm W)
:=
\inf_{\phi\in\mathcal M}
\left\|\bm U-\bm W^\phi\right\|_\square .
\end{align*}
We say that $\bm U$ and $\bm W$ are \emph{weakly isomorphic}, and write
$\bm U\sim\bm W$, if $\delta_\square(\bm U,\bm W)=0$. Equivalently, $\bm U\sim\bm W$ if and only if $\bm U$ belongs to the
closure, with respect to the cut norm, of the orbit
$\{\bm W^\phi:\phi\in\mathcal M\}$. (In particular, $\delta_\square(\bm W,\bm W^{\phi})=0.$)
Denote the resulting quotient space by $\tilde{\mathcal W}_k:=\mathcal W_k/\!\sim$ and denote the equivalence class of $\bm W$ by $\tilde{\bm W}$. The cut distance on the quotient is defined by
\[
\delta_\square(\tilde{\bm U},\tilde{\bm W})
:=
\delta_\square(\bm U,\bm W),
\]
where $\bm U$ and $\bm W$ are arbitrary representatives of
$\tilde{\bm U}$ and $\tilde{\bm W}$, respectively. This definition
does not depend on the choice of representatives, and
$\bigl(\tilde{\mathcal W}_k,\delta_\square\bigr)$ is a metric space.  The cut distance allows us to define a natural notion of metric convergence
for $k$-colorings as follows. A sequence of $k$-colorings
$\{\mathscr{X}_n\}_{n\geq 1}$ is said to converge to a $k$-colored probability
graphon $\bm W\in\mathcal W_k$ if $\lim_{n\to\infty}
\delta_\square(\wt{\bm W}^{\mathscr{X}_n},\wt{\bm W})=0$. 
From the general convergence theory of decorated graphons, specialized to the
setting of finite colorings, one has the following results \cite{abraham2023probabilitygraphons} (see also \cite{multiplexconvergence}): (1) The metric space $(\wt{\mathcal W}_k, \delta_\square)$ is compact, (2) the functional $t(\bm H, \cdot)$ is continuous for every colored graph $\bm H$, and (3) left convergence of $k$-colorings and convergence in terms of the cut distance are equivalent.

\begin{remark}[Vertex relabelings and color relabelings]
Note that the space $\wt{\mathcal W}_k$ is obtained by
quotienting only with respect to measure-preserving relabelings of the vertex
space, while keeping the color labels fixed. One could, in principle, further
identify graphons that differ only by a global permutation of the colors. 
For instance, given
$\bm W=(W_1,W_2,\ldots,W_k)\in\mathcal W_k$ and $\tau\in S_k$, where
$S_k$ denotes the group of permutations of $[k]$, define $\tau \bm W = ((\tau \bm W)_1, \ldots, (\tau \bm W)_k) \in \mathcal W_k$ as 
\begin{align*}
(\tau \bm W)_a := W_{\tau^{-1}(a)} \quad \text{ for } a\in[k]. 
\end{align*}
Thus, $\tau\bm W$ is the $k$-colored probability graphon obtained from
$\bm W$ by globally permuting the color labels.
In general, $\tau\bm W$ need not be equivalent to $\bm W$ in $\wt{\mathcal W}_k$, since measure-preserving relabelings of the vertex space do not alter the labels of the colors. For color-symmetric observables, one may further quotient by global color permutations. However, unless explicitly stated otherwise,  all uniqueness statements in this paper are understood in the fixed-color quotient $\wt{\mathcal W}_k$.  
\end{remark}

\subsection{Large Deviation Principle}\label{sec:ldp-probability-graphons}

For ${\bm \nu}\in\Delta_k$, denote by $\mathscr{X}(n, {\bm \nu} ) $ the coloring of $K_n$ where each edge is assigned a color independently with distribution ${\bm \nu}$. The random coloring $\mathscr{X}(n, {\bm \nu})$, 
induces a probability measure $\bm \mu_{n,{\bm \nu}}$ on $\mathcal W_k$ through the map $\mathscr{X} \mapsto \bm W^\mathscr{X}$ (recall \eqref{eq:Wa}). This naturally lifts to a probability measure $\wt{\bm \mu }_{n,{\bm \nu}}$ on $\wt{\mathcal W}_k$ through the map 
\begin{equation}\label{eq:mappingGraphsGraphons}
    \mathscr{X} \mapsto \bm W^\mathscr{X} \mapsto \wt{\bm W}^{\mathscr{X}}.
\end{equation} Recently, \cite{dionigiLargeDeviationsProbability2025} derived a large deviation principle (LDP) for random weighted graphs with general decorations, which, specialized to setting of finite-color decorations gives, an LDP for $\wt{\bm \mu }_{n,{\bm \nu}}$. To state this result, we need to introduce a few definitions: 
For $\bm \mu =(\mu_1,\dots, \mu_k), {\bm \nu}=(\nu_1,\dots,\nu_k)\in\Delta_k$, define the relative entropy as: 
\begin{align}\label{eq:divergence}
D_k(\bm \mu \|{\bm \nu}):=\sum_{a=1}^k \mu_a\log\frac{\mu_a}{\nu_a},
\end{align}
with the conventions $0\log 0=0$ and $\mu_a\log(\mu_a/\nu_a)=+\infty$,  whenever $\mu_a>0$ and $\nu_a=0$.
This naturally extends pointwise to $k$-color probability graphons as follows: For  $\bm W\in \mathcal W_k$ and ${\bm \nu}\in\Delta_k$,  
\begin{align}\label{eq:IcolorW}
\mathcal I_{\bm \nu}(\bm W):=\int_{[0,1]^2} D_k\left( \bm W(x,y)\|{\bm \nu}\right)\,\mathrm{d}x\,\mathrm{d}y = \sum_{a=1}^k \int_{[0,1]^2} W_a(x,y) \log \frac{W_a(x,y)}{\nu_a}  \,\mathrm{d}x \,\mathrm{d}y . 
\end{align}
It is shown in \cite{dionigiLargeDeviationsProbability2025} that $\mathcal I_{\bm \nu}$ is well defined on equivalence classes in $\wt{\mathcal W}_k$, and with a slight abuse of notation we denote the induced functional on the quotient space by the same symbol. In particular, $\mathcal I_{\bm \nu}$ is lower semicontinuous and has compact level sets. The LDP for $\wt{\bm \mu }_{n,{\bm \nu}}$ can now be stated as follows:

\begin{theorem}[{\cite[Theorem 1.3]{dionigiLargeDeviationsProbability2025}}]
\label{THM:LDP_prob_graphons}
For every ${\bm \nu}\in\Delta_k$, the sequence of measures $\{\wt{\bm \mu }_{n,{\bm \nu}}\}_{n\ge 1}$ satisfies a large deviation principle on $\wt{\mathcal W}_k$ with speed $\frac{n^2}{2}$ and good rate function $\mathcal I_{\bm \nu}$. Equivalently, for every closed set $\tilde F\subset\wt{\mathcal W}_k$,
$$
\limsup_{n\to\infty}\frac{2}{n^2}\log \wt{\bm \mu }_{n,{\bm \nu}}(\tilde F)
\le -\inf_{\wt{\bm W}\in \tilde F}\mathcal I_{\bm \nu}(\wt{\bm W}),
$$
and for every open set $\tilde U\subset\wt{\mathcal W}_k$,
\begin{align*}
\liminf_{n\to\infty}\frac{2}{n^2}\log \wt{\bm \mu }_{n,{\bm \nu}}(\tilde U)
\ge -\inf_{\wt{\bm W}\in \tilde U}\mathcal I_{\bm \nu}(\wt{\bm W}).
\end{align*}
\end{theorem}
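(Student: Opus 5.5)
The plan is to reproduce the Chatterjee--Varadhan argument for the Erd\H{o}s--R\'enyi graphon LDP, replacing Bernoulli edges by $\Delta_k$-valued edge labels. The proof has two parts. First, establish a \emph{local} LDP (ball estimates) in the cut metric. Then upgrade it to a full LDP, using compactness of $(\wt{\mathcal W}_k,\delta_\square)$ and lower semicontinuity of $\mathcal I_{\bm\nu}$.

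Concretely, the target is: for every $\wt{\bm W}$,
\[
\lim_{\eta\to 0}\limsup_{n\to\infty}\frac{2}{n^2}\log \wt{\bm\mu}_{n,\bm\nu}\bigl(B_\square(\wt{\bm W},\eta)\bigr)\le -\mathcal I_{\bm\nu}(\wt{\bm W}),
\]
together with the matching $\liminf$ lower bound. Since the space is compact, the upper bound for closed sets follows by covering with finitely many balls. The lower bound for open sets follows directly from the local lower bound.

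\emph{Lower bound.} I first reduce to step graphons $\bm W$ that are constant on the blocks of an equipartition into $m$ parts, with values in the interior of $\Delta_k$. This reduction uses lower semicontinuity of $\mathcal I_{\bm\nu}$ under block-averaging and convexity of $D_k(\cdot\|\bm\nu)$: averaging does not increase the rate, and the step approximations converge in cut norm. For such $\bm W$, I tilt the law. Under the tilted measure, each edge $(u,v)$ independently gets color distribution $\bm W(u/n,v/n)$. A Hoeffding- or Szemer\'edi-type concentration bound in cut norm, via a union bound over $4^n$ pairs of sets $S,T$ for each color, shows that the tilted coloring lies in $B_\square(\wt{\bm W},\eta)$ with probability tending to one. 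The Radon--Nikodym derivative is $\exp\{-\sum_{u<v}\log(W_{a}/\nu_a)\}$ along the realized colors. By the law of large numbers it equals $\exp\{-\frac{n^2}{2}(\mathcal I_{\bm\nu}(\bm W)+o(1))\}$ on a high-probability event. The standard change-of-measure inequality then gives the lower bound.

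\emph{Upper bound.} For a fixed labelled $\bm W$ and a fixed vertex relabeling, I use an exponential Chebyshev bound. For a step function $\bm\Theta=(\Theta_1,\dots,\Theta_k)$ on an $m$-block partition, the event $\|\bm W^{\mathscr X}-\bm W\|_\square<\eta$ implies $\sum_a\langle \Theta_a, W^{\mathscr X}_a\rangle\ge\sum_a\langle\Theta_a,W_a\rangle - C(\bm\Theta)\eta$. The cumulant generating function factorizes over edges and equals $\frac{n^2}{2}\int\log\sum_a\nu_a e^{\Theta_a}$. Optimizing over $\bm\Theta$ recovers the Legendre transform, i.e.\ $\mathcal I_{\bm\nu}$ of the block-averaged $\bm W$. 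Letting $m\to\infty$ and using lower semicontinuity gives $\mathcal I_{\bm\nu}(\bm W)$. To pass to the quotient, I follow Chatterjee--Varadhan. By the weak regularity lemma, $\wt{\mathcal W}_k$ is covered by boundedly many balls around $m$-step graphons. Up to relabeling, a coloring in $B_\square(\wt{\bm W},\eta)$ is close in cut norm to one of at most $m^n$ labelled versions. This factor $m^n=e^{o(n^2)}$ is negligible at speed $n^2$.

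I expect the main obstacle to be the upper bound's passage from labelled cut norm to $\delta_\square$. The infimum over $\phi$ must be controlled by a subexponential union bound, which needs a multicolor weak regularity lemma: a simultaneous partition working for all $k$ coordinates, e.g.\ by applying Frieze--Kannan to each $W_a$ and taking a common refinement. I would also need to handle colors with $\nu_a=0$, where $\mathcal I_{\bm\nu}$ is infinite. For these I would restrict to the face of $\Delta_k$ containing $\bm\nu$'s support and treat everything off that face as having zero probability.
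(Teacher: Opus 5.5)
Your route differs from the paper's mainly because the paper gives no argument of its own. It imports the statement from the general LDP for probability graphons with an arbitrary decoration space in \cite[Theorem 1.3]{dionigiLargeDeviationsProbability2025} and specializes the decoration space to $[k]$. It also takes from that source the facts you use as inputs: that $\mathcal I_{\bm\nu}$ is well defined on $\wt{\mathcal W}_k$, lower semicontinuous, and has compact level sets.

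You instead give a self-contained Chatterjee--Varadhan argument for finitely many colors, and the sketch is sound.
\begin{itemize}
\item Lower bound: tilt to independent edges with law $\bm W(u/n,v/n)$, use Hoeffding with a $4^n$ union bound, and apply the law of large numbers to the log-likelihood ratio. This gives the bound at interior step graphons, and block averaging and mixing with $\bm\nu$ do not increase the rate, by convexity.
\item Labelled upper bound: the cumulant generating function factorizes over edges, and $\sup_{\bm\theta}\{\langle\bm\theta,\bm p\rangle-\log\sum_a\nu_a e^{\theta_a}\}=D_k(\bm p\|\bm\nu)$.
\item Colors with $\nu_a=0$: the face restriction handles them, since the $1/k$ diagonal blocks cost only $O(1/n)$ in cut norm.
\end{itemize}
The citation buys generality (any Polish decoration space, hence weighted graphs) and the analytic properties of the rate function for free. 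Your route buys transparency using only finite-dimensional tools.

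Two steps should be stated precisely when you write this out.

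First, apply the multicolor weak regularity lemma to the random coloring, not to the target $\wt{\bm W}$. Every $\mathscr X$ admits a vertex partition into at most $m(\varepsilon)$ parts: take a common refinement of Frieze--Kannan partitions of the $k$ color classes. Refining at most doubles the error, because block averaging is a cut-norm contraction. So after one of at most $m^n$ (or $n!$) relabelings, $\bm W^{\mathscr X}$ lies within $\varepsilon$ in labelled cut norm of an element of a fixed, $n$-independent finite net. If you instead start from $\delta_\square(\wt{\bm W}^{\mathscr X},\wt{\bm W})<\eta$ and ask for an integral vertex assignment, you must round a fractional overlay, which is not automatic.

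Second, once the net and the radius $\varepsilon$ are fixed, your local $\eta\to0$ bound at a fixed $\bm W$ is not enough. You need the labelled bound for closed cut balls of fixed radius:
\[
\limsup_n\tfrac{2}{n^2}\log\P\bigl(\|\bm W^{\mathscr X}-\bm h\|_\square\le\varepsilon\bigr)\le-\inf\bigl\{\mathcal I_{\bm\nu}(\bm g):\|\bm g-\bm h\|_\square\le\varepsilon\bigr\}.
\]
This follows from your Chebyshev half-space estimate, because such balls are weak$^*$ compact and are covered by finitely many half-spaces. Lower semicontinuity on the compact quotient then lets the closed $2\varepsilon$-neighbourhood of $\tilde F$ shrink to $\tilde F$, which finishes the upper bound.
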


Using the above result, in the next section we will derive a variational
formula for the limiting log-partition function for colored ERGMs, which will enable us to characterize the asymptotic structure of typical colorings.

\section{Main Results}
\label{sec:color}

Let $T\colon \wt{\cW}_k\to\mathbb R$ be a bounded and continuous function on the metric space $(\wt{\cW}_k, \delta_\square)$.
This induces a probability measure on $\mathcal C_n^k$ as follows: For $\mathscr{X} \in \mathcal C_n^k$, 
\begin{align}\label{eq:TW}
\mathbb P_{n}(\mathscr{X}) := \exp\left\{n^2T(\wt{\bm W}^{\mathscr{X}}) - n^2 \psi_n \right\}, 
\end{align}
where
\begin{align}\label{eq:Zn}
\psi_n:= \frac{1}{n^2}\log \sum_{\mathscr{X}\in\mathcal C_n^k}\exp\left\{n^2T(\wt{\bm W}^\mathscr{X})\right\} , 
\end{align}
is the normalized log-partition function (free energy). Note that the measure $\mathbb{P}_n$ defined above can be directly related to the measure defined \eqref{eq:Fun}. Moreover, we can naturally lift the measure $\P_n$ from the space of graphs to the space $\wt{\mathcal W}_k$ pushing forward the measure through the map \eqref{eq:mappingGraphsGraphons}. We will denote the lifted measure by $\tilde{\mathbb{P}}_n$ in the following.

The function $T$ will be referred to as the sufficient statistic of the model \eqref{eq:TW}.  In this paper, we consider sufficient statistics of the form
\begin{align}\label{eq:sufficientTW}
T(\bm W)=\sum_{s=1}^r \beta_s \, t( \bm H_s, \bm W),
\end{align}
where $\beta_1,\dots,\beta_r \in\mathbb R$ and $\bm H_1,\dots, \bm H_r$ are finite, simple $k$-colored graphs.\footnote{Note that \eqref{eq:sufficientTW} is invariant under measure-preserving relabelings. Hence, it descends to a well-defined functional on $\wt{\mathcal W}_k$ by setting $T(\wt{\bm W}) := T(\bm W)$, for any representative $\bm W$ of the equivalence class $\wt{\bm W}$. Hereafter, with a slight abuse of notation, we will sometimes regard $T$ as a functional on $\mathcal W_k$ and sometimes as a functional on $\wt{\mathcal W}_k$.} 
Since homomorphism densities take values in $[0,1]$ and are continuous on $\wt{\mathcal W}_k$, such a functional $T$ is indeed bounded and continuous. For a concrete example, consider the rainbow triangle ERGM introduced in \eqref{eq:rbn}. In this case, the sufficient statistic $T$ for $\bm W = (W_1, W_2, W_3) \in \mathcal W_3$ takes the form: 
\begin{align}\label{eq:rainbowT}
T(\bm W)= \beta_1 t(K_2^{(1)}, \bm W) + \beta_2 t(K_2^{(2)}, \bm W) + \beta_3 t(K_2^{(3)}, \bm W) + \beta_4 t_{\mathrm{rb}\triangle}(\bm W) . 
\end{align}
Here, $K_2^{(a)}$ denotes a single edge colored $a\in[3]$ and consequently, $t(K_2^{(a)},  \bm W) = \int_{[0,1]^2} W_a (x, y) \mathrm{d} x \mathrm{d} y$. Further, for $a,b,c\in[3]$, let $\bm K_3^{(a,b,c)}$ denote the decorated triangle on vertex set $\{1,2,3\}$ whose edges $(1,2)$, $(2,3)$, and $(3,1)$ have colors $a$, $b$, and $c$, respectively. Set $\bm K_3^{\mathrm{rb}}:=\bm K_3^{(1,2,3)}$. Then,
\begin{align}\label{eq:tWrainbow}
t_{\mathrm{rb}\triangle}(\bm W) & := t(\bm K_3^{\mathrm{rb}},\bm W)= \int_{[0,1]^3} W_1(x,y)W_2(y,z)W_3(z,x)\, \mathrm{d}x\, \mathrm{d}y\, \mathrm{d}z ,  
\end{align}
is  the \emph{fixed decorated} rainbow triangle homomorphism density. With this parametrization, \eqref{eq:rainbowT} is consistent with the finite $n$ model described in  \eqref{eq:rbn}. Specifically, with the diagonal-block convention in \eqref{eq:Wa}, one has the exact identities: 
\begin{equation}\label{eq:empirical_rainbow_exact}
t(K_2^{(a)},\bm W^{\mathscr X})
=
\frac{2|E_a(\mathscr X)|}{n^2}+\frac{1}{3n},
\quad \text{ for } a\in[3],
\qquad\text{ and }\qquad
t_{\mathrm{rb}\triangle}(\bm W^{\mathscr X})
=
\frac{\triangle_{\mathrm{rb}}(\mathscr X)}{n^3}
+
\frac{1}{27n^2}.
\end{equation}
The second identity follows by noting that every unordered rainbow triangle contributes exactly once to the fixed decorated density, configurations with exactly two coincident sampled vertices contribute zero, while the $n$ diagonal cubes contribute the final term in
\eqref{eq:empirical_rainbow_exact}. Hence, 
\begin{align*}
n^2\left[
\sum_{a=1}^3\beta_a t(K_2^{(a)},\bm W^{\mathscr X})
+
\beta_4t_{\mathrm{rb}\triangle}(\bm W^{\mathscr X})
\right]
&=
2\sum_{a=1}^3\beta_a|E_a(\mathscr X)| + \frac{\beta_4}{n}\triangle_{\mathrm{rb}}(\mathscr X) + \frac n3\sum_{a=1}^3\beta_a+
\frac{\beta_4}{27}.
\end{align*} 
The last two terms are independent of $\mathscr X$ and can therefore be absorbed into the partition function, showing that \eqref{eq:TW} with sufficient statistic as in \eqref{eq:rainbowT} induces exactly the probability law \eqref{eq:rbn}.

\begin{remark}[Symmetrized rainbow triangle density]
Another alternative way to define the rainbow triangle density is to use the color-symmetrized statistic that counts a rainbow triangle without fixing which edge receives which color:
\begin{align}
\label{eq:tWrainbow_sym}
t_{\mathrm{rb}\triangle}^{\mathrm{sym}}(\bm W) &:=
\sum_{\substack{a,b,c\in[3]\\ \{a,b,c\}=[3]}}
\int_{[0,1]^3}
W_a(x,y)W_b(y,z)W_c(z,x) \mathrm dx \mathrm dy \mathrm dz =6 t_{\mathrm{rb}\triangle}(\bm W). 
\end{align}  
The last identity above follows by noting that every permutation of the three edges of a
triangle is induced by a permutation of its vertices, together with the
symmetry of the kernels $W_a$, for $a \in [3]$. If one places a coefficient $\bar\beta_4$ in front of the symmetrized statistic \eqref{eq:tWrainbow_sym}, then the same model as in \eqref{eq:rainbowT} is obtained under the reparametrization $\beta_4=6\bar\beta_4$. We will need to use this symmetrized way of counting rainbow triangles to compare our results with the flag algebra literature in Section \ref{sec:rt}.
\end{remark}

\begin{remark}[Relation to weighted ERGMs]
A weighted exponential random graph model is an ERGM in which each edge carries
a real-valued weight, and the sufficient statistics are typically weighted
homomorphism densities
\cite{bhamidiWeightedExponentialRandom2018,demuse2018phase}.
When the base measure $\mu$ has finite support, weighted ERGMs can be naturally
embedded into the colored ERGMs framework through the contraction of probability
graphons; see
\cite[Section~5.1 and Corollary~7.10.1]
{zucal2024probabilitygraphonspvariablesequivalent}.
In terms of decorated homomorphism densities, these statistics use the same
scalar decoration on every edge of the test graph. The colored setting is
more general, since different edges may involve different color coordinates
and the resulting statistics need not factor through a single scalar
contraction, as illustrated by the rainbow-triangle statistic.
A theory of ERGMs with general edge decorations was already raised as a natural direction in \cite[Remark~4.5]{bhamidiWeightedExponentialRandom2018}; the present framework provides such a theory in the finite-decoration setting.
\end{remark} 

\subsection{Limiting Free Energy and Concentration of Measure}
\label{sec:Zvariationalpf}

Define the function $I_k : \Delta_k \rightarrow \R$ as $
I_k(\bm \mu ):=\sum_{a=1}^k \mu_a\log \mu_a$, which is the negative of the usual Shannon entropy. This can be extended to $k$-color probability graphons as: 
\begin{align}\label{eq:IW}
\mathcal I(\bm W):= \int_{[0,1]^2} I_k\left(\bm W(x,y)\right)\,\mathrm{d}x\,\mathrm{d}y = \sum_{a=1}^k \int_{[0,1]^2} W_a(x,y) \log W_a(x,y)  \,\mathrm{d}x\,\mathrm{d}y  ,  
\end{align}
for $\bm W \in \mathcal W_k$. As for the relative entropy \eqref{eq:IcolorW}, the probability graphon entropy $\mathcal I$ is well defined on equivalence classes in $\wt{\mathcal W}_k$ and with a slight abuse of notation we will denote the induced functional on the quotient space by the same symbol. Our first result is the following variational representation of the limiting free energy:

\begin{theorem}\label{thm:variational_formula}
Let $T\colon \wt{\mathcal W}_k\to\mathbb R$ be a bounded and continuous function and $\psi_n$ and $\mathcal I$ be as defined in \eqref{eq:Zn} and \eqref{eq:IW}, respectively. Then
\begin{equation}
\label{eq:varformula}
\lim_{n\to\infty}\psi_n
=
\sup_{\wt{\bm W}\in\wt{\mathcal W}_k}
\left(T(\wt{\bm W})-\frac12\mathcal I(\wt{\bm W})\right).
\end{equation}
\end{theorem}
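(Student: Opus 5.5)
The plan is to apply Varadhan's lemma to the LDP of Theorem~\ref{THM:LDP_prob_graphons} with the uniform base measure $\bm\nu=\bm u:=(\frac1k,\ldots,\frac1k)$. Under $\bm u$, every coloring $\mathscr X\in\mathcal C_n^k$ has probability $k^{-\binom n2}$. So
\begin{align*}
\sum_{\mathscr X\in\mathcal C_n^k}\exp\left\{n^2T(\wt{\bm W}^{\mathscr X})\right\}
= k^{\binom n2}\,\mathbb E_{\wt{\bm\mu}_{n,\bm u}}\left[\exp\left\{n^2 T(\wt{\bm W})\right\}\right],
\end{align*}
and therefore
\begin{align*}
\psi_n=\frac{1}{n^2}\binom n2\log k+\frac12\cdot\frac{2}{n^2}\log\mathbb E_{\wt{\bm\mu}_{n,\bm u}}\left[\exp\left\{\tfrac{n^2}{2}\cdot 2T(\wt{\bm W})\right\}\right].
\end{align*}
The first term tends to $\frac12\log k$. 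The second is exactly in the form to which Varadhan's lemma applies, at speed $n^2/2$ with the function $2T$.

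For the second step, observe that $T$ is bounded and continuous on $(\wt{\mathcal W}_k,\delta_\square)$, and that the rate function $\mathcal I_{\bm u}$ is good. Hence the exponential moment (tail) condition in Varadhan's lemma is trivial: boundedness of $T$ gives $\sup$ bounds uniformly in $n$. The lemma then yields
\begin{align*}
\lim_{n\to\infty}\frac{2}{n^2}\log\mathbb E_{\wt{\bm\mu}_{n,\bm u}}\left[e^{n^2T}\right]=\sup_{\wt{\bm W}}\left(2T(\wt{\bm W})-\mathcal I_{\bm u}(\wt{\bm W})\right).
\end{align*}
One could alternatively avoid invoking Varadhan abstractly. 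Because $\wt{\mathcal W}_k$ is compact, the upper bound can be obtained by covering the space with finitely many small closed balls on which $T$ varies by at most $\varepsilon$, and the lower bound by using a single small open ball around a near-maximizer. Either route is standard.

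The last step identifies the rate function with the entropy. Pointwise, $D_k(\bm\mu\|\bm u)=\sum_a\mu_a\log\mu_a+\log k=I_k(\bm\mu)+\log k$, so $\mathcal I_{\bm u}(\bm W)=\mathcal I(\bm W)+\log k$ after integrating over $[0,1]^2$. Combining,
\begin{align*}
\lim_{n\to\infty}\psi_n=\frac12\log k+\frac12\sup_{\wt{\bm W}}\left(2T(\wt{\bm W})-\mathcal I(\wt{\bm W})-\log k\right)=\sup_{\wt{\bm W}}\left(T(\wt{\bm W})-\tfrac12\mathcal I(\wt{\bm W})\right),
\end{align*}
which is the claim.

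The only point needing care is bookkeeping. The speed is $n^2/2$ while the Hamiltonian is scaled by $n^2$, which produces the factor $2T$ and hence the $\frac12$ in front of $\mathcal I$. One must also check that $\mathcal I$ is well defined on $\wt{\mathcal W}_k$, which holds because it differs from $\mathcal I_{\bm u}$ by a constant, and that the counting measure equals $k^{\binom n2}$ times the uniform law; the latter holds including the diagonal convention, since that convention affects only $\bm W^{\mathscr X}$ and not the count. I expect no genuine obstacle beyond these constants.
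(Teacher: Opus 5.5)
Your proposal is correct and follows the paper's proof essentially verbatim. Both write $Z_n$ as $k^{\binom n2}$ times an exponential moment under the uniform product coloring, apply Varadhan's lemma at speed $n^2/2$ to the bounded continuous function $2T$, and use the identity $\mathcal I_{\bm u}=\mathcal I+\log k$ to cancel the $\frac12\log k$ coming from the counting factor.
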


The proof of Theorem \ref{thm:variational_formula} is given in Section \ref{sec:Znpf}. This is the analogue of \cite[Theorem 3.1]{ChatterjeeDiaconisExponential2013}  for uncolored ERGMs in the colored setting. The proof is a direct application of the large deviation principle from \Cref{THM:LDP_prob_graphons} and Varadhan's Lemma (see Theorem \ref{thm::EVaradhanLemma}). Note that the set of maximizers
\begin{equation}
\label{eq:main_maximizer_set}
\wt{\mathcal S}_T
:=
\argmax_{\wt{\bm W}\in\wt{\mathcal W}_k}
\left\{
T(\wt{\bm W})-\frac12\mathcal I(\wt{\bm W})
\right\}
\end{equation}
is nonempty and compact. This is because $T$ is continuous, $\mathcal I$ is lower semicontinuous, and the functional $\tilde {\bm W }\rightarrow T(\tilde {\bm W})-\frac12\mathcal I(\tilde {\bm W})$ is upper semicontinuous on the compact quotient space
$\wt{\mathcal W}_k$. We can now identify the limiting structure of a coloring sampled from \eqref{eq:TW} as follows:

\begin{corollary}
\label{cor::expconc}
Let $\wt{\cS}_T$ be as defined in \eqref{eq:main_maximizer_set} and for each $n \geq 1$, let $\mathscr{X}_n \sim \P_{n}$ be a random $k$-coloring sampled from \eqref{eq:TW}. Then for every $\delta > 0$, there exists $C : = C(\delta)> 0$ such that for all sufficiently large $n$,
$$\P_n\left(\delta_{\square}(\wt{\bm{W}}^{\mathscr{X}_n} ,\wt{\cS}_T) > \delta\right) \leq e^{-Cn^2}.$$
\end{corollary}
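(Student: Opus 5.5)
The plan is to follow the Chatterjee--Diaconis concentration argument and rewrite $\P_n$ as a tilt of the uniform coloring measure. Let $\bm u=(\frac1k,\ldots,\frac1k)$, so that $\mathscr X(n,\bm u)$ is the uniformly random $k$-coloring and $\wt{\bm\mu}_{n,\bm u}(\{\mathscr X\})=k^{-\binom n2}$. Then
\[
\P_n\bigl(\wt{\bm W}^{\mathscr X_n}\in \tilde F\bigr)
=\frac{\mathbb E_{\wt{\bm\mu}_{n,\bm u}}\bigl[e^{n^2T}\bm 1_{\tilde F}\bigr]}{\mathbb E_{\wt{\bm\mu}_{n,\bm u}}\bigl[e^{n^2T}\bigr]},
\]
and the factor $k^{\binom n2}$ cancels. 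Take the closed set $\tilde F_\delta:=\{\wt{\bm W}:\delta_\square(\wt{\bm W},\wt\cS_T)\ge\delta\}$; it is closed because the distance to a set is continuous. Since $\mathcal I_{\bm u}(\bm W)=\mathcal I(\bm W)+\log k$, the good rate function in Theorem~\ref{THM:LDP_prob_graphons} is $\mathcal I+\log k$ at speed $n^2/2$.

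\textbf{Upper bound for the numerator.} Write $n^2T=\frac{n^2}{2}\cdot 2T$, where $2T$ is bounded and continuous. The upper half of Varadhan's lemma restricted to a closed set (the standard proof for bounded continuous tilts) gives
\[
\limsup_{n\to\infty}\frac{2}{n^2}\log \mathbb E\bigl[e^{n^2T}\bm 1_{\tilde F_\delta}\bigr]\le \sup_{\tilde F_\delta}\bigl(2T-\mathcal I-\log k\bigr).
\]

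\textbf{Lower bound for the denominator.} Varadhan's lemma (equivalently, Theorem~\ref{thm:variational_formula}) gives the same expression with the supremum taken over all of $\wt{\mathcal W}_k$.

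The remaining task, and the only real step, is to prove the strict gap
\[
\sup_{\tilde F_\delta}\bigl(T-\tfrac12\mathcal I\bigr)<\sup_{\wt{\mathcal W}_k}\bigl(T-\tfrac12\mathcal I\bigr)=:M.
\]
On the compact space $\wt{\mathcal W}_k$ the functional $T-\frac12\mathcal I$ is upper semicontinuous, because $T$ is continuous and $\mathcal I$ is lower semicontinuous. The set $\tilde F_\delta$ is a closed subset of a compact space, hence compact. An upper semicontinuous function attains its supremum on a nonempty compact set, so the supremum over $\tilde F_\delta$ is attained at some $\wt{\bm W}^*\in\tilde F_\delta$. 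Since $\wt{\bm W}^*\notin\wt\cS_T$, its value is strictly less than $M$. If $\tilde F_\delta$ is empty, the claimed bound is trivial. Call the gap $2\eta>0$.

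Combining the two bounds gives
\[
\limsup_{n\to\infty}\frac1{n^2}\log\P_n\bigl(\delta_\square(\wt{\bm W}^{\mathscr X_n},\wt\cS_T)\ge\delta\bigr)\le-2\eta.
\]
Hence, for all sufficiently large $n$, this probability is at most $e^{-\eta n^2}$. This proves the corollary with $C=\eta$, and the bound holds a fortiori for the event with strict inequality $>\delta$.

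The point needing most care is the upper semicontinuity on the quotient, which the text just before the corollary already asserts.
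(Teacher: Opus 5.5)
Your proposal is correct and is essentially the paper's argument. The paper writes $\P_n$ as the same tilt of the uniform coloring measure and applies the tilted-LDP result Theorem~\ref{thm:TiltedLDP}(b) with $F=2T$, after identifying the minimizers of the tilted rate function with $\wt{\cS}_T$; you have simply unpacked that theorem into its three ingredients: the Varadhan upper bound on the closed set $\tilde F_\delta$, Varadhan's limit for the normalizer, and the strict gap obtained from compactness and upper semicontinuity of $T-\frac12\mathcal I$.
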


The proof of Corollary \ref{cor::expconc} is given in Section \ref{sec:concentrationpf}. The result shows that the asymptotic structure of a typical coloring sampled from \eqref{eq:TW} must be close to the set $\wt{\mathcal S}_T$ with high probability. Specifically, if $\wt{\cS}_T=\{\wt{\bm{W}} \}$ is a singleton, then Corollary \ref{cor::expconc} shows that $\delta_{\square}(\wt{\bm{W}}^{\mathscr{X}_n} ,\wt{\bm{W}})\rightarrow 0$ in probability, that is, \ $\wt{\P}_{n}$ (the probability measure lifted to the $k$-colored probability graphon space $\wt{\mathcal W}_k$), weakly converges to $\delta_{\wt{\bm{W}}}$ in the metric space $(\wt{\mathcal W}_k, \delta_{\square})$.

\subsection{Replica Symmetric Families }
\label{sec:starmonotone}

The results of the previous section show that understanding the typical structure of a coloring sampled from a colored ERGM reduces to analyzing the maximizers of the limiting variational problem in \eqref{eq:varformula}. A particularly important regime is when the variational problem is solved by constant $k$-colored probability graphons, where $\bm W \in \mathcal W_k$ is of the form 
$$\bm W(x,y)= \bm \mu= (\mu_1,\mu_2,\ldots,\mu_k)\in\Delta_k, \qquad \text{ for } (x,y)\in[0,1]^2. $$ 
For example, if \eqref{eq:varformula} has a unique maximizer among all $k$-colored probability graphons and this maximizer is some $\bm \gamma \in \Delta_k $, then a random coloring sampled from \eqref{eq:TW} is asymptotically close to the product coloring $\mathscr X(n, \bm\gamma)$, in which the colors of different edges are assigned independently according to $\bm \gamma$. More generally, if the set $\Gamma$ of maximizers of \eqref{eq:varformula} consists of several constant $k$-colored probability graphons, then the model concentrates asymptotically near the family of product colorings $\{\mathscr X(n,\bm \gamma):\bm \gamma\in\Gamma\}$. In this case, the limiting behavior is described by a mixture over the constant maximizers, rather than by a single product coloring. This is referred to as the {\it replica symmetric regime}, where the typical large-scale structure of the coloring is spatially homogeneous, determined only by the limiting color distribution on a single edge. 

In this section, we identify the following families of colored ERGMs for which replica symmetry holds. 

\begin{itemize}

\item The first consists of models whose sufficient statistics are linear
    combinations of homomorphism densities of colored stars. These models are
    exactly solvable and replica symmetric for all parameter values (Section~\ref{sec:colorstar}).

    \item The second family consists of models with monotone chromatic interactions. These are obtained by projecting the ERGM sufficient statistic onto finitely many scalar graphons and considering positive linear combinations of homomorphism densities of these scalar graphons. By a generalized H\"older bound, the resulting variational problem under these assumptions has only constant maximizers, giving another replica-symmetric class (see Section~\ref{sec:monochromaticTW}). 

\item The third consists of negative Sidorenko interactions. In this case, Sidorenko-type lower bounds for the projected homomorphism densities yield upper bounds on the ERGM variational problem when the corresponding coefficients are negative. This gives replica symmetry for a class of interactions with negative parameters (see Section~\ref{sec:negativereplicasymmetry}). 

\item The fourth consists of models whose sufficient statistics are non-negative linear combinations of rainbow-common colored graphs. Rainbow commonness implies that the relevant homomorphism densities are maximized by the uniform $k$-color graphon, and the entropy term selects the same constant graphon. This yields a simple sufficient condition for replica symmetry (see Section~\ref{sec:rc}). 

\end{itemize}

In each of the above cases, the main step is to establish an upper-bound on the variational problem \eqref{eq:varformula} of the following form:
\begin{equation}
\label{eq:repsym}
\left(T(\bm W) - \frac{1}{2}\cI(\bm W)\right) \leq \sup_{\bm \mu  \in \Delta_k} P(\bm \mu ) , 
\end{equation}
for all $\bm W \in \cW_k$ and for some function $P:\Delta_k \to \R$. This upper bound is specifically chosen so that it is attained when $W \equiv \bm \mu ^*$, where $\bm \mu ^*$ maximizes $P$. The second step of the argument is to show that the inequality above is strict when $W$ is non-constant.

\subsubsection{Colored Star Models}
\label{sec:colorstar}

In the classical uncolored setting, \cite[Theorem~6.4]{ChatterjeeDiaconisExponential2013}
shows that star models, whose sufficient statistic $T$ is a linear combination
of homomorphism densities of stars, are always replica symmetric. In
this section, we establish an analogous result in the colored setting. To this end, for any integer $s \geq 1$, denote by $K_{1, s}$ the star-graph with $s$ leaves. Given a vector $\bm s = (s_1, s_2, \ldots, s_k)$ of non-negative integers with $\sum_{a=1}^k s_a := |\bm s |$, denote by $\bm K_{1, \bm s}$ a $k$-colored $|\bm s|$-star in which exactly $s_a$ edges are assigned the color $a \in [k]$. The precise assignment of colors to edges is irrelevant for the density. In fact, for any assignment $\sigma$ of colors to the edges of $K_{1, s}$ realizing the specified multiplicities and $\bm W = (W_1, W_2, \dots, W_k) \in \mathcal W_k$ the homomorphism density can be expressed as: 
\begin{align}\label{eq:t_star_general_M}
t(\bm K_{1, \bm s}, \bm W)
&:=\int_{[0,1]^{|\bm s|+1}} \prod_{\ell=2}^{|\bm s|+1} W_{\sigma((1, \ell))}(x_1,x_\ell)\,\mathrm{d} x_1\cdots \mathrm{d} x_{|\bm s|+1} =\int_0^1 \prod_{a=1}^k d_{W_a}(x)^{s_a}\,\mathrm{d} x, 
\end{align} 
where $d_{W_a}(x):=\int_0^1 W_a(x,y)\,\mathrm{d}y$ is the degree function of the graphon $W_a$, for $a \in [k]$. We also denote $\bm d_{\bm W}(x) := (d_{W_1}(x),\dots, d_{W_k}(x))$. Note that $\sum_{a=1}^k  d_{W_a}(x)=1$ for almost every $x$, which means that $\bm d_{\bm W}(x) \in\Delta_k$. Now, consider a $k$-color ERGM model as in  \eqref{eq:TW} with sufficient statistic of the form: 
\begin{equation}
\label{eq:T_general_star}
T(\bm W):=\sum_{\bm s \in\mathscr{S} } \beta_{\bm s} \,t( \bm K_{1, \bm s}, \bm W).
\end{equation}
where $\mathscr{S}$ is a finite collection of vectors with non-negative entries and $(\beta_{\bm s})_{\bm s \in \mathscr{S}}$ be a collection of real parameters. Note, from \eqref{eq:t_star_general_M}, that $T$ depends on $\bm W$ only through $\bm d_{\bm W}$:
\begin{equation}
\label{eq:P_general_star}
T(\bm W)
=\int_0^1 P\left(\bm d_{\bm W} (x) \right)\, \mathrm{d}x, 
\qquad \text{ where } 
P(\bm \mu ):=\sum_{\bm s \in\mathscr S} \beta_{\bm s}\prod_{a=1}^k \mu_a^{s_a},
\end{equation}
for $\bm \mu \in\Delta_k$.  Consequently, in this case, the model \eqref{eq:TW} depends only on the vector of colored degrees. The following result shows that this model is in the replica symmetric regime for all parameter values.

\begin{theorem}\label{thm:colordegree}
 Consider the $k$-colored ERGM model \eqref{eq:TW} with $T$ as in \eqref{eq:T_general_star}. Then, for any collection of real parameters $(\beta_{\bm s})_{\bm s \in \mathscr{S}}$, 
 \begin{equation}
\label{eq:general_star_variational_reduction}
\lim_{n \rightarrow \infty }\psi_n 
=
\sup_{\bm \mu \in\Delta_k}\left\{P(\bm \mu )-\frac12I_k(\bm \mu )\right\},
\end{equation} 
where $I_k(\bm \mu ):=\sum_{a=1}^k \mu_a\log \mu_a$.  Moreover, every maximizer of the variational problem in Theorem \ref{thm:variational_formula} is a constant $k$-color probability graphon of the form $\bm W = \bm \mu ^*$, where $\bm \mu ^* \in \Delta_k$ is a maximizer of  \eqref{eq:general_star_variational_reduction}. 
\end{theorem}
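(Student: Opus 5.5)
The plan is to follow the Chatterjee--Diaconis star argument and use Jensen's inequality on the entropy, row by row. By Theorem~\ref{thm:variational_formula}, $\lim_n\psi_n=\sup_{\bm W}\{T(\bm W)-\frac12\mathcal I(\bm W)\}$. Constant graphons $\bm W\equiv\bm\mu$ give $T-\frac12\mathcal I=P(\bm\mu)-\frac12 I_k(\bm\mu)$, because then $\bm d_{\bm W}\equiv\bm\mu$. Hence the right-hand side of \eqref{eq:general_star_variational_reduction} is a lower bound for the limit, and only the reverse inequality \eqref{eq:repsym} needs proof.

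For the upper bound, fix $\bm W\in\mathcal W_k$. The map $I_k$ is convex on $\Delta_k$, since it is a sum of the convex functions $t\mapsto t\log t$. For a.e.\ $x$ we have $\bm d_{\bm W}(x)=\int_0^1\bm W(x,y)\,\mathrm dy$, so Jensen's inequality in $y$ gives
\[
\int_0^1 I_k(\bm W(x,y))\,\mathrm dy\ \ge\ I_k(\bm d_{\bm W}(x)).
\]
Integrating in $x$ yields $\mathcal I(\bm W)\ge\int_0^1 I_k(\bm d_{\bm W}(x))\,\mathrm dx$. Combined with \eqref{eq:P_general_star}, this gives
\[
T(\bm W)-\tfrac12\mathcal I(\bm W)\le\int_0^1\Bigl(P(\bm d_{\bm W}(x))-\tfrac12 I_k(\bm d_{\bm W}(x))\Bigr)\mathrm dx\le\sup_{\bm\mu\in\Delta_k}\Bigl\{P(\bm\mu)-\tfrac12I_k(\bm\mu)\Bigr\}.
\]
This proves the formula. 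The supremum is attained because $P-\frac12I_k$ is continuous on the compact simplex.

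For the structure of maximizers, let $\bm W$ attain the supremum, and call the common value $M$. Both inequalities above must then be equalities. Equality in the second one forces $\bm d_{\bm W}(x)$ to be a maximizer of $P-\frac12 I_k$ for a.e.\ $x$. Equality in the first forces equality in Jensen for a.e.\ $x$. Since $t\log t$ is strictly convex, this means $\bm W(x,\cdot)$ is a.e.\ constant in $y$, namely $\bm W(x,y)=\bm d_{\bm W}(x)$ for a.e.\ $(x,y)$.

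By symmetry, $\bm W(x,y)=\bm W(y,x)=\bm d_{\bm W}(y)$ a.e., so $\bm d_{\bm W}(x)=\bm d_{\bm W}(y)$ for a.e.\ pair $(x,y)$. A Fubini argument then shows $\bm d_{\bm W}$ is a.e.\ equal to a constant $\bm\mu^*$, which is a maximizer of the reduced problem. Hence $\bm W\equiv\bm\mu^*$ a.e. This statement passes to equivalence classes, because any representative of a maximizing class is itself a maximizer.

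The main obstacle is this equality analysis. Strict convexity has to be handled coordinatewise, and boundary points with $\mu_a=0$ need care. This is fine because $t\log t$ is strictly convex on $[0,1]$ including $0$, so equality in Jensen forces each coordinate $W_a(x,\cdot)$ to be a.e.\ constant. The step from row-constancy plus symmetry to global constancy also needs a careful measure-theoretic argument.
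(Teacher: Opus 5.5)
Your proposal is correct and follows essentially the same route as the paper. The lower bound comes from constant graphons, and the upper bound from row-wise Jensen on the convex function $I_k$ combined with $T(\bm W)=\int_0^1 P(\bm d_{\bm W}(x))\,\mathrm dx$; the equality analysis then uses strict convexity, together with the symmetry relation $\bm d_{\bm W}(x)=\bm d_{\bm W}(y)$, to force $\bm W$ to be a.e.\ constant at a maximizer of $P-\frac12 I_k$. Your remarks on coordinatewise strict convexity of $t\log t$ up to the boundary and on the Fubini step only spell out details that the paper leaves implicit.
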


The proof of Theorem~\ref{thm:colordegree} is given in Section~\ref{sec:starpf}, which 
follows from a straightforward application of multivariate Jensen's inequality.

\begin{remark} 
To obtain the result of \cite{ChatterjeeDiaconisExponential2013} for uncolored
star models from Theorem~\ref{thm:colordegree}, take $k=2$ and suppose 
$\bm W=(W,1-W)\in\mathcal W_2$, where $W$ is a graphon. For each integer
$s\geq 1$, color all edges of the $s$-star $K_{1,s}$ with color $1$
(which corresponds to choosing the vector $\bm s=(s,0)$ in the notation above). Then
\eqref{eq:T_general_star} reduces to the usual homomorphism density
$t(K_{1,s},W)$ of the uncolored $s$-star in the graphon $W$. Consequently,
\eqref{eq:P_general_star} becomes
$$
T(W)=\sum_{s=1}^r \beta_s t(K_{1,s},W) , 
$$ 
for $\beta_1,\ldots,\beta_r\in\mathbb R$. Theorem~\ref{thm:colordegree} then implies that the corresponding uncolored
ERGM with sufficient statistic $T(\bm W)$ is in the replica-symmetric regime for
all $\beta_1,\ldots,\beta_r\in\mathbb R$, which recovers the result in 
\cite[Theorem~6.4]{ChatterjeeDiaconisExponential2013}.
\end{remark}

\subsubsection{Monotone Interactions}
\label{sec:monochromaticTW}

For classical uncolored ERGMs, \cite{ChatterjeeDiaconisExponential2013} shows that models with sufficient statistics of the form \eqref{eq:sufficientTW} exhibit replica symmetry whenever the parameters $\beta_1,\beta_2,\ldots,\beta_r$ are non-negative. A result of this type, however, does not hold for general colored models. For example, in Section~\ref{sec:rt}, we show that for the rainbow triangle ERGM (recall  \eqref{eq:rbn} and \eqref{eq:rainbowT}), there exists a critical value $\beta_*$ such that, for every $\beta_4>\beta_*$, the model exhibits symmetry breaking, that is, constant $k$-colored probability graphons no longer optimize the variational problem \eqref{eq:varformula}.  In this section, we isolate a class of interactions for which such competition is absent, and for which the variational problem admits replica-symmetric constant maximizers.  
The key structural assumption is that the sufficient statistics can be expressed
as non-negative linear combinations of uncolored homomorphism densities. To begin with, given 
$\bm\alpha=(\alpha_1,\ldots,\alpha_k)\in \mathbb R_{ \geq 0 }^k$ and
$\bm W=(W_1,\ldots,W_k)\in\mathcal W_k$, define the projection of $\bm W$ in the direction $\bm\alpha$ as the function  $\langle \bm\alpha,\bm W\rangle:[0,1]^2\to\mathbb R_{\geq 0}$, where  
$$
\langle \bm\alpha,\bm W\rangle(x,y)
=
\sum_{a=1}^k \alpha_a W_a(x,y) , 
$$ 
for $(x, y) \in [0, 1]^2$. It is now convenient to define the following preorder: For
$\bm\alpha\in\mathbb R_{\geq0}^k$ and
$\bm W,\bm W'\in\mathcal W_k$, write
$$
\bm W'\succeq_{\bm\alpha}\bm W
\quad\Longleftrightarrow\quad
\langle \bm\alpha,\bm W'\rangle(x,y)
\ge
\langle \bm\alpha,\bm W\rangle(x,y) , 
\quad\text{for almost every }(x,y)\in[0,1]^2.
$$
For an (uncolored) finite simple graph $H$, extending the definition of the homomorphism density in \eqref{eq:tHW} verbatim to general bounded functions (not necessarily positive), one has 
\begin{align}\label{eq:tHWprojection}
t(H, \langle \bm \alpha, \bm W \rangle ) & =\int_{[0,1]^{V(H)}}\prod_{ (u, v) \in E(H)} \langle \bm \alpha, \bm W \rangle (x_u, x_v) \, \prod_{v\in V(H)} \mathrm{d} x_v \nonumber  \\ 
& =\sum_{\sigma:E(H)\to[k]} \left( \prod_{e\in E(H)}\alpha_{\sigma(e)} \right)\, t((H, \sigma), \bm W),
\end{align}
where $(H, \sigma)$ is the graph $H$ with the edges $k$-colored by $\sigma: E(H) \rightarrow [k]$. Note that the second equality in \eqref{eq:tHWprojection} follows by  expanding $\langle \bm \alpha, \bm W \rangle=\sum_{a=1}^k \alpha_a W_a$ for each edge. From the above representation it follows that $W'\succeq_{\bm \alpha} W$ implies $t(H, \langle \bm \alpha, \bm W' \rangle)\ge t(H, \langle \bm \alpha, \bm W \rangle)$ for every finite graph $H$, that is, the homomorphism densities are monotone under the pre-order $\succeq_{\bm \alpha}$. This motivates the following definition:

\begin{definition} 
\label{def:MonotoneFamHomDens} 
A function $T$ of the form \eqref{eq:sufficientTW} is said to be a \emph{monotone interaction}, if there exist uncolored graphs $H_1, \dots, H_R$ (each with at least one edge), non-negative vectors $\bm \alpha_1, \bm \alpha_2, \ldots, \bm \alpha_R \in \mathbb R_{\geq 0 }^k$, coefficients $\gamma_1, \gamma_2, \ldots, \gamma_R \in \R_{\geq 0}$, and another vector $ \bm c \in \mathbb R^k$, such that
\begin{equation}\label{eq:T_gamma_projection}
T(\bm W) = t(K_2, \langle \bm c , \bm W \rangle ) + \sum_{s=1}^R \gamma_s  t(H_s, \langle \bm \alpha_s , \bm W \rangle ) . 
\end{equation}
\end{definition}

Recall that, in the classical uncolored ERGM setting, one often considers
sufficient statistics of the form
\begin{align}\label{eq:TWgraph}
T( W )= \beta_0 t(K_2,W) + \sum_{s=1}^r \beta_s\, t(H_s,W), 
\end{align}
where $H_1, \ldots,H_r$ are finite simple graphs and
$\beta_1,\beta_2,\ldots,\beta_r\in\mathbb R$. Note that the homomorphism density $t(H, W)$, for any graph $H$, is trivially monotone with respect to the usual pointwise partial order on graphons: that is, if $W'(x,y)\geq W(x,y)$ for almost every $(x,y)\in[0,1]^2$,  then $t(H, W')\geq t(H, W)$. In the colored setting, there is no canonical single pointwise order, since the coordinates of a $k$-colored probability graphon are constrained to lie in the simplex, and hence, increasing the density of one color necessarily decreases the density of at least one other color. The purpose of
Definition~\ref{def:MonotoneFamHomDens} is to identify directions
$\bm\alpha_1,\bm\alpha_2,\ldots,\bm\alpha_R$ in the non-negative orthant of
$\mathbb R^k$ along which monotonicity can still be formulated. 
We will discuss several examples after the next
theorem, which shows that an ERGM with a monotone interaction is in the
replica-symmetric regime.

\begin{theorem}\label{thm:explicit_solvable_monotone}
 Suppose $T$ is a monotone interaction as in~\eqref{eq:T_gamma_projection}. For the $k$-colored ERGM model \eqref{eq:TW} with sufficient statistic $T$, the following holds: 
\begin{equation}\label{eq:explicit_solvable_formula}
\lim_{n\to\infty}\psi_n
=
\sup_{\bm \mu \in\Delta_k}
\left( \langle \bm c , \bm \mu \rangle + \sum_{s=1}^R \gamma_s \langle \bm \alpha_s, \bm \mu \rangle^{|E(H_s)|} - \frac12\,I_k( \bm \mu) 
\right).
\end{equation} 
Moreover, every maximizer of the variational problem in Theorem \ref{thm:variational_formula} is a constant $k$-color probability graphon of the form $\bm W = \bm \mu ^*$, where $\bm \mu ^* \in \Delta_k$ is a maximizer of  \eqref{eq:explicit_solvable_formula}.
\end{theorem}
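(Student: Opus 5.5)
The plan is to follow the Chatterjee--Diaconis argument for nonnegative uncolored ERGMs, replacing the graphon by the scalar projections $\langle \bm\alpha_s,\bm W\rangle$. By Theorem~\ref{thm:variational_formula} it suffices to show that for every $\bm W\in\mathcal W_k$,
\[
T(\bm W)-\tfrac12\mathcal I(\bm W)\le \int_{[0,1]^2}\Phi(\bm W(x,y))\,\mathrm dx\,\mathrm dy,\qquad \Phi(\bm\mu):=\langle \bm c,\bm\mu\rangle+\sum_{s=1}^R\gamma_s\langle\bm\alpha_s,\bm\mu\rangle^{|E(H_s)|}-\tfrac12 I_k(\bm\mu).
\]
The right-hand side is trivially at most $\sup_{\Delta_k}\Phi$, and the bound is attained by constant graphons. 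This gives \eqref{eq:explicit_solvable_formula} in the form \eqref{eq:repsym}, with $P=\Phi$.

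The linear term is exact, since $t(K_2,\langle\bm c,\bm W\rangle)=\int\langle\bm c,\bm W\rangle$. The entropy term is also pointwise by \eqref{eq:IW}. So the key inequality is, for each $s$ and with $f_s:=\langle\bm\alpha_s,\bm W\rangle\ge 0$ bounded and $e_s:=|E(H_s)|\ge 1$,
\[
t(H_s,f_s)\le \int_{[0,1]^2} f_s(x,y)^{e_s}\,\mathrm dx\,\mathrm dy .
\]
This is the generalized Hölder inequality (Finner's inequality) for homomorphism densities: $t(H,f)\le\prod_{e\in E(H)}\|f\|_{L^{e(H)}}=\|f\|_{e(H)}^{e(H)}$. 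It holds for any nonnegative symmetric bounded $f$, because each edge variable pair appears in the product and the edge-indexed functions depend on two coordinates each. I will cite it from \cite[Ch.~2]{LovaszGraphLimits} / \cite{ChatterjeeDiaconisExponential2013} (their proof of Theorem 6.1, applied to the scalar kernel $f_s$, works verbatim since it never uses $f\le1$). Multiplying by $\gamma_s\ge0$ and summing gives the displayed upper bound.

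For the characterization of maximizers, suppose $\bm W$ attains the supremum. Then $\bm W(x,y)\in\operatorname{argmax}\Phi$ for a.e.\ $(x,y)$, and equality must hold in the Hölder step for every $s$ with $\gamma_s>0$. I expect this to be the main obstacle. Two things must be shown. First, equality in $t(H,f)\le\|f\|_e^e$ forces $f$ to be a.e.\ constant on the relevant part. Second, this together with the pointwise argmax constraint forces $\bm W$ itself to be constant, not merely its projections. The difficulty is that $\Phi$ might have several maximizers $\bm\mu^*$, and a graphon mixing them is not excluded by the pointwise condition alone.

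The plan for the first point is to use the equality case of Hölder on the edge incident to a vertex: equality in Finner forces $f_s(x,y)=g(x)g(y)$-type factorization, and then equality in $\int f^e\ge(\int f)^{\dots}$ forces constancy. Alternatively, I will use the $e_s=1$ trivial case and the standard argument of \cite[Theorem 4.1]{ChatterjeeDiaconisExponential2013} that equality forces $f_s$ constant. For the second point, I will argue as follows. Since $I_k$ is strictly convex and the other parts of $\Phi$ are determined by the projections $\langle\bm c,\bm\mu\rangle$ and $\langle\bm\alpha_s,\bm\mu\rangle$, consider the averaged $\bar{\bm\mu}=\int\bm W$. The projection terms of $\bar{\bm\mu}$ are the same as those of $\bm W$: the constant ones and the linear one. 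Then strict convexity of $I_k$ gives $\Phi(\bar{\bm\mu})>\int\Phi(\bm W)$ unless $\bm W$ is a.e.\ constant. This yields the contradiction and shows every maximizer is a constant $\bm\mu^*$ maximizing \eqref{eq:explicit_solvable_formula}.
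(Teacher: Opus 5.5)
Your route is the paper's: H\"older on $[0,1]^{V(H_s)}$ with all exponents equal to $e_s=|E(H_s)|$ gives $t(H_s,h_s)\le\int h_s^{e_s}$ for $h_s=\langle\bm\alpha_s,\bm W\rangle$. Plain H\"older suffices here; Finner's inequality is not needed. This yields $\mathcal F(\bm W)\le\int\Phi(\bm W(x,y))\,\mathrm dx\,\mathrm dy\le\sup_{\Delta_k}\Phi$, with equality at constants.

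\textbf{The equality step needs fixing.} Your chain is a single H\"older step with equal exponents, so there is no factorization $h_s(x,y)=g(x)g(y)$ to extract. There is also no further inequality of the form $\int h^{e}\ge(\int h)^{e}$ in which to have equality. The correct argument is short. If $\|h_s\|_{e_s}=0$, then $h_s=0$ a.e. and there is nothing to prove. Otherwise, equality in H\"older forces the $e_s$ functions $(x_v)_{v}\mapsto h_s(x_u,x_v)$, $(u,v)\in E(H_s)$, which all have the same $L^{e_s}$ norm, to coincide a.e. on $[0,1]^{V(H_s)}$. Two edges sharing a vertex give $h_s(x_1,x_2)=h_s(x_1,x_3)$ a.e., so $h_s(x,y)=g(x)$, and symmetry makes $g$ constant. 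Two disjoint edges give constancy directly. This needs $e_s\ge2$. Terms with $\gamma_s=0$ or $e_s=1$ force nothing, but they are linear in $\bm W$, so they cause no trouble afterwards.

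\textbf{Your final step is correct and slightly different from the paper's.} The paper notes that $\bm W(x,y)$ lies a.e. in the affine slice $S=\{\bm\mu\in\Delta_k:\langle\bm\alpha_s,\bm\mu\rangle=q_s \text{ for all } s\}$, on which $\Phi$ is strictly concave. It then uses the a.e. argmax property to pin $\bm W(x,y)$ to the unique maximizer of $\Phi|_S$, and invokes Theorem~\ref{thm:colordegree} separately when no nonlinear term survives. You instead compare $\int\Phi(\bm W)=\sup\Phi$ with $\Phi(\bar{\bm\mu})$ for $\bar{\bm\mu}=\int\bm W$ and apply strict Jensen to $I_k$. This needs only the integrated equality, not the pointwise argmax, and it handles the case with no surviving nonlinear terms at no extra cost. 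It is the same device the paper itself uses for negative Sidorenko interactions. In both versions, your worry that $\Phi$ might have several maximizers is resolved because the nonlinear part of $\Phi$ is frozen once the projections are constant.
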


The proof of Theorem~\ref{thm:explicit_solvable_monotone} is given in
Section~\ref{sec:monotonepf}. The key ingredient is a generalized H\"older's inequality
(see Lemma~\ref{lem:holder_equality}), which allows us to bound the relevant homomorphism
densities in terms of suitable norms of the underlying graphons. The monotonicity
of the projection directions then allows these bounds to
be combined into an upper bound for the variational problem in
\eqref{eq:varformula} of the form given in
\eqref{eq:repsym}.

\begin{remark} 
Note that representation in \eqref{eq:T_gamma_projection} may not be unique. For instance, to ensure the projection acts as a standard probability graphon bounded in $[0,1]$, one could define $$\tilde{\bm \alpha}_s = \frac{\bm \alpha_s}{ \max_{a \in [k]}  \alpha_{s ,a} } , $$ and extract the scaling factor $(\max_{a \in [k]}  \alpha_{s ,a})^{|E(H_s)|}$ to multiply the coefficient outside the homomorphism density. Further, as $\gamma_s \ge 0$ for $s \in [R]$, one could absorb $\gamma_s$ into the projection vector $\bm \alpha_s$ by taking $$\tilde{\bm \alpha}_s = \gamma_s^{\frac{1}{|E(H_s)|}} \bm \alpha_s.$$
For this reason, we will often omit these coefficients from the representation
\eqref{eq:T_gamma_projection}. In such cases, they are understood to have been
absorbed into the projection vectors
$\bm\alpha_1,\bm\alpha_2,\ldots,\bm\alpha_R$ as described above.
\end{remark}

We now present several examples of ERGMs with monotone interactions. We begin
with monochromatic patterns, which leads to classical uncolored ERGMs with interactions of the form \eqref{eq:TWgraph}.

\begin{example}[Monochromatic patterns]
Fix $k\geq 2$, and suppose that $H_1,H_2,\ldots,H_r$ is a collection of $k$-colored graphs, each of which is monochromatic. For $s\in[r]$, let $\kappa(s)$ denote the unique color assigned to the edges of $H_s$. Then any function of the form 
\begin{align}\label{eq:color1}
T(\bm W) = t\left(K_2,\langle \bm c,\bm W\rangle\right) + \sum_{s=1}^r \beta_s t(H_s,\bm W), 
\end{align} 
where $\beta_1,\beta_2,\ldots,\beta_r\in\mathbb R_{\geq 0}$, $\bm W\in\mathcal W_k$, and $\bm c = (c_1, c_2, \ldots, c_k)\in\mathbb R^k$, can be expressed as a monotone interaction. Indeed, choosing $\bm\alpha_s=\bm e_{\kappa(s)}$, where $\bm e_a$ denotes the $a$-th standard basis vector of $\mathbb R^k$, gives $$ t(H_s,\bm W) = t\left(H_s,W_{\kappa(s)}\right) = t\left(H_s,\langle \bm\alpha_s,\bm W\rangle\right), $$ where $H_s$ is interpreted as a simple graph in the second and third terms of the equality, which yields an expression of the form \eqref{eq:T_gamma_projection}. In particular, if $k=2$ and $H_1,H_2,\ldots,H_r$ are all monochromatic graphs of color $1$, then, under the identification $\bm W=(W,1-W)\in\mathcal W_2$, where $W$ is an ordinary graphon, \eqref{eq:color1} reduces to (ignoring a constant term):  
$$ T(\bm W)= \beta_0 t(K_2, W) + \sum_{s=1}^r \beta_s t(H_s,W), $$ 
for some $\beta_0 \in \R$. 
Theorem~\ref{thm:explicit_solvable_monotone} shows that such an ERGM lies in the
replica symmetric regime whenever $\beta_1,\beta_2,\ldots,\beta_r$ are non-negative, which is precisely the result of \cite[Theorem~4.1]{ChatterjeeDiaconisExponential2013}. 
\end{example}

\begin{example}[Monotone colored triangles]  
Let $K_3$ be an uncolored triangle with vertices labeled $\{1, 2, 3\}$. A single projection of the homomorphism density of $K_3$ in the direction $\bm \alpha =  (\alpha_1, \alpha_2, \ldots, \alpha_k) \in \mathbb{R}_{\geq 0}^k$ generates a weighted combination of colored triangles as follows: For $\bm W = (W_1, W_2, \ldots, W_k) \in \mathcal W_k$, 
\begin{align}\label{eq:alphamonotone}
t(K_3, \langle \bm \alpha , \bm W \rangle) = \sum_{a,b,c \in [k]} \alpha_a \alpha_b \alpha_c  \, t( \bm K_3^{(a,b,c)}, \bm W) , 
\end{align}
where $\bm K_3^{(a,b,c)}$ is a triangle where edges $(1, 2), (2, 3), (3, 1)$ are assigned colors $a, b, c$, respectively. Hence, any $k$-colored ERGM with sufficient statistic of the form \eqref{eq:alphamonotone} will be in the replica symmetric phase. Conversely, one may ask for which choices of parameters
$(\beta_{abc})_{a,b,c\in[k]}$, that we will assume without loss of generality to be symmetric\footnote{If the tensor $(\beta_{abc})_{a,b,c\in[k]}$ is not symmetric, then  take $\frac{1}{3!} \sum_{\pi \in S_3} \beta_{a_{\pi(1)}a_{\pi(2)}a_{\pi(3)}}$, where $(a_1, a_2, a_3)  = (a, b, c)$ and $S_3$ denotes the set of permutations of $[3]$, and observe that $t (\bm K_3^{(a,b,c)},\bm W )$ is invariant under the permutation of $a, b, c$.}, a $k$-color ERGM sufficient statistic of the form
\begin{align}\label{eq:TWcolortriangle}
T(\bm W) =  \sum_{a,b,c\in [k]}
\beta_{abc}\, t (\bm K_3^{(a,b,c)},\bm W ) 
\end{align}
can be represented as a monotone interaction? 

\begin{itemize}

\item If we consider a projection along a single direction $\bm \alpha \in \R_{\geq0}^k$ as in \eqref{eq:alphamonotone}, then a representation of the form \eqref{eq:TWcolortriangle} is  possible, for example, when the coefficient symmetric tensor
$\bm B=(\beta_{abc})_{a,b,c\in[k]}$ admits a rank-one representation
\begin{align}\label{eq:Babc}
\bm B = \bm\alpha\otimes\bm\alpha\otimes\bm\alpha := (\alpha_a\alpha_b\alpha_c)_{a,b,c\in[k]},
\end{align}  
for some non-negative vector
$\bm\alpha=(\alpha_1,\alpha_2,\ldots,\alpha_k)\in\mathbb R_{\geq0}^k$. In Lemma \ref{lem:monotoneabc} we show that \eqref{eq:Babc} holds if and only if
\begin{align}\label{eq:positiveinteraction}
\beta_{aaa}\geq0  \quad \text{ and } \quad  \beta_{abc} = (\beta_{aaa}\beta_{bbb}\beta_{ccc})^{\frac{1}{3}}, 
\end{align}
for $a,b,c\in[k].$ In this case, the vector $\bm\alpha$ is uniquely determined by 
$\alpha_a=\beta_{aaa}^{\frac{1}{3}}$, for $a\in[k]$.  
Note that the set 
\begin{align*}
\mathcal R_\triangle^+
:=
\left\{
(\beta_{abc})_{a,b,c\in[k]}:
\beta_{abc}=\alpha_a\alpha_b\alpha_c
\text{ for some } \bm\alpha\in\mathbb R_{\geq0}^k
\right\} , 
\end{align*}
is closed under multiplication by non-negative scalars, hence it is a cone inside the space of symmetric order three tensors. Algebraically, $\mathcal R_\triangle^+$ is the non-negative part of the affine cone over the degree-three Veronese variety \cite{landsberg2012tensors}.

\item One can substantially enlarge the class of coefficient (symmetric) tensors $\bm B=(\beta_{abc})_{a,b,c\in[k]}$ for which replica symmetry holds by
considering projections along multiple non-negative directions
$\bm\alpha_1,\bm\alpha_2,\ldots,\bm\alpha_R\in\mathbb R_{\geq0}^k$, for some
$R\geq 1$. In this case, Theorem~\ref{thm:explicit_solvable_monotone} implies replica symmetry whenever
$$
T(\bm W)
=
\sum_{s=1}^R
t\left(K_3,\langle \bm\alpha_s,\bm W\rangle\right).
$$
This, in particular, applies when $\bm{B}$ belongs to the set 
$$
\mathcal R_{\Delta, R}^+ := \left\{ \bm B = (\beta_{abc})_{a,b,c\in[k]} :  \bm B=\sum_{s=1}^R \bm\alpha_s \otimes \bm\alpha_s \otimes \bm\alpha_s \text{ and } \bm\alpha_s\in\mathbb R_{\geq0}^k \right\} , $$
A symmetric non-negative tensor admitting such a decomposition is called a
\emph{completely positive tensor} (of order-three, in our setting), and the minimum possible value of $R$ is called its \emph{CP-rank} \cite{qi2014nonnegative,luo2016completely}. Entrywise non-negativity, that is, $\beta_{abc}\geq 0$, for all
$a,b,c\in[k]$, is clearly necessary for complete positivity, but it is not
sufficient in general.  In fact, deciding whether an arbitrary tensor is completely positive is generally NP-hard \cite{luo2016completely,dickinson2014complexity}. Consequently, the
completely positive cone provides a rich and structured class of coefficient
tensors for which the colored interaction admits a monotone projection
representation. 
\end{itemize}  
\label{example:triangleinteractions}  
\end{example}

\begin{remark}[Extension to general colored graphs]  
The discussion in Example \ref{example:triangleinteractions}  can be extended from triangles to an arbitrary finite graph $H$. The additional point is that different edge-colorings
$\sigma$ of $H$ may define the same decorated homomorphism density. Thus, one
should first clump together the coefficients corresponding to colorings that
are equivalent under the symmetries of $H$. The resulting coefficient data is
naturally indexed by the corresponding equivalence classes of colorings, and
need not form a symmetric tensor, since a general graph $H$ does not realize
all permutations of its edges through vertex automorphisms. For a single projection $t(H,\langle \bm \alpha,\bm W\rangle)$, for $\bm \alpha\in\mathbb R_{\geq0}^k$, 
the induced coefficients are nevertheless generated by a symmetric rank-one
tensor, and a criterion analogous to \eqref{eq:positiveinteraction} can be obtained by asking
whether the clumped coefficient data admits such a rank-one representation.
For several projection directions, the corresponding characterization is more
involved: one must determine whether the clumped coefficients admit a lift that is a finite sum of symmetric rank-one tensors generated by nonnegative vectors, that is, a completely positive symmetric tensor compatible with the symmetries of  $H$.
\end{remark}

\begin{remark}[Non-monotonicity of induced homomorphism densities]While homomorphism densities for ordinary graphons are monotone with
respect to the pointwise partial order, this generally fails for
induced homomorphism densities (recall \eqref{eq:inducedW}). For example, suppose $k=2$, 
$\bm W=(W,1-W)\in\mathcal W_2$, where $W: [0, 1]^2 \rightarrow [0, 1]$ is an ordinary graphon, and 
$\bm K_{3}^{(1, 1, 2)}$ is a triangle with two edges colored 1 and one edge colored 2. Then,
recalling \eqref{Def:homDensInd} and \eqref{eq:inducedW}, $t(\bm K_{3}^{(1, 1, 2)},\bm W) := t_{\mathrm{ind}}(K_{1,2},W)$,
where
\begin{align}\label{eq:wedgeW}
t_{\mathrm{ind}}(K_{1,2},W) = \int_{[0,1]^3} W(x,y)W(x,z) (1-W(y,z) )\, \mathrm dx\,\mathrm dy\,\mathrm dz = t(K_{1,2},W)-t(K_3,W), 
\end{align}
is the induced homomorphism density of the 2-star $K_{1, 2}$ in $W$. 
This functional is clearly not monotone in $W$ with respect to the pointwise partial
order. In particular, consider a classical uncolored ERGM with sufficient statistic
$$
T(W)=\beta_0 t(K_2,W)+\beta_1 t_{\mathrm{ind}}(K_{1,2},W).
$$
We show in Section~\ref{sec:temperaturewedge} that there exists a critical value $\beta_*$ such
that, for $\beta_1>\beta_*$, this model exhibits symmetry breaking. However, as the previous example illustrates, for multiple colors (more than 2) it is possible to weight the various colored triangles appropriately to obtain monotonicity.  
\end{remark}

\subsubsection{Negative Sidorenko interactions} 
\label{sec:negativereplicasymmetry}

A graph $H = (V(H), E(H))$ is said to have the \emph{Sidorenko property}, or to be a \emph{Sidorenko graph}, if  $t(H,W)\geq t(K_2,W)^{|E(H)|}$, for every graphon $W$. Equivalently, among all graphs with a prescribed edge density, the Erd\H{o}s--R\'enyi random graph asymptotically minimizes the homomorphism density of $H$. Sidorenko's conjecture asserts that every bipartite graph has this property \cite{sidorenko1993correlation} (see also \cite[Chapter~16]{LovaszGraphLimits}). The conjecture is known for several important classes of bipartite graphs, including trees, even cycles, complete bipartite graphs. Hatami introduced the framework of graph norms and proved Sidorenko-type inequalities for weakly norming graphs, including hypercubes \cite{hatami2010graph}. Conlon, Fox, and Sudakov \cite{conlon2010approximate} proved the conjecture for bipartite graphs having a vertex complete to the opposite part, and also established an approximate version for arbitrary bipartite graphs. For recent overviews of known cases and methods related to Sidorenko's conjecture, see the introductions of \cite{im2026sidorenko,conlon2024extremal} and the references therein.

For colored ERGMs, using the projection idea from Section \ref{sec:monochromaticTW}, one can identify linear combinations of Sidorenko graphs for which replica symmetry holds. To this end, we need the following definition: 
\begin{definition}
A function $T$ of the form \eqref{eq:sufficientTW} is said to be a \emph{negative Sidorenko interaction}, if there exist Sidorenko graphs $F_1, \dots, F_R$ (each with at least one edge), non-negative vectors  $\bm \eta_1, \bm \eta_2, \ldots, \bm \eta_R \in \mathbb R_{ \geq 0 }^k$, coefficients $\delta_1, \delta_2, \ldots, \delta_R \geq 0$, and another vector $ \bm{c}_1\in \mathbb R^k$, such that 
\begin{equation}\label{eq:T_delta_projection}
T(\bm W) = t(K_2, \langle \bm{c}_1 , \bm W \rangle ) - \sum_{s=1}^R \delta_s  t(F_s, \langle \bm \eta_s , \bm W \rangle ) .  
\end{equation}
\end{definition}

The next result shows that colored ERGMs with negative Sidorenko interactions lie in the replica-symmetric regime. The proof is given in Section~\ref{sec:negativesymmetrypf}.

\begin{theorem}\label{thm:NegativeSidorenko_RS}
Suppose $T$ is a negative Sidorenko interaction as in~\eqref{eq:T_delta_projection}. Then for the $k$-colored ERGM model \eqref{eq:TW} with sufficient statistic $T$, the following holds: 
\begin{equation}\label{eq:NegativeSidorenko_finite}
\lim_{n\to\infty}\psi_n
=
\sup_{\bm \mu \in\Delta_k}
\left( \langle  \bm{c}_1, \bm \mu \rangle  - \sum_{s=1}^R \delta_s \langle {\bm \eta_s}, \bm \mu \rangle^{|E(F_s)|} - \frac12\,I_k( \bm \mu) 
\right).
\end{equation} 
Moreover, every maximizer of the variational problem in Theorem \ref{thm:variational_formula} is a constant $k$-color probability graphon of the form $\bm W = \bm \mu ^*$, where $\bm \mu ^* \in \Delta_k$ is a maximizer of  \eqref{eq:NegativeSidorenko_finite}. 
\end{theorem}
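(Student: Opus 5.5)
Starting from Theorem~\ref{thm:variational_formula}, it suffices to show that for every $\bm W\in\mathcal W_k$,
\[
T(\bm W)-\tfrac12\mathcal I(\bm W)\le \Phi(\bm\mu_{\bm W}),
\]
where $\bm\mu_{\bm W}:=\int_{[0,1]^2}\bm W(x,y)\,\mathrm dx\,\mathrm dy\in\Delta_k$ is the average color distribution and
$\Phi(\bm\mu):=\langle \bm c_1,\bm\mu\rangle-\sum_s\delta_s\langle\bm\eta_s,\bm\mu\rangle^{|E(F_s)|}-\tfrac12 I_k(\bm\mu)$ is the objective in \eqref{eq:NegativeSidorenko_finite}. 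We also need equality to force $\bm W$ to be constant. Since constant graphons attain $\Phi(\bm\mu)$ exactly (homomorphism densities of constants factorize), this gives both the formula \eqref{eq:NegativeSidorenko_finite} and the characterization of maximizers.

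The bound splits term by term.

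\emph{Linear term.} This is exactly linear in $\bm W$:
\[
t(K_2,\langle\bm c_1,\bm W\rangle)=\int\langle\bm c_1,\bm W\rangle=\langle\bm c_1,\bm\mu_{\bm W}\rangle.
\]

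\emph{Interaction terms.} Fix $s$. If $\bm\eta_s=0$ the term vanishes. Otherwise normalize $\tilde{\bm\eta}_s=\bm\eta_s/\max_a\eta_{s,a}$, so that $U_s:=\langle\tilde{\bm\eta}_s,\bm W\rangle$ is a genuine graphon with values in $[0,1]$; by homogeneity of $t(F_s,\cdot)$ the scale factor pulls out as $(\max_a\eta_{s,a})^{|E(F_s)|}$. The Sidorenko property of $F_s$ then gives
\[
t(F_s,U_s)\ge t(K_2,U_s)^{|E(F_s)|}=\langle\tilde{\bm\eta}_s,\bm\mu_{\bm W}\rangle^{|E(F_s)|}.
\]
Since $\delta_s\ge0$, this yields $-\delta_s t(F_s,\langle\bm\eta_s,\bm W\rangle)\le-\delta_s\langle\bm\eta_s,\bm\mu_{\bm W}\rangle^{|E(F_s)|}$.

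\emph{Entropy term.} $I_k$ is convex on $\Delta_k$, so Jensen's inequality gives $\mathcal I(\bm W)\ge I_k(\bm\mu_{\bm W})$, i.e.\ $-\tfrac12\mathcal I(\bm W)\le-\tfrac12 I_k(\bm\mu_{\bm W})$.

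Adding the three pieces gives the displayed bound. Hence
\[
\sup_{\bm W}\bigl(T(\bm W)-\tfrac12\mathcal I(\bm W)\bigr)\le\sup_{\bm\mu\in\Delta_k}\Phi(\bm\mu),
\]
and the reverse inequality comes from evaluating at constants. The supremum of $\Phi$ is attained because $\Phi$ is continuous on the compact set $\Delta_k$.

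\emph{Maximizers.} Suppose $\bm W$ is a maximizer. Then
\[
\sup\Phi=T(\bm W)-\tfrac12\mathcal I(\bm W)\le\Phi(\bm\mu_{\bm W})\le\sup\Phi,
\]
so every inequality above is an equality. In particular $\bm\mu_{\bm W}$ maximizes $\Phi$, and Jensen holds with equality for the entropy term. Here I use that $I_k$ is \emph{strictly} convex on $\Delta_k$ (its Hessian on the simplex is $\mathrm{diag}(1/\mu_a)$, and $t\log t$ is strictly convex on $[0,1]$ coordinatewise). Equality in Jensen for a strictly convex function forces $\bm W(x,y)=\bm\mu_{\bm W}$ almost everywhere, so $\bm W$ is the constant graphon $\bm\mu^*=\bm\mu_{\bm W}$, a maximizer of \eqref{eq:NegativeSidorenko_finite}.

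The Sidorenko inequalities need no equality analysis, because the entropy term alone forces constancy. Since constant graphons are closed in the cut-distance quotient, the statement about maximizers transfers to $\wt{\mathcal W}_k$ as well.

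The main care point is the normalization in the Sidorenko step. The Sidorenko property is stated for $[0,1]$-valued graphons, while $\langle\bm\eta_s,\bm W\rangle$ may exceed $1$, so the rescaling and homogeneity argument must be applied before invoking it. One could alternatively note that the property extends to all bounded nonnegative kernels by scaling. Apart from this, the argument is a routine combination of the Sidorenko lower bound (valid here because $\delta_s\ge0$) with Jensen's inequality for the entropy.
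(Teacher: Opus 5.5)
Your proof is correct and follows essentially the same route as the paper: the edge term is computed exactly, each projected density is bounded below via the Sidorenko property (your explicit rescaling step is what the paper's footnote handles by scaling), Jensen's inequality is applied to the entropy, and strict convexity of $I_k$ in the equality case forces any maximizer to be constant. As in the paper, only the entropy equality case is needed, so no equality analysis of the Sidorenko inequalities is required.
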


\subsubsection{ Rainbow Common }
\label{sec:rc}

In extremal combinatorics, an uncolored graph $H$ is said to be {\it common} if, among all 2-colorings of $K_n$, the asymptotic minimum number of monochromatic copies of $H$ is attained by the uniform random coloring. In graphon language, this translates the following \cite[Chapter 16.5.4]{LovaszGraphLimits}: $H$ is common if 
$$t(H,W)+t(H,1 - W) \geq 2^{1- |E(H)|},$$ 
for every graphon $W$. Commonness has been extensively studied because of its
connections to other homomorphism-density inequalities, specifically, the Sidorenko conjecture. In particular, Sidorenko graphs (recall discussion in Section \ref{sec:negativereplicasymmetry}) are common \cite[Chapter 16]{LovaszGraphLimits}.  In the colored setting, specifically in anti-Ramsey theory, one often studies the opposite extremal problem: instead of minimizing the number of monochromatic
copies, the goal is to maximize the number of rainbow copies of a graph, that
is, copies whose edges all receive distinct colors
\cite{Silva2018AntiRamsey,baloghRainbowTrianglesThreecolored2017,erdos1972ramsey}.
This leads to the following natural analogue of commonness for rainbow graphs.

\begin{definition}[Rainbow common]
\label{defn:rc}
An uncolored graph $H$ (with at least one edge) is said to be $k$-\emph{rainbow common} (for $k\geq |E(H)|$) if \begin{equation}\label{eq:rainbow_stat}
R_{H,k}(\bm{W}) := \sum_{\substack{\sigma : E(H) \to [k] \\ \text{injective}}} t((H,\sigma), \bm{W}),
\end{equation} where the sum on the right-hand side of \eqref{eq:rainbow_stat} is over the injective maps $\sigma:E(H)\rightarrow [k]$ (i.e.\ the colorings $\sigma\in \mathcal C_k(H)$ that are rainbow) is maximized, over all $k$-colored probability graphons, at the uniform distribution on the simplex. In other words,
\begin{align*}
 \left( \frac{1}{k}, \frac{1}{k}, \ldots, \frac{1}{k} \right)\in \argmax_{\bm W}R_{H,k}(\bm{W}) 
\end{align*}
and, therefore, 
\begin{align}\label{eq:raimbCommMax}
\sup_{\bm W}R_{H,k}(\bm{W}) =\frac{k! }{(k-|E(H)|)!k^{|E(H)|}}  . 
\end{align}
Otherwise, $H$ is said to be $k$-{\it rainbow uncommon}. We say that $H$ is {\it rainbow common} (respectively, {\it rainbow uncommon}) if it is $k$-rainbow common (respectively, $k$-rainbow uncommon), for every $k \geq |E(H)|$.\footnote{Note that $H$ can be $r$-rainbow common for some $r$ and $s$-rainbow uncommon for some $s \ne r$,
in which case it is neither rainbow common nor rainbow uncommon. However, no such example of $H$ is known. } 
\end{definition}

A classical result of \citet{erdos1972ramsey} shows that
triangles are 3-rainbow uncommon, that is, there exists a $3$-coloring of $K_n$
with asymptotically larger rainbow triangle density than that obtained from the
uniform random $3$-coloring. The problem of determining the maximum possible
rainbow triangle density was resolved later by \cite{baloghRainbowTrianglesThreecolored2017}, using techniques from flag algebra. We will return to this result in Section~\ref{sec:rt}. On the positive side, \cite{Silva2018AntiRamsey} showed that disjoint unions of stars are rainbow common and conjectured that paths on $4$ vertices are rainbow common. More
recently, \cite{sun2024rainbowcommongraphsforests} showed that if $ H$
contains a cycle, then it is rainbow uncommon. In particular, any rainbow common
graph must be a forest. A recent paper of \citet{Batetal24} conjectures that
\emph{all} forests are rainbow common, which, if true, together with the result of
\cite{sun2024rainbowcommongraphsforests}, would give a complete
characterization of rainbow common graphs.

For colored ERGMs, rainbow commonness provides a simple sufficient condition for replica symmetry. Specifically, suppose we have a $k$-colored ERGM with sufficient statistic of the form \eqref{eq:sufficientTW} that can be rewritten as \begin{equation}\label{Eq:statRainbowCommonSum}
    T(\bm{W})=\sum^r_{s=1}\beta_sR_{H_s,k}(\bm{W}),
\end{equation}where  $H_1,H_2,\ldots,H_r$ are all $k$-rainbow common and $R_{H_s,k}$ is defined as in \eqref{eq:rainbow_stat} and $\beta_s$ are scalars. Then, for $\beta_1,\beta_2,\ldots,\beta_r\geq 0$, 
\begin{align}\label{eq:colorHW} T(\bm W)-\frac12\mathcal I(\bm W) \leq \sum_{s=1}^r \beta_s\frac{k!}{(k-|E( H_s)|)!k^{|E( H_s)|}} + \frac12\log k .  
\end{align} 
This follows from the rainbow commonness bound 
$R_{H,k}(\bm{W})\leq \frac{k!}{(k-|E( H)|)!k^{|E( H)|}}$ (recall \eqref{eq:raimbCommMax}), together with the entropy bound $\mathcal I(\bm W)\geq \log\left(\frac{1}{k}\right)$. Moreover, equality in \eqref{eq:colorHW} is attained if and only if $\bm W$ is the uniform $k$-color probability graphon $\bm{U}^{(k)}$. Consequently, colored ERGMs whose sufficient statistics are non-negative linear combinations of rainbow-common graphs lie in the replica-symmetric regime,  with the uniform color distribution as a maximizer of the variational problem. This is summarized in the following result:

\begin{proposition}
Consider the $k$-colored ERGM model \eqref{eq:TW} with sufficient statistic of the form \eqref{Eq:statRainbowCommonSum}, where $ H_1, H_2,\ldots, H_r$ are $k$-rainbow common and $\beta_1,\beta_2,\ldots,\beta_r\geq 0$. Then 
$$\lim_{n \rightarrow \infty} \psi_n =\sum_{s=1}^r \beta_s\frac{k!}{(k-|E( H_s)|)!k^{|E( H_s)|}} + \frac12\log k . $$ 
Moreover, the uniform $k$-color probability graphon $\bm{U}^{(k)} = ( \frac{1}{k}, \frac{1}{k}, \ldots, \frac{1}{k} ) $ is the unique maximizer of the variational problem in Theorem \ref{thm:variational_formula}.   
\end{proposition}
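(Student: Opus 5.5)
The plan is to combine Theorem~\ref{thm:variational_formula} with two pointwise upper bounds, one on the energy and one on the entropy, and then to identify when equality holds.

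\emph{Energy bound.} Fix $\bm W\in\mathcal W_k$. Since each $H_s$ is $k$-rainbow common, \eqref{eq:raimbCommMax} gives
\[
R_{H_s,k}(\bm W)\le \frac{k!}{(k-|E(H_s)|)!\,k^{|E(H_s)|}} .
\]
The coefficients satisfy $\beta_s\ge 0$, so multiplying each bound by $\beta_s$ and summing yields
\[
T(\bm W)\le \sum_{s=1}^r \beta_s\frac{k!}{(k-|E(H_s)|)!\,k^{|E(H_s)|}} .
\]

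\emph{Entropy bound.} For every $\bm\mu\in\Delta_k$ we have $I_k(\bm\mu)\ge -\log k$. This follows from Jensen's inequality, or equivalently from $D_k(\bm\mu\|\bm U^{(k)})=I_k(\bm\mu)+\log k\ge 0$. Equality holds if and only if $\bm\mu=(1/k,\ldots,1/k)$. Integrating over $[0,1]^2$ gives $-\tfrac12\mathcal I(\bm W)\le \tfrac12\log k$.

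Adding the two bounds gives \eqref{eq:colorHW} for every $\bm W$. Both bounds are attained simultaneously at $\bm U^{(k)}$. The energy bound is attained there by the definition of rainbow commonness, since the uniform graphon lies in the argmax. The entropy bound is attained there trivially. Hence the supremum in \eqref{eq:varformula} equals the right-hand side of \eqref{eq:colorHW}, and Theorem~\ref{thm:variational_formula} identifies this value as $\lim_{n\to\infty}\psi_n$.

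\emph{Uniqueness.} Suppose $\bm W$ is a maximizer. Since both bounds were added, each must hold with equality. In particular $\mathcal I(\bm W)=-\log k$. Because $D_k(\bm W(x,y)\|\bm U^{(k)})\ge 0$ pointwise and its integral is $\mathcal I(\bm W)+\log k=0$, the divergence vanishes almost everywhere. So $\bm W(x,y)=(1/k,\ldots,1/k)$ for a.e.\ $(x,y)$, i.e.\ $\bm W=\bm U^{(k)}$ as an element of $\mathcal W_k$. Its class in $\wt{\mathcal W}_k$ is then unique.

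The main subtlety is that rainbow commonness alone does not make $\bm U^{(k)}$ the unique energy maximizer; other graphons could also attain the bound on $\sum_s\beta_s R_{H_s,k}$. This does not affect the argument. The strict concavity of the entropy term alone forces uniqueness, so no equality analysis of the energy bound is needed. The degenerate case where all $\beta_s=0$ is covered by the same reasoning.
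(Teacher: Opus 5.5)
Your proposal is correct and matches the paper's argument: rainbow commonness bounds each $R_{H_s,k}$, and $\beta_s\ge 0$ turns this into a bound on $T$; adding the entropy bound $\mathcal I(\bm W)\ge -\log k$ gives \eqref{eq:colorHW}, both bounds are attained at $\bm U^{(k)}$, and uniqueness comes from the strict entropy inequality alone. You also justify the equality case more carefully than the paper, which simply asserts it, by showing that $D_k(\bm W(x,y)\,\|\,\bm U^{(k)})$ must vanish almost everywhere.
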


\subsection{Euler--Lagrange Equations and High-Temperature Uniqueness }
\label{sec:euler_lagrange}

In the previous sections, we derived several conditions (involving both the
colored graphs appearing in the sufficient statistics and the choices of
parameters) under which colored ERGMs exhibit replica symmetry. For general
sufficient statistics and general parameter values, the variational problem is
notoriously difficult. Nevertheless, one can still obtain information about the
structure of the optimizers by deriving the Euler--Lagrange equations. To this
end, consider the $k$-colored ERGM \eqref{eq:TW} with $T$ as in
\eqref{eq:sufficientTW}, where
$\beta_1,\ldots,\beta_r\in\mathbb R$ and
$\bm H_1,\ldots,\bm H_r$ are finite simple $k$-colored graphs. 
Recall that for each $s \in [r]$, $\bm H_s =  (H_s, \sigma_s)$, where $\sigma_s$ is a $k$-coloring of the $H_s$. For each $s \in [r]$, $a\in[k]$, and $\bm W = (W_1, W_2, \ldots, W_k) \in \mathcal W_k$, define first the oriented edge-rooted kernel as:   
\begin{equation}\label{eq:Delta_H}
\Delta_{\bm H_s}^{(a)}(\bm W)(x,y)
=
\sum_{(u, v)\in E(H_s)}
\bm 1\{ \sigma_s((u, v))=a \}
\int_{[0,1]^{|V(H_s)|-2}}
\prod_{\substack{(u', v')\in E(H_s)\\ (u', v')\neq(u, v)}}
W_{\sigma_s((u', v'))}(x_{u'},x_{v'})
\, \prod_{\substack{ t \in V(H_s) \\ t\notin \{u, v\} }} \mathrm{d} x_t ,
\end{equation} 
where $x_u=x$ and $x_v=y$. Then define the symmetrized edge-rooted kernel as:  
\begin{equation}\label{eq:Delta_H_sym}
\widehat\Delta_{\bm H_s}^{(a)}(\bm W)(x,y)
:=
\frac12\left[
\Delta_{\bm H_s}^{(a)}(\bm W)(x,y)
+
\Delta_{\bm H_s}^{(a)}(\bm W)(y,x)
\right].
\end{equation}
For every collection of bounded symmetric functions
$g_1,\ldots,g_k:[0,1]^2\to\mathbb R$, one has
\begin{align}\label{eq:first_variation_symmetric}
\lim_{\varepsilon \to 0} \frac{t(\bm H_s, \bm W+ \varepsilon  \bm g) - t(\bm H_s, \bm W)}{\varepsilon } = \sum_{a=1}^k \left\langle g_a, \widehat\Delta_{\bm H_s}^{(a)}(\bm W)
\right\rangle_{L^2} = \sum_{a=1}^k \left\langle g_a, \Delta_{\bm H_s}^{(a)}(\bm W)
\right\rangle_{L^2}  ,  
\end{align}
where the second equality follows because each $g_a$ is symmetric. 
Also, denote 
\begin{align}\label{eq:F}
\mathcal F(\bm W) = T(\bm W) - \frac{1}{2} \mathcal I( \bm W),
\end{align}  
where $T(\bm W)$ and $\mathcal I(\bm W)$ are as defined in \eqref{eq:sufficientTW} and \eqref{eq:IW}, respectively. This  descends naturally to the quotient space $\wt{\mathcal W}_k$: For $\wt{\bm W}\in\wt{\mathcal W}_k$, we define $\mathcal F(\wt{\bm W}) = T(\wt{\bm W}) - \frac{1}{2} \mathcal I(\wt{\bm W})$, where $\bm W$ is any representative of the equivalence class $\wt{\bm W}$.

\begin{theorem}\label{thm:EL-prob-graphon}
Let $\wt{\bm W}\in\wt{\mathcal W}_k$ be a maximizer of $\mathcal F$ and $\bm W = (W_1, W_2, \ldots, W_k) \in\mathcal W_k$ be any representative of the equivalence class $\wt{\bm W}$. Then every $a \in [k]$, 
\begin{equation}\label{eq:softmax}
W_a(x,y)
=
\frac{
\exp\left(2\sum_{s=1}^r \beta_s \,\widehat\Delta_{\bm H_s}^{(a)}(\bm W)(x,y)\right)
}{
\sum_{b=1}^k
\exp\left(2\sum_{s=1}^r \beta_s \,\widehat\Delta_{\bm H_s}^{(b)}(\bm W)(x,y)\right)
} , 
\end{equation} 
for almost all $(x,y)\in[0,1]^2$, where
$\widehat\Delta_{\bm H_s}^{(a)}$ is defined in \eqref{eq:Delta_H_sym}. 
Moreover, $\bm W$ is bounded away from the boundary of the simplex, that is, there exists $\varepsilon>0$ such that $\varepsilon\le W_a(x,y)\le 1-\varepsilon$, for  all $a\in[k]$ and almost every $(x,y)\in[0,1]^2$.  
\end{theorem}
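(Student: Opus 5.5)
The plan is a first-variation argument, in the spirit of the Chatterjee--Diaconis proof for uncolored ERGMs, but using perturbations that stay inside the simplex. Since $\mathcal F$ is invariant under measure-preserving relabelings, the representative $\bm W$ of $\wt{\bm W}$ maximizes $\mathcal F$ over all of $\mathcal W_k$. So it suffices to show that any maximizer $\bm W\in\mathcal W_k$ satisfies \eqref{eq:softmax}. Set
\[
h_a(x,y):=\sum_{s=1}^r \beta_s\,\widehat\Delta^{(a)}_{\bm H_s}(\bm W)(x,y).
\]
Each $h_a$ is symmetric and bounded, with $|h_a|\le M:=\sum_s|\beta_s||E(H_s)|$, because every factor in \eqref{eq:Delta_H} lies in $[0,1]$.

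\textbf{Step 1: the maximizer is bounded away from the boundary.} Fix colors $a\neq b$ and $\eta>0$ small. Consider the symmetric set $A=\{W_a<\eta\}$, taken with positive measure, and the perturbation that moves mass from $b$ to $a$ on $A$:
\[
W_a\mapsto W_a+\varepsilon\,\bm 1_A W_b,\qquad W_b\mapsto W_b-\varepsilon\,\bm 1_A W_b,\qquad \varepsilon\in(0,1).
\]
This stays in $\mathcal W_k$. By \eqref{eq:first_variation_symmetric}, the change in $T$ is at least $-2M\varepsilon\int_A W_b$, up to $O(\varepsilon^2)$. For the entropy, the derivative of $u\log u$ at $W_a$ is $\log W_a+1\le \log\eta+1$, while the loss in the $b$-coordinate contributes at most $O(\varepsilon)\int_A W_b$, since $|\log W_b|$ is only relevant where $W_b$ is not small. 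Summing over $b$ makes $\sum_b W_b\ge 1-\eta$ on $A$. Hence the derivative of $-\tfrac12\mathcal I$ is at least $\tfrac12|\log\eta|\,\varepsilon\int_A(1-W_a)-C\varepsilon|A|$. For $\eta$ small enough this beats $2M\varepsilon|A|$ and contradicts maximality. This gives $W_a\ge\varepsilon_0$ a.e. for every $a$, and therefore $W_a\le 1-(k-1)\varepsilon_0$.

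The delicate point is the entropy derivative when $W_a=0$ on part of $A$, where it is infinite. I would handle it by convexity: compare directly using $u\log u\le$ chord bounds instead of derivatives, or first perturb to a strictly positive graphon. I expect this to be the main technical obstacle. A cleaner alternative is to show directly that the one-sided directional derivative of $-\mathcal I$ toward the uniform graphon is $+\infty$ whenever $\{W_a=0\}$ has positive measure, while $T$ has bounded derivative, which rules out zeros. Then the perturbation above, restricted to $\{W_a<\eta\}$ with $\eta=e^{-4M-2}$, gives the quantitative bound.

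\textbf{Step 2: Euler--Lagrange equation.} Once $\varepsilon_0\le W_a$, take any bounded symmetric $g_1,\dots,g_k$ with $\sum_a g_a=0$. Then $\bm W+t\bm g\in\mathcal W_k$ for $|t|$ small. Both $T$ and $\mathcal I$ are differentiable in $t$ there, the latter because $\log W_a$ is bounded, so dominated convergence applies. The derivative must vanish:
\[
\sum_a\int g_a\Big(h_a-\tfrac12(\log W_a+1)\Big)=0 .
\]
Choose $g_a=\phi$, $g_b=-\phi$ and all other coordinates zero, with $\phi$ an arbitrary bounded symmetric function. Since $h_a-\tfrac12\log W_a$ is symmetric, this yields $2h_a-\log W_a=2h_b-\log W_b$ a.e. Therefore $W_a=c(x,y)\,e^{2h_a}$ for a common function $c$. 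Normalizing with $\sum_a W_a=1$ gives exactly \eqref{eq:softmax}.

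As a consistency check, \eqref{eq:softmax} together with $|h_a|\le M$ directly gives $W_a\ge e^{-4M}/k$. This recovers the bound from Step 1 a posteriori, but Step 1 is logically needed first, in order to justify differentiating the entropy in Step 2.
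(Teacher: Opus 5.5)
Your proposal is correct in outline but organized differently from the paper. Step 1 needs two repairs before it is airtight.

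First, make the perturbation explicitly the move toward the vertex $\bm e_a$: $W_a\mapsto W_a+\varepsilon\bm 1_A(1-W_a)$ and $W_c\mapsto(1-\varepsilon\bm 1_A)W_c$ for $c\neq a$. The single-pair move from $b$ to $a$ does not work. Its $b$-coordinate term $\varepsilon W_b|\log W_b|$ is not $O(\varepsilon W_b)$ when $W_b$ is small. Where $W_b\approx W_a$, the first-order change of $\mathcal I$, namely $\varepsilon W_b\log(W_a/W_b)$, vanishes, while the energy may drop by $2M\varepsilon W_b$. With the summed move, the other colors raise $\mathcal I$ by at most $\varepsilon(1-W_a)\bigl(\log(k-1)+\log\frac{1}{1-\eta}\bigr)$ pointwise on $A$. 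So the threshold you can certify this way depends on $k$ as well as $M$; for example $\eta=e^{-4M-2}/(2(k-1))$ works.

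Second, the case $W_a=0$ is not really an obstacle. You only need to bound the increment using the slope at the endpoint rather than at $W_a$. Convexity of $\phi(u)=u\log u$ gives $\phi(y)-\phi(x)\le\phi'(y)(y-x)$. Take $y=W_a+\varepsilon(1-W_a)\le 2\eta$ for $\varepsilon\le\eta$. Then the color-$a$ term lowers $\mathcal I$ by at least $\varepsilon(1-W_a)\bigl(\log\frac{1}{2\eta}-1\bigr)$, with no differentiation at $0$. The first variation of $T$ along this move is $\int_A\sum_{c\neq a}W_c(h_a-h_c)\ge-2M\int_A(1-W_a)$, and the remainder is $O(\varepsilon^2)$ for the fixed set $A$. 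Together, this single perturbation rules out zeros and small values at once, so no separate positivity step is needed.

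The paper runs the argument in the opposite order.
\begin{itemize}
\item It first proves only $W_a>0$ a.e., which is essentially your fallback. It uses the same move toward $\bm e_a$, but only on the zero set $B_a=\{W_a=0\}$, where the gain $\frac12|B_a|\,t|\log t|$ beats the $O(t)$ losses.
\item It then derives the Euler--Lagrange identity from perturbations supported on $A_\eta=\{\min_a W_a\ge\eta\}$, exhausting $[0,1]^2$ as $\eta\downarrow0$.
\item Only at the end does it read off $W_a\ge\frac1k e^{-4C}$ from the softmax form.
\end{itemize}
Your route front-loads a quantitative separation estimate. That costs the bookkeeping above, but it then lets you use arbitrary bounded symmetric test functions globally in Step 2, which is correct as written, with no truncation or exhaustion. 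The paper's route needs only a qualitative infinite-slope argument and obtains the explicit constant for free.
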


The proof of Theorem~\ref{thm:EL-prob-graphon} is given in Section~\ref{thm:EL-prob-graphonpf}. The equations in \eqref{eq:softmax} are the first-order conditions satisfied by
any maximizer of the variational problem in Theorem~\ref{thm:variational_formula}. As an illustration, we consider the rainbow triangle ERGM, for which the equations simplify considerably.

\begin{example}\label{ExampEulerLagrangeRainbowTriang}
Consider the rainbow triangle ERGM with sufficient statistic as in \eqref{eq:rainbowT}.  For simplicity assume $\beta_1 = \beta_2 = \beta_3 = 0 $. Recall, from \eqref{eq:tWrainbow}, that, for $\bm W = (W_1, W_2, W_3) \in \mathcal W_3$, 

\begin{align*}
t_{\mathrm{rb}\triangle}(\bm W)
&=
\int_{[0,1]^3}W_1(x,y)W_2(y,z)W_3(z,x)
\,\mathrm dx\,\mathrm dy\,\mathrm dz
=
t(\bm K_3^{\mathrm{rb}},\bm W).
\end{align*}
Here $\bm K_3^{\mathrm{rb}}$ is the fixed decorated triangle introduced in \eqref{eq:tWrainbow}. In this case, the symmetrized edge-rooted kernels are: 
\begin{align*}
\widehat\Delta_{\bm K_3^{\mathrm{rb}}}^{(1)}(\bm W)(x,y)
&=
\frac12\int_0^1
\left[
W_2(y,z)W_3(x,z)+W_2(x,z)W_3(y,z)
\right] \,\mathrm dz,\\
\widehat\Delta_{\bm K_3^{\mathrm{rb}}}^{(2)}(\bm W)(x,y)
&=
\frac12\int_0^1
\left[
W_1(x,z)W_3(y,z)+W_1(y,z)W_3(x,z)
\right] \,\mathrm dz,\\
\widehat\Delta_{\bm K_3^{\mathrm{rb}}}^{(3)}(\bm W)(x,y)
&=
\frac12\int_0^1
\left[
W_1(x,z)W_2(y,z)+W_1(y,z)W_2(x,z)
\right] \,\mathrm dz.
\end{align*}
Then \eqref{eq:softmax} simplifies to the following: For $a\in[3]$, 
\begin{equation*}
W_a(x,y) = \frac{\exp\left(2\beta_4\widehat\Delta_{\bm K_3^{\mathrm{rb}}}^{(a)}(\bm W)(x,y)\right)}{\sum_{b=1}^3 \exp\left(2\beta_4\widehat\Delta_{\bm K_3^{\mathrm{rb}}}^{(b)}(\bm W)(x,y)\right) }. 
\end{equation*} 
\end{example}

The Euler--Lagrange equations in \eqref{eq:softmax} admit a useful representation as a fixed-point equation.  To see this, define 
\begin{align*}
\widehat\Psi_a(\bm W)(x,y)
:=
\sum_{s=1}^r
\beta_s\,\widehat\Delta_{\bm H_s}^{(a)}(\bm W)(x,y), 
\end{align*}
for $a\in[k]$ and let
$\widehat{\bm\Psi}=(\widehat\Psi_1,\ldots,\widehat\Psi_k)$.
Then the equations in \eqref{eq:softmax} can be expressed as: 
\begin{align}\label{eq:hightemp}
\bm W(x,y) = \SoftMax\left(2\widehat{\bm\Psi}(\bm W)(x,y)\right) , 
\end{align}  
where $\SoftMax\colon\mathbb R^k\to\Delta_k$ is defined as 
\begin{align}\label{eq:softmaxz}
\SoftMax(\bm z) := \left( \frac{e^{z_1}}{\sum_{b=1}^k e^{z_b}}, \frac{e^{z_2}}{\sum_{b=1}^k e^{z_b}}, \ldots , \frac{e^{z_k}}{\sum_{b=1}^k e^{z_b}}\right) , 
\end{align}
for $\bm z=(z_1,z_2,\ldots,z_k)$. This representation allows us to derive the following high-temperature uniqueness criterion for colored ERGMs.

\begin{theorem}\label{thm:highT_colored} 
Consider the $k$-colored ERGM \eqref{eq:TW} with $T$ as in
\eqref{eq:sufficientTW}, where
$\beta_1,\ldots,\beta_r\in\mathbb R$ and
$\bm H_1,\ldots,\bm H_r$ are finite, simple $k$-colored graphs. 
Define
\begin{equation}
\label{eq:Lambda_pair_def}
\Lambda
:=
\sum_{s=1}^r |\beta_s| \left( (|E(H_s)|-1)\,
\max_{ 1 \leq a\neq b \leq k } \left(|E_a(H_s)|+|E_b(H_s)|\right) \right) .
\end{equation}
If $\Lambda<2$,  then the variational problem in Theorem \ref{thm:variational_formula} has a unique (almost everywhere) maximizer. Consequently, the maximizer is replica-symmetric, that is, there exists $\bm \mu ^\star\in\Delta_k$ such that $\bm W^\star(x,y)\equiv \bm \mu ^\star$, almost everywhere in $[0, 1]^2$. 
\end{theorem}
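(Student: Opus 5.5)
Every maximizer of $\mathcal F$ is a fixed point of the map $\Phi(\bm W):=\SoftMax(2\widehat{\bm\Psi}(\bm W))$ on $\mathcal W_k$ (Theorem~\ref{thm:EL-prob-graphon}). The plan is to show that $\Phi$ is a strict contraction in a suitable seminorm when $\Lambda<2$, and then combine this with the observation that constants are mapped to constants.

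\textbf{Step 1: contraction.} Define, for bounded $\bm z:[0,1]^2\to\R^k$, the oscillation seminorm
\[
\osc(\bm z):=\operatorname*{ess\,sup}_{(x,y)}\max_{a\neq b}|z_a(x,y)-z_b(x,y)|,
\]
and on $\mathcal W_k$ use $\|\bm W-\bm W'\|:=\operatorname*{ess\,sup}_{(x,y)}\tfrac12\sum_a|W_a-W'_a|$, the pointwise total variation. Softmax is invariant under adding a common constant to all coordinates, and its Jacobian is $\diag(p)-pp^\top$. The first ingredient is the standard bound
\[
\|\SoftMax(\bm z)-\SoftMax(\bm z')\|_{TV}\le \tfrac12\,\osc(\bm z-\bm z')\quad\text{pointwise}.
\]
One way to prove it: write $\bm w=\bm z-\bm z'$. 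Along the segment, the derivative of $\tfrac12\sum_a|p_a-p'_a|$ is bounded by $\tfrac12\sum_a p_a|w_a-\bar w|$ with $\bar w=\sum_b p_bw_b$, and this is at most $\tfrac12(\max w-\min w)$. If that constant is off, I would settle for any constant $c_0$ and adjust the threshold accordingly.

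\textbf{Step 2: Lipschitz bound on $\widehat\Psi_a-\widehat\Psi_b$.} Telescope the product in \eqref{eq:Delta_H} edge by edge. Each rooted term has $|E(H_s)|-1$ factors, and replacing one factor $W_c$ by $W'_c$ changes the integral by at most $\sup|W_c-W'_c|\le \|\bm W-\bm W'\|$, because every $W$ is bounded by $1$. The difference $\Delta^{(a)}-\Delta^{(b)}$ involves at most $|E_a(H_s)|+|E_b(H_s)|$ rooted terms. Hence
\[
\osc\bigl(2\widehat{\bm\Psi}(\bm W)-2\widehat{\bm\Psi}(\bm W')\bigr)\le 2\Lambda\|\bm W-\bm W'\|,
\]
and Step 1 gives $\|\Phi(\bm W)-\Phi(\bm W')\|\le \Lambda\|\bm W-\bm W'\|$, up to constants.

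\textbf{Main obstacle: matching the constants.} The difficulty is making the norm used on $\bm W$ and the softmax bound compatible, so that the contraction factor is exactly $\Lambda/2<1$. The naive sup-coordinate bound loses a factor of $2$ relative to total variation. I would first try to control each factor change $|W_c-W'_c|$ by the total variation distance (valid since $|W_c-W'_c|\le\mathrm{TV}$). I would then check whether the softmax constant is $\tfrac12$ or $\tfrac14$ (the Jacobian of a two-point logistic is at most $\tfrac14$). The goal is to arrive at factor $\Lambda/2$.

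\textbf{Step 3: conclusion.} Let $\bm W^\star$ be any maximizer, which exists by compactness. Consider the constant graphons $\bm\mu\in\Delta_k$. For these $\Phi(\bm\mu)$ is again constant, so $\Phi$ restricted to $\Delta_k$ is continuous and, by Brouwer, has a fixed point $\bm\mu^\star$. By contraction, any two fixed points coincide almost everywhere, so $\bm W^\star=\bm\mu^\star$ a.e. This holds for every representative of every maximizer, so the maximizer is unique and constant. The argument works pointwise a.e. on a fixed representative, so the quotient by relabelings causes no issue: the constant graphon is fixed by all relabelings.
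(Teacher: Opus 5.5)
Your architecture matches the paper's. Every maximizer is a fixed point of $\Phi(\bm W)=\SoftMax(2\widehat{\bm\Psi}(\bm W))$. The edge-by-edge telescoping gives $\|\widehat{\bm\Psi}(\bm W)-\widehat{\bm\Psi}(\bm W')\|_{\mathrm{osc},\infty}\le\Lambda\|\bm W-\bm W'\|$. Shift invariance of softmax is what makes the oscillation the right seminorm.

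\textbf{The gap.} As written, the proposal does not reach the stated threshold. With the constant $\tfrac12$ you assert in Step~1, the contraction factor is $\tfrac12\cdot 2\Lambda=\Lambda$, which gives uniqueness only for $\Lambda<1$. You leave the decisive constant ``to be checked''. The missing estimate is
\[
\|\SoftMax(\bm z)-\SoftMax(\bm z')\|\le\tfrac14\osc(\bm z-\bm z').
\]
Your own derivative computation is one inequality away from it. With $\bm w=\bm z-\bm z'$, the quantity $\sum_a p_a|w_a-\bar w|$ is the mean absolute deviation of a random variable with values in $[\min_a w_a,\max_a w_a]$. It is therefore at most $\sqrt{\mathrm{Var}}\le\tfrac12(\max_a w_a-\min_a w_a)$ by Popoviciu's inequality, not $\max_a w_a-\min_a w_a$. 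Hence the total-variation derivative is at most $\tfrac14\osc(\bm w)$.

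The paper gets the same constant in the coordinatewise sup norm. The $\ell^\infty\to\ell^\infty$ norm of $\diag(\bm p)-\bm p\bm p^\top$ is $\max_a 2p_a(1-p_a)\le\tfrac12$. Since $\SoftMax(\bm z-c\bm 1)=\SoftMax(\bm z)$, one gets $\|\SoftMax(\bm z)-\SoftMax(\bm z')\|_\infty\le\tfrac12\inf_{c}\|\bm w-c\bm 1\|_\infty=\tfrac14\osc(\bm w)$ (Lemmas~\ref{lem:osc_distance} and~\ref{lem:softmax_osc_lip}). Either way the factor becomes $\tfrac14\cdot2\Lambda=\Lambda/2<1$.

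Your worry that the sup-coordinate norm ``loses a factor of 2'' is unfounded; the paper uses it throughout and gets exactly $\Lambda/2$. Total variation is not needed, though it also works, since $\max_c|W_c-W'_c|$ is bounded by the total-variation distance between probability vectors.

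\textbf{Step 3.} Once that constant is in place, your Step~3 is a valid and somewhat cleaner route to constancy than the paper's. The paper proves uniqueness among maximizers, then argues that the unique maximizer is invariant under all measure-preserving relabelings and hence constant. That last step relies on a standard fact, not proved there, that a kernel invariant under all such relabelings is a.e.\ constant. You instead note two things: the contraction estimate holds on all of $\mathcal W_k$ (it never uses maximality), and $\Phi$ maps constants to constants. So the constant fixed point $\bm\mu^\star$ is the only fixed point in $\mathcal W_k$. You can get it from Brouwer, or directly from Banach since $\Phi$ restricted to $\Delta_k$ is itself a contraction on a complete space. Every representative of every maximizer must then coincide with $\bm\mu^\star$ almost everywhere.
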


The proof of Theorem~\ref{thm:highT_colored} is given in Section~\ref{sec:highT_coloredpf}. The proof is based on a contraction argument, which requires establishing a Lipschitz property of the fixed-point map in \eqref{eq:hightemp} with respect to an appropriate seminorm.
To identify the appropriate seminorm, note the following shift-invariance property of the $\SoftMax$ function: $$ \SoftMax(\bm z+c\bm 1)=\SoftMax(\bm z), \qquad \text{for all } \bm z\in\mathbb R^k \text{ and } c\in\mathbb R. $$ Thus, $\SoftMax$ depends only on the relative differences between the coordinates of its input, and not on a common additive shift. This suggests that the correct quantity for measuring deviations in the fixed-point equation is not the $L^\infty$ norm itself, but rather the distance to the diagonal direction $\mathbb R\bm 1$. This motivates the definition of the oscillation seminorm (see \eqref{eq:normz}), using which we establish the contraction property of the fixed-point map, leading to the high-temperature uniqueness criterion in \eqref{eq:Lambda_pair_def}. In particular, when $k=2$, this recovers the high-temperature threshold for classical ERGMs, as explained in the next remark.

\begin{remark}[Recovery of the classical binary threshold]
Note that when $k=2$, $|E_1(H_s)|+ |E_2(H_s)|=|E(H_s)|$, for $s \in [r]$. Hence,  \eqref{eq:Lambda_pair_def} becomes $\Lambda=\sum_{s=1}^r |\beta_s|\,(|E(H_s)|-1)\,|E(H_s)|$. Therefore, the condition $\Lambda<2$ reduces exactly to the high-temperature criterion for classical uncolored ERGMs in \cite[Theorem~6.2]{ChatterjeeDiaconisExponential2013}. 
\end{remark}

\subsection{Zero Temperature Asymptotics }
\label{sec:temperatureasymptotics}

In the previous sections, we established various conditions under which the solutions of the variational problem \eqref{eq:varformula} are replica-symmetric constant probability graphons. These results either require the sufficient statistic to admit certain specific representations, or require the total magnitude of the coefficients to be sufficiently small (a `high-temperature' condition). In this section, we investigate what happens when one of the coefficients takes a large positive or negative value. To this end, consider a $k$-colored ERGM with sufficient statistic of the form: 
\begin{equation}\label{eq:T_betaf_infty} 
    T(\bm W) = \beta f(\bm W) + g(\bm W) , 
\end{equation}
where $f, g: \mathcal{W}_k \rightarrow \mathbb{R}$ are continuous functions that are invariant under measure preserving transformations. Our interest is in the asymptotic behavior of the model as $\beta \rightarrow \infty$. (The other parameters, if any, are absorbed in the function $g$.) This entails, by Theorem \ref{thm:variational_formula}, understanding the structure of the  optimizers  of the following variational problem, as $\beta \rightarrow \infty$: 
\begin{align}\label{eq:limitvariational}
\Gamma_\beta := \argmax_{\wt{\bm W}\in\wt{\mathcal W}_k}
\left\{\beta f(\wt{\bm W})+g( \wt{\bm W} )-\frac12\mathcal I( \wt{\bm W} )\right\}. 
\end{align}
For this, we propose the following two-step strategy: 
\begin{enumerate}
\item First, maximize the `main term' $f$. Specifically, let  
\begin{align}\label{eq:largetemperaturemaximizerf} 
\mathcal{N}_f = \argmax_{\wt {\bm W} \in \wt{\mathcal{W}}_k} f(\wt{\bm W})  .  
\end{align} 
Note that, by the compactness of $\wt{\mathcal{W}}_k$ and the continuity of $f$, the set $\mathcal{N}_f$ is compact. 

\item Then among the maximizers of $f$, maximize the `tie-breaker' term $h(\bm W):=g(\bm W)-\frac12\mathcal I(\bm W)$. Formally, define 
\begin{align}\label{eq:largetemperaturemaximizer}  
\mathcal N^*=\argmax_{\wt{\bm W}\in \mathcal{N}_f} \left(g(\tilde{\bm W})-\frac12\mathcal I(\tilde{\bm W})\right). 
\end{align}

\end{enumerate}
The next result shows that the optimizers of \eqref{eq:limitvariational} concentrate around $\mathcal{N}^*$, as $\beta \to \infty$. To emphasize our interest in the asymptotics of $\beta$, we will denote $\P_{n} := \P_{n, \beta}$. Also, let $\tilde{\P}_{n,\beta}$ denote the pushforward of $\P_{n,\beta}$ under the map $\mathscr X\mapsto\wt{\bm W}^{\mathscr X}$. Further, $\wt{\bm W} \in \wt{\mathcal{W}}_k$ and $A \subseteq \wt{\mathcal{W}}_k$, define 
$\delta_\square(\wt{\bm W}, A) :=\inf_{\wt{\bm U} \in A}
\delta_\square( \wt{\bm W}, \wt{\bm U} )$.

\begin{theorem}\label{ThmConcentrationBeta}
Consider a $k$-colored ERGM with sufficient statistic of the form
\eqref{eq:T_betaf_infty}, and let $\mathcal N^*$ be as defined in
\eqref{eq:largetemperaturemaximizer}. For any $\varepsilon>0$, define
\[
\mathcal S_\varepsilon
:=
\left\{
\wt{\bm W}\in\wt{\mathcal W}_k:
\delta_\square(\wt{\bm W},\mathcal N^*)\geq\varepsilon
\right\}.
\]
Then there exist constants $\beta_0=\beta_0(\varepsilon)>0$ and
$K=K(\varepsilon)>0$ such that the following hold: For every fixed $\beta\geq\beta_0$,
there exists $n_0=n_0(\varepsilon,\beta)$ such that for $n\geq n_0$, 
\begin{align}\label{eq:tempertaureset}
\tilde{\P}_{n,\beta}(\mathcal S_\varepsilon)
&=
\P_{n,\beta}\left(
\delta_\square(\wt{\bm W}^{\mathscr X},\mathcal N^*)
\geq\varepsilon
\right)
\leq \exp(-Kn^2) . 
\end{align} 
Further,
\begin{align}\label{eq:tempertauredistance}
\lim_{\beta\rightarrow\infty}
\sup_{\wt{\bm W}\in\Gamma_\beta}
\delta_\square(\wt{\bm W},\mathcal N^*)
=0.
\end{align}
\end{theorem}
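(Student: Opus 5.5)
Write $\mathcal F_\beta:=\beta f+g-\frac12\mathcal I$ on $\wt{\mathcal W}_k$, $f^*:=\max f$, $h:=g-\frac12\mathcal I$ and $h^*:=\max_{\mathcal N_f}h$. Since $\mathcal I$ is lower semicontinuous with values in $[-\log k,0]$, $h$ is upper semicontinuous and bounded, so $h^*$ is finite and attained on $\mathcal N_f$. The plan is to prove the deterministic statement \eqref{eq:tempertauredistance} first, in a quantitative form: there is $\beta_0$ and a gap $\eta(\varepsilon)>0$ such that for $\beta\ge\beta_0$,
\[
\sup_{\mathcal S_\varepsilon}\mathcal F_\beta\le \sup_{\wt{\mathcal W}_k}\mathcal F_\beta-\eta .
\]
The probabilistic bound \eqref{eq:tempertaureset} then follows as in Corollary~\ref{cor::expconc}. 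Theorem~\ref{THM:LDP_prob_graphons} with the uniform $\bm\nu$ (so that $\mathcal I_{\bm\nu}=\mathcal I+\log k$), together with Varadhan's lemma on the closed set $\mathcal S_\varepsilon$, gives
\[
\limsup_n \frac1{n^2}\log\tilde\P_{n,\beta}(\mathcal S_\varepsilon)\le \sup_{\mathcal S_\varepsilon}\mathcal F_\beta-\sup\mathcal F_\beta\le-\eta .
\]
Here Theorem~\ref{thm:variational_formula} identifies the normalizing constant $\psi_n$. This yields $K=\eta/2$ for $n\ge n_0(\varepsilon,\beta)$. We need $\mathcal S_\varepsilon$ closed, which holds because $\delta_\square(\cdot,\mathcal N^*)$ is continuous; note $\mathcal N^*$ is compact as a closed subset of $\mathcal N_f$.

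For the gap, the lower bound is immediate: evaluating $\mathcal F_\beta$ at a point of $\mathcal N^*$ gives $\sup\mathcal F_\beta\ge\beta f^*+h^*$. For the upper bound I split $\mathcal S_\varepsilon$ using a small parameter $\rho>0$.

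On $A_\rho:=\{\wt{\bm W}\in\mathcal S_\varepsilon:\delta_\square(\wt{\bm W},\mathcal N_f)\ge\rho\}$, compactness and continuity of $f$ give $f\le f^*-c_\rho$ with $c_\rho>0$. Hence $\mathcal F_\beta\le\beta f^*-\beta c_\rho+C$, where $C:=\sup|g|+\frac12\log k$, and this is below $\beta f^*+h^*-1$ once $\beta$ is large.

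On $B_\rho:=\{\wt{\bm W}\in\mathcal S_\varepsilon:\delta_\square(\wt{\bm W},\mathcal N_f)\le\rho\}$ I use $f\le f^*$, so $\mathcal F_\beta\le \beta f^*+h$. It remains to show $\sup_{B_\rho}h\le h^*-2\eta$ for some small $\rho$. This is the key step. Suppose not. Then there are $\rho_j\to0$ and $\wt{\bm W}_j\in B_{\rho_j}$ with $h(\wt{\bm W}_j)>h^*-1/j$. By compactness a subsequence converges to some $\wt{\bm W}$. Closedness of $\mathcal S_\varepsilon$ and of $\mathcal N_f$ give $\wt{\bm W}\in\mathcal S_\varepsilon\cap\mathcal N_f$. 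Upper semicontinuity of $h$ (continuity of $g$ plus lower semicontinuity of $\mathcal I$) gives $h(\wt{\bm W})\ge h^*$. So $\wt{\bm W}\in\mathcal N^*$, contradicting $\delta_\square(\wt{\bm W},\mathcal N^*)\ge\varepsilon$. Combining both pieces gives the gap with $\eta$ independent of $\beta\ge\beta_0$.

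Finally, \eqref{eq:tempertauredistance} follows because any $\wt{\bm W}\in\Gamma_\beta$ attains $\sup\mathcal F_\beta$. For $\beta\ge\beta_0(\varepsilon)$ the gap forbids it from lying in $\mathcal S_\varepsilon$, so $\delta_\square(\wt{\bm W},\mathcal N^*)<\varepsilon$. The main point needing care is the upper semicontinuity argument in the $B_\rho$ step: $\mathcal I$ is only lower semicontinuous, so the direction of the inequality for $-\frac12\mathcal I$ must be checked. It goes the right way, since $-\frac12\mathcal I$ is upper semicontinuous.
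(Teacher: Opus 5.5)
Your proposal is correct and follows essentially the same route as the paper: both prove a uniform gap $\sup_{\mathcal S_\varepsilon}(\beta f+h)\le \beta f^*+h^*-\eta$ for all $\beta\ge\beta_0$ using compactness, continuity of $f$ and upper semicontinuity of $h$, and then apply the LDP upper bound to the closed set $\mathcal S_\varepsilon$. The paper phrases that last step through the tilted LDP of Theorem~\ref{thm:TiltedLDP}, which is equivalent to your Varadhan bound on closed sets. The only difference is organizational. The paper obtains the gap from a single contradiction argument along a sequence $\beta_L\to\infty$, using the upper bound on $h$ to force $f(\wt{\bm W}_L)\to f^*$. Your split into $A_\rho$ and $B_\rho$ instead makes the compactness step independent of $\beta$ and handles the region far from $\mathcal N_f$ by a direct linear-in-$\beta$ estimate.
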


The proof of Theorem~\ref{ThmConcentrationBeta} is given in
Section~\ref{sec:ThmConcentrationBetapf}. The result provides a general
framework for understanding ground states of colored ERGMs along any prescribed
parameter direction, through a two-level optimization strategy: \emph{first
maximize the main energy term, and then maximize the second-order energy and entropy functional within the resulting ground-state set}. Later in this section, we will illustrate this general principle
in two examples: the induced wedge model in Section~\ref{sec:temperaturewedge}
and the rainbow triangle model in Section~\ref{sec:rt}.

\begin{remark}[Asymptotics as $\beta \to -\infty$]
\label{remark:temperaturenegativeTW}
The regime $\beta\to-\infty$ can also be obtained from
Theorem~\ref{ThmConcentrationBeta} by setting
$\alpha=-\beta>0$ and $r(\bm W)=-f(\bm W)$. Then the conclusion of
Theorem~\ref{ThmConcentrationBeta} applies to $T(\bm W)=\alpha r(\bm W)+g(\bm W)$,
as $\alpha\to\infty$. 
\end{remark}

Theorem~\ref{ThmConcentrationBeta} connects to several well-known results in the classical uncolored setting and in extremal combinatorics. We discuss some of these connections in the following remarks.

\begin{remark}[Zero temperature regime in the classical uncolored ERGMs]
In the classical two-color/scalar graphon setting, \cite[Theorem 7.1]{ChatterjeeDiaconisExponential2013} derived the asymptotics of ERGMs with sufficient statistic of the form: 
\begin{align}\label{eq:temperatureclassicalTW}
T_{\beta_2}(W)=\beta_1\,t(K_2, W)+\beta_2\,t(H,W ), 
\end{align}
for a finite graph $H$, as $\beta_2 \rightarrow -\infty$. To obtain their result from Theorem \ref{ThmConcentrationBeta}, set 
$f(W):=-t(H,W)$ and $g(W):=\beta_1\,t(K_2,W)$ in \eqref{eq:T_betaf_infty}. Then \eqref{eq:largetemperaturemaximizer} simplifies to (up to the normalization convention used for the entropy functional), 
\begin{align}\label{eq:NW}
\mathcal{N}^* = \argmax_{t(H,W)=0} \left\{\beta_1\,t(K_2,W)-\frac{1}{2}\mathcal I(W)\right\},
\end{align}
where $\mathcal I(W) = \int_{[0, 1]^2} I_2(W(x, y), 1-W(x,y)) \mathrm d x \mathrm dy$. Note that 
$$\beta_1\,t(K_2,W)-\frac{1}{2}\mathcal I(W) = - \frac{1}{2}J_p(W) - \frac{1}{2}\log(1-p),$$ 
where $p=\frac{e^{2\beta_1}}{1+e^{2\beta_1}}$ and $J_p(W) := \int_{[0, 1]^2} J_p(W(x, y)) \mathrm d x \mathrm dy  =\mathcal{I}_{(p,1-p)}((W,1-W))$ (recall \eqref{eq:IcolorW}), with $J_p(u) := u \log \frac{u}{p} + (1 - u) \log \frac{1 - u}{1-p}=D_2((u,1-u), (p,1-p))$ (recall \eqref{eq:divergence}), for $u \in [0, 1]$. Hence, \eqref{eq:NW} simplifies to the constrained entropy minimization problem
$$
\mathcal{N}^* =  \argmin\left\{J_p(W):\ t(H,W)=0\right\}.
$$
This is exactly the variational problem underlying
\cite[Theorem~7.1]{ChatterjeeDiaconisExponential2013}. 
\end{remark}

\begin{remark}[Quantization and extremal combinatorics]
One can also let both parameters $\beta_1$ and $\beta_2$ in \eqref{eq:T_betaf_infty} diverge along straight lines, leading to nontrivial asymptotic extremal behavior. For instance, Yin et al.~\cite{YinRinaldoFadnavis_AoP_quantiz} consider ERGMs with sufficient statistic of the form \eqref{eq:temperatureclassicalTW} in the asymptotic regime where $\beta_2 \to -\infty$ and $\beta_1 = a\beta_2 + b$. In this case, setting $\beta=\beta_2$, the limiting variational problem is governed by $$ \beta\, t(H,W)+(a\beta+b)t(K_2,W)-\frac12\mathcal I(W). $$ In the language of \Cref{ThmConcentrationBeta}, this corresponds to taking the leading-order functional to be $f(W):=t(H,W)+a\,t(K_2,W)$, and the lower-order term to be $g(W):=b\,t(K_2,W)$. The associated tie-breaker functional is therefore $h(W)=b\,t(K_2,W)-\frac12\mathcal I(W)$. Thus, our selection principle shows that the zero-temperature limit is resolved in two steps. First, since $\beta\to-\infty$, the tilted measure concentrates on the ground-state manifold $\mathcal N_f$ of minimizers of $f(W)$. Then, among these ground states, it selects those maximizing the entropy-tilted functional $h(W)$. In the case of \cite[Theorem~3.3]{YinRinaldoFadnavis_AoP_quantiz}, where $H=K_3$ is the triangle, the minimizers of the leading-order problem are described, depending on the scaling parameter $a$, by complete multipartite graphons (Tur\'an graphons). This description relies on known extremal results for the lower envelope of triangle density as a function of edge density (the Razborov curve \cite{razborov2008minimal}). When several such ground states are available, the lower-order parameter $b$ resolves the degeneracy by selecting the entropy-maximizing ground state. This produces the quantization phenomenon observed in \cite{YinRinaldoFadnavis_AoP_quantiz}: the limiting graphon is forced onto a discrete family of Tur\'an graphons, and consequently the limiting edge density takes values in the discrete set, rather than varying continuously.
\end{remark}

\begin{remark}[Maximum entropy graphons]
The asymptotic regimes $\beta\to\pm\infty$ provide a natural bridge between colored ERGMs and the literature on subgraph-constrained maximum-entropy graphons. 

\begin{itemize}

 \item In the regime $\beta\to+\infty$, concentration on $\mathcal N^*$ shows that the model first selects graphons maximizing the target functional $f(W)$, such as a prescribed homomorphism density. Among these energy maximizers, the limiting measure then favors those graphons that maximize the tie-breaker variational functional $h(W)=g(W)-\frac12\mathcal I(W)$.  In the special case $g\equiv 0$, this reduces to maximizing the graphon entropy $-\frac12\mathcal I(W)$ over the set of graphons that maximize $f$. Thus, the zero-temperature limit is closely related to subgraph-constrained maximum-entropy problems, in which one seeks the most typical graphon subject to fixed or extremal subgraph-density constraints (see, for example, \cite{radin2014asymptotics,kenyon2017multipodal,kenyon2017phases, neeman2023typical} and the references therein). 
 
 \item Conversely, the regime $\beta\to-\infty$ forces minimization of $f$. In the canonical case where $f\geq 0$ records the density of a prescribed family of subgraphs, the condition $f(W)=0$ is precisely a forbidden-pattern constraint. The admissible graphons then describe the closure of a hereditary graph property. If $g\equiv 0$, the concentration result shows that the model selects entropy-maximizing graphons among all graphons avoiding the forbidden patterns. This aligns with the graph-limit approach to typical structure in hereditary properties, where the asymptotic enumeration and typical behavior of graphs in a property are governed by entropy-maximizing graphons \cite{EntropyGraphonsSzegediJansonHatamy}. It also parallels the corresponding decorated and multicolored theory developed using graph-limit and container methods \cite{falgasravry2016multicolour, falgasravry2019multicolor, falgas-ravryRectilinearApproximationVolume2023}.  
\end{itemize} 
\end{remark}

\subsubsection{Induced Wedge ERGM}
\label{sec:temperaturewedge}

In classical uncolored ERGMs, sufficient statistics are usually taken to be linear combinations of homomorphism densities (as in \eqref{eq:sufficientTW}). However, as briefly discussed after Example~\ref{example:triangleinteractions}, the situation can be quite different if one considers induced homomorphism densities instead. To investigate this further we consider the induced wedge model, an uncolored ERGM with sufficient statistic 
\begin{align}\label{eq:wedgeTW} 
T_{\wedge}(W) := \beta_1 t(K_2,W) + \beta_2 t_{\mathrm{ind}}(K_{1,2},W), 
\end{align} 
where $\beta_1,\beta_2\in\mathbb R$, $W:[0,1]^2\to[0,1]$ is a graphon, and $t_{\mathrm{ind}}(K_{1,2},W)$ denotes the induced homomorphism density of the $2$-star, (wedge) in $W$ (as defined in \eqref{eq:wedgeW}). An ERGM with sufficient statistic as in \eqref{eq:wedgeTW} will be referred to as the {\it induced wedge ERGM}.

\begin{remark}[Induced wedges and structural balance]
\label{remark:wedge}
The induced wedge statistic admits a natural interpretation in terms of structural balance theory \citep{heider1946attitudes,cartwright1956structural,davis1967clustering, leskovec2010signed}. Suppose, for example, that $G_n$ is a graph on $n$ vertices sampled from the ERGM with sufficient statistic as in \eqref{eq:wedgeTW}. Interpret the edges of $G_n$ as negative $(-)$ relations, such as rivalry or conflict, and the non-edges as positive $(+)$ relations, such as friendship or alliance. Then an induced wedge encodes the balance-theoretic phenomenon `the enemy of my enemy is my friend' (see \cite{fritz2025exponential,feng2022testing} and the references therein). Consequently, this ERGM can be viewed as a binary signed-network model in which $\beta_1$ controls the overall density of negative ties, while $\beta_2$ controls the tendency toward triads of type $\{-,-,+\}$. A positive value of $\beta_2$ favors such balanced configurations, whereas a negative value suppresses them. In particular, when $\beta_2$ is large and positive, the model favors structures in which enemies of a common actor tend to be friends with one another. 
\end{remark}

In this section, our aim is to understand, using the selection principle underlying  Theorem~\ref{ThmConcentrationBeta}, 
the behavior of the induced wedge model as $\beta_2\to\infty$, with $\beta_1$ held fixed. To begin with, note that, by Theorem~\ref{thm:variational_formula}, the limiting free energy in this case is given by 
\begin{align}\label{eq:wedgetemperatureTW}
\lim_{n\to\infty}\psi_n = \psi_{\wedge}(\beta_2) := \sup_{W\in\wt{\mathcal W}} \left\{ T_{\wedge}(W)-\frac12\mathcal I(W) \right\}. 
\end{align}
Although $\psi_{\wedge}$ depends on both $\beta_1$ and $\beta_2$, we keep only $\beta_2$ in the argument to emphasize that $\beta_2$ is the parameter of interest, while $\beta_1$ is held fixed throughout this analysis. Also, define 
\begin{align}\label{eq:wedgemaximizer}
 \mathcal M^*_{\wedge} (\beta_2) := \argmax_{W\in\wt{\mathcal W}} \left\{ T_{\wedge}(W)-\frac12\mathcal I(W) \right\}. 
\end{align}

We begin with the case $\beta_2 \rightarrow + \infty$. In this regime, the model favors graphons that maximize the induced wedge density.  Towards this, we show \Cref{lem:wedge-max-bipartite} that $\max_{W} t_{\mathrm{ind}}(K_{1,2},W)=\frac14$ and equality holds if and only if $W$ is the {\it complete balanced bipartite graphon}: 
\begin{align}\label{eq:bipartiteW}
W_{\mathrm{bip}}(x,y)=\bm 1 \left\{[0, \tfrac{1}{2}] \times [\tfrac{1}{2}, 1]\right\}(x,y)+\bm 1\left\{[\tfrac{1}{2}, 1] \times [0, \tfrac{1}{2}]\right\}(x,y) , 
\end{align}
up to a measure-preserving transformation. 
Applying Theorem \ref{ThmConcentrationBeta} now leads to the following result. The proof is given in Section \ref{sec:wedge-beta-plus-inftypf}.

\begin{theorem}
\label{thm:wedge-beta-plus-infty}
Let $\psi_{\wedge}(\beta_2)$ and $ \mathcal M^*_{\wedge} (\beta_2)$ be defined in \eqref{eq:wedgetemperatureTW} and \eqref{eq:wedgemaximizer}, respectively. 
Then 
\begin{align}\label{eq:wedgeoptimizer}
\lim_{\beta_2\to+\infty} \sup_{ \tilde{ W} \in \mathcal  \mathcal M^*_{\wedge} (\beta_2) }\delta_\square( \tilde{W} ,  \tilde W_{\mathrm{bip}})  = 0, 
\end{align}
where $\tilde W_{\mathrm{bip}}$ is the equivalence class of the complete balanced bipartite graphon 
$W_{\mathrm{bip}}$ in \eqref{eq:bipartiteW}. Moreover, 
\begin{align}\label{eq:temperaturewedgeW}
\lim_{\beta_2\to+\infty}\frac{\psi_{\wedge}(\beta_2)}{\beta_2}=\frac14. 
\end{align}
\end{theorem}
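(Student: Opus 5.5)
The plan is to apply Theorem~\ref{ThmConcentrationBeta} with $k=2$, identifying $\bm W=(W,1-W)$ as in Remark~\ref{remark:graphoncolor}. Set $\beta=\beta_2$, $f(W)=t_{\mathrm{ind}}(K_{1,2},W)$ and $g(W)=\beta_1 t(K_2,W)$. Both are finite linear combinations of homomorphism densities by \eqref{eq:wedgeW}, so they are continuous on $\wt{\mathcal W}$ and invariant under measure-preserving maps. By \Cref{lem:wedge-max-bipartite}, $\mathcal N_f=\{\tilde W_{\mathrm{bip}}\}$ is a singleton. Hence the tie-breaker step is trivial and $\mathcal N^*=\{\tilde W_{\mathrm{bip}}\}$. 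Note that $\mathcal I(W_{\mathrm{bip}})=0$ is finite, since $W_{\mathrm{bip}}$ is $\{0,1\}$-valued and $0\log 0=0$, so the tie-breaker value is not $-\infty$. Then \eqref{eq:tempertauredistance} is exactly \eqref{eq:wedgeoptimizer}, since $\Gamma_{\beta_2}=\mathcal M^*_\wedge(\beta_2)$.

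The main input is therefore \Cref{lem:wedge-max-bipartite}, which I take as given. If it had to be proved, I would argue as follows. Write $t_{\mathrm{ind}}=\int_0^1\int\int W(x,y)W(x,z)(1-W(y,z))\,dy\,dz\,dx$. For fixed $x$, the inner double integral is at most the mass of pairs $(y,z)$ with $y$ in the neighbourhood weight of $x$ and $z$ too, with $y,z$ non-adjacent. A Goodman/Mantel-type counting argument then bounds the total by $1/4$. Equality forces $W$ to be $\{0,1\}$-valued, each neighbourhood to be independent, and the graph to be complete bipartite. Balance follows from the identity $t_{\mathrm{ind}}=\int d(x)^2\,dx - t(K_3,W)$ together with a degree computation.

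For \eqref{eq:temperaturewedgeW}, I would give direct two-sided bounds. The lower bound comes from plugging in $W_{\mathrm{bip}}$:
\[
\psi_\wedge(\beta_2)\ \ge\ \tfrac14\beta_2+\tfrac12\beta_1 .
\]
The upper bound uses $t_{\mathrm{ind}}\le \tfrac14$, $|t(K_2,W)|\le 1$ and $-\tfrac12\mathcal I(W)\le \tfrac12\log 2$:
\[
\psi_\wedge(\beta_2)\ \le\ \tfrac14\beta_2+|\beta_1|+\tfrac12\log 2 .
\]
Dividing by $\beta_2$ and letting $\beta_2\to\infty$ gives the limit $1/4$.

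The only real obstacle is the extremal lemma, specifically its uniqueness (stability) part. Singleton-ness of $\mathcal N_f$ is what makes the convergence statement hold with no tie-breaking analysis. Everything else reduces immediately to Theorem~\ref{ThmConcentrationBeta}.
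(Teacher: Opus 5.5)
Your proposal is correct and follows the paper's proof essentially verbatim: Theorem~\ref{ThmConcentrationBeta} with $f=t_{\mathrm{ind}}(K_{1,2},\cdot)$ and $g=\beta_1 t(K_2,\cdot)$, together with Lemma~\ref{lem:wedge-max-bipartite}, gives $\mathcal N_f=\mathcal N^*=\{\tilde W_{\mathrm{bip}}\}$ and hence \eqref{eq:wedgeoptimizer}, while \eqref{eq:temperaturewedgeW} follows by sandwiching $\psi_\wedge(\beta_2)$ between its value at $W_{\mathrm{bip}}$ and $\tfrac14\beta_2+|\beta_1|+\tfrac12\log 2$. Your finiteness check on $\mathcal I(W_{\mathrm{bip}})$ is harmless but unnecessary, since $\mathcal I$ always lies in $[-\log 2,0]$, and your sketch of the lemma is only heuristic (the paper instead uses $t_{\mathrm{ind}}(K_{1,2},W)\le t(K_2,W)-t(K_{1,2},W)\le t(K_2,W)-t(K_2,W)^2\le\tfrac14$), but since you take the lemma as given this does not affect the argument.
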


\begin{remark}[Symmetry breaking for induced wedges at positive temperatures]
The above result shows that, in the limit $\beta_2\to\infty$, the induced wedge ERGM concentrates on the complete balanced bipartite graphon. Intuitively, this means that for $\beta_2$ sufficiently large, a graph sampled from the induced wedge ERGM should resemble a complete balanced bipartite graph. On the other hand, it follows from Theorem~\ref{thm:highT_colored} that when $|\beta_2|$ is sufficiently small (the high-temperature regime), the model admits a unique constant (replica symmetric) solution and is therefore asymptotically indistinguishable from an Erd\H{o}s--R\'enyi graph. This suggests the existence of a finite-temperature threshold $\beta_*\in(0,\infty)$ at which the model transitions from a replica-symmetric phase to a symmetry-broken phase. (For explicit bounds on the threshold $\beta_*$, see Proposition~\ref{ppn:finite_beta_wedge_symmetry_breaking}). This contrasts with ERGMs based on linear combinations of homomorphism densities (not necessarily induced), for which \cite[Theorem~4.1]{ChatterjeeDiaconisExponential2013} establishes replica symmetry for all non-negative parameter values.  The induced wedge density, however, falls outside this regime, since it is the difference between the homomorphism densities of the wedge and the triangle (recall \eqref{eq:wedgeW}).
Thus, the induced wedge ERGM can exhibit symmetry breaking, making it a natural
model for clustered structure, particularly in signed networks.  
\end{remark}

Next, we consider the regime $\beta_2\to-\infty$. Here, the leading constraint is $t_{\mathrm{ind}}(K_{1,2},W)=0$. It is well known that this condition is equivalent to $W$ being a \emph{cluster graphon}, that is, a graphon representing a disjoint union of
cliques (see \cite[Theorem 7.1]{janson2013graphlimits}). More precisely,
\begin{align}\label{eq:disjointW}
W(x,y)
=
\sum_{i\geq 1}\bm 1\{(x,y)\in A_i\times A_i\},
\qquad \text{ for } (x,y)\in[0,1]^2,
\end{align}
for some pairwise disjoint measurable sets $\{A_i\}_{i\geq 1}$ in $[0,1]$
satisfying $\sum_{i\geq 1}\lambda(A_i)\leq 1$, where $\lambda$ denotes Lebesgue measure. The remaining set $A_0:=[0,1]\setminus\bigcup_{i\geq 1}A_i$ is the `dust' part  where the graphon is identically zero. We denote by $\mathcal W_{\mathrm{clus}}$ the collection of cluster graphons, and let $\wt{\mathcal W}_{\mathrm{clus}}$  be the corresponding collection of equivalence classes. Applying Theorem~\ref{ThmConcentrationBeta} now yields the following result. Hereafter, for $p\in[0,1]$, we write $W\equiv p$ for the graphon that is equal to the constant $p$ on $[0, 1]^2$.

\begin{theorem} 
\label{thm:wedge-ground-states} 
Let $\psi_{\wedge}(\beta_2)$ and $ \mathcal M^*_{\wedge} (\beta_2)$ be defined in \eqref{eq:wedgetemperatureTW} and \eqref{eq:wedgemaximizer}, respectively. 
Then 
\begin{align}\label{eq:wedgenegativeoptimizer} 
\lim_{\beta_2\to - \infty} \sup_{ \tilde{ W} \in  \mathcal M^*_{\wedge} (\beta_2) }\delta_\square( \tilde{W} ,  \mathcal N^*_{\wedge})  = 0, 
\end{align}
where 
\begin{align}\label{eq:wedgenegativeN} 
\mathcal N^*_{\wedge} := 
\begin{cases}
\{W \equiv 0\} & \text{ if }  \beta_1 < 0, \\ 
\tilde{\mathcal W}_{\mathrm{clus}} & \text{ if }  \beta_1 =0 , \\ 
\{W \equiv 1\} & \text{ if }  \beta_1 >0 . 
\end{cases} 
\end{align}
Moreover, 
\begin{align}\label{eq:wedgenegativeZ} 
\lim_{\beta_2\to -\infty} \psi_{\wedge}(\beta_2) 
= \begin{cases}
0  & \text{ if } \beta_1\le 0, \\
\beta_1 & \text{ if } \beta_1>0.
\end{cases}
\end{align}
\end{theorem}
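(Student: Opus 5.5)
The plan is to apply Theorem~\ref{ThmConcentrationBeta} in the form of Remark~\ref{remark:temperaturenegativeTW}, treating the induced wedge ERGM as a $2$-colored ERGM under the identification $\bm W=(W,1-W)$ of Remark~\ref{remark:graphoncolor}. Set $\alpha:=-\beta_2\to+\infty$ and take as leading term $r(W):=-t_{\mathrm{ind}}(K_{1,2},W)$. Take as lower-order term $g(W):=\beta_1 t(K_2,W)$. Both are continuous on $\wt{\mathcal W}$ and invariant under measure-preserving maps, since $t_{\mathrm{ind}}(K_{1,2},\cdot)=t(\bm K_3^{(1,1,2)},\cdot)$ is a colored homomorphism density. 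The proof then has three steps: identify the ground-state set, solve the tie-breaker problem on it, and transfer the result to the free energy.

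\emph{Step 1 (ground states).} Since $t_{\mathrm{ind}}(K_{1,2},W)\ge 0$, the maximizers of $r$ are exactly the graphons with $t_{\mathrm{ind}}(K_{1,2},W)=0$. By \cite[Theorem 7.1]{janson2013graphlimits}, this set is $\wt{\mathcal W}_{\mathrm{clus}}$, the cluster graphons of the form \eqref{eq:disjointW}. It is nonempty, since it contains $W\equiv 0$, and it is automatically compact as the zero set of a continuous functional on a compact space.

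\emph{Step 2 (tie-breaker).} We maximize $h(W)=\beta_1 t(K_2,W)-\frac12\mathcal I(W)$ over $\wt{\mathcal W}_{\mathrm{clus}}$. A cluster graphon is $\{0,1\}$-valued, and $I_2(1,0)=I_2(0,1)=0$ with the convention $0\log 0=0$, so $\mathcal I(W)=0$. Also $t(K_2,W)=\sum_{i\ge1}\lambda(A_i)^2$. Hence $h(W)=\beta_1 \sum_i\lambda(A_i)^2$, and we split by the sign of $\beta_1$.
\begin{itemize}
\item If $\beta_1<0$, the maximum value is $0$. It is attained only when every $\lambda(A_i)=0$, that is, only by $W\equiv 0$ almost everywhere.
\item If $\beta_1>0$, we use $\sum_i\lambda(A_i)^2\le\bigl(\sum_i\lambda(A_i)\bigr)^2\le 1$. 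Equality forces a single block of full measure, so the unique maximizer is $W\equiv1$ and the maximum value is $\beta_1$.
\item If $\beta_1=0$, the functional $h$ vanishes identically on $\wt{\mathcal W}_{\mathrm{clus}}$, so $\mathcal N^*=\wt{\mathcal W}_{\mathrm{clus}}$.
\end{itemize}
This identifies $\mathcal N^*$ with $\mathcal N^*_\wedge$ in \eqref{eq:wedgenegativeN}. Then \eqref{eq:tempertauredistance} gives \eqref{eq:wedgenegativeoptimizer} directly.

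\emph{Step 3 (free energy).} We aim to show that $\psi_\wedge(\beta_2)\to\max_{\mathcal N_f} h$, which equals $0$ for $\beta_1\le0$ and $\beta_1$ for $\beta_1>0$. For the lower bound, evaluate the functional at a maximizer $W_0\in\mathcal N^*$. There $\beta_2 t_{\mathrm{ind}}(K_{1,2},W_0)=0$, so $\psi_\wedge(\beta_2)\ge h(W_0)$ for every $\beta_2$. For the upper bound, since $\beta_2<0$ and $t_{\mathrm{ind}}\ge0$, we have $\psi_\wedge(\beta_2)\le h(W_{\beta_2})$ for any $W_{\beta_2}\in\mathcal M^*_\wedge(\beta_2)$. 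By compactness, choose a sequence $\beta_2\to-\infty$ along which $\limsup h(W_{\beta_2})$ is realized and $W_{\beta_2}$ converges. By \eqref{eq:wedgenegativeoptimizer}, any such limit point lies in $\mathcal N^*_\wedge\subseteq\wt{\mathcal W}_{\mathrm{clus}}$. The functional $h$ is upper semicontinuous, because $t(K_2,\cdot)$ is continuous and $\mathcal I$ is lower semicontinuous. Therefore the limsup is at most $\max_{\mathcal N_f}h$, which proves \eqref{eq:wedgenegativeZ}.

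The main point needing care is Step~1: verifying that the zero set of the induced wedge density is exactly $\wt{\mathcal W}_{\mathrm{clus}}$, including graphons with infinitely many blocks and dust. I rely on Janson's characterization for this rather than reproving it. Given that, the remaining steps reduce to the elementary inequality $\sum_i\lambda(A_i)^2\le(\sum_i\lambda(A_i))^2$ and semicontinuity of $h$.
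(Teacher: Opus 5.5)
Your proposal is correct and follows essentially the same route as the paper: you apply Theorem~\ref{ThmConcentrationBeta} via Remark~\ref{remark:temperaturenegativeTW} with $f=-t_{\mathrm{ind}}(K_{1,2},\cdot)$ and $g=\beta_1 t(K_2,\cdot)$, use Janson's characterization to get $\mathcal N_f=\wt{\mathcal W}_{\mathrm{clus}}$ with $\mathcal I\equiv 0$ there, and then use compactness and upper semicontinuity of $h$ for the free-energy limit. The differences are minor: you check uniqueness of the tie-breaker maximizer explicitly via $t(K_2,W)=\sum_i\lambda(A_i)^2$, where the paper only notes $0\le t(K_2,W)\le 1$ with the endpoints attained, and in Step~3 you place limit points in $\mathcal N^*$ using the already-proved \eqref{eq:wedgenegativeoptimizer}, whereas the paper rederives $f(W_*)=0$ from $\psi_\wedge\ge M$ and the boundedness of $h$.
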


The proof of Theorem~\ref{thm:wedge-ground-states} is given in Section~\ref{sec:wedge-ground-statespf}. The result shows that in the $\beta_2\to-\infty$ limit, the induced wedge ERGM undergoes a sharp selection phenomenon governed by the edge-density parameter $\beta_1$. In this regime, the leading-order effect of the negative induced wedge parameter is to force the constraint $t_{\mathrm{ind}}(K_{1,2},W)=0$,  so that the admissible limiting graphons are precisely the cluster graphons. Within this ground-state set, the lower-order term $\beta_1 t(K_2,W)$ determines which cluster graphons are selected. More precisely, if $\beta_1<0$, the edge-density term favors the smallest possible edge density among cluster graphons, and the model concentrates around the empty graphon $W\equiv 0$. If $\beta_1>0$, it favors the largest possible edge density, and the model concentrates around the complete graphon $W\equiv 1$. At the critical value $\beta_1=0$, the edge-density term no longer breaks the degeneracy among cluster graphons, and every feasible cluster graphon is optimal.

\subsubsection{Rainbow Triangles ERGM}
\label{sec:rt}

In this section, we derive the zero-temperature asymptotics of the rainbow triangle ERGM, the $3$-colored ERGM with sufficient statistic given in \eqref{eq:rainbowT}. Specifically, we study the regime in which the coefficient $\beta_4$ of the rainbow triangle density diverges to $\pm\infty$, while the edge-density parameters $\beta_1,\beta_2,\beta_3$ are held fixed. 
As in the previous section, by Theorem~\ref{thm:variational_formula}, the limiting free energy in this case is given by 
\begin{align}\label{eq:rbtemperatureTW}
\lim_{n\to\infty}\psi_n = \psi_{\mathrm{rb}\triangle}(\beta_4) := \sup_{\bm W\in\wt{\mathcal W}_3} \left\{ T(\bm W)-\frac12\mathcal I(\bm W) \right\}. 
\end{align}
for $T(\bm W)$ as defined in \eqref{eq:rainbowT}. Also, define 
\begin{align}\label{eq:rbmaximizer}
\mathcal M^*_{\mathrm{rb}\triangle}(\beta_4) := \argmax_{\bm W\in\wt{\mathcal W}_3} \left\{ T(\bm W)-\frac12\mathcal I(\bm W) \right\}. 
\end{align}

We begin with the regime $\beta_4\to+\infty$. In this case, the leading-order term favors $3$-colored probability graphons that maximize the rainbow triangle density.
A first natural candidate is the uniform 3-color probability graphon $\bm{U}^{(3)}(x,y)=\left(\frac13,\frac13,\frac13\right)$, for all $x,y\in[0,1]$. For this, recalling \eqref{eq:tWrainbow}, one has  
\begin{equation*}
t_{\mathrm{rb}\triangle}(\bm U^{(3)}) = \frac{1}{3^3}  = \frac{1}{27} \quad \text{ and } \quad t_{\mathrm{rb}\triangle}^{\mathrm{sym}}(\bm U^{(3)}) = \frac{2}{9}.
\end{equation*}  
However, as mentioned in Section~\ref{sec:rc}, triangles are $3$-rainbow
uncommon \cite{erdos1972ramsey}. In graphon language, this means that there
exists a $3$-colored probability graphon with rainbow triangle density larger
than that attained by $\bm U^{(3)}$ (recall Definition~\ref{defn:rc}). In fact,
determining the maximum possible density of rainbow triangles over all
$3$-colorings of $K_n$ is a delicate question, going back to
Erd\H{o}s and S\'os \cite{rodl1990mathematics}. Using flag algebras,
Balogh et al.~\cite{baloghRainbowTrianglesThreecolored2017} settled this
problem by showing that the maximum density is asymptotically $\frac{2}{5}$ (see
Theorem~\ref{thm:rainbow} for the formal statement). Their normalization is the density of an unordered rainbow triangle,
equivalently the color-symmetrized graphon statistic
$t_{\mathrm{rb}\triangle}^{\mathrm{sym}}=6t_{\mathrm{rb}\triangle}$ (recall \eqref{eq:tWrainbow_sym}).
Therefore, their extremal value $\frac{2}{5}$ corresponds to
\begin{equation}\label{eq:fixed_conversion}
\sup_{\bm W\in\mathcal W_3}
t_{\mathrm{rb}\triangle}(\bm W)
=
\frac16\cdot\frac25
=
\frac1{15}
\end{equation}
under the fixed decorated Hamiltonian used in this paper. The extremizers are
unchanged by this factor-six conversion. The extremal value is attained in the limit by the following iterative construction.

\begin{definition}[Iterated blowups of properly 3-colored $K_4$] 
\label{defn:rainbowmaximumgraph}
Consider the following $3$-coloring of $K_4$ with vertex set $\{1,2,3,4\}$: 
\begin{align}\label{eq:color1234}
 \mathscr X(12)=\mathscr X(34)=1,\qquad \mathscr X(13)=\mathscr X(24)=2,\qquad \mathscr X(14)=\mathscr X(23)=3. 
\end{align}  
This is a proper $3$-coloring of $K_4$, and every triangle is rainbow (see Figure~\ref{fig:iterated-k4-blowup-graphon} (a)). We refer to it as the \emph{properly $3$-colored $K_4$ template}. Given this template, its {\it balanced $1$-step blowup} is the following $3$-coloring of $K_n$: 
\begin{itemize} 

\item Partition $[n]$ into four parts $V_1,V_2,V_3,V_4$ with sizes as equal as possible. 

\item Color every edge between $V_i$ and $V_j$ according to the color of the edge $(i ,j)$ in the template $K_4$, for $1 \leq i \ne j \leq 4$. 
\end{itemize}
Next, to obtain the {\it balanced $2$-step blowup}, repeat  the above 1-step construction inside each of the four diagonal blocks $V_i\times V_i$, for $i \in [4]$. Continuing in this way, the {\it balanced $t$-step blowup}, for $t \geq 3$, is obtained by recursively applying the same $K_4$-template blowup inside every diagonal block produced at the previous step.
\end{definition}

The continuum analogue of the above construction is a sequence of $3$-colored probability graphons, defined iteratively as follows. 

\begin{itemize} 

\item Define $\bm W^{(1)} := (W^{(1)}_1,W^{(1)}_2,W^{(1)}_3) \in \mathcal W_3$ as the empirical $3$-colored probability graphon (recall \eqref{eq:Wa}) associated with the properly $3$-colored $K_4$ template (the coloring $\mathscr X$ in \eqref{eq:color1234}).

\item For $t \geq 2$, given $\bm W^{(t-1)}$, construct $\bm W^{(t)} = (W^{(t)}_1,W^{(t)}_2,W^{(t)}_3) \in \mathcal W_3$ by first partitioning $[0,1]$ into four intervals $A_1, A_2, A_3, A_4$ of equal length. On each off-diagonal block $A_i\times A_j$, with $i\neq j$, assign the constant color prescribed by the properly $3$-colored $K_4$ template: 
$$W^{(t)}_a(x,y)=1 \quad\text{if and only if } \mathscr X(i,j)=a,$$ 
for $(x,y)\in A_i\times A_j$. On each diagonal block $A_i\times A_i$ place a rescaled copy of $\bm W^{(t-1)}$. More precisely, if $\phi_i:A_i\to[0,1]$ denotes the affine bijection, for $i \in [4]$, then $$W^{(t)}_a(x,y) = W^{(t-1)}_a(\phi_i(x),\phi_i(y)),$$ for $(x,y)\in A_i\times A_i$ and $a\in[3]$. Thus, $\bm W^{(2)}$ is obtained by repeating the 1-step construction inside each of the four diagonal blocks of $\bm W^{(1)}$, and the graphon $\bm W^{(t)}$ is obtained by iterating this procedure $t$ times (see Figure~\ref{fig:iterated-k4-blowup-graphon} (b)).
\end{itemize}
Denote by $\wt{\bm W}^{(t)}$ the equivalence class of $\bm W^{(t)}$. Note that $\{\wt{\bm W}^{(t)}\}_{t \geq 1}$ is a Cauchy sequence in the compact space $(\tilde{\mathcal W}_3, \delta_\square)$, hence, $\{\wt{\bm W}^{(t)}\}_{t \geq 1}$ has a limit $\tilde{\bm W}^{\mathrm{rb}}$, which we call the {\it blowup $3$-colored probability graphon of the $K_4$ template.}

\begin{figure}[htbp]
\centering
\resizebox{0.90\textwidth}{!}{%
\begin{tikzpicture}[
    every node/.style={font=\small},
    vertex/.style={
        circle,
        draw=black,
        fill=white,
        thick,
        minimum size=7mm,
        inner sep=0pt
    },
    arr/.style={-{Latex[length=3mm]}, thick}
]

\definecolor{colone}{RGB}{215,39,40}
\definecolor{coltwo}{RGB}{31,119,180}
\definecolor{colthree}{RGB}{44,160,44}

\draw[colone, line width=1.5pt]
    (-2.05,3.00) -- (-1.55,3.00);
\node[anchor=west] at (-1.45,3.00) {color $1$};

\draw[coltwo, line width=1.5pt]
    (-0.25,3.00) -- (0.25,3.00);
\node[anchor=west] at (0.35,3.00) {color $2$};

\draw[colthree, line width=1.5pt]
    (1.55,3.00) -- (2.05,3.00);
\node[anchor=west] at (2.15,3.00) {color $3$};

\coordinate (v1) at (-1.45, 1.65);
\coordinate (v2) at ( 1.45, 1.65);
\coordinate (v3) at (-1.45,-1.25);
\coordinate (v4) at ( 1.45,-1.25);

\draw[colone, line width=2.2pt] (v1)--(v2);
\draw[colone, line width=2.2pt] (v3)--(v4);

\draw[coltwo, line width=2.2pt] (v1)--(v3);
\draw[coltwo, line width=2.2pt] (v2)--(v4);

\draw[colthree, line width=2.2pt] (v1)--(v4);
\draw[colthree, line width=2.2pt] (v2)--(v3);

\node[vertex] at (v1) {$1$};
\node[vertex] at (v2) {$2$};
\node[vertex] at (v3) {$3$};
\node[vertex] at (v4) {$4$};

\draw[arr] (2.85,0.20) -- (4.05,0.20);
\node at (3.45, 0.67) {blow up};
\node at (3.45,-0.27) {and iterate};

\def\xG{5.55}
\def\yG{-2.05}
\def\LG{5.00}

\def\KIVlist{%
  0/1/colone,   1/0/colone,
  2/3/colone,   3/2/colone,
  0/2/coltwo,   2/0/coltwo,
  1/3/coltwo,   3/1/coltwo,
  0/3/colthree, 3/0/colthree,
  1/2/colthree, 2/1/colthree%
}

\foreach \bi/\bj/\bcolor in \KIVlist {
  \fill[\bcolor]
    ({\xG + \bj*\LG/4},
     {\yG + \LG - (\bi+1)*\LG/4})
    rectangle ++({\LG/4},{\LG/4});
}

\foreach \dA in {0,1,2,3} {
  \foreach \bi/\bj/\bcolor in \KIVlist {
    \fill[\bcolor]
      ({\xG + \dA*\LG/4 + \bj*\LG/16},
       {\yG + \LG - \dA*\LG/4 - (\bi+1)*\LG/16})
      rectangle ++({\LG/16},{\LG/16});
  }
}

\foreach \dA in {0,1,2,3} {
  \foreach \dB in {0,1,2,3} {
    \foreach \bi/\bj/\bcolor in \KIVlist {
      \fill[\bcolor]
        ({\xG + \dA*\LG/4 + \dB*\LG/16
               + \bj*\LG/64},
         {\yG + \LG - \dA*\LG/4 - \dB*\LG/16
               - (\bi+1)*\LG/64})
        rectangle ++({\LG/64},{\LG/64});
    }
  }
}

\foreach \dA in {0,1,2,3} {
  \foreach \dB in {0,1,2,3} {
    \foreach \dC in {0,1,2,3} {
      \foreach \bi/\bj/\bcolor in \KIVlist {
        \fill[\bcolor]
          ({\xG + \dA*\LG/4 + \dB*\LG/16
                 + \dC*\LG/64 + \bj*\LG/256},
           {\yG + \LG - \dA*\LG/4 - \dB*\LG/16
                 - \dC*\LG/64 - (\bi+1)*\LG/256})
          rectangle ++({\LG/256},{\LG/256});
      }
    }
  }
}

\foreach \k in {0,1,2,3,4} {
  \pgfmathsetmacro{\xx}{\xG+\LG*\k/4}
  \pgfmathsetmacro{\yy}{\yG+\LG*\k/4}

  \draw[black, line width=0.45pt]
      (\xx,\yG) -- (\xx,{\yG+\LG});

  \draw[black, line width=0.45pt]
      (\xG,\yy) -- ({\xG+\LG},\yy);
}

\draw[black, line width=0.65pt]
    (\xG,\yG) rectangle ++(\LG,\LG);

\foreach \i/\lab in {0/A_1,1/A_2,2/A_3,3/A_4} {
  \pgfmathsetmacro{\xx}{\xG+\LG*(\i+0.5)/4}
  \node at (\xx,{\yG+\LG+0.30}) {$\lab$};
}

\foreach \i/\lab in {0/A_1,1/A_2,2/A_3,3/A_4} {
  \pgfmathsetmacro{\yy}{
      \yG+\LG-\LG*(\i+0.5)/4
  }
  \node[anchor=east] at ({\xG-0.18},\yy) {$\lab$};
}

\node at ({\xG+\LG/2},{\yG-0.45}) {$y$};
\node[rotate=90] at
    ({\xG-0.58},{\yG+\LG/2}) {$x$};

\node[anchor=east] at
    ({\xG-0.14},{\yG+\LG}) {$0$};

\node[anchor=west] at
    ({\xG+\LG+0.14},{\yG+\LG}) {$1$};

\node[anchor=east] at
    ({\xG-0.14},\yG) {$1$};

\node[anchor=west] at
    ({\xG+\LG+0.14},\yG) {$1$};

\def\yPanelCaption{-3.00}

\node[
    anchor=north,
    align=center,
    text width=4.6cm,
    inner sep=0pt
] at (0,\yPanelCaption) {
    \textup{ (a) }
};

\node[
    anchor=north,
    align=center,
    text width=5.8cm,
    inner sep=0pt
] at ({\xG+\LG/2},\yPanelCaption) {
    \textup{ (b)} } ; 
\end{tikzpicture}%
}
\caption{ \small{ (a) The properly $3$-colored $K_4$ template and (b) its iterated
blowup $3$-colored probability graphon. In panel (b), the off-diagonal blocks follow the $K_4$ color template, while every diagonal block contains
a scaled copy of the whole pattern. } }
\label{fig:iterated-k4-blowup-graphon}
\end{figure}

\begin{remark}[Rainbow triangle density in $\tilde{\bm W}^{\mathrm{rb}}$]  
Recall the definition of the fixed decorated rainbow triangle homomorphism density $t_{\mathrm{rb}\triangle}(\cdot)$ from \eqref{eq:tWrainbow}. 
Using the iterated construction above, we can derive a recursion for $t_{\mathrm{rb}\triangle}(\bm W^{(t)})$. For this, sample an ordered triple $(x,y,z)$ uniformly from $[0,1]^3$ and consider the following cases relative to the top-level four-part partition in the construction of $\bm W^{(t+1)}$:
\begin{itemize}
\item First, suppose that $x,y,z$ fall in three distinct top-level parts. This event has probability $\frac{4\cdot3\cdot2}{4^3}=\frac38$. The properly $3$-colored $K_4$ template guarantees that the corresponding
triangle is rainbow. However, among the six orderings of the vertices of a
rainbow triangle, exactly one has the prescribed color pattern $\operatorname{col}(x,y)=1$, $\operatorname{col}(y,z)=2$, and $\operatorname{col}(z,x)=3$. Consequently, the contribution of triples falling in three distinct parts to the fixed decorated density is $\frac16\cdot\frac38=\frac1{16}$. 
\item Second, suppose that all three points fall in the same top-level part. The probability of this event is $4\left(\frac14\right)^3=\frac1{16}$. Inside each such diagonal block, the construction is a rescaled copy of $\bm W^{(t)}$, and therefore this case contributes $\frac1{16}\, t_{\mathrm{rb}\triangle}(\bm W^{(t)})$. 
\item Finally, if exactly two of the three points lie in the same top-level part,
then the two cross-edges joining these points to the third part have the same
deterministic color. Such a triangle cannot be rainbow, hence, this case
contributes zero. 
\end{itemize}
Combining the three cases yields 
\begin{equation}\label{eq:fixed_blowup_recursion}
t_{\mathrm{rb}\triangle}(\bm W^{(t+1)})
=
\frac1{16}
+
\frac1{16}
t_{\mathrm{rb}\triangle}(\bm W^{(t)}).
\end{equation}
It remains to determine the initial value. Note that, by definition, $\bm W^{(1)}$ is the
empirical probability graphon associated with the properly colored $K_4$
template, with the uniform distribution
$(\frac13,\frac13,\frac13)$ on each of its four diagonal blocks. The
off-diagonal contribution is again $\frac1{16}$. Moreover, the probability
that all three sampled points fall in the same diagonal block is $\frac1{16}$,
and, conditional on this event, the fixed decorated density is $\frac13\cdot\frac13\cdot\frac13=\frac1{27}$. Therefore, $t_{\mathrm{rb}\triangle}(\bm W^{(1)}) = \frac1{16} + \frac1{16}\cdot\frac1{27} = \frac7{108}$. Now, solving the recursion \eqref{eq:fixed_blowup_recursion} with the initial value gives 
\begin{equation}\label{eq:fixed_blowup_solution}
t_{\mathrm{rb}\triangle}(\bm W^{(t)})
=
\frac1{15}
-
\frac{1}{540\,16^{t-1}},
\qquad t\ge1.
\end{equation}
This implies, $t_{\mathrm{rb}\triangle}(\bm W^{\mathrm{rb}}) = \frac1{15}$, which is consistent with the result in \cite{baloghRainbowTrianglesThreecolored2017} (recall \eqref{eq:fixed_conversion}). 
\label{remark:trbW}
\end{remark}

Using the above extremal construction combined with Theorem \ref{ThmConcentrationBeta}, we now derive in the next theorem the asymptotic behavior of the rainbow triangle ERGM as $\beta_4 \rightarrow \infty$. 
The proof of the result is given in Section \ref{sec:concentrationrainbowpf}.

\begin{theorem}\label{thm:concentrationrainbow} 
Let $\psi_{\mathrm{rb}\triangle}(\beta_4)$ and $\mathcal M^*_{\mathrm{rb}\triangle}(\beta_4)$ be defined in \eqref{eq:rbtemperatureTW} and \eqref{eq:rbmaximizer},  respectively.  Then 
\begin{align}\label{eq:rainbowdistanceN}
\lim_{\beta_4 \to+\infty} \sup_{ \tilde{ \bm W} \in \mathcal M^*_{\mathrm{rb}\triangle}(\beta_4) }\delta_\square( \tilde{\bm{W}} ,  \wt{\bm W}^{\mathrm{rb}})  = 0. 
\end{align} 
Further,  
\begin{align}\label{eq:rainbowZ}
\lim_{\beta_4\to+\infty}\frac{\psi_{\mathrm{rb}\triangle}(\beta_4)}{\beta_4}= \frac{1}{15} . 
\end{align} 
\end{theorem}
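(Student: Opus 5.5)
The plan is to apply Theorem~\ref{ThmConcentrationBeta} with $f(\bm W)=t_{\mathrm{rb}\triangle}(\bm W)$ and $g(\bm W)=\sum_{a=1}^3\beta_a t(K_2^{(a)},\bm W)$. Both functionals are continuous on $\wt{\mathcal W}_3$ and invariant under measure-preserving maps.

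The key input is the extremal result of \citet{baloghRainbowTrianglesThreecolored2017}, recalled as Theorem~\ref{thm:rainbow}. In graphon form, after the factor-six conversion \eqref{eq:fixed_conversion}, it states that $\sup_{\bm W}t_{\mathrm{rb}\triangle}(\bm W)=\frac1{15}$. I will use it in its stability/uniqueness form: every $\bm W$ attaining $\frac1{15}$ is weakly isomorphic to the iterated $K_4$-blowup $\wt{\bm W}^{\mathrm{rb}}$, up to a global permutation of the colors.

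The color-permutation issue has to be handled first. Since $t_{\mathrm{rb}\triangle}$ uses a fixed decoration, a color permutation $\tau$ sends $t(\bm K_3^{(1,2,3)},\cdot)$ to $t(\bm K_3^{(\tau(1),\tau(2),\tau(3))},\cdot)$. Every rainbow decoration of a triangle is obtained from $(1,2,3)$ by a vertex relabeling, so this density is invariant under $\tau$. The blowup template is also invariant under color permutations up to vertex relabeling, because permuting colors of the proper $3$-coloring of $K_4$ corresponds to an automorphism of $K_4$. Hence $\tau\wt{\bm W}^{\mathrm{rb}}\sim \wt{\bm W}^{\mathrm{rb}}$, and $\mathcal N_f=\{\wt{\bm W}^{\mathrm{rb}}\}$ is a singleton. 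I would verify this template-symmetry claim concretely by checking that the Klein four-group together with the vertex stabilizer realizes all of $S_3$ on the three perfect matchings.

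Once $\mathcal N_f$ is a singleton, the tie-breaker step is trivial. We get $\mathcal N^*=\mathcal N_f=\{\wt{\bm W}^{\mathrm{rb}}\}$, which is well defined because $\mathcal I(\bm W^{\mathrm{rb}})$ is finite. Indeed $\bm W^{\mathrm{rb}}$ is $\{0,1\}$-valued almost everywhere, so $\mathcal I=0$. Statement \eqref{eq:tempertauredistance} then gives \eqref{eq:rainbowdistanceN} directly.

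For the free energy \eqref{eq:rainbowZ}, I will prove matching bounds.

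\textbf{Lower bound.} Plug $\bm W^{\mathrm{rb}}$ into the variational problem: $\psi\ge \frac{\beta_4}{15}+g(\bm W^{\mathrm{rb}})-0$, and $g$ is bounded.

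\textbf{Upper bound.} For every $\bm W$, $T-\frac12\mathcal I\le \frac{\beta_4}{15}+\sum_a|\beta_a|+\frac12\log 3$, using $-\mathcal I\le\log 3$.

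Dividing by $\beta_4$ and letting $\beta_4\to\infty$ gives the limit $\frac1{15}$.

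The main obstacle is the uniqueness of the extremizer. Balogh et al.\ give the extremal value and a structural (stability) statement for finite colorings. This must be transferred to a statement that the graphon maximizer is exactly $\wt{\bm W}^{\mathrm{rb}}$, including graphons with fractional values. My route would be to take a maximizing $\bm W$ and sample $W$-random colorings (colored $\mathbb G(n,\bm W)$). These have rainbow density $\frac1{15}-o(1)$, so by stability they are $o(n^2)$-close in edit distance to iterated blowups. Passing to the limit would then give $\delta_\square(\wt{\bm W},\wt{\bm W}^{\mathrm{rb}})=0$. If only approximate stability is available, $\mathcal N_f$ is still the closed set of exact maximizers, and the argument goes through once it is identified as this single class.
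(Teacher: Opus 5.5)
Your proposal is correct and follows essentially the same route as the paper. The paper likewise applies Theorem~\ref{ThmConcentrationBeta} with $f=t_{\mathrm{rb}\triangle}$ and the edge terms as $g$, identifies $\mathcal N_f=\mathcal N^*=\{\wt{\bm W}^{\mathrm{rb}}\}$, and bounds $\psi_{\mathrm{rb}\triangle}(\beta_4)/\beta_4$ from above and below exactly as you do; the identification of $\mathcal N_f$ is Corollary~\ref{cor:graphon_extremality_rainbow}, which transfers the Balogh et al.\ uniqueness to graphons by sampling $\bm W$-random colorings and absorbs color permutations through vertex permutations of the $K_4$ template.

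Two harmless side remarks. First, $\mathcal I$ always lies in $[-\log 3,0]$, so no finiteness check is needed for the tie-breaker. Second, the Klein four-group is exactly the kernel of the action of $S_4$ on the three perfect matchings, so the vertex stabilizer alone already realizes all of $S_3$.
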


Next, we consider the regime $\beta_4\to-\infty$. In this regime, the leading-order constraint is $t_{\mathrm{rb}\triangle}(\bm W)=0$. Finite edge-colorings of complete graphs with no rainbow triangle have been studied extensively in extremal and structural combinatorics. Their structure goes back to Gallai's work on transitively orientable graphs \cite{gallaiTransitivOrientierbareGraphen1967}; in the modern graph-coloring language, such colorings are commonly called \emph{Gallai colorings}. The terminology and several basic consequences are discussed in \cite{gyarfasEdgeColoringsComplete2004}. Subsequent work has studied Ramsey-type properties, structural refinements, and enumerative aspects of Gallai colorings (see, for example, \cite{gyarfasRamseytypeResultsGallai2010, baloghTypicalStructureGallai2019,bastosCountingGallai3colorings2019}). For the rainbow triangle ERGM, Theorem~\ref{ThmConcentrationBeta} implies that the asymptotics as $\beta_4\to-\infty$ are governed by a secondary variational problem on the no-rainbow triangle ground-state set. More precisely, after the leading term forces $t_{\mathrm{rb}\triangle}(\bm W)=0$, the limiting behavior is determined by maximizing the tie-breaker functional $\sum_{s=1}^3 \beta_s t(K_2^{(s)},\bm W)-\frac12\mathcal I(\bm W)$ over this constrained class. This leads to the optimization problem:  \begin{align}\label{eq:rt_negative_tiebreaker} 
\sup_{\tilde{\bm W} \in \tilde{\mathcal W}_3} \left\{ \sum_{s=1}^3 \beta_s t(K_2^{(s)},\bm W)-\frac12\mathcal I(\bm W): t_{\mathrm{rb}\triangle}(\bm W)=0 \right\}. 
\end{align}  
We show that the optimizers for this problem are constant bichromatic probability graphons. Specifically, for $1\leq a<b\leq 3$, define $\bm W^{(ab)} = (W^{(ab)}_1,W^{(ab)}_2,W^{(ab)}_3)$ by 
\begin{align}\label{eq:colorW}
W^{(ab)}_a(x,y) = \frac{e^{2\beta_a}}{e^{2\beta_a}+e^{2\beta_b}}, \quad W^{(ab)}_b(x,y) = \frac{e^{2\beta_b}}{e^{2\beta_a}+e^{2\beta_b}}, \quad \text{ and } \quad W^{(ab)}_c(x,y)=0 , 
\end{align} 
for almost every $(x,y)\in[0,1]^2$ and for $c\in[3]\setminus\{a,b\}$. Equivalently, $\bm W^{(ab)}$ is the constant probability graphon supported only on two colors $a$ and $b$, with edge-color distribution given by the two-color Gibbs weights determined by $\beta_a$ and $\beta_b$. The following theorem shows that, as $\beta_4\to-\infty$, the rainbow triangle ERGM concentrates around the equivalence classes of these bichromatic probability graphons corresponding to the maximizing pair of edge parameters.

\begin{theorem}\label{thm:concentration_rainbow_negative} 
Let $\psi_{\mathrm{rb}\triangle}(\beta_4)$ and $\mathcal M^*_{\mathrm{rb}\triangle}(\beta_4)$ be defined in \eqref{eq:rbtemperatureTW} and \eqref{eq:rbmaximizer},  respectively.  Define 
$$\mathcal N^*_{\mathrm{rb}\triangle} = \left\{ \tilde{\bm W}^{(ab)}:(a,b)\in\mathcal P_* \right\} , $$ 
where $ \mathcal P_* := \argmax_{1\leq a<b\leq 3} (e^{2\beta_a}+e^{2\beta_b})$. Then 
\begin{align}\label{eq:rainbowfreedistanceN}
\lim_{\beta_4 \to-\infty} \sup_{ \tilde{ \bm W} \in \mathcal M^*_{\mathrm{rb}\triangle}(\beta_4) }\delta_\square( \tilde{\bm{W}} , \mathcal N^*_{\mathrm{rb}\triangle} )  = 0. 
\end{align} 
Further,  
\begin{align}\label{eq:rainbowfreeZ}
\lim_{\beta_4\to-\infty} \psi_{\mathrm{rb}\triangle}(\beta_4) = \frac{1}{2} \log\left( \max_{1\leq a<b\leq3} \left(e^{2\beta_a}+e^{2\beta_b}\right) \right) . 
\end{align}

\end{theorem}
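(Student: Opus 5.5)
The plan is to apply Theorem~\ref{ThmConcentrationBeta} via Remark~\ref{remark:temperaturenegativeTW}. Set $\alpha=-\beta_4$, $f(\bm W)=-t_{\mathrm{rb}\triangle}(\bm W)$ and $g(\bm W)=\sum_{s=1}^3\beta_s t(K_2^{(s)},\bm W)$. Since $t_{\mathrm{rb}\triangle}\ge 0$ and the constant bichromatic graphons attain the value $0$, we have $\mathcal N_f=\{\tilde{\bm W}: t_{\mathrm{rb}\triangle}(\bm W)=0\}$. The whole task then reduces to identifying $\mathcal N^*$, the set of maximizers of $h=g-\frac12\mathcal I$ over $\mathcal N_f$, and showing that $\mathcal N^*=\mathcal N^*_{\mathrm{rb}\triangle}$. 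Once this is done, \eqref{eq:rainbowfreedistanceN} is exactly \eqref{eq:tempertauredistance}.

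For the free energy \eqref{eq:rainbowfreeZ} we argue by sandwiching. The lower bound is $\psi_{\mathrm{rb}\triangle}(\beta_4)\ge \mathcal F(\bm W^{(ab)})=h(\bm W^{(ab)})$, since the rainbow term vanishes there. A direct computation gives $h(\bm W^{(ab)})=\frac12\log(e^{2\beta_a}+e^{2\beta_b})$. For the upper bound, $\psi\le\sup h$ over the sublevel set $\{t_{\mathrm{rb}\triangle}\le C/|\beta_4|\}$ for a suitable constant $C$. By compactness and upper semicontinuity of $h$, this supremum converges to $\sup_{\mathcal N_f}h$ as $\beta_4\to-\infty$. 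The limit identity then follows once we know $\sup_{\mathcal N_f}h=\frac12\log\max_{a<b}(e^{2\beta_a}+e^{2\beta_b})$.

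The key step is therefore the \emph{constrained} problem \eqref{eq:rt_negative_tiebreaker}. Pointwise, $h$ equals $\frac12\int\big(\langle 2\bm\beta,\bm W\rangle - I_3(\bm W)\big)$. By Gibbs' variational principle, the pointwise maximum of $\langle 2\bm\beta,\bm\mu\rangle-I_3(\bm\mu)$ over $\bm\mu$ supported in a set $S$ of colors equals $\log\sum_{a\in S}e^{2\beta_a}$. It is therefore enough to show that a rainbow-free $\bm W$ essentially uses at most two colors, in an integrated, entropy-compatible sense.

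I would make this precise through the Gallai structure of the constraint. If $t_{\mathrm{rb}\triangle}(\bm W)=0$, then for a.e.\ triple $(x,y,z)$ the product $W_1(x,y)W_2(y,z)W_3(z,x)$ and all its color-permuted versions vanish, since all six orderings arise from relabeling vertices. The cleanest route I would attempt is a \emph{uniform} pointwise bound. Let $\bm W$ be rainbow-free and let $\bm W'$ be obtained by an LDP/sampling argument: the uniform-mixture entropy bound $-\mathcal I(\bm W)\le$ (limit of $\frac{2}{n^2}\log$ of the number of colorings near $\bm W$), with the colorings tilted by $e^{2\beta}$ weights. One then bounds the weighted count of Gallai $3$-colorings of $K_n$, $\sum_{\mathscr X\text{ Gallai}}\prod e^{2\beta_{\mathscr X(e)}}$, by $\binom{3}{2}\max_{a<b}(e^{2\beta_a}+e^{2\beta_b})^{\binom n2}e^{o(n^2)}$. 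This is the weighted analogue of the known count of Gallai colorings, namely that almost all are $2$-colored (Balogh--Li and Bastos et al.).

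Concretely, the plan is to use Theorem~\ref{THM:LDP_prob_graphons} with $\bm\nu\propto(e^{2\beta_a})_a$. Then $h(\bm W)=-\frac12\mathcal I_{\bm\nu}(\bm W)+\frac12\log Z_\beta$ with $Z_\beta=\sum_a e^{2\beta_a}$. Consequently $\sup_{\mathcal N_f}h=\frac12\log Z_\beta-\frac12\inf_{\mathcal N_f}\mathcal I_{\bm\nu}$, and the infimum equals the large-deviation rate of the rainbow-free event under $\mathscr X(n,\bm\nu)$. That event's probability is at most $\#\{\text{rainbow-free}\}$-weighted, and we must show it equals $\max_{a<b}(\nu_a+\nu_b)^{\binom n2}e^{o(n^2)}$. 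The probability of a rainbow-free coloring under $\mathscr X(n,\bm\nu)$ is a removal-lemma/container-type statement. I expect the main obstacle to be proving this weighted Gallai counting bound for non-uniform $\bm\nu$, where the standard results are for uniform colorings.

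My first approach would be induction via Gallai's decomposition: every Gallai coloring has a vertex partition whose reduced graph uses at most two colors. From it one derives the recursion $P_n\le\sum_{\text{partitions}}(\nu_a+\nu_b)^{\text{cross edges}}\prod P_{n_i}$. This should be combined with the fact that an unbalanced partition costs little entropy while a balanced partition is dominated by $\max(\nu_a+\nu_b)^{\binom n2}$, with the $e^{o(n^2)}$ slack handled by the multinomial count of partitions.

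Finally, the characterization of equality: equality forces $\mathcal I_{\bm\nu}(\bm W)=-\log\max(\nu_a+\nu_b)$. This in turn forces $\bm W$ to be the constant $\bm W^{(ab)}$, by strict convexity of $D_3(\cdot\|\bm\nu)$ restricted to the face $\{\mu_c=0\}$ and by the LDP identification of the unique minimizer on that face. This identifies $\mathcal N^*$ as claimed.
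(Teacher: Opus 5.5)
Your outer framework matches the paper's. You apply Theorem~\ref{ThmConcentrationBeta} with $\alpha=-\beta_4$, $f=-t_{\mathrm{rb}\triangle}$ and $g=\sum_a\beta_a t(K_2^{(a)},\cdot)$, and you sandwich the free energy using the bichromatic graphons and upper semicontinuity of $h$.

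\textbf{The gap.} It lies in identifying $\mathcal N^*$, which is what \eqref{eq:rainbowfreedistanceN} actually requires: every maximizer of $h$ on $\{t_{\mathrm{rb}\triangle}=0\}$ must be some $\bm W^{(ab)}$ with $(a,b)\in\mathcal P_*$. Your weighted-Gallai-count plus LDP route, even if completed, only yields the \emph{value} $\inf_{\mathcal N_f}\mathcal I_{\bm\nu}=-\log\max_{a<b}(\nu_a+\nu_b)$. That is an integrated statement and carries no pointwise information about $\bm W$. Your final step invokes strict convexity of $D_3(\cdot\|\bm\nu)$ on a face $\{\mu_c=0\}$. This presupposes that $\bm W(x,y)$ lies in one fixed two-color face for a.e.\ $(x,y)$, which is exactly what must be proved. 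A priori a rainbow-free $\bm W$ could use different faces at different points, and when $|\mathcal P_*|>1$ several faces attain the same pointwise optimum. So nothing in your argument excludes non-constant optimizers.

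The value step is also left open; you flag the weighted Gallai bound yourself as the main obstacle. Relating $\inf_{\mathcal N_f}\mathcal I_{\bm\nu}$ to exactly rainbow-free colorings is not immediate either. With the diagonal convention \eqref{eq:Wa} one has $t_{\mathrm{rb}\triangle}(\bm W^{\mathscr X})>0$ always, so the LDP only controls neighbourhoods $\{t_{\mathrm{rb}\triangle}<\epsilon\}$. You would additionally need a colored removal lemma, or a sampling argument from $\bm W$, to transfer a count of exactly rainbow-free colorings.

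\textbf{The missing idea.} An elementary pointwise fact makes all of this unnecessary: if $t_{\mathrm{rb}\triangle}(\bm W)=0$, then $|\{a:W_a(x,y)>0\}|\le 2$ for a.e.\ $(x,y)$.
\begin{itemize}
\item Suppose the set $E$ of pairs with full support had positive measure. Fubini gives an $x$ with $\lambda(E_x)>0$.
\item For a.e.\ $y,z\in E_x$ and any color $b$, let $a,c$ be the other two colors. Then $W_a(x,y)W_c(z,x)>0$, so \eqref{eq:zero-rb-ae} forces $W_b(y,z)=0$.
\item This holds for every $b$, contradicting $\bm W(y,z)\in\Delta_3$.
\end{itemize}
With this lemma the argument closes quickly:
\begin{itemize}
\item The Gibbs bound on each two-color face holds pointwise and integrates to $h\le\frac12\log\max_{a<b}(e^{2\beta_a}+e^{2\beta_b})$.
\item Equality forces pointwise equality a.e. Hence $\bm W(x,y)$ is the Gibbs vector of some palette $P(x,y)\in\mathcal P_*$, with support exactly $P(x,y)$.
\item Palettes agree on a.e.\ triangle. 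If three two-element subsets of $[3]$ are not all equal, they admit distinct representatives, which would give a positive-weight rainbow triangle.
\item The identity $\sum_\alpha p_\alpha(x)^2=1$ together with symmetry makes $P$ a.e.\ constant.
\end{itemize}
This is how the paper obtains $\mathcal N^*=\mathcal N^*_{\mathrm{rb}\triangle}$.
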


The proof of Theorem \ref{thm:concentration_rainbow_negative} is given in Section \ref{sec:concentration_rainbow_negativepf}. Note that in the symmetric case 
$\beta_1=\beta_2=\beta_3$,  $ \mathcal P_* =\{(1,2),(1,3),(2,3)\}$, and hence,  
$\mathcal N^*_{\mathrm{rb}\triangle} = \{ \tilde{\bm W}^{(12)}, \tilde{\bm W}^{(23)}, \tilde{\bm W}^{(13)} \} $,  that is, all three balanced bichromatic probability graphons are selected.

\begin{remark}[Gallai constraints and bichromatic phases]
Although the class of no-rainbow graphons contains the graphon limits of all Gallai colorings and can have a rich recursive structure, Theorem \ref{thm:concentration_rainbow_negative} shows that the entropy-tilted optimization problem in \eqref{eq:rt_negative_tiebreaker} has a much simpler set of optimizers: they are probability graphons supported only on two colors. In other words, the Gibbs measure at $\beta_4 \to-\infty$ creates at most coexistence among bichromatic phases, but does not produce genuinely three-color Gallai-type graphons. 
\end{remark}

\subsection{Finite Temperature Symmetry Breaking}
\label{sec:symmetry}

The results in the preceding section show that, in the positive zero-temperature limit, the variational problems for both the induced wedge ERGM and the rainbow triangle ERGM converge to extremal nonconstant probability graphons. On the other hand, Theorem~\ref{thm:highT_colored} implies that, in the high-temperature regime, both models admit unique constant, replica-symmetric solutions. This suggests the existence of a finite-temperature threshold for each model, at which the model transitions from a replica-symmetric phase to a symmetry-broken phase. In this section, we derive explicit bounds on the corresponding thresholds for the induced wedge and rainbow triangle ERGMs. We also complement these results with numerical simulations in Section~\ref{sec:finite_beta_simulations}.

\subsubsection{Induced Wedge ERGM}

Recall the induced wedge ERGM variational problem from \eqref{eq:wedgetemperatureTW}. For simplicity, throughout we will assume the coefficient of the edge density $\beta_1 = 0$, in which case 
\eqref{eq:wedgetemperatureTW} simplifies to 
\begin{align}\label{eq:symmetrywedgeTW}
\psi_{\wedge}(\beta_2) = \sup_{W} \left\{ \beta_2 t_{\mathrm{ind}}(K_{1, 2}, W) - \frac12\mathcal I(W) \right\} . 
\end{align}
Then $\beta_2 > 0 $ is in the replica symmetry regime, whenever $\psi_{\wedge}(\beta_2) = \psi_{\wedge}^{\mathrm{const}}(\beta_2)$, and in the symmetry-breaking regime, whenever $\psi_{\wedge}(\beta_2) > \psi_{\wedge}^{\mathrm{const}}(\beta_2)$, where 
\begin{align}\label{eq:wedgeconstant}
\psi_{\wedge}^{\mathrm{const}}(\beta_2) :=
\sup_{c\in[0,1]}
\left\{ \beta_2 c^2(1-c)-\frac12 I(c) \right\} ,   
\end{align}
with $I(c):= I_2(c,(1-c))= c\log c+(1-c)\log(1-c)$, 
is the value of the variational problem \eqref{eq:wedgetemperatureTW} for the optimal constant graphon.

\begin{proposition}\label{ppn:finite_beta_wedge_symmetry_breaking}
Consider the induced wedge ERGM variational problem in \eqref{eq:symmetrywedgeTW}. Then the following hold: 

\begin{itemize} 

\item[$(1)$] For $|\beta_2|<\frac{1}{3}$, $\psi_{\wedge}(\beta_2) = \psi_{\wedge}^{\mathrm{const}}(\beta_2)$, that is, the model is in the replica symmetric regime. 

\item[$(2)$] For $\beta_2\ge 3.054$, $\psi_{\wedge}(\beta_2) > \psi_{\wedge}^{\mathrm{const}}(\beta_2)$, that is, the model is in the symmetry breaking regime. 
 
\end{itemize} 
\end{proposition}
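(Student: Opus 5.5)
For part (1), I will apply Theorem~\ref{thm:highT_colored} with $k=2$. The first step is to write the induced wedge density as a single $2$-colored graph: $t_{\mathrm{ind}}(K_{1,2},W)=t(\bm K_3^{(1,1,2)},\bm W)$, where $\bm W=(W,1-W)$. For this graph $|E(H)|=3$, and when $k=2$ the only pair of colors gives $|E_1|+|E_2|=3$. Taking $r=1$, the threshold in \eqref{eq:Lambda_pair_def} becomes
\[
\Lambda=|\beta_2|\cdot(3-1)\cdot 3=6|\beta_2|.
\]
Hence $\Lambda<2$ is equivalent to $|\beta_2|<\tfrac13$.

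Theorem~\ref{thm:highT_colored} then gives a unique maximizer, and it is constant. Its value is $\sup_c\{\beta_2c^2(1-c)-\tfrac12 I(c)\}$, because $t_{\mathrm{ind}}(K_{1,2},c)=c^2(1-c)$. So $\psi_\wedge=\psi_\wedge^{\mathrm{const}}$, and part (1) follows directly. The decomposition $t_{\mathrm{ind}}=t(K_{1,2},\cdot)-t(K_3,\cdot)$ is not needed, since the single colored-graph representation already enters the theorem.

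For part (2), I will exhibit a bipartite competitor that beats every constant graphon. Take a two-block graphon, balanced with parts of measure $\tfrac12$, equal to $q$ within the blocks and $p$ across them. Its induced wedge density is computed by cases on the block memberships of the centre $x$ and the leaves $y,z$. The centre sees a leaf in its own block with probability $\tfrac12$ and in the other block with probability $\tfrac12$:
\[
t_{\mathrm{ind}}=\tfrac14\left[q^2(1-q)+p^2(1-q)+2pq(1-p)\right].
\]
The two leaves lie in the same block when both are in the centre's block (weight $\tfrac14$, edge probability $q$) or both in the other block (weight $\tfrac14$, $p$ to each, closing pair inside a block, factor $1-q$). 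They lie in different blocks with total weight $\tfrac12$ (edges $q,p$, closing pair across blocks, factor $1-p$). The entropy is $\tfrac12(I(p)+I(q))$.

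A natural choice is $q=\varepsilon$ small and $p$ near $1$, with the simplest candidate being $q=0$. Then $t_{\mathrm{ind}}=\tfrac14 p^2$, and the competitor value is $\tfrac14\beta_2p^2-\tfrac14 I(p)$. Note $-\tfrac12 I(q)=0$ at $q=0$, so the entropy contributes $-\tfrac12\cdot\tfrac12 I(p)$. Maximizing in $p$ gives $G(\beta_2)=\sup_p\{\tfrac{\beta_2}{4}p^2-\tfrac14 I(p)\}$. If needed I will optimize over $q>0$ as well, to lower the threshold toward $3.054$.

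The remaining task is to show $G(\beta_2)>\psi_\wedge^{\mathrm{const}}(\beta_2)$ for all $\beta_2\ge 3.054$. I would proceed in three steps.

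1. Bound the constant side. The maximizer $c^*$ solves $\beta_2(2c-3c^2)=\tfrac12\log\frac{c}{1-c}$. I would bound $\psi^{\mathrm{const}}$ from above by a crude but explicit estimate, for instance $\beta_2\cdot\tfrac{4}{27}+\tfrac12\log 2$, or more sharply via case analysis on $c$.

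2. Bound the competitor side from below. Plug in an explicit $p$, say $p=1$, which gives $\tfrac{\beta_2}{4}$.

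3. Handle the growth. Since $\tfrac{\beta_2}{4}$ eventually dominates $\tfrac{4\beta_2}{27}+\tfrac12\log2$, the comparison holds for large $\beta_2$. The crude bound crosses only near $\beta_2\approx 3.4$, so reaching $3.054$ requires precise numerics. I would show that the difference $D(\beta_2)=G-\psi^{\mathrm{const}}$ is increasing in $\beta_2$, by an envelope argument comparing the derivatives $\tfrac14 p^{*2}$ and $c^{*2}(1-c^*)\le \tfrac4{27}$. Both sides are convex and increasing. It then suffices to verify $D(3.054)>0$ numerically with rigorous interval bounds on the two one-dimensional optimizations.

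The main obstacle is this sharp numerical threshold. The general argument (part (1), the envelope monotonicity) is routine; pinning down $3.054$ depends on choosing the right competitor family, possibly with $q>0$, and on certifying a one-dimensional transcendental optimization at that point.
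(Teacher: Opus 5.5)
Your part (1) is exactly the paper's argument: the induced wedge is the $2$-colored triangle with $|E_1|=2$ and $|E_2|=1$, so $\Lambda=6|\beta_2|$ and Theorem~\ref{thm:highT_colored} applies.

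Part (2) has the same architecture as the paper: an explicit two-block competitor, an upper bound on the constant branch, and the slope bound $c^2(1-c)\le\frac{4}{27}$ to propagate from $\beta_2=3.054$ upward. However, the competitor you actually commit to does not reach $3.054$.

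\textbf{Your competitor fails at $3.054$.} With within-block density $0$ you get
$\sup_p\{\frac{3.054}{4}p^2-\frac14 I(p)\}\approx 0.7641$, attained near $p\approx 0.998$. By contrast, $\psi_\wedge^{\mathrm{const}}(3.054)\approx 0.7782$, attained near $c\approx 0.623$. So your $D(3.054)\approx -0.014<0$, and the proposed numerical verification would fail. This family only overtakes the constant branch near $\beta_2\approx 3.19$, which improves on the crude $3.403$ but not on $3.054$.

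\textbf{The within-block density must be genuinely positive.} It is not a perturbative $\varepsilon$ to be tuned only ``if needed''. The diagonal-block entropy $-\frac14 I(\cdot)$ grows like $x\log(1/x)$ and outweighs the linear loss in induced-wedge density up to a within-block density of roughly $0.07$. The paper takes within-block density $\frac{17}{250}$ and across-block density $\frac{199}{200}$. This gives $T_\wedge=\frac{231922367}{10^9}$ and $-\frac12\mathcal I\approx 0.069978$, so the total is $\approx 0.778269$ at $\beta_2=3.054$. Relative to your $q=0$ choice, this is an entropy gain of about $0.062$ against an energy loss of about $0.048$.

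\textbf{The constant-branch bound must be sharp to about $10^{-4}$.} Even with the right competitor, the margin over the constant branch at $\beta_2=3.054$ is only about $10^{-4}$. The crude bound $\frac{4\beta_2}{27}+\frac12\log 2\approx 0.799$ is far too weak, and ``case analysis on $c$'' is not yet a certificate.

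\textbf{How the paper certifies it.} The paper uses concavity to get the tangent-line bound $-\frac12 I(c)\le A(c_0)-B(c_0)c$, with $c_0=\frac{1557}{2500}$ near the constant maximizer. This reduces the constant problem to maximizing an explicit cubic with critical point $c^*=\frac{1+\sqrt{1-3B(c_0)/\beta_2}}{3}$, giving the rigorous bound $M(3.054,c_0)\approx 0.778164$. Fixing the competitor's parameters, rather than optimizing over them, makes its value affine in $\beta_2$ with slope $T_\wedge>\frac{4}{27}$. So no envelope argument is needed on the competitor side.
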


The proof of the result is given in Section~\ref{sec:finite_beta_wedge_symmetry_breakingpf}. It proceeds in two steps. 
\begin{itemize} 
\item The replica-symmetry statement follows from a direct application of Theorem~\ref{thm:highT_colored}. It is worth noting that applying \cite[Theorem~6.2]{ChatterjeeDiaconisExponential2013} gives replica symmetry only for $|\beta_2|<\frac14$ in this model. This is weaker than the region obtained in Proposition~\ref{ppn:finite_beta_wedge_symmetry_breaking} from the colored high-temperature criterion of Theorem~\ref{thm:highT_colored}. 

\item   For the symmetry-breaking statement, we use a tangent-line upper bound on the entropy to obtain an explicit upper bound on $\psi_{\wedge}^{\mathrm{const}}(\beta_2)$. We then compare this upper bound with the value attained by a nonconstant balanced 2-block graphon, with value $p$ on the two diagonal blocks and value $q$ on the two off-diagonal blocks. The values of $p$ and $q$ are chosen based on the results of the numerical experiments in Section~\ref{sec:finite_beta_simulations}, leading to the rigorous lower bound of $3.054$ for the symmetry-breaking phase. 
\end{itemize} 
Combining these two bounds, shows that the induced wedge model is replica symmetric for $|\beta_2|<\frac13$ and symmetry breaking for $\beta_2\geq 3.054$. The intermediate positive-temperature window remains open. The numerical results in Section~\ref{sec:finite_beta_simulations} suggest that the phase transition may occur close to $3.054$. They also indicate that, as $\beta_2$ increases, the optimizing graphon appears to transition from a constant graphon to a 2-block graphon and eventually converges to the complete balanced bipartite graphon (see Figures \ref{fig:wg-section36-centered-competitors} and \ref{fig:wg-section36-centered-competitors-zoom}).

\subsubsection{Rainbow Triangle ERGM}

In this section, we consider the rainbow triangle ERGM variational problem 
\eqref{eq:rbtemperatureTW} with the coefficients corresponding to the monochromatic edge densities set to zero, that is, $\beta_1= \beta_2= \beta_3 =0$. Then \eqref{eq:rbtemperatureTW} simplifies to 
\begin{align}\label{eq:rbsymmetryTW}
\psi_{\mathrm{rb}\triangle}(\beta_4) := \sup_{\bm W\in\wt{\mathcal W}_3} \left\{  \beta_4 t_{\mathrm{rb}\triangle}(\bm W) -\frac12\mathcal I(\bm W) \right\}. 
\end{align}
In this case, $\beta_4 > 0 $ is in the replica symmetry regime, whenever \eqref{eq:rbsymmetryTW} is optimized at a constant 3-color probability graphon $\bm W(x, y) = (p_1, p_2, p_3) \in \Delta_3$, for almost every $(x, y) \in [0, 1]^2$.  
The variational problem \eqref{eq:rbsymmetryTW} restriction to the space of 3-color probability graphons is
\begin{equation*}
\psi_{\mathrm{rb}\triangle}^{\mathrm{const}}(\beta_4)
:=
\sup_{p\in\Delta_3}
\left\{
\beta_4p_1p_2p_3
-\frac12\sum_{i=1}^3p_i\log p_i
\right\}.
\end{equation*}
For $\beta_4\ge0$, both terms are maximized at
$p_1=p_2=p_3=1/3$. Indeed, by the AM--GM inequality, $p_1p_2p_3\le\frac{1}{27}$, with equality if and only if $p_1=p_2=p_3=\frac{1}{3}$, and $-\sum_{i=1}^3 p_i\log p_i\le \log 3$,  
again with equality if and only if $p_1=p_2=p_3=\frac{1}{3}$. Therefore, we obtain, for $
\beta_4\ge0$
\begin{align}\label{eq:optimizerrainbowconstant}
\psi_{\mathrm{rb}\triangle}^{\mathrm{const}}(\beta_4)
=
\frac{\beta_4}{27}+\frac12\log3 . 
\end{align}

Having identified the constant branch, we first identify a rigorous regime in
which it is the unique global maximizer. Note that a direct application of
Theorem~\ref{thm:highT_colored} in this case gives $|\beta_4|< \frac{1}{2}$. However, for the fixed rainbow triangle interaction, the simplex constraint produces
cancellations between the color derivatives and yields the following improved
criterion. The proof is given in Section~\ref{sec:rainbow_highT_sharp_pf}.

\begin{corollary} \label{cor:rainbow_highT_sharp}
Consider the rainbow triangle ERGM with sufficient statistic
\eqref{eq:rainbowT}, where $t_{\mathrm{rb}\triangle}$ is the fixed decorated
homomorphism density in \eqref{eq:tWrainbow}. Suppose $\beta_1,\beta_2,\beta_3\in\mathbb R$ are arbitrary. Then, for $|\beta_4|<1$, the variational problem in Theorem~\ref{thm:variational_formula} has a
unique maximizer. Consequently, this maximizer is a constant $3$-colored
probability graphon.
\end{corollary}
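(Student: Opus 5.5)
The plan is to rerun the contraction argument behind Theorem~\ref{thm:highT_colored}, but with a sharper Lipschitz estimate that exploits the special product structure of the rainbow kernels together with the simplex constraint.

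\textbf{Step 1: reduction to the fixed-point equation.} By Theorem~\ref{thm:EL-prob-graphon}, every maximizer $\bm W$ satisfies
\[
\bm W = \SoftMax\bigl(2\widehat{\bm\Psi}(\bm W)\bigr).
\]
For the rainbow model, the edge terms contribute the constant vector $(\beta_1,\beta_2,\beta_3)$ to $\widehat{\bm\Psi}$. This constant cancels in differences, which is why $\beta_1,\beta_2,\beta_3$ may be arbitrary. The rainbow part of $\widehat{\bm\Psi}$ is $\beta_4$ times the kernels $\widehat\Delta^{(a)}_{\bm K_3^{\mathrm{rb}}}$ computed in Example~\ref{ExampEulerLagrangeRainbowTriang}.

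\textbf{Step 2: metric and softmax estimate.} For two maximizers $\bm W,\bm W'$, set
\[
d:=\operatorname*{ess\,sup}_{x,y}\sum_a|W_a(x,y)-W'_a(x,y)|.
\]
As in the proof of Theorem~\ref{thm:highT_colored}, I will use the oscillation bound
\[
\|\SoftMax(\bm z)-\SoftMax(\bm z')\|_1\le \tfrac12\bigl(\max_a(z_a-z'_a)-\min_a(z_a-z'_a)\bigr).
\]
This follows from the mean value theorem, since the Jacobian of softmax is $\diag(p)-pp^\top$, and its action on a vector $v$ has $\ell^1$ norm at most half the oscillation of $v$. It therefore suffices to prove
\[
\osc\Bigl(2\widehat{\bm\Psi}(\bm W)-2\widehat{\bm\Psi}(\bm W')\Bigr)(x,y)\le 2|\beta_4|\,d,
\]
which yields $d\le|\beta_4|d$ and hence $d=0$ when $|\beta_4|<1$.

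\textbf{Step 3: the key estimate (main obstacle).} The general bound only gives $|\beta_4|<\tfrac12$. The gain must come from differences of coordinates. For example,
\[
\widehat\Psi_1-\widehat\Psi_2=\tfrac{\beta_4}2\int\bigl[W_3(x,z)\,(W_2-W_1)(y,z)+W_3(y,z)\,(W_2-W_1)(x,z)\bigr]\,\mathrm dz .
\]
Each of the two integrands has the form (color-$c$ weight) $\times$ (difference of the other two colors), that is, a product $w_c(u-v)$ with $w_c+u+v=1$.

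I will write $W_a=W'_a+h_a$ with $\sum_a h_a=0$, and expand the differences of these pairwise quantities between $\bm W$ and $\bm W'$. The aim is to bound, for every pair $(a,b)$, the change in $\widehat\Psi_a-\widehat\Psi_b$ by $|\beta_4|\,d$. The intended mechanism is the following: at a fixed $z$, the variation of $w_c(u-v)$ is at most $\tfrac12\sum|h|$, using the constraints $w,u,v\in[0,1]$ with sum $1$ and $\sum_a h_a=0$. Summing this over the two half-weighted terms gives $\tfrac12\cdot 2\cdot\tfrac12\cdot 2=1$ times $|\beta_4|d$.

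This pointwise simplex inequality, namely a bound on the variation of $w(u-v)$ on $\Delta_3$ in terms of the $\ell^1$ perturbation, is the step I expect to be delicate. I would first verify it by maximizing a bilinear form over the simplex. If the constant comes out too large, I would instead compare against $\bm W'$ through a convex interpolation path and integrate the derivative bound along it.

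\textbf{Step 4: conclusion.} Steps 2 and 3 give $d\le|\beta_4|\,d$, so for $|\beta_4|<1$ any two maximizers coincide a.e. Hence the maximizer is unique; existence follows from the compactness argument after Theorem~\ref{thm:variational_formula}. Since $\bm W^\phi$ is also a maximizer for every measure-preserving $\phi$, the same contraction applied to $\bm W$ and $\bm W^\phi$ gives $\bm W=\bm W^\phi$ a.e. for all $\phi$. This forces $\bm W$ to be constant, exactly as in the proof of Theorem~\ref{thm:highT_colored}.
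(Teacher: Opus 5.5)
Your framework (Steps 1, 2 and 4) is sound and matches the paper's. It uses the fixed-point equation from Theorem~\ref{thm:EL-prob-graphon}, the cancellation of the edge terms, an oscillation bound for softmax, and the relabeling argument. Your $\ell^1$ softmax bound is correct, since $\sum_a p_a|v_a-\bm p\cdot\bm v|$ is a mean absolute deviation. It is equivalent to the paper's $\ell^\infty$ version because $\|\bm h\|_1=2\|\bm h\|_\infty$ for zero-sum $\bm h\in\mathbb R^3$.

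The gap is Step 3, which is the entire content of the corollary. First, the structural premise is false. In $W_3(x,z)(W_2-W_1)(y,z)$ the weight and the difference are coordinates of two \emph{different} probability vectors, $\bm p=\bm W(x,z)$ and $\bm q=\bm W(y,z)$, so $w_c+u+v=1$ does not hold. Second, the per-term estimate you aim for is false. Take $\bm p=(0,0,1)$, $\bm p'=(\tfrac d2,0,1-\tfrac d2)$, $\bm q=(0,1,0)$, $\bm q'=(\tfrac d2,1-\tfrac d2,0)$, so both $\ell^1$ perturbations equal $d$. Then the single term $p_3(q_2-q_1)$ changes by $\tfrac32 d-\tfrac12 d^2>d$ for $d<1$. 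At first order its $\bm q$-gradient $(-p_3,p_3,0)$ has oscillation up to $2$ and its $\bm p$-gradient $(0,0,q_2-q_1)$ up to $1$, so $\tfrac32$ is the sharp per-term constant. Bounding the two half-weighted terms separately therefore only yields $|\beta_4|<\tfrac23$. Integrating along an interpolation path cannot help, because the obstruction is already present in the derivative.

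The missing idea is to group the variation by which vector is perturbed, not by term. Take the derivative of $F_{12}(\bm p,\bm q)=\tfrac12[p_3(q_2-q_1)+q_3(p_2-p_1)]$ at an intermediate point $(\bm r,\bm s)\in\Delta_3^2$, in the direction $\bm h=\bm p-\bm p'$. Its coefficient vector is $\tfrac12(-s_3,s_3,s_2-s_1)$. This collects one contribution from each term and depends only on the single vector $\bm s\in\Delta_3$.

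This is where the simplex constraint actually acts. When $s_3$ is large, $|s_2-s_1|\le 1-s_3$ is small, so the oscillation of this vector is at most $1$, rather than the $\tfrac12+1=\tfrac32$ you get by treating the two terms separately. Since $|\bm c\cdot\bm h|\le\tfrac12\osc(\bm c)\|\bm h\|_1$ for zero-sum $\bm h$, the $\bm h$-part is at most $\tfrac12\|\bm h\|_1$, and symmetrically for $\bm k=\bm q-\bm q'$. The mean value theorem then gives $|F_{12}(\bm p,\bm q)-F_{12}(\bm p',\bm q')|\le\tfrac12(\|\bm h\|_1+\|\bm k\|_1)\le d$. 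That is exactly what your Step 2 requires. It is the paper's estimate \eqref{eq:rainbow_pair_lipschitz_sharp}, stated there as $\|\bm h\|_\infty+\|\bm k\|_\infty\le 2\delta$.
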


\begin{remark}[Why the model-specific contraction is useful?]
Although the numerical improvement from $\frac{1}{2}$ to $1$ is modest, the argument
illustrates a reusable point: for a chosen colored interaction, estimating its
derivatives before applying general motif-count bounds can exploit cancellations
forced by the simplex constraint $W_1+W_2+W_3=1$ and thereby sharpen the
replica-symmetry criterion.
\end{remark}

To obtain a rigorous symmetry-breaking bound on the other side, we need an
explicit nonconstant family of competitors. Since the positive zero-temperature
limit of the rainbow-triangle ERGM is given by the iterated blowup $3$-colored
probability graphon of the $K_4$ template, a natural initial candidate for
finite-temperature symmetry breaking is the 1-step blowup of the properly
$3$-colored $K_4$ template (as defined in Section~\ref{sec:rt}). However, interestingly the numerical exploration in
Section~\ref{sec:finite_beta_simulations} identifies balanced bipodal graphons
as the first family among those tested to overtake the uniform constant branch
as $\beta_4$ increases. This motivates the following definition: 
Fix a measurable partition $[0,1]=A\sqcup B$ with
$|A|=|B|=\frac{1}{2}$. For $q,s\in[0,1]$, define the balanced bipodal
probability graphon
\begin{equation}
\bm W^{q,s}(x,y)
=
\begin{cases}
\left(q,\frac{1-q}{2},\frac{1-q}{2}\right),
& (x,y)\in(A\times A)\cup(B\times B),\\[1mm]
\left(s,\frac{1-s}{2},\frac{1-s}{2}\right),
& (x,y)\in(A\times B)\cup(B\times A).
\end{cases}
\label{eq:rainbow_balanced_bipodal}
\end{equation}
Permuting the three colors produces analogous bipodal families with the same free energy values. Now, defining $h(u):=-u\log u-(1-u)\log\frac{1-u}{2}$
and
$$\tau(q,s) :=\frac{q^3-2q^2+3qs^2-4qs+2q-2s^2+2s}{16},$$
a direct block computation gives 
\begin{equation}
\mathcal F_{\beta_4}^{\mathrm{rb}\triangle, \mathrm{bi}}(q,s)
:=
\beta_4t_{\mathrm{rb}\triangle}(\bm W^{q,s})
-\frac12\mathcal I(\bm W^{q,s})
=\beta_4\tau(q,s)+\frac14\bigl(h(q)+h(s)\bigr).
\label{eq:rainbow_bipodal_reduction}
\end{equation}
Note that the uniform probability graphon corresponds to $q=s=\frac{1}{3}$. Now, based on the simulations in Section~\ref{sec:finite_beta_simulations} we choose the following  explicit rational parameters $q_0:=\frac{4491}{5000}$, $s_0:=\frac{19}{10000}$, $\Delta_0:=\tau(q_0,s_0)-\frac1{27}$ and set
\begin{equation}
B_0
:=
\frac{\log3-\frac12\bigl(h(q_0)+h(s_0)\bigr)}{2\Delta_0}
=14.0049634433\ldots .
\label{eq:rainbow_bipodal_B0}
\end{equation}
Then exact rational arithmetic gives $\Delta_0
=\frac{4210540070599}{216000000000000}>0$. 
Using \eqref{eq:rainbow_bipodal_reduction},
\eqref{eq:rainbow_bipodal_B0}, and
\eqref{eq:optimizerrainbowconstant}, we therefore obtain, 
\begin{align*}
\psi_{\mathrm{rb}\triangle}(\beta_4)
-\psi_{\mathrm{rb}\triangle}^{\mathrm{const}}(\beta_4) \ge
\beta_4\tau(q_0,s_0)
+\frac14\bigl(h(q_0)+h(s_0)\bigr)
-\left(\frac{\beta_4}{27}+\frac12\log3\right) =\Delta_0(\beta_4-B_0).
\end{align*}
This is strictly positive whenever $\beta_4>B_0$, proving item $(2)$. Combining this with Corollary \ref{cor:rainbow_highT_sharp} leads to the following result:

\begin{proposition}
\label{ppn:rainbow_highT} 
Consider the rainbow triangle ERGM variational problem in \eqref{eq:rbsymmetryTW}. Then the following hold:

\begin{itemize}
\item[$(1)$] For $|\beta_4|<1$,
$\psi_{\mathrm{rb}\triangle}(\beta_4)
=
\psi_{\mathrm{rb}\triangle}^{\mathrm{const}}(\beta_4)$; in fact, the
maximizer is unique and replica symmetric.
\item[$(2)$] For $\beta_4>B_0$, where
$B_0=14.0049634433\ldots$ is the explicit constant defined in
\eqref{eq:rainbow_bipodal_B0},
$\psi_{\mathrm{rb}\triangle}(\beta_4)
>
\psi_{\mathrm{rb}\triangle}^{\mathrm{const}}(\beta_4)$, so the model is in
the symmetry-breaking regime. 
\end{itemize}
\end{proposition}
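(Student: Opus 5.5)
The proposition has two parts, and each follows from results already established in this section.

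Part $(1)$ is Corollary~\ref{cor:rainbow_highT_sharp} specialized to $\beta_1=\beta_2=\beta_3=0$. For $|\beta_4|<1$ that corollary gives a unique maximizer of $\beta_4 t_{\mathrm{rb}\triangle}(\bm W)-\frac12\mathcal I(\bm W)$ over $\wt{\mathcal W}_3$, and this maximizer is a constant probability graphon $\bm W\equiv \bm p$ with $\bm p\in\Delta_3$. The supremum over all graphons is therefore attained on constants, so $\psi_{\mathrm{rb}\triangle}(\beta_4)=\psi^{\mathrm{const}}_{\mathrm{rb}\triangle}(\beta_4)$.

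For $0\le\beta_4<1$, the AM--GM and entropy argument preceding \eqref{eq:optimizerrainbowconstant} further identifies the constant as $(\frac13,\frac13,\frac13)$. For $-1<\beta_4<0$ we only need that the maximizer is some constant $\bm p$, which is exactly what the statement asserts.

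Part $(2)$ is a competitor argument. Since $\psi_{\mathrm{rb}\triangle}$ is a supremum over all of $\wt{\mathcal W}_3$, it is bounded below by its value at the single bipodal graphon $\bm W^{q_0,s_0}$ of \eqref{eq:rainbow_balanced_bipodal}. The plan has three steps.

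First, verify the block reduction \eqref{eq:rainbow_bipodal_reduction}.
\begin{itemize}
\item For the entropy: each diagonal class $A\times A$, $B\times B$ and each off-diagonal class $A\times B$, $B\times A$ has total measure $\frac12$. Since $-I_3(u,\frac{1-u}2,\frac{1-u}2)=h(u)$, this gives $-\frac12\mathcal I=\frac14(h(q)+h(s))$.
\item For $\tau$: classify ordered triples $(x,y,z)$ by which of $A,B$ each point lies in. There are eight patterns, each of probability $\frac18$. All three in one part gives $2\cdot\frac18\, q\bigl(\frac{1-q}2\bigr)^2$. Two in one part and one in the other gives terms like $q\,\frac{(1-s)^2}{4}$ and $s\,\frac{1-q}{2}\,\frac{1-s}{2}$, depending on which edge is the diagonal-type one.
\item Summing these and expanding should produce $\tau(q,s)$. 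This is routine algebra, which I would double-check at $q=s$, where $\tau(q,q)$ must equal $q\bigl(\frac{1-q}{2}\bigr)^2$.
\end{itemize}

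Second, insert $q_0=\frac{4491}{5000}$ and $s_0=\frac{19}{10000}$. Evaluating $\tau(q_0,s_0)-\frac1{27}$ in exact rational arithmetic confirms the stated positive value of $\Delta_0$. The entropy terms $h(q_0),h(s_0)$ involve logarithms, so $B_0$ is irrational and is only needed to the stated precision.

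Third, combine this with the explicit constant value \eqref{eq:optimizerrainbowconstant}, valid for $\beta_4\ge0$:
\[
\psi_{\mathrm{rb}\triangle}(\beta_4)-\psi^{\mathrm{const}}_{\mathrm{rb}\triangle}(\beta_4)\ \ge\ \Delta_0(\beta_4-B_0),
\]
which is strictly positive whenever $\beta_4>B_0$.

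The main obstacle is the block computation of $\tau$. The fixed decorated density distinguishes which oriented edge carries color $1$, so the mixed patterns must be tracked carefully rather than symmetrized naively. Numerically, the only care needed is checking that $\Delta_0>0$; the sign of $B_0$'s numerator is immaterial.
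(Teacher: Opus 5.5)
Your proposal is correct and follows the paper's argument. Part $(1)$ is exactly Corollary~\ref{cor:rainbow_highT_sharp} with $\beta_1=\beta_2=\beta_3=0$. Part $(2)$ uses the same explicit bipodal competitor $\bm W^{q_0,s_0}$, compared against $\psi^{\mathrm{const}}_{\mathrm{rb}\triangle}(\beta_4)=\frac{\beta_4}{27}+\frac12\log 3$, to obtain the gap $\Delta_0(\beta_4-B_0)$.

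Your pattern count for $\tau(q,s)$ reproduces the paper's formula. The three mixed configurations contribute $q\frac{(1-s)^2}{4}$, $s\frac{(1-q)(1-s)}{4}$ and $s\frac{(1-q)(1-s)}{4}$, each on $2$ of the $8$ patterns.

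One small correction: the sign of $B_0$'s numerator is not entirely immaterial, because the constant-branch formula requires $\beta_4\ge 0$. It is harmless here, since $h\le\log 3$ forces the numerator to be nonnegative, so $B_0>0$.
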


The above result shows that the rainbow-triangle ERGM is replica symmetric for $|\beta_4|<1$ and is in the symmetry-breaking regime for $\beta_4>B_0=14.0049634433\ldots$. The intermediate positive-temperature window remains open. As mentioned above, the symmetry-breaking bound obtained from the 1-step blowup of the properly $3$-colored $K_4$ template is slightly weaker than the bound obtained from the bipodal family described above, as shown in the next remark. To close the gap between the replica-symmetric and symmetry-breaking bounds, it may be interesting to investigate how bipodal and more general multipodal structures compete with finite-step blowups of the properly $3$-colored $K_4$ template, as $\beta_4$ varies.

\begin{remark}
(Symmetry breaking window from the 1-step blowup) Consider the empirical $3$-colored probability graphon
$\bm W^{(1)}$ associated with the properly $3$-colored $K_4$ template, as
defined in Section~\ref{sec:rt}. From Remark \ref{remark:trbW} we know that $t_{\mathrm{rb}\triangle}(\bm W^{(1)}) = \frac1{16} + \frac1{16}\cdot\frac1{27} = \frac7{108}$. Further, it can be easily checked that $\mathcal I(\bm W^{(1)}) = -\frac14\log3$. Hence, evaluating the variational functional at $\bm W^{(1)}$ gives 
\[
\psi_{\mathrm{rb}\triangle}(\beta_4)
\ge
\beta_4
t_{\mathrm{rb}\triangle}(\bm W^{(1)})
-
\frac12\mathcal I(\bm W^{(1)})
=
\frac{7\beta_4}{108}
+
\frac18\log3.
\]
Then recalling \eqref{eq:optimizerrainbowconstant} one has, 
\begin{align*}
\psi_{\mathrm{rb}\triangle}(\beta_4) - \psi_{\mathrm{rb}\triangle}^{\mathrm{const}}(\beta_4) \ge \left(\frac{7\beta_4}{108} + \frac18\log3 \right) - \left( \frac{\beta_4}{27} + \frac12\log3
\right) =
\frac{\beta_4}{36} - \frac38\log3 > 0  , 
\end{align*} 
when $\beta_4>\frac{27}{2}\log3 \approx 14.8313$, which is slightly worse than the bound obtained in Proposition \ref{ppn:rainbow_highT} (2). 
\end{remark}

\subsection{Numerical Phase Diagram and Simulations}
\label{sec:finite_beta_simulations}

In this section, we provide a numerical analysis of the variational problems for the induced wedge and rainbow triangle ERGMs and compare the results with the rigorous bounds obtained above.

\subsubsection{Induced Wedge ERGM}

For the induced-wedge model, we set $\beta_1=0$. Then the limiting free energy functional is given by: 
\[
 \F^{\wedge}_{\beta_2}(W)
 :=\beta_2t_{\mathrm{ind}}(K_{1,2},W)
   -\frac12\mathcal I(W).
\]
To understand the replica symmetry/symmetry breaking phases we consider the following three candidates. Throughout, we fix $\beta_2 > 0$. 

\begin{itemize}

\item {\it The best constant graphon}: This is the replica symmetric solution that is obtained by maximizing $\F^{\wedge}_{\beta_2}(W)$ over constant graphons. We denote this optimized value by $\psi_{\wedge}^{\mathrm{const}}(\beta_2)$, as in \eqref{eq:wedgeconstant}.

\item {\it The best non-constant balanced 2-block graphon}: A natural competitor for the replica symmetric solution above, is a graphon with 2-blocks that assigns different weights within and between blocks. To obtain the optimal graphon in this class, partition $[0,1]=A\sqcup B$ with $|A|=|B|=1/2$ and define
\[
 W^{p,q}(x,y)=
 \begin{cases}
 p,&(x,y)\in(A\times A)\cup(B\times B),\\
 q,&(x,y)\in(A\times B)\cup(B\times A).
 \end{cases}
\]
Thus, $p$ is the within-block edge probability and $q$ is the between-block edge probability. A direct computation shows that the free energy functional is given by: 
\[
 F^{\wedge}_{\beta_2}(p,q)
 :=\frac{\beta_2}{4}
 \bigl(p^2(1-p)+q^2(1-p)+2pq(1-q)\bigr) 
 -\frac14\bigl(I(p)+I(q)\bigr), 
\]
where $I(c)$ is as defined  in \eqref{eq:wedgeconstant}. This family contains every constant graphon when $p=q$ and approaches the
balanced complete bipartite graphon $W_{\mathrm{bip}}$ (recall \eqref{eq:bipartiteW}) as
$(p,q)\to(0,1)$. The optimized value within this class is denoted by $\psi_{\wedge}^{\mathrm{2-block}}(\beta_2) = \max_{p, q \in [0, 1]} F^{\wedge}_{\beta_2}(p,q)$. 

\item {\it Complete bipartite graphon}: We also consider the free energy functional for the balanced complete bipartite graphon $W_{\mathrm{bip}}$, which is the positive zero-temperature limit in the induced-wedge ERGM. In this case, $\F^{\wedge}_{\beta_2}(W_{\mathrm{bip}})=\frac{\beta_2}{4}$.

\end{itemize}

Figure \ref{fig:wg-section36-centered-competitors} shows plots of the free energy functionals for the above three candidates in the window $\beta_2 \in [0, 10]$. To better visualize the small free energy advantages, we center the plot relative to  
\begin{align}\label{eq:wedgemaximum}
\psi_{\wedge}^{\mathrm{2-opt}}(\beta_2) : =\max\{ \psi_{\wedge}^{\mathrm{const}}(\beta_2), \psi_{\wedge}^{\mathrm{2-block}}(\beta_2)\},
\end{align} 
which is the maximum of the free energies for the best constant graphon and the best non-constant balanced 2-block graphon obtained by the numerical search.\footnote{
Note that the balanced two-block family contains every constant graphon by taking $p=q$, $\psi_{\wedge}^{2\text{-block}}(\beta_2)
\geq \psi_{\wedge}^{\mathrm{const}}(\beta_2)$. Hence, $\psi_{\wedge}^{2\text{-opt}}(\beta_2) =
\psi_{\wedge}^{2\text{-block}}(\beta_2)$. However, a numerical optimization procedure for
$\psi_{\wedge}^{2\text{-block}}(\beta_2)$ may converge to a nonconstant local
optimum whose value is smaller than
$\psi_{\wedge}^{\mathrm{const}}(\beta_2)$. Thus, with a slight abuse of
notation, in the numerical discussion we use
$\psi_{\wedge}^{2\text{-block}}(\beta_2)$ to denote the value of the
nonconstant local optimum identified by the numerical search. In particular, the short orange
segment in Figure~\ref{fig:wg-section36-centered-competitors} to the left of the vertical line
$\beta_2\approx 3.0528$ corresponds to points where the numerical search finds nonconstant local optimum, whose
free-energy value is smaller than that of the best constant graphon. (The orange curve is not shown for smaller values of $\beta_2$, because here the numerical search finds only constant stationary solutions.) Consequently, although the maximum in \eqref{eq:wedgemaximum} is mathematically
redundant, it is useful numerically for comparing the constant solution with
the nonconstant branch, and we interpret it in this sense throughout the
numerical discussion. }
From  the plot we observe that the best non-constant balanced 2-block beats the best constant value for the first time at 
$\beta_{\mathrm{2-block}}=3.0527543267\ldots$ (marked by the dotted vertical line). This is compatible with the rigorous symmetry-breaking statement for $\beta_2\geq3.054$ in
Proposition~\ref{ppn:finite_beta_wedge_symmetry_breaking}. In fact, this experiment enabled us to identify a specific 2-block graphon $W^{p, q}$ that we used in the proof of Proposition \ref{ppn:finite_beta_wedge_symmetry_breaking} to establish symmetry breaking. Another point to note is that the free energy of the balanced complete bipartite graphon approaches the horizontal reference line as $\beta_2$ increases, which also aligns with positive zero-temperature limit obtained in Theorem \ref{thm:wedge-beta-plus-infty}. A zoomed in view of 
Figure \ref{fig:wg-section36-centered-competitors} around $\beta_{\mathrm{2-block}}=3.0528$ is shown in Figure \ref{fig:wg-section36-centered-competitors-zoom}, where the differences between the different curves become more visually apparent.

\begin{figure}[h!]
  \centering
  \includegraphics[width=0.75\textwidth]
  {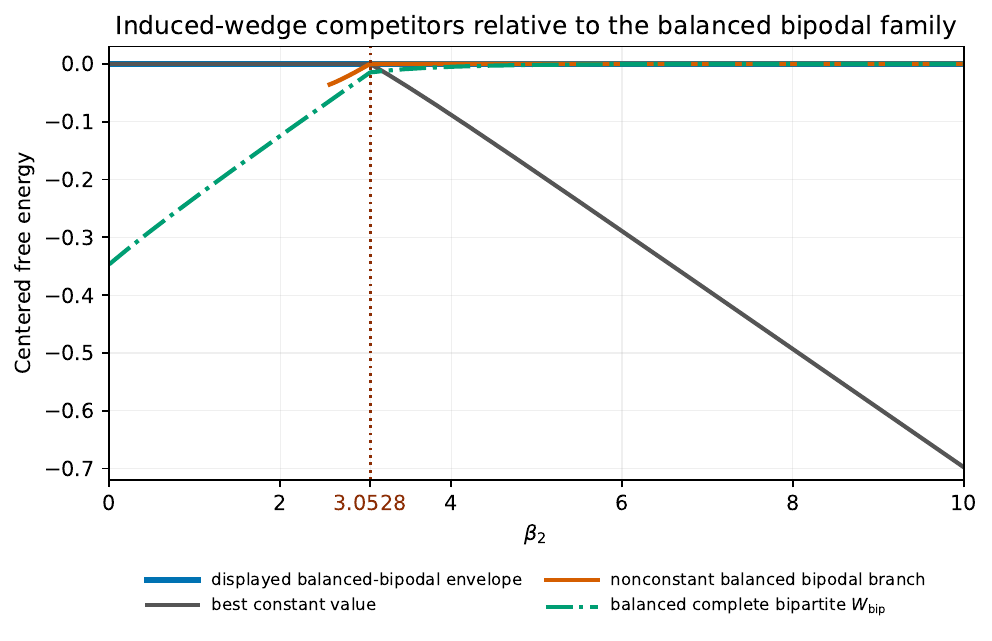}
  \caption{ \small{ Centered free energies for the induced-wedge ERGM on
  $0\leq\beta_2\leq10$, relative to the displayed balanced-bipodal envelope
  $\psi_{\wedge}^{\mathrm{2-opt}}(\beta_2)$ (the blue zero line). The black curve is
  $\psi_{\wedge}^{\mathrm{const}}(\beta_2)-\psi_{\wedge}^{\mathrm{2-opt}}(\beta_2)$,
  and the orange curve is
  $\psi_{\wedge}^{\mathrm{2-block}}(\beta_2)-\psi_{\wedge}^{\mathrm{2-opt}}(\beta_2)$;
  the latter is shown only when the numerical search finds a genuinely nonconstant
  balanced-bipodal local maximum. The dash-dotted green curve is the balanced
  complete bipartite value
  $\frac{\beta_2}{4}-\psi_{\wedge}^{\mathrm{2-opt}}(\beta_2)$.
  The dotted vertical line marks
  $\beta_{\mathrm{2-block}}\approx3.0528$, where the nonconstant branch first
  exceeds the best constant value. } }  
  \label{fig:wg-section36-centered-competitors}
\end{figure}

\begin{figure}[p]
  \centering
  \includegraphics[width=0.75\textwidth]
  {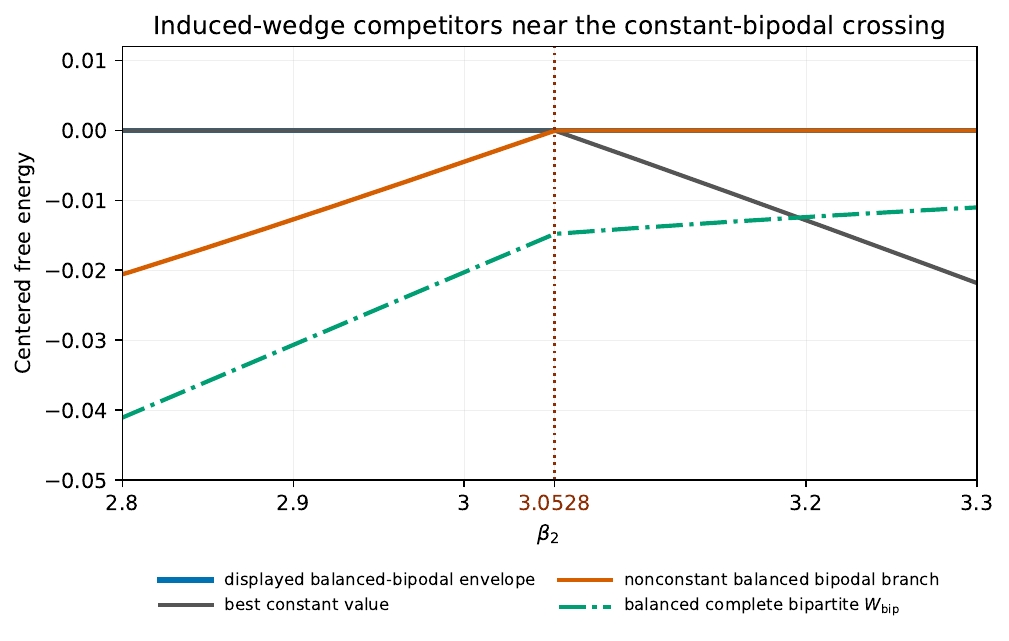}
  \caption{\small{ A zoomed-in view of
  Figure~\ref{fig:wg-section36-centered-competitors} near the restricted
  constant--balanced-bipodal equal-value point
  $\beta_{\mathrm{2-block}}\approx3.0528$. The curves and centering use the
  same conventions as in the full-range comparison. }}
  \label{fig:wg-section36-centered-competitors-zoom}
\end{figure}

\subsubsection{Rainbow Triangle ERGM}

\begin{figure}[p]
  \centering
  \includegraphics[width=0.75\textwidth]
  {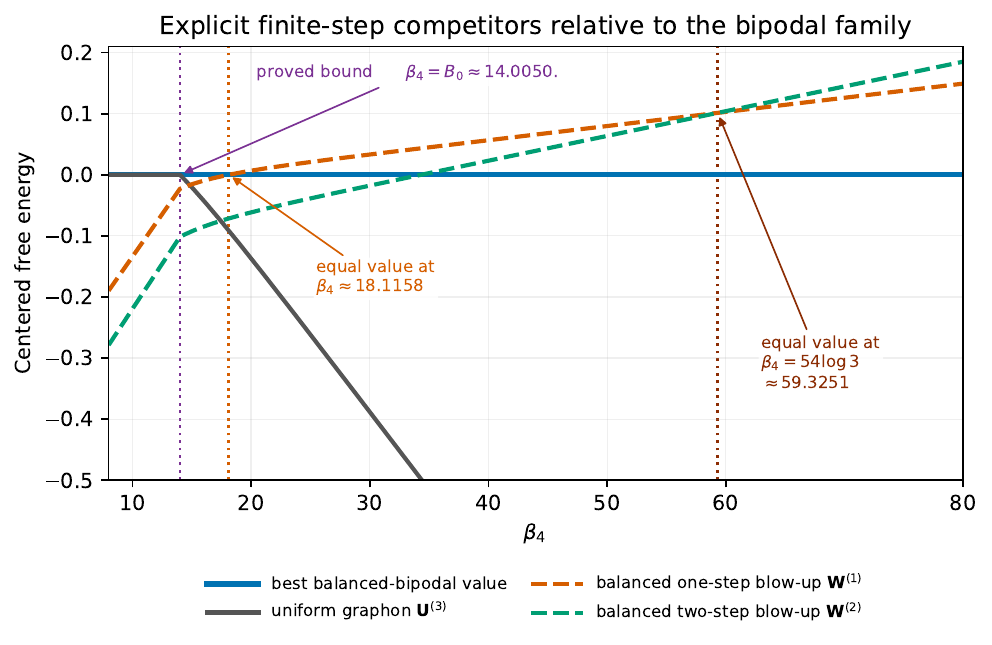}
  \caption{\small{ Centered free energies for the rainbow-triangle ERGM relative
  to the best value in the balanced-bipodal family on $\beta_4\in[8,80]$.
  The blue zero line is $\psi^{\mathrm{rb}\triangle,\mathrm{bi}}_{\beta_4}$.
  The black curve is
  $\F_{\beta_4}^{\mathrm{rb}\triangle}(\bm U^{(3)})-
  \psi^{\mathrm{rb}\triangle,\mathrm{bi}}_{\beta_4}$, the dashed orange curve is
  $\F_{\beta_4}^{\mathrm{rb}\triangle}(\bm W^{(1)})-
  \psi^{\mathrm{rb}\triangle,\mathrm{bi}}_{\beta_4}$, and the dashed green curve is
  $\F_{\beta_4}^{\mathrm{rb}\triangle}(\bm W^{(2)})-
  \psi^{\mathrm{rb}\triangle,\mathrm{bi}}_{\beta_4}$.
  From left to right, the dotted vertical lines mark the proved
  symmetry-breaking bound $B_0\approx14.0050$, the numerical equal-value point
  between $\bm W^{(1)}$ and the best balanced-bipodal value at
  $\beta_4\approx18.1158$, and the exact equal-value point between $\bm W^{(2)}$
  and $\bm W^{(1)}$ at $\beta_4=54\log 3\approx59.3251$. } }
  \label{fig:rb-section36-centered-competitors}
\end{figure}

For the rainbow triangle ERGM, we set $\beta_1=\beta_2=\beta_3=0$ and work with the variational problem in \eqref{eq:rbsymmetryTW}, whose free energy functional is given by 
\begin{align}\label{eq:trianglesimulation}
 \F_{\beta_4}^{\mathrm{rb}\triangle}(\bm W)
 :=\beta_4t_{\mathrm{rb}\triangle}(\bm W)
   -\frac12\mathcal I(\bm W). 
   \end{align}
In this case we consider the following candidates: 
First, we maximize the functional \eqref{eq:trianglesimulation} over the balanced bipodal family $\bm W^{q,s}$ from
\eqref{eq:rainbow_balanced_bipodal}, using the reduced functional in
\eqref{eq:rainbow_bipodal_reduction}. We denote the optimum value in this case by 
$$\psi^{\mathrm{rb}\triangle, \mathrm{bi}}_{\beta_4}:=\max_{(q,s)\in[0,1]^2}\F_{\beta_4}^{\mathrm{rb}\triangle, \mathrm{bi}}(q,s),$$
We then compare this benchmark with the free energy of the uniform graphon $\bm U^{(3)}$, and the free energies of the fixed balanced 1-step and 2-step blowups $\bm W^{(1)}$ and $\bm W^{(2)}$,  
defined in Section~\ref{sec:rt}. From the plot we observe that the best balanced bipodal first exceeds the free energy of the
uniform graphon $\bm U^{(3)}$ at $\beta_{\mathrm{bi}}=14.0049633052\ldots$. This should be
distinguished from the rigorous constant
$B_0=14.0049634433\ldots$ in \eqref{eq:rainbow_bipodal_B0}: the explicit
bipodal competitor used in Proposition~\ref{ppn:rainbow_highT} proves
symmetry breaking for every $\beta_4>B_0$, but does not prove that the
unrestricted maximizer is bipodal or that the global transition occurs at
$B_0$. Figure~\ref{fig:rb-section36-centered-competitors} also shows the point where the free energy of the 1-step blowup $\bm W^{(1)}$ first exceeds the best bipodal value, at $\beta_4\approx18.1158$, and the point where the 2-step blowup $\bm W^{(2)}$ first exceeds $\bm W^{(1)}$, at $\beta_4=54\log 3\approx59.3251$. These comparisons are further illustrated in Figure~\ref{fig:rb-section36-transition-profiles}, where we zoom in on the curves near the respective thresholds.

\begin{figure}[ht!] 
  \centering
  \includegraphics[width=0.88\textwidth]
  {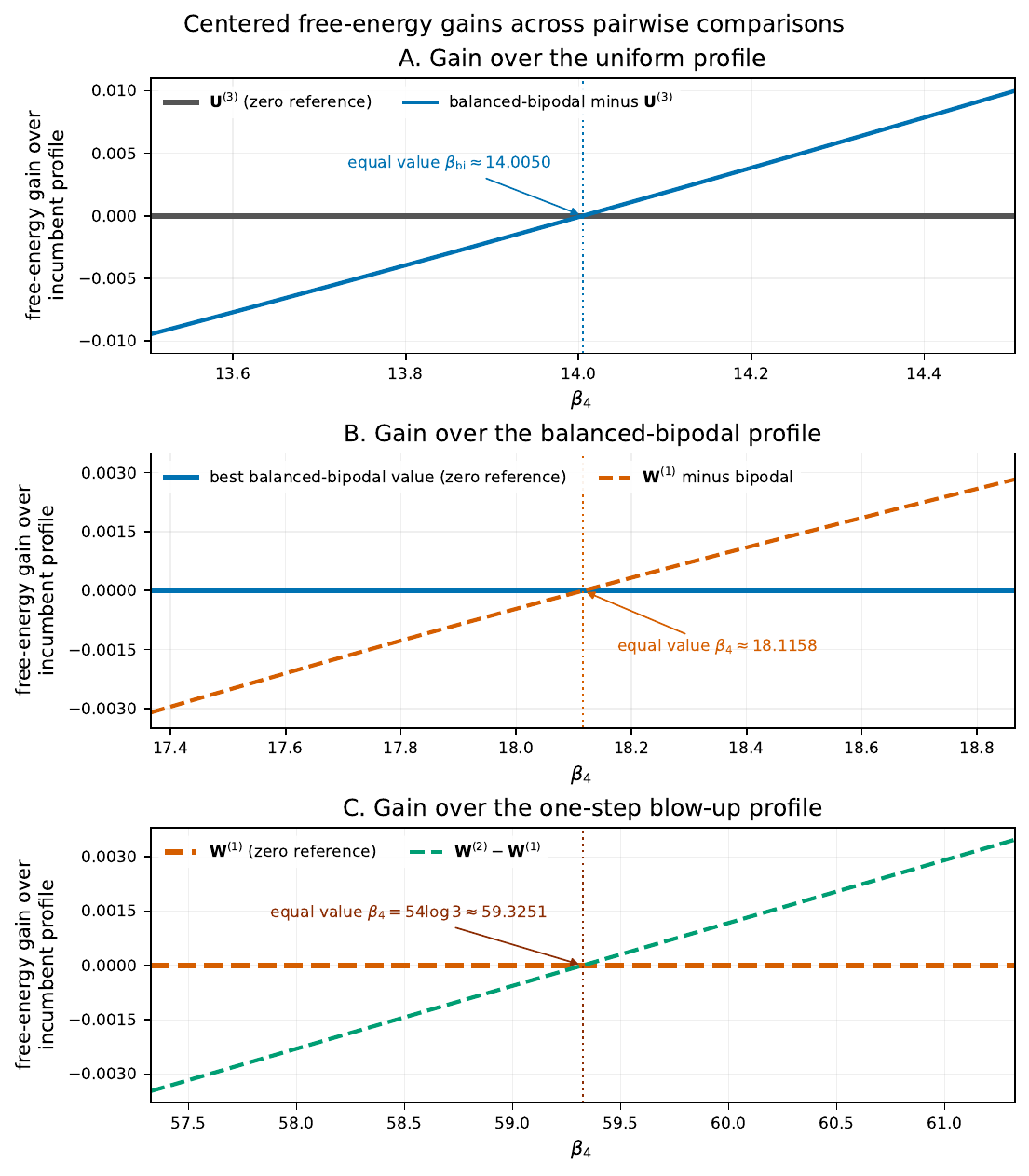}
  \caption{\small{Pairwise comparisons of the centered free energy profiles for the rainbow triangle ERGM. In each panel the
  incumbent profile is the highlighted zero line. In the first panel, the blue curve is   $\psi^{\mathrm{rb}\triangle, \mathrm{bi}}_{\beta_4}-\F_{\beta_4}^{\mathrm{rb}\triangle}(\bm U^{(3)})$ zoomed in near the numerical value $\beta_{\mathrm{bi}}\approx14.0050$. In the second panel, 
the dashed orange curve is $\F_{\beta_4}^{\mathrm{rb}\triangle}(\bm W^{(1)}) - \psi^{\mathrm{rb}\triangle, \mathrm{bi}}_{\beta_4}$ zoomed in near $\beta_4\approx18.1158$. In the third panel, the dashed green curve is 
  $\F_{\beta_4}^{\mathrm{rb}\triangle}(\bm W^{(2)}) - \F_{\beta_4}^{\mathrm{rb}\triangle}(\bm W^{(1)}) $ zoomed in near $\beta_4 = 54\log 3\approx59.3251$. }}
  \label{fig:rb-section36-transition-profiles}
\end{figure}

\clearpage
\section{Proofs from Section \ref{sec:Zvariationalpf} }

\subsection{Proof of Theorem \ref{thm:variational_formula}} 
\label{sec:Znpf}

Denote by ${\bm \nu} =  \bm{U}^{(k)}:=(\frac1k,\dots,\frac1k)$ the uniform probability vector. Then $\mu_{n,\bm{u}}$ is the law of the random coloring of $K_n$ whose edges are colored independently with the uniform distribution $\bm{u}$, that is, 
$$\mu_{n,\bm{u}}(\mathscr{X}) = \frac{1}{k^{\binom{n}{2}}}, $$
for each $\mathscr{X} \in \mathcal C_n^k$. As in Section \ref{sec:ldp-probability-graphons}, denote by $\wt{\bm \mu }_{n,\bm{u}}$ the lift of $\mu_{n,\bm{u}}$ to the space of probability measures on $\wt{\cW}_k$. Then 
\begin{align*}
Z_n = \sum_{\mathscr{X} \in \mathcal C^k_n} \exp\left(n^2T(\wt{\bm W}^{\mathscr{X}})\right) & = k^{\binom{n}{2}}\int_{\mathcal C^k_n} \exp\left(n^2T(\wt{\bm W}^{\mathscr{X}})\right)\,\mu_{n,\bm{u}}(\mathrm{d} \mathscr{X})  \nonumber \\ 
& = k^{\binom{n}{2}}\int_{\wt{\cW}_k} \exp\left(n^2T(\wt{\bm{W}})\right)\,\wt{\bm \mu }_{n,\bm{u}}( \mathrm{d} \wt{\bm{W}}).
\end{align*}
By Theorem \ref{THM:LDP_prob_graphons}, $\wt{\bm \mu }_{n,\bm{u}}$ satisfies a LDP with speed $a_n = \frac{n^2}{2}$ and good rate function $\cI_{\bm{u}}$. From \eqref{eq:IcolorW} and \eqref{eq:IW} note that 
\begin{equation}
\label{eq:IutoI}
\mathcal I_{\bm{u}}(\bm W)=\log k+\mathcal I(\bm W),
\end{equation}
for $W\in\mathcal W_k$. Therefore, applying Theorem \ref{thm::EVaradhanLemma}, with $\cX = \wt{\cW}_k$, $F = T$, and $Q_n = \wt{\bm \mu }_{n,\bm{u}}$, and using \eqref{eq:IutoI}, 
\begin{align*}
\lim_{n\to\infty} \psi_n &= \lim_{n\to\infty}\frac{1}{n^2}\log Z_n\\
&=\lim_{n\to\infty} \frac{1}{n^2}\log\left(k^{\binom{n}{2}}\int_{\wt{\cW}_k}\exp\left(n^2T(\wt{\bm{W}})\right)\,\wt{\bm \mu }_{n,\bm{u}}(d\wt{\bm{W}})\right)\\
&=\frac{1}{2}\lim_{n\to\infty} \frac{1}{a_n}\left(\log \left(k^{\binom{n}{2}}\right) + \log\left(\int_{\cX} \exp(a_n(2F(x)))\,Q_n(\mathrm{d}x)\right)\right)\\
&= \lim_{n\to\infty} \frac{1}{n^2}\binom{n}{2}\log(k)+\frac{1}{2}\sup_{x \in \cX} \left(2F(x) - \cI_{\bm{u}}(x)\right)\\
&= \frac{1}{2}\log(k) + \sup_{\wt{\bm{W}} \in \wt{\cW}_k} \left(T(\wt{\bm{W}}) - \frac12\cI_{\bm{u}}(\wt{\bm{W}})\right)\\
&= \sup_{\wt{\bm{W}} \in \wt{\cW}_k} \left(T(\wt{\bm{W}}) - \frac12\cI(\wt{\bm{W}})\right)  ,   
\end{align*}
as desired. \hfill $\Box$

\subsection{Proof of Corollary \ref{cor::expconc}}  
\label{sec:concentrationpf}

Let $\tilde A \subset \wt{\cW}_k$ be any Borel measurable set. Define $A_n = \{ \mathscr{X} \in \cC^k_n: \wt{\bm W}^{\mathscr{X}} \in \tilde A\}$. Recall that $\mu_{n,\bm{u}}$ is the uniform probability distribution on $\cC^k_n$. Also, recall the definition of $\P_{n}$ from \eqref{eq:TW} and the definitions of probability measures  $\tilde\mu_{n,\bm{u}}$ and $\tilde{\mathbb{P}}_n$, which are the lifts of   $\mu_{n,\bm{u}}$ and $\P_{n}$ to the space of probability graphons $\wt{\cW}_k$,  using the map \eqref{eq:mappingGraphsGraphons}, respectively. Then, 
\begin{align*}
\wt{\P}_{n}(\tilde A) = \P_{n}(A_n) & = \frac{\sum_{\mathscr{X}: \wt{\bm W}^{\mathscr{X}} \in \tilde A}\exp\left(n^2T(\wt{\bm W}^{\mathscr{X}})\right)}{\sum_{\mathscr{X}_0\in \cC^k_n}\exp\left(n^2T(\wt{\bm{W}}^{\mathscr{X}_0})\right)} \\
&= \frac{|\cC^k_n|\int_{A_n}\exp\left(n^2T(\wt{\bm W}^{\mathscr{X}})\right)\mu_{n,\bm{u}}( \mathrm{d} \mathscr{X})}{|\cC^k_n|\int_{\cC^k_n}\exp\left(n^2T(\wt{\bm W}^{\mathscr{X}})\right)\mu_{n,\bm{u}}(\mathrm{d} \mathscr{X})}   \\
&= \frac{\int_{\tilde A} \exp\left(n^2T(\wt{\bm{W}})\right)\wt{\bm \mu }_{n,\bm{u}}(\mathrm{d}\wt{\bm{W}})}{\int_{\wt{\cW}_k} \exp\left(n^2T(\wt{\bm{W}})\right)\wt{\bm \mu }_{n,\bm{u}}(\mathrm{d}\wt{\bm{W}})}.
\end{align*}
By Theorem \ref{THM:LDP_prob_graphons}, $(\wt{\bm \mu }_{n,\bm{u}})_{n \in \N}$ satisfies a large deviations principle with speed $\frac{n^2}{2}$ and good rate function $\cI_{\bm{u}}$. Recall that, by assumption, $T$ is bounded and continuous. Hence, applying Theorem \ref{thm:TiltedLDP}, with $\cX = \wt{\cW}_k$, $Q_n = \wt{\bm \mu }_{n,\bm{u}}$, $a_n = \frac{n^2}{2}$, $\cI = \cI_{\bm{u}}$, $F = 2T$, gives that $\wt{\P}_{n} := Q_{n, F}$ satisfies a LDP with speed $\frac{n^2}{2}$ and rate function 
\begin{align*}
\mathcal I_T(\wt{\bm{W}} ) &= \cI_{\bm{u}}(\wt{\bm{W}} ) - 2T(\wt{\bm{W}}) - \inf_{\wt{\bm{W}} \in \wt{\cW}_k}\left(\cI_{\bm{u}}(\wt{\bm{W}} ) - 2T(\wt{\bm{W}})\right) . 
\end{align*}
By \eqref{eq:IutoI},
\begin{align*}
\argmin_{\wt{\bm{W}} \in \wt{\cW}_k} \mathcal I_T(\wt{\bm{W}} ) &= \argmin_{\wt{\bm{W}} \in \wt{\cW}_k}\left( \cI_{\bm{u}}(\wt{\bm{W}} ) - 2T(\wt{\bm{W}}) - \inf_{\wt{\bm{W}} \in \wt{\cW}_k}\left(\cI_{\bm{u}}(\wt{\bm{W}} ) - 2T(\wt{\bm{W}})\right)\right)\\
&= \argmin_{\wt{\bm{W}} \in \wt{\cW}_k}\left( \cI_{\bm{u}}(\wt{\bm{W}} ) - \log k - 2T(\wt{\bm{W}})\right)\\
&= \argmax_{\wt{\bm{W}} \in \wt{\cW}_k}\left(T(\wt{\bm{W}}) - \frac{1}{2}\cI(\wt{\bm{W}} )\right)\\
&= \wt{\cS}_T.
\end{align*}
Therefore, the set $\{\wt{\bm{W}}  \in \wt{\cW}_k: \delta_{\square}(\wt{\bm{W}} ,\wt{\cS}_T)\geq \delta\}$, which is a closed subset of $\wt{\cW}_k$, does not contain any minimizers of $\mathcal I_T$.
Then by Theorem \ref{thm:TiltedLDP}, there exists a $C := C(\delta) > 0$ and an integer $N : = N(\delta)$ such that for $n \geq N$,
$$\wt{\P}_{n}(\delta_{\square}(\wt{\bm{W}} ,\wt{\cS}_T)\geq \delta) = \P_n\left(\delta_{\square}(\wt{\bm{W}}^{\mathscr{X}},\wt{\cS}_T) \geq \delta\right) \leq e^{-Cn^2} , $$
as desired.  \hfill $\Box$

\section{Proofs from Section \ref{sec:starmonotone}}

\subsection{Proof of Theorem \ref{thm:colordegree}}
\label{sec:starpf}

Note that $I_k$ is convex on $\Delta_k$. Hence, applying multivariate Jensen's inequality \cite[Theorem 10.2.6]{dudleyRealAnalysisProbability2002} gives: 
For each fixed $x \in [0, 1]$,
$$ \int_0^1 I_k(\bm W(x,y))\, \mathrm{d} y \ge I_k\left(\int_0^1 \bm W(x,y)\, \mathrm{d}y \right)
= I_k( \bm d_{\bm W} (x)), $$
with equality only when $y \rightarrow \bm W(x,y)$ is almost everywhere constant. Now, integrating over $x$ and recalling \eqref{eq:IW} yields, 
\begin{equation}
\label{eq:entropy_jensen_general_star}
\mathcal I(\bm W)\ge \int_0^1 I_k( \bm d_{\bm W}(x))\, \mathrm{d}x . 
\end{equation}
Combining \eqref{eq:entropy_jensen_general_star} and \eqref{eq:P_general_star} gives, 
\begin{align}\label{eq:starconvex}
T(\bm W)-\frac12\mathcal I(\bm W) \le \int_0^1 \left(  P(\bm d_{\bm W} (x))\, - \frac12 I_k( \bm d_{\bm W} (x)) \right) \, \mathrm{d} x \le \sup_{\bm \mu \in\Delta_k} \left \{P(\bm \mu )-\frac12 I_k(\bm \mu ) \right \} ,  
\end{align}
since $\bm d_{\bm W} (x) \in \Delta_k$. Taking the supremum over $\bm W\in\wt{\mathcal W}_k$ gives, 
$$\sup_{  \bm W \in \mathcal W_k  }
\left(T( \bm W )- \frac12\mathcal I( \bm W )\right) \leq \sup_{\bm \mu \in\Delta_k} \left \{P(\bm \mu )-\frac12 I_k(\bm \mu ) \right \} . $$ 
To show the reverse direction, let $\bm \mu ^\star$ be a maximizer of $\bm \mu \mapsto P(\bm \mu )-\frac12 I_k(\bm \mu )$ on $\Delta_k$. Observe that for the constant $k$-color probability graphon $\bm W^\star\equiv \bm \mu ^\star$, we have $T(\bm W^\star)=P(\bm \mu ^\star)$ and $\mathcal I(\bm W^\star)=I_k(\bm \mu ^\star)$. Hence, 
$$\sup_{  \bm W \in \mathcal W_k  } \left(T( \bm W )-\frac12\mathcal I( \bm W )\right) \geq 
T( \bm W^\star )-\frac12\mathcal I( \bm W^\star ) = \sup_{\bm \mu \in\Delta_k} \left \{P(\bm \mu )-\frac12 I_k(\bm \mu ) \right \} . $$ 
Combining the above inequalities and recalling Theorem \ref{thm:variational_formula} proves the result in \eqref{eq:general_star_variational_reduction}.

Now, suppose $\bm W$ is a maximizer of the graphon variational problem in Theorem \ref{thm:variational_formula}.  
Then equality must hold in \eqref{eq:starconvex} throughout. This implies, from the equality condition of Jensen's inequality, that the function $y \rightarrow \bm W(x,y)$ must be almost everywhere constant, for almost every $x \in [0, 1]$. Thus, $\bm W(x,y)= \bm d_{\bm W}(x)$ almost everywhere. Since $\bm W$ is symmetric,
\[
\bm d_{\bm W}(x) = \bm W(x,y) = \bm W(y,x) = \bm d_{\bm W}(y)
\]
for almost every $(x,y)$. Hence $\bm d_{\bm W}$ is almost everywhere constant, and so is $\bm W$. Also, equality in the final ``$\le$'' of \eqref{eq:starconvex} implies that $\bm d_{\bm W}(x)$ is almost everywhere equal to a maximizer of $P-\frac12 I_k$ on $\Delta_k$. This shows that $\bm W = \bm \mu ^*$, where $\bm \mu ^* \in \Delta_k$ is a maximizer of  \eqref{eq:general_star_variational_reduction}. \hfill $\Box$

\subsection{ Proof of Theorem \ref{thm:explicit_solvable_monotone} } 
\label{sec:monotonepf}

Note that, for each $\bm \alpha\in\mathbb R_{\geq 0}^k$, the function $\langle\bm\alpha,\bm W\rangle$ is non-negative,
symmetric, and bounded on $[0,1]^2$. For $s\in[R]$, set $h_s:=\langle\bm\alpha_s,\bm W\rangle$. Then  applying H\"older's inequality (Lemma \ref{lem:holder_equality}) on the probability space
$[0,1]^{V(H_s)}$ to the $|E(H_s)|$ functions $(x_v)_{v\in V(H_s)} \longmapsto
h_s(x_u,x_v)$, for $(u,v)\in E(H_s)$, with all exponents equal to $|E(H_s)|$, gives
\begin{align}\label{eq:holder_correct}
t(H_s,h_s) & \leq \prod_{(u,v) \in E(H_s)} \left( \int_{[0,1]^{V(H_s)}}h_s(x_u,x_v)^{|E(H_s)|} \prod_{w\in V(H_s)}\mathrm{d}x_w \right)^{\frac{1}{|E(H_s)|}}  \nonumber \\ 
& = \int_{[0,1]^2}h_s(x,y)^{|E(H_s)|}\,\mathrm{d}x\,\mathrm{d}y  .  
\end{align}
We now derive the corresponding equality condition. If $|E(H_s)|\geq2$, then
equality in \eqref{eq:holder_correct} holds if and only if $h_s$ is
constant almost everywhere. Indeed, if $\int_{[0,1]^2}h_s(x,y)^{|E(H_s)|} = 0$, then $h_s=0$ almost everywhere. Otherwise, the equality condition in H\"older's inequality (Lemma \ref{lem:holder_equality}) implies that, for any
two distinct edges $e, e'\in E(H_s)$, the corresponding functions satisfy, 
\[
h_s(x_e)^{|E(H_s)|}=h_s(x_{e'})^{|E(H_s)|}  ,  \quad\text{ almost everywhere on }[0,1]^{V(H_s)}.
\]
Since $h_s$ is non-negative, this gives $h_s(x_e)=h_s(x_{e'})$, almost everywhere. If $e$ and $e'$ share a vertex, relabeling their endpoints
gives $h_s(x_1,x_2)=h_s(x_1,x_3)$ for almost every $(x_1,x_2,x_3)$, which means $h_s(x,y)=g(x)$ almost everywhere for some measurable function $g$.
By symmetry, $g(x)=h_s(x,y)=h_s(y,x)=g(y)$, for almost every $(x, y)$, and hence, $g$, and therefore $h_s$, is constant almost everywhere. If
$e$ and $e'$ are disjoint, then $h_s(x_1,x_2)=h_s(x_3,x_4)$ for almost every $(x_1,x_2,x_3,x_4)$, which directly implies that $h_s$ is constant almost everywhere. The converse is immediate.

Summing \eqref{eq:holder_correct} over $s\in[R]$ with the non-negative
coefficients $\gamma_1,\ldots,\gamma_R$ and adding the linear term for
$K_2$, we obtain
\begin{align}
T(\bm W)
&=
t(K_2,\langle\bm c,\bm W\rangle)
+
\sum_{s=1}^R
\gamma_s\,t(H_s,\langle\bm\alpha_s,\bm W\rangle)
\notag\\
&\leq
\int_{[0,1]^2}
\left[
\langle\bm c,\bm W\rangle(x,y)
+
\sum_{s=1}^R
\gamma_s
\bigl(\langle\bm\alpha_s,\bm W\rangle(x,y)\bigr)^{|E(H_s)|}
\right]
\mathrm{d}x\,\mathrm{d}y.
\label{eq:T_upper_bound_pointwise}
\end{align}
Combining \eqref{eq:T_upper_bound_pointwise} with \eqref{eq:IW} gives 
\begin{equation*}
\mathcal F(\bm W)
:=
T(\bm W)-\frac12\mathcal I(\bm W)
\leq
\int_{[0,1]^2}\Phi(\bm W(x,y))\,\mathrm{d}x\,\mathrm{d}y,
\end{equation*}
where $\Phi:\Delta_k\to\mathbb R$ is defined by
\begin{equation*}
\Phi(\bm\mu)
:=
\langle\bm c,\bm\mu\rangle
+
\sum_{s=1}^R
\gamma_s\langle\bm\alpha_s,\bm\mu\rangle^{|E(H_s)|}
-\frac12 I_k(\bm\mu).
\end{equation*}
Since $\Phi(\bm W(x,y)) \leq \sup_{\bm\mu\in\Delta_k}\Phi(\bm\mu)$, for almost every $(x,y)\in[0,1]^2$, Theorem~\ref{thm:variational_formula}
gives
\[
\lim_{n\to\infty}\psi_n
\leq
\sup_{\bm\mu\in\Delta_k}\Phi(\bm\mu).
\]
For the reverse inequality, fix $\bm\mu\in\Delta_k$ and consider the
constant $k$-color probability graphon $\bm W\equiv\bm\mu$. Then
$\langle\bm\alpha_s,\bm W\rangle
\equiv\langle\bm\alpha_s,\bm\mu\rangle$, so equality holds in
\eqref{eq:holder_correct} for every $s$. Taking the supremum over
$\bm\mu\in\Delta_k$ proves \eqref{eq:explicit_solvable_formula}.

It remains to prove that every maximizer of the graphon variational
problem is constant. We first eliminate trivial terms. Terms for which
$\gamma_s=0$ may be deleted. If $|E(H_s)|=1$, then
the term can be absorbed into the linear term. If no nonlinear terms remain,
the result follows from Theorem~\ref{thm:colordegree}. Thus, without
loss of generality, we may assume that $R\geq1$, $\gamma_s>0$ and $|E(H_s)|\geq2$, for every $s\in[R]$. Now, suppose $\bm W$ is a maximizer of $\mathcal F$. By
\eqref{eq:explicit_solvable_formula},
\[
\mathcal F(\bm W)
=
\sup_{\bm\mu\in\Delta_k}\Phi(\bm\mu).
\]
On the other hand,
\[
\mathcal F(\bm W)
\leq
\int_{[0,1]^2}\Phi(\bm W(x,y))\,\mathrm{d}x\,\mathrm{d}y
\leq
\sup_{\bm\mu\in\Delta_k}\Phi(\bm\mu).
\]
Consequently, equality holds in both inequalities. Moreover,
\[
\int_{[0,1]^2}\Phi(\bm W(x,y))\,\mathrm{d}x\,\mathrm{d}y
-\mathcal F(\bm W)
=
\sum_{s=1}^R\gamma_s
\left(
\int_{[0,1]^2}h_s(x,y)^{|E(H_s)|}\,\mathrm{d}x\,\mathrm{d}y
-t(H_s,h_s)
\right).
\]
Every summand on the right-hand side is non-negative, and
$\gamma_s>0$. Hence equality must hold in \eqref{eq:holder_correct} for
every $s\in[R]$. By the equality condition established above, there
exist constants $q_1,\ldots,q_R\geq0$ such that $\langle\bm\alpha_s,\bm W\rangle(x,y)=q_s$, for almost every $(x,y)\in[0,1]^2$ and every $s\in[R]$. Equality in the second inequality also implies $\Phi(\bm W(x,y)) = \sup_{\bm\mu\in\Delta_k}\Phi(\bm\mu)$, for almost every $(x,y)\in[0,1]^2$. Define
\[
S
:=
\left\{
\bm\mu\in\Delta_k:
\langle\bm\alpha_s,\bm\mu\rangle=q_s
\text{ for every }s\in[R]
\right\}.
\]
Then $S$ is a compact convex subset of $\Delta_k$, and
$\bm W(x,y)\in S$ almost everywhere. On $S$, the function $\Phi$
reduces to
\[
\Phi(\bm\mu)
=
\langle\bm c,\bm\mu\rangle
+
\sum_{s=1}^R\gamma_s q_s^{|E(H_s)|}
-\frac12 I_k(\bm\mu).
\]
Since $I_k$ is strictly convex on $\Delta_k$, the restriction of
$\Phi$ to $S$ is strictly concave. It therefore has a unique maximizer,
say $\bm\mu^*\in S$. Since $\bm W(x,y)$ belongs to $S$ and maximizes
$\Phi$ almost everywhere, it follows that
\[
\bm W(x,y)=\bm\mu^*
\]
for almost every $(x,y)\in[0,1]^2$. Thus $\bm W$ is a constant
$k$-color probability graphon. Finally, because
$\Phi(\bm\mu^*)=\sup_{\bm\mu\in\Delta_k}\Phi(\bm\mu)$, the vector
$\bm\mu^*$ is a maximizer of \eqref{eq:explicit_solvable_formula}.
\hfill $\Box$

\subsection{Proof of Theorem \ref{thm:NegativeSidorenko_RS}}
\label{sec:negativesymmetrypf}

For a $k$-color probability graphon $\bm W \in \mathcal{W}_k$, define its global spatial average vector $\overline{\bm{W}} \in \Delta_k$ by:
$$
\overline{\bm{W}} := \int_{[0,1]^2} W(x,y) \, \mathrm{d}x\, \mathrm{d}y.
$$
Note that, by the linearity of the integral, $t(K_2, \langle \bm c_1 , \bm{W} \rangle) = \langle \bm c_1 , \overline{\bm{W}} \rangle$ and $\int_{[0,1]^2}  \langle \bm \eta_s , \bm{W} \rangle (x,y)\, \mathrm{d}x\, \mathrm{d}y = \langle \bm \eta_s , \overline{\bm{W}} \rangle$, for any vector $\bm \eta_s \in \R^k_{\geq 0}$. Also, since each graph $F_s$ satisfies the Sidorenko property,\footnote{Note that, although $\langle\eta_s,\bm W\rangle$ need not take values in $[0,1]$,
the Sidorenko inequality extends by scaling to arbitrary bounded
non-negative kernels. }
$$
t(F_s, \langle \bm \eta_s , \bm{W} \rangle ) \ge \left( \int_{[0,1]^2} \langle \bm \eta_s , \bm{W} \rangle (x,y) \, \mathrm{d}x\, \mathrm{d}y \right)^{|E(F_s)| } = (\langle \bm \eta_s , \overline{\bm{W}} \rangle)^{|E(F_s)| }.
$$
Since the coefficients $\delta_1, \delta_2, \ldots, \delta_R$ are non-negative, 
\begin{equation}\label{eq:sidorenko_pointwise_flip}
T(\bm W) = \langle \bm c_1 , \overline{\bm{W}} \rangle- \sum_{s=1}^R \delta_s \, t(F_s, \langle \bm \eta_s , \bm{W} \rangle ) \le \langle \bm c_1 , \overline{\bm{W}} \rangle - \sum_{s=1}^R \delta_s (\langle \bm \eta_s , \overline{\bm{W}} \rangle)^{|E(F_s)| }.
\end{equation}
Next, we bound the entropy functional. Recall that the function $I_k(\bm \mu) = \sum_{a=1}^k \mu_a \log \mu_a$ is strictly convex on the compact simplex $\Delta_k$. Applying Jensen's inequality with respect to the uniform probability measure on $[0,1]^2$ yields:
\begin{align}\label{eq:entropy_jensen_bound}
\mathcal{I}(\bm W) = \int_{[0,1]^2} I_k(\bm W(x,y)) \, \mathrm{d}x\, \mathrm{d}y \ge I_k\left( \int_{[0,1]^2} \bm W(x,y) \, \mathrm{d}x\, \mathrm{d}y \right) = I_k(\overline{\bm{W}}) . 
\end{align}
Now, defining $\mathcal{F}(\bm W) := T(\bm W) - \frac{1}{2}\mathcal{I}(\bm W)$ and combining~\eqref{eq:sidorenko_pointwise_flip} and~\eqref{eq:entropy_jensen_bound} gives: 
$$
\mathcal{F}(\bm W) \le \langle \bm c_1 , \overline{\bm{W}} \rangle - \sum_{s=1}^R \delta_s (\langle \bm \eta_s , \overline{\bm{W}} \rangle)^{|E(F_s)| } - \frac{1}{2} I_k(\overline{\bm{W}}) = \mathcal{F}(\overline{\bm{W}}),
$$
where $\mathcal{F}(\overline{\bm{W}})$ is the functional evaluated at the constant $k$-color probability graphon $\overline{\bm{W}}$. Taking the supremum over both sides yields the reduction to the finite-dimensional optimization problem~\eqref{eq:NegativeSidorenko_finite}.

To prove the necessity of constant maximizers, let $\bm W^*$ be a global maximizer of $\mathcal{F}(\bm W)$, over $\bm W \in \mathcal W_k$. This means equality must hold in the entropy bound~\eqref{eq:entropy_jensen_bound}:
$$
\int_{[0,1]^2} I_k(\bm W^*(x,y)) \, \mathrm{d}x\, \mathrm{d}y = I_k\left( \int_{[0,1]^2} \bm W^*(x,y) \, \mathrm{d}x\, \mathrm{d}y \right).
$$
Since the multivariate negative entropy $-I_k(\bm \mu)$ is strictly concave, equality holds in Jensen's inequality if and only if the integrand is constant almost everywhere. Thus, $\bm W^*(x,y) \equiv \overline{\bm{W}}^*$, for almost every $(x,y) \in [0,1]^2$, completing the proof.   \hfill $\Box$  

\section{Proofs from Section \ref{sec:euler_lagrange}}

\subsection{Proof of Theorem \ref{thm:EL-prob-graphon}} 
\label{thm:EL-prob-graphonpf}

Let $\wt{\bm W}\in\wt{\mathcal W}_k$ be a maximizer of $\mathcal F$ and $\bm W = (W_1, W_2, \ldots, W_k) \in\mathcal W_k$ be any representative of the equivalence class. For $a\in[k]$, define the symmetric potential
\begin{align}\label{eq:sumTW}
\widehat\Phi_a(x,y) := \sum_{s=1}^r \beta_s\,\widehat\Delta_{\bm H_s}^{(a)}(\bm W)(x,y).
\end{align}  
We first show that, for all $a \in [k]$, $W_a$ is strictly positive almost everywhere. To this end, fix $a\in[k]$, and suppose that the set
$$
B_a:=\{(x,y)\in[0,1]^2:\ W_a(x,y)=0\}
$$
has positive measure. Define $\bm U=(U_1,\dots,U_k)$ by
$$
U_a(x, y):=\bm 1\{B_a\}(x,y)(1-W_a (x, y))=\bm 1\{B_a\}(x,y), \quad  \text{ and }
\quad
U_b(x, y):=-\bm 1\{B_a\}(x,y)W_b(x ,y)
$$
for $b\neq a$, where, for a measurable set $S$, we denote with $1\{S\}$ the indicator function of $S$.
Note that $\sum_{b=1}^k U_b(x, y)=0$, for almost every $x, y \in [0, 1]$.\newline
For every $t\in[0,1]$, let $\bm{W}^t := (W^t_1, W^t_2, \ldots, W^t_k)$, where 
$$
W^t_b(x, y):=W_b(x, y) + t U_b (x, y), 
$$
for $b\in[k]$. Note that $\bm{W}^t \in \mathcal W_k$. In particular, on $B_a$, the perturbation is explicitly $W^t_{a} = t$ and $W^t_{b} = (1-t)W_b$, for $b \neq a$. Further, outside $B_a$ it equals $\bm{W}$. Since $B_a$ is symmetric, $\bm{W}^t$ is measurable and symmetric, its coordinates are non-negative, and they still sum to 1.  Hence, recalling the definition of $T$ from~\eqref{eq:sufficientTW},  observe that
\[
T(\bm W^t)=T(\bm W+t\bm U)
\]
is a polynomial in $t$. Indeed, each occurrence of $W_b$ in a homomorphism
density is replaced by the affine function $W_b+tU_b$, and $T$ is a linear
combination of homomorphism densities of colored graphs. Therefore,
\begin{equation}\label{eq:TWU}
T(\bm W+t\bm U)
=
T(\bm W)
+
t\,DT(\bm W)[\bm U]
+
R(t),
\end{equation}
where $DT(\bm W)[\bm U]$ denotes the directional derivative of $T$ at
$\bm W$ in the direction $\bm U$ and $R(t)$ is the remainder term. Using again the fact that $T$ is a linear combination of homomorphism densities, expanding
$T(\bm W+t\bm U)$ yields $|DT(\bm W)[\bm U]| \le C\|\bm U\|_\infty$,
where the constant $C$ depends only on $T$. As
$\|\bm U\|_\infty\le1$, it follows that $|DT(\bm W)[\bm U]| \le
C$. Moreover, we obtain that the remainder in~\eqref{eq:TWU} is uniform. More precisely, there exists a constant $K>0$, depending only on $T$, such that
$R(t) = O(t^2)$ and $|R(t)|\le Kt^2$, for all sufficiently small $t$. 
Consequently,
\[ T(\bm W^t)-T(\bm W) = t\,DT(\bm W)[\bm U]+R(t) \ge -Ct-Kt^2. \]
Hence, there exists $C_1>0$ such that $T(\bm W^t)-T(\bm W) \ge
-C_1t$, for all sufficiently small $t>0$.

Recall that for $(x, y) \in B_a$, we have $W_a(x, y)=0$ and $\sum_{b\neq a}W_b(x, y)=1$, while $\bm{W}^t_a(x, y)=t$ and $\bm{W}^t_b(x, y)=(1-t)W_b(x, y)$, for $b\neq a$. Hence, for $(x,y)\in B_a$,
\begin{align*}
&\sum_{b=1}^k W_b^t(x,y)\log W_b^t(x,y)-\sum_{b=1}^k W_b(x,y)\log W_b(x,y) \\
&\qquad = t\log t +(1-t)\log(1-t) -t\sum_{b\neq a}W_b(x,y)\log W_b(x,y).
\end{align*}
Since the last sum is bounded below by $-\log k$, there exists a constant $C_2 : = C_2(k) <\infty$ such that, 
$$
\mathcal I(\bm{W}^t)-\mathcal I(\bm W)
\le |B_a|\,t \log t + C_2 t , 
\qquad\text{for all sufficiently small } t>0.
$$
Therefore, recalling \eqref{eq:F} and the bound in \eqref{eq:TWU} gives, 
$$
\mathcal F(\bm{W}^t)-\mathcal F(\bm W) \ge -C_1 t - \frac12|B_a|\,t \log t - \frac12 C_2 t , 
$$
for all sufficiently small $t>0$. Note that the right-hand side above is strictly positive for all sufficiently small $t>0$, which contradicts the maximality of $\bm W$. This implies that $
W_a(x,y)>0$ for almost every $(x,y)\in[0,1]^2$. Since $a\in[k]$ was arbitrary, every coordinate of $\bm W = (W_1, W_2, \ldots, W_k)$ is positive almost everywhere. 

We now derive the Euler--Lagrange equation on interior truncations. For $\eta>0$, let
$$
A_\eta:= \left\{(x,y)\in[0,1]^2:\ \min_{1\le a\le k} W_a(x,y)\ge \eta \right\}.
$$
Since $W_a>0$ almost everywhere, for every $a \in [k]$, the sets $\{A_\eta\}_{\eta > 0}$ increase to a full-measure subset of $[0,1]^2$ as $\eta\downarrow0$. Let $\bm U=(\bm U_1, \bm U_2 \dots, \bm U_k)$ be bounded symmetric functions, supported in $A_\eta$, such that $\sum_{a=1}^k U_a(x,y)=0$, for all $(x,y)\in[0,1]^2$. For $t \in\mathbb R$ and $a \in [k]$, set
$$
W_a^t(x,y):=W_a(x,y)+t U_a(x,y).
$$
Since $W_a(x, y) \ge \eta$, for $(x, y) \in A_\eta$, for $|t|$ small enough we still have $\bm{W}^t\in\mathcal W_k$. By maximality of $\bm W$ we have, 
\begin{align}\label{eq:FWt}
\frac{\mathrm{d}}{\mathrm{d}t}\mathcal F(\bm{W}^t)\Big|_{t=0}=0.
\end{align}
For the entropy term,
\begin{align}\label{eq:IWt}
\frac12\frac{\mathrm{d}}{\mathrm{d}t}\mathcal I(\bm{W}^t)\Big|_{t=0}
&=\frac12\int_{[0,1]^2}\sum_{a=1}^k U_a(x,y)\left(\log W_a(x,y)+1\right)\, \mathrm{d}x\, \mathrm{d}y \\
&=\frac12\int_{[0,1]^2}\sum_{a=1}^k U_a(x,y)\log W_a(x,y)\, \mathrm{d}x\, \mathrm{d}y,  \nonumber  
\end{align}
where the constant term vanishes because $\sum_{a=1}^k U_a=0$. 
For the energy term, applying
\eqref{eq:first_variation_symmetric} to the symmetric perturbation
$\bm U$ gives, for each $s\in[r]$,
\[
\frac{\mathrm d}{\mathrm dt}
t(\bm H_s,\bm W^t)\Big|_{t=0}
=
\int_{[0,1]^2}
\sum_{a=1}^k
U_a(x,y)\,
\widehat\Delta_{\bm H_s}^{(a)}(\bm W)(x,y)
\,\mathrm dx\,\mathrm dy,
\]
where $\widehat\Delta_{\bm H_s}^{(a)}$ is defined in
\eqref{eq:Delta_H_sym}. Hence, recalling
\eqref{eq:sufficientTW} and the definition of $\widehat\Phi_a$ in
\eqref{eq:sumTW},
\begin{align}\label{eq:TWt}
\frac{\mathrm d}{\mathrm dt}
T(\bm W^t)\Big|_{t=0} =
\sum_{s=1}^r
\beta_s
\frac{\mathrm d}{\mathrm dt}
t(\bm H_s,\bm W^t)\Big|_{t=0} =
\int_{[0,1]^2}
\sum_{a=1}^k
U_a(x,y)\widehat\Phi_a(x,y)
\,\mathrm dx\,\mathrm dy.
\end{align}
Combining \eqref{eq:FWt}, \eqref{eq:IWt}, and \eqref{eq:TWt}
therefore gives
\begin{equation}\label{eq:EL_symmetric_test}
0
=
\int_{[0,1]^2}
\sum_{a=1}^k
U_a(x,y)
\left[
\widehat\Phi_a(x,y)
-\frac12\log W_a(x,y)
\right]
\,\mathrm dx\,\mathrm dy.
\end{equation}
For $a\in[k]$, set
\[ \Xi_a(x,y) := \widehat\Phi_a(x,y)-\frac12\log W_a(x,y). \]
Each $\Xi_a$ is symmetric, and its restriction to $A_\eta$ is bounded.
Fix two colors $a,b\in[k]$. In
\eqref{eq:EL_symmetric_test}, choose $U_a=h$, $U_b=-h$, and $U_c=0$,  for $c\notin\{a,b\}$,  where $h$ is any bounded symmetric function supported in $A_\eta$. Then, 
\[ \int_{A_\eta} h(x,y)\bigl[\Xi_a(x,y)-\Xi_b(x,y)\bigr] \,\mathrm dx\,\mathrm dy =
0. \]
Since $\Xi_a-\Xi_b$ is bounded and symmetric on $A_\eta$, we may take
$h = \bm 1\{A_\eta\}\bigl(\Xi_a-\Xi_b\bigr)$, which yields 
$$\int_{A_\eta} \left|\Xi_a(x,y)-\Xi_b(x,y)\right|^2 \,\mathrm dx\,\mathrm dy = 0.$$
Consequently, $\Xi_a(x,y)=\Xi_b(x,y)$, for every $a,b\in[k]$ and almost every $(x,y)\in A_\eta$. Since the sets $A_\eta$ increase to a full-measure subset of
$[0,1]^2$, as $\eta\downarrow0$, there exists a symmetric measurable
function $\lambda:[0,1]^2\to\mathbb R$ such that
\[
\widehat\Phi_a(x,y)-\frac12\log W_a(x,y)
=
\lambda(x,y)
\]
for every $a\in[k]$ and almost every $(x,y)\in[0,1]^2$.
Rearranging gives
\[ W_a(x,y) = \exp\left( 2\widehat\Phi_a(x,y)-2\lambda(x,y) \right). \]
Using $\sum_{a=1}^kW_a(x,y)=1$, we obtain $\exp (2\lambda(x,y) ) =
\sum_{b=1}^k \exp( 2\widehat\Phi_b(x,y) )$. Then substitution gives the result in 
\eqref{eq:softmax}.

To complete the proof of Theorem~\ref{thm:EL-prob-graphon}, it remains to
show that the maximizer is uniformly separated from the boundary of the
simplex. By \eqref{eq:Delta_H}, for every $s\in[r]$ and $a\in[k]$,
\[
\left|
\Delta_{\bm H_s}^{(a)}(\bm W)(x,y)
\right|
\le
|E_a(H_s)|
\le
|E(H_s)|
\]
for almost every $(x,y)\in[0,1]^2$. Indeed,
$\Delta_{\bm H_s}^{(a)}$ is a sum of $|E_a(H_s)|$ terms, and every term is
an integral of a product of functions taking values in $[0,1]$. Hence, 
recalling the definition of $\widehat\Delta_{\bm H_s}^{(a)}(\bm W)(x,y)$ from \eqref{eq:Delta_H_sym} gives, 
\[
\left|
\widehat\Delta_{\bm H_s}^{(a)}(\bm W)(x,y)
\right|
\le
|E(H_s)|
\]
for every $s\in[r]$, every $a\in[k]$, and almost every
$(x,y)\in[0,1]^2$.
Recalling the definition of $\widehat\Phi_a$ in \eqref{eq:sumTW} and applying
the triangle inequality, we obtain
\begin{align*} 
\left| \widehat\Phi_a(x,y) \right| = \left| \sum_{s=1}^r \beta_s \widehat\Delta_{\bm H_s}^{(a)}(\bm W)(x,y) \right| \le \sum_{s=1}^r |\beta_s| \left| \widehat\Delta_{\bm H_s}^{(a)}(\bm W)(x,y) \right| \le \sum_{s=1}^r |\beta_s|\,|E(H_s)| =:C.
\end{align*}
Thus, $-C \le \widehat\Phi_a(x,y) \le C$, for every $a\in[k]$ and almost every $(x,y)\in[0,1]^2$. 
Applying these bounds to \eqref{eq:softmax} gives $W_a(x,y) \ge \frac1k e^{-4C}
=:\varepsilon$. Hence, every coordinate is uniformly bounded away from zero. Since
$\sum_{b=1}^kW_b(x,y)=1$, we also have $$W_a(x,y) = 1-\sum_{b\ne a}W_b(x,y) \le 1-(k-1)\varepsilon \le 1-\varepsilon, $$
where the final inequality uses $k\ge2$. Therefore, $\varepsilon \le W_a(x,y) \le
1-\varepsilon$, for every $a\in[k]$ and almost every $(x,y)\in[0,1]^2$. This completes the proof of Theorem~\ref{thm:EL-prob-graphon}.  \hfill $\Box$

\subsection{ Proof of Theorem \ref{thm:highT_colored} } 
\label{sec:highT_coloredpf}

For a vector $\bm z = (z_1, z_2, \ldots, z_k)\in\mathbb R^k$, define its oscillation as: 
\begin{align}\label{eq:normz}
\osc(\bm z):=\max_{a\in[k]} z_a-\min_{a\in[k]} z_a.
\end{align} 
We begin establishing the following key property of the oscillation. 

\begin{lemma} 
\label{lem:osc_distance}
For every $\bm z\in\mathbb R^k$, 
$$\osc(\bm z) = 2 \inf_{c\in\mathbb R}\|\bm z-c\bm 1\|_\infty.$$
\end{lemma}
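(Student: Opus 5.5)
The argument reduces to a one-dimensional minimization in the shift parameter $c$. Fix $\bm z\in\mathbb R^k$ and write $M:=\max_{a\in[k]} z_a$ and $m:=\min_{a\in[k]} z_a$, so that $\osc(\bm z)=M-m$. For each $c\in\mathbb R$, the coordinates of $\bm z-c\bm 1$ lie between $m-c$ and $M-c$. Hence
\[
\|\bm z-c\bm 1\|_\infty=\max_{a\in[k]}|z_a-c|=\max\{M-c,\;c-m\}.
\]
This holds because $|z_a-c|$ is convex in $z_a$, so its maximum over the coordinates is attained at one of the extreme values $M$ or $m$. I will state this identity first.

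For the lower bound, I use that a maximum dominates an average. For every $c$,
\[
\max\{M-c,\;c-m\}\ge \tfrac12\bigl[(M-c)+(c-m)\bigr]=\tfrac12(M-m).
\]
Taking the infimum over $c$ gives $\inf_{c}\|\bm z-c\bm 1\|_\infty\ge \tfrac12\osc(\bm z)$.

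For the matching upper bound, I take the midpoint $c^\ast:=\tfrac12(M+m)$. At this choice both terms are equal, $M-c^\ast=c^\ast-m=\tfrac12(M-m)$, so $\|\bm z-c^\ast\bm 1\|_\infty=\tfrac12\osc(\bm z)$. The infimum is therefore attained and equals $\tfrac12\osc(\bm z)$. Multiplying by $2$ gives the claim.

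There is no real obstacle here. The only point needing care is the first identity: one must check that the maximum of $|z_a-c|$ over $a$ is governed only by the extreme coordinates $M$ and $m$, and this follows directly since $m\le z_a\le M$ for every $a$.
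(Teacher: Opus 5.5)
Your proof is correct and follows essentially the same route as the paper: the midpoint $c=\frac{M+m}{2}$ gives the upper bound, and for every $c$ the bound $\max\{|M-c|,|m-c|\}\ge\frac{M-m}{2}$ gives the lower bound. The only difference is that you first compute $\|\bm z-c\bm 1\|_\infty=\max\{M-c,\,c-m\}$ exactly, which also shows that the infimum is attained.
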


\begin{proof}
Let $M:=\max_{a\in[k]} z_a$, $m:=\min_{a\in[k]} z_a$, and $c:=\frac{M+m}{2}$. Then for every $a\in[k]$, 
$$
|z_a-c|\le \frac{M-m}{2}=\frac12\,\osc(\bm z).
$$
Therefore, $\| z-c\bm 1 \|_\infty\le \frac12\,\osc(\bm z)$.

Conversely, for any $c\in\mathbb R$, $\| \bm z-c\bm 1\|_\infty\ge \max\{|M-c|,|m-c|\}\ge \frac{M-m}{2} =\frac12\,\osc(\bm z)$. This proves the claim.
\end{proof}

Now, recall the definition of the softmax function from \eqref{eq:softmaxz}. Note that the $\SoftMax$ function satisfies the following uniform $L^\infty$ Lipschitz bound. For $\bm{x}, \bm{y} \in \R^k$,
\begin{equation}
\label{eq:softmax_infty_half}
\|\SoftMax(\bm{x})-\SoftMax(\bm{y})\|_\infty
\le
\frac12\,\|\bm{x} - \bm{y}\|_\infty. 
\end{equation}
This follows from the Jacobian formula $\nabla\SoftMax(\bm{x})=\diag(\bm p)- \bm p \bm p^\top$, where $\bm p=\SoftMax(\bm x )$ and the estimate
$$
\max_{a\in[k]}\sum_{b=1}^k |(\diag(\bm p)- \bm p \bm p^\top)_{ab}|
=
2\max_{a\in[k]} p_a(1-p_a)
\le \frac12.
$$
The Lipschitz bound in \eqref{eq:softmax_infty_half}, however, yields a coarser high-temperature condition than the one required in Theorem~\ref{thm:highT_colored}.
To obtain the sharper criterion, we need the following oscillation-based Lipschitz bound, which follows from the representation in Lemma~\ref{lem:osc_distance}.

\begin{lemma}
\label{lem:softmax_osc_lip}
For all $\bm{x}, \bm{y} \in\mathbb R^k$,
\begin{equation}
\label{eq:softmax_osc_bound}
\|\SoftMax(\bm{x})-\SoftMax(\bm{y})\|_\infty
\le
\frac14\,\osc(\bm{x} - \bm{y}).
\end{equation}
\end{lemma}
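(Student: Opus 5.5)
The plan is to combine the shift invariance of $\SoftMax$ with a sharper, row-sum-free bound on its Jacobian. Write $\bm d:=\bm x-\bm y$. Since $\SoftMax(\bm y+c\bm 1)=\SoftMax(\bm y)$ for every $c\in\mathbb R$, we may replace $\bm y$ by $\bm y+c\bm 1$ without changing the left-hand side of \eqref{eq:softmax_osc_bound}. Choose $c=\frac{M+m}{2}$, where $M=\max_a d_a$ and $m=\min_a d_a$. Then, as in the proof of Lemma~\ref{lem:osc_distance}, the new difference $\bm d-c\bm 1$ has every coordinate in $[-\tfrac12\osc(\bm d),\tfrac12\osc(\bm d)]$. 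It therefore suffices to prove that, for every $\bm d$ and every coordinate $a$,
\[
|\SoftMax(\bm y+\bm d)_a-\SoftMax(\bm y)_a|\le \tfrac14\,\osc(\bm d).
\]
Simply applying \eqref{eq:softmax_infty_half} to $\|\bm d-c\bm 1\|_\infty\le\frac12\osc(\bm d)$ gives this with constant $\frac14$ directly: $\frac12\cdot\frac12\osc(\bm d)=\frac14\osc(\bm d)$. This already proves the lemma, and it is the route I would write up.

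As a sanity check, and to keep a self-contained alternative, I would also verify the bound coordinatewise along the segment $\bm y+t\bm d$, $t\in[0,1]$. By the mean value theorem applied to $g_a(t)=\SoftMax(\bm y+t\bm d)_a$, we have
\[
g_a'(t)=p_a\Bigl(d_a-\sum_b p_b d_b\Bigr),
\]
where $\bm p=\SoftMax(\bm y+t\bm d)$. Here $\sum_b p_bd_b$ is a convex combination of the $d_b$. Splitting it as $p_a d_a+(1-p_a)\bar d$, with $\bar d$ a weighted average of the $d_b$ for $b\ne a$, gives
\[
g_a'(t)=p_a(1-p_a)(d_a-\bar d).
\]
Since $|d_a-\bar d|\le\osc(\bm d)$ and $p_a(1-p_a)\le\frac14$, this yields $|g_a'(t)|\le\frac14\osc(\bm d)$. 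Integrating over $t\in[0,1]$ and taking the maximum over $a$ gives \eqref{eq:softmax_osc_bound}.

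The only step that needs care is the row-sum estimate behind \eqref{eq:softmax_infty_half}. This is the identity $\sum_b|(\diag(\bm p)-\bm p\bm p^\top)_{ab}|=p_a(1-p_a)+p_a\sum_{b\ne a}p_b=2p_a(1-p_a)\le\frac12$, which the paper already states. The direct derivative computation avoids relying on it at all, so I expect no real obstacle: the shift reduction via Lemma~\ref{lem:osc_distance} is the key idea.
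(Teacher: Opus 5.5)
Your main argument is correct and is essentially the paper's proof: use shift invariance of $\SoftMax$ to subtract the midpoint shift $c=\frac{M+m}{2}$, apply the $\frac12$-Lipschitz bound \eqref{eq:softmax_infty_half}, and use $\|\bm d-c\bm 1\|_\infty=\frac12\osc(\bm d)$, which is the content of Lemma~\ref{lem:osc_distance}. Your alternative computation $g_a'(t)=p_a(1-p_a)(d_a-\bar d)$ is also valid and gives the same constant directly, without needing either the shift or the Jacobian row-sum estimate.
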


\begin{proof} 
Let $\bm{\delta}:=\bm{x} - \bm{y}$. Then using $\SoftMax(\bm{x})=\SoftMax(\bm x -c\bm 1)$, for every $c\in\mathbb R$, and \eqref{eq:softmax_infty_half} gives, 
$$
\|\SoftMax(\bm{x})-\SoftMax(\bm{y})\|_\infty
=
\|\SoftMax(\bm{x}-c\bm 1)-\SoftMax(\bm{y})\|_\infty
\le
\frac12\,\|\bm{\delta}-c\bm 1\|_\infty.
$$
Optimizing over $c\in\mathbb R$ and applying \Cref{lem:osc_distance}, we obtain
$$
\|\SoftMax(\bm{x})-\SoftMax(\bm{y})\|_\infty
\le
\frac12\inf_{c\in\mathbb R}\|\bm{\delta}-c\bm 1\|_\infty
=
\frac{1}{4}\,\osc(\bm{\delta}),
$$
which is exactly \eqref{eq:softmax_osc_bound}.
\end{proof}

\begin{remark}
Note that in the classical uncolored setting, which corresponds to $k=2$, one does not need to introduce oscillation as a separate object. 
This is because, for $\bm x = (x_1, x_2)\in\mathbb
R^2$, $\osc(\bm{x})=|x_1-x_2|$. Hence, quotienting out the common additive shift simply reduces the problem to a single scalar variable. For $k\ge 3$, by contrast, there are genuinely several independent
pairwise differences, and this is precisely why the oscillation-based formulation
becomes the natural one (recall the discussion following Theorem \ref{thm:highT_colored}). 
\end{remark}

With the above preparations, we now proceed with the proof of Theorem \ref{thm:highT_colored}. To establish a contradiction, suppose $\bm W= (W_1, W_2, \ldots, W_k)$ and $\bm V = (V_1, V_2, \ldots, V_k)$ be two maximizers for the function $\mathcal F$ (recall \eqref{eq:F}). Hence, both of them satisfy \eqref{eq:hightemp}. This means, for almost every $(x,y)\in[0,1]^2$,
\begin{align}\label{eq:WV}
\bm W(x,y)
&=
\SoftMax\left(
2\widehat{\bm\Psi}(\bm W)(x,y)
\right),
&
\bm V(x,y)
&=
\SoftMax\left(
2\widehat{\bm\Psi}(\bm V)(x,y)
\right).
\end{align} 
Now, for a vector-valued function $\bm F=(F_1,\dots,F_k)$ on $[0,1]^2$, define 
$$
\|\bm F\|_{\mathrm{osc},\infty}
:=
\operatorname*{ess\,sup}_{(x,y)\in[0,1]^2}\osc(\bm F(x,y))
=
\max_{ 1 \leq a \ne b \leq k }\|F_a-F_b\|_\infty.
$$
Applying Lemma~\ref{lem:softmax_osc_lip} pointwise to the two
fixed-point equations in \eqref{eq:WV} gives
\begin{align*}
\|\bm W(x,y)-\bm V(x,y)\|_\infty
&=
\left\|
\SoftMax\left(2\widehat{\bm\Psi}(\bm W)(x,y)\right)
-
\SoftMax\left(2\widehat{\bm\Psi}(\bm V)(x,y)\right)
\right\|_\infty
\\
&\le
\frac14
\osc\left(
2\widehat{\bm\Psi}(\bm W)(x,y)
-
2\widehat{\bm\Psi}(\bm V)(x,y)
\right)
\\
&=
\frac12
\osc\left(
\widehat{\bm\Psi}(\bm W)(x,y)
-
\widehat{\bm\Psi}(\bm V)(x,y)
\right).
\end{align*}
Taking the essential supremum over $(x,y)$ yields
\begin{align}\label{eq:maximumWV}
\|\bm W-\bm V\|_\infty :=
\operatorname*{ess\,sup}_{(x,y)\in[0,1]^2}
\|\bm W(x,y)-\bm V(x,y)\|_\infty \le \frac12
\left\| \widehat{\bm\Psi}(\bm W) - \widehat{\bm\Psi}(\bm V) \right\|_{\mathrm{osc},\infty}.
\end{align} 
Next, for every pair $a\ne b$, define
\[
\widehat\Theta_{ab}(\bm W)
:=
\widehat\Psi_a(\bm W)-\widehat\Psi_b(\bm W)
=
\sum_{s=1}^r
\beta_s
\left[
\widehat\Delta_{\bm H_s}^{(a)}(\bm W)
-
\widehat\Delta_{\bm H_s}^{(b)}(\bm W)
\right].
\] 
We first record the corresponding estimate for the oriented kernels. By
\eqref{eq:Delta_H},
$\Delta_{\bm H_s}^{(a)}(\bm W)(x,y)$ is a sum of
$|E_a(H_s)|$ terms. Each term is an integral of a product of exactly
$|E(H_s)|-1$ factors, every factor being one coordinate of $\bm W$ and
taking values in $[0,1]$. Using the elementary inequality
\[
\left|
\prod_{j=1}^{K}u_j-\prod_{j=1}^{K}v_j
\right|
\le
\sum_{j=1}^{K}|u_j-v_j|,
\qquad
u_j,v_j\in[0,1],
\]
we obtain, for every $s\in[r]$ and $a\in[k]$,
\begin{equation}\label{eq:oriented_derivative_lipschitz}
\left\|
\Delta_{\bm H_s}^{(a)}(\bm W)
-
\Delta_{\bm H_s}^{(a)}(\bm V)
\right\|_\infty
\le
|E_a(H_s)|
\bigl(|E(H_s)|-1\bigr)
\|\bm W-\bm V\|_\infty.
\end{equation}
Now, let
\[
(\mathsf Sf)(x,y)
:=
\frac12\bigl(f(x,y)+f(y,x)\bigr)
\]
denote the symmetrization operator. By
\eqref{eq:Delta_H_sym}, $\widehat\Delta_{\bm H_s}^{(a)} = \mathsf S\Delta_{\bm H_s}^{(a)}$. Further, $\mathsf S$ is an $L^\infty$ contraction, that is, $\|\mathsf Sf\|_\infty\le\|f\|_\infty$. Therefore,
\begin{align*}
&
\left\|
\left[
\widehat\Delta_{\bm H_s}^{(a)}(\bm W)
-
\widehat\Delta_{\bm H_s}^{(b)}(\bm W)
\right]
-
\left[
\widehat\Delta_{\bm H_s}^{(a)}(\bm V)
-
\widehat\Delta_{\bm H_s}^{(b)}(\bm V)
\right]
\right\|_\infty
\\
&\qquad=
\left\|
\mathsf S
\left(
\left[
\Delta_{\bm H_s}^{(a)}(\bm W)
-
\Delta_{\bm H_s}^{(a)}(\bm V)
\right]
-
\left[
\Delta_{\bm H_s}^{(b)}(\bm W)
-
\Delta_{\bm H_s}^{(b)}(\bm V)
\right]
\right)
\right\|_\infty
\\
&\qquad\le
\left\|
\Delta_{\bm H_s}^{(a)}(\bm W)
-
\Delta_{\bm H_s}^{(a)}(\bm V)
\right\|_\infty
+
\left\|
\Delta_{\bm H_s}^{(b)}(\bm W)
-
\Delta_{\bm H_s}^{(b)}(\bm V)
\right\|_\infty
\\
&\qquad\le
\bigl(|E(H_s)|-1\bigr)
\bigl(|E_a(H_s)|+|E_b(H_s)|\bigr)
\|\bm W-\bm V\|_\infty,
\end{align*}
where the final inequality follows from
\eqref{eq:oriented_derivative_lipschitz}. Consequently, for every $a\ne b$,
\begin{align*}
\left\|
\widehat\Theta_{ab}(\bm W)
-
\widehat\Theta_{ab}(\bm V)
\right\|_\infty
&\le  
\sum_{s=1}^r
|\beta_s|
\bigl(|E(H_s)|-1\bigr)
\bigl(|E_a(H_s)|+|E_b(H_s)|\bigr)
\|\bm W-\bm V\|_\infty
\nonumber\\
&\le
\Lambda\|\bm W-\bm V\|_\infty.
\end{align*}  
Taking the maximum over $a\ne b$ and using the definition of the oscillation
seminorm gives,  
\begin{equation*}
\left\|
\widehat{\bm\Psi}(\bm W)
-
\widehat{\bm\Psi}(\bm V)
\right\|_{\mathrm{osc},\infty}
\le
\Lambda\|\bm W-\bm V\|_\infty.
\end{equation*}
Combining this with \eqref{eq:maximumWV}, we conclude that $\|\bm{W}-\bm{V}\|_\infty \le \frac12\,\Lambda\,\|\bm{W}-\bm{V}\|_\infty$.
Since $\Lambda<2$, this means $\|\bm{W}-\bm{V}\|_\infty=0$, hence
$W=V$ almost everywhere. This proves the uniqueness of the maximizer. 

Finally, if $\phi\colon[0,1]\to[0,1]$ is any measure-preserving bijection, then
the relabeled graphon
$$
\bm W^\phi(x,y):= \bm W(\phi(x),\phi(y))
$$
satisfies $\mathcal F(\bm W^\phi)=\mathcal F(\bm W)$. Hence, $\bm W^\phi$ is again a maximizer. By uniqueness, $\bm W^\phi=\bm W$, almost everywhere, for every such $\phi$. Therefore, the unique maximizer must be invariant under all measure-preserving relabelings, and thus it is constant almost everywhere.  \hfill  $\Box$

\section{Proofs from Section \ref{sec:temperatureasymptotics}} 

\subsection{Proof of Theorem \ref{ThmConcentrationBeta} } 
\label{sec:ThmConcentrationBetapf}

Set
\[
h(\wt{\bm W})
:=
g(\wt{\bm W})-\frac12\mathcal I(\wt{\bm W}).
\]
Since $g$ is continuous and $\mathcal I$ is lower semicontinuous
\cite[Corollary 3.12.1]{dionigiLargeDeviationsProbability2025}, the
function $h$ is upper semicontinuous. For every fixed $\beta>0$,
Theorem~\ref{thm:TiltedLDP} implies that
$\tilde{\P}_{n,\beta}$ satisfies an LDP with speed $n^2/2$ and good
rate function
\begin{equation*}
I_\beta(\wt{\bm W})
=
2\beta\left[
\sup_{\wt{\bm V}\in\wt{\mathcal W}_k}
\left(f(\wt{\bm V})+\frac1\beta h(\wt{\bm V})\right)
-
\left(f(\wt{\bm W})+\frac1\beta h(\wt{\bm W})\right)
\right].
\end{equation*}  
Let $\mathcal N_f$ and $\mathcal N^*$ be as defined in
\eqref{eq:largetemperaturemaximizerf} and
\eqref{eq:largetemperaturemaximizer}, and let
\[
M_f:=\max_{\wt{\bm W}\in\wt{\mathcal W}_k}f(\wt{\bm W}),
\qquad
M^*:=\max_{\wt{\bm W}\in\mathcal N_f}h(\wt{\bm W}).
\]
These maxima are attained because $f$ is continuous, $h$ is upper
semicontinuous, and the relevant sets are compact. In particular,
$\mathcal N^*$ is nonempty and compact. Fix
$\wt{\bm W}^*\in\mathcal N^*$. Then, for every $\beta>0$,
\begin{equation}\label{eq:temperature-lower-bound}
\sup_{\wt{\bm V}\in\wt{\mathcal W}_k}
\left\{\beta f(\wt{\bm V})+h(\wt{\bm V})\right\}
\geq \beta M_f+M^*.
\end{equation} 
We claim that there exist $\delta>0$ and $\beta_0>0$ such that
\begin{equation}\label{eq:uniform-temperature-gap}
\sup_{\wt{\bm W}\in\mathcal S_\varepsilon}
\left\{\beta f(\wt{\bm W})+h(\wt{\bm W})\right\}
\leq \beta M_f+M^*-\delta
\end{equation}
for every $\beta\geq\beta_0$. If $\mathcal S_\varepsilon$ is empty,
the conclusions are immediate, so suppose that it is nonempty. If the
claim were false, there would exist a sequence $\beta_L\to\infty$ and
$\wt{\bm W}_L\in\mathcal S_\varepsilon$ such that
\begin{equation}\label{eq:perturb_ineq}
\beta_L\bigl(f(\wt{\bm W}_L)-M_f\bigr)
+h(\wt{\bm W}_L)
\geq M^*-\frac{1}{L}.
\end{equation}

Since $\mathcal S_\varepsilon$ is a closed subset of the compact space
$\wt{\mathcal W}_k$, after passing to a subsequence we may assume that $\wt{\bm W}_L \rightarrow \wt{\bm W}_\infty\in\mathcal S_\varepsilon$. Moreover, $h$ attains a finite maximum $H$ on
$\wt{\mathcal W}_k$. Hence, it follows from \eqref{eq:perturb_ineq} that
\[
0\leq M_f-f(\wt{\bm W}_L)
\leq\frac{H-M^*+\frac{1}{L}}{\beta_L}.
\]
Therefore $f(\wt{\bm W}_L)\to M_f$, and the continuity of $f$ gives $\wt{\bm W}_\infty\in\mathcal N_f$. Since $f(\wt{\bm W}_L)-M_f\leq0$, \eqref{eq:perturb_ineq} also gives $h(\wt{\bm W}_L)\geq M^*-\frac{1}{L}$. By the upper semicontinuity of $h$, 
$$h(\wt{\bm W}_\infty) \geq \limsup_{L \to\infty}h(\wt{\bm W}_L)
\geq M^*.$$ Because $\wt{\bm W}_\infty\in\mathcal N_f$, the definition of $M^*$
implies that $\wt{\bm W}_\infty\in\mathcal N^*$. This contradicts
$\wt{\bm W}_\infty\in\mathcal S_\varepsilon$ and proves
\eqref{eq:uniform-temperature-gap}.

We now prove \eqref{eq:tempertauredistance}. If
$\wt{\bm W}\in\Gamma_\beta$, then its optimality and
\eqref{eq:temperature-lower-bound} give $\beta f(\wt{\bm W})+h(\wt{\bm W})
\geq\beta M_f+M^*$. For $\beta\geq\beta_0$, this is incompatible with
\eqref{eq:uniform-temperature-gap} whenever $\wt{\bm W}\in\mathcal S_\varepsilon$. Hence, $\sup_{\wt{\bm W}\in\Gamma_\beta}
\delta_\square(\wt{\bm W},\mathcal N^*)
<\varepsilon$, for every sufficiently large $\beta$. Since $\varepsilon>0$ is
arbitrary, \eqref{eq:tempertauredistance} follows.

Finally, for every $\beta\geq\beta_0$ and every $\wt{\bm W}\in\mathcal S_\varepsilon$, applying \eqref{eq:temperature-lower-bound} and
\eqref{eq:uniform-temperature-gap} gives, $I_\beta(\wt{\bm W})\geq2\delta$. Since $\mathcal S_\varepsilon$ is closed, the LDP upper bound in Theorem \ref{thm:TiltedLDP} gives, for every fixed $\beta\geq\beta_0$, 
$$\limsup_{n\to\infty}\frac{2}{n^2}
\log\tilde{\P}_{n,\beta}(\mathcal S_\varepsilon)
\leq
-\inf_{\wt{\bm W}\in\mathcal S_\varepsilon}
I_\beta(\wt{\bm W})
\leq-2\delta.
$$ Consequently, there exists $n_0=n_0(\varepsilon,\beta)$ such that $
\tilde{\P}_{n,\beta}(\mathcal S_\varepsilon) \leq \exp(-\frac{\delta}{2}n^2)$, for every $n\geq n_0$. Taking $K=\delta/2$ proves \eqref{eq:tempertaureset}. \hfill $\Box$

\subsection{Proof of Theorem \ref{thm:wedge-beta-plus-infty}  } 
\label{sec:wedge-beta-plus-inftypf}

In the regime, $\beta_2 \rightarrow + \infty$ the model favors graphons that maximize the induced wedge density. Specifically, the proof of Theorem \ref{thm:wedge-beta-plus-infty} relies on the following key lemma about the structure of graphons that maximize the induced wedge density. The proof of the lemma is given in Appendix \ref{sec:wedge-max-bipartitepf}.

\begin{lemma}
\label{lem:wedge-max-bipartite}
For a graphon $W\colon[0,1]^2\to[0,1]$, let $t_{\mathrm{ind}}(K_{1,2},W)$ be the homomorphism density of the induced 2-star in $W$. 
Then
\begin{align}\label{eq:maximumwedge}
\max_{W} t_{\mathrm{ind}}(K_{1,2},W)=\frac14.
\end{align}
Moreover, equality holds if and only if, $W$ is, up to a measure-preserving relabeling, the complete  balanced bipartite graphon $W_{\mathrm{bip}}$ in \eqref{eq:bipartiteW}. 
\end{lemma}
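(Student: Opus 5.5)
The plan is to reduce the bound to Goodman's inequality for monochromatic triangles, and then read off the equality case from the cluster-graphon characterization \cite[Theorem 7.1]{janson2013graphlimits} quoted before Theorem~\ref{thm:wedge-ground-states}.

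\textbf{Step 1: the upper bound.} Sample $x_1,x_2,x_3$ uniformly and independently from $[0,1]$. Given the samples, put each edge $(x_i,x_j)$ independently with probability $W(x_i,x_j)$, and let $G$ be the resulting random graph on $\{1,2,3\}$. Write $P_j$ for the probability that $G$ has exactly $j$ edges.

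By definition, $t_{\mathrm{ind}}(K_{1,2},W)$ is the probability that $G$ is an induced cherry centred at vertex $1$. By exchangeability of the three vertices, this gives $t_{\mathrm{ind}}(K_{1,2},W)=\tfrac13 P_2$.

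On the other hand, $P_0+P_3=t(K_3,W)+t(K_3,1-W)$. Goodman's inequality (the triangle is common) says $t(K_3,W)+t(K_3,1-W)\ge \tfrac14$. Hence
\[
P_2 \le 1-P_0-P_3-P_1 \le \tfrac34-P_1 ,
\]
and therefore $t_{\mathrm{ind}}(K_{1,2},W)\le \tfrac14-\tfrac13P_1\le \tfrac14$.

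For $W_{\mathrm{bip}}$ a direct computation gives $t_{\mathrm{ind}}(K_{1,2},W_{\mathrm{bip}})=2\cdot\tfrac12\cdot\tfrac14=\tfrac14$. This proves \eqref{eq:maximumwedge}.

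\textbf{Step 2: the structure of maximizers.} If $t_{\mathrm{ind}}(K_{1,2},W)=\tfrac14$, the chain above forces $P_1=0$. An induced "one edge plus an isolated vertex" in $W$ is exactly an induced cherry in $1-W$, so $P_1=0$ means $t_{\mathrm{ind}}(K_{1,2},1-W)=0$.

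By \cite[Theorem 7.1]{janson2013graphlimits}, $1-W$ is then a cluster graphon of the form \eqref{eq:disjointW}, with parts $A_i$ and dust $A_0$. Equivalently, $W$ is $\{0,1\}$-valued and complete multipartite:
\begin{itemize}
\item $W=0$ on each $A_i\times A_i$ with $i\ge1$;
\item $W=1$ across distinct parts;
\item $W=1$ on $A_0\times[0,1]$ (up to null sets), so each dust vertex behaves like an infinitesimal singleton part.
\end{itemize}

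Set $p_i=\lambda(A_i)$, $m=\sum_{i\ge1}p_i\le1$, $S_2=\sum_i p_i^2$ and $S_3=\sum_i p_i^3$. An induced cherry centred at $x$ requires $y,z$ to lie in a common genuine part different from the part of $x$; dust vertices cannot serve as $y,z$ because they are adjacent to everything. This gives
\[
t_{\mathrm{ind}}(K_{1,2},W)=\sum_j p_j^2(1-p_j)=S_2-S_3 .
\]

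Cauchy--Schwarz gives $S_2^2\le m S_3\le S_3$. Hence $t_{\mathrm{ind}}\le S_2-S_2^2\le\tfrac14$. Equality requires all of the following:
\begin{itemize}
\item $S_2=\tfrac12$;
\item $m=1$, so there is no dust;
\item all nonzero $p_i$ are equal, from the equality case of Cauchy--Schwarz.
\end{itemize}
Equal parts summing to $1$ with $S_2=\tfrac12$ means exactly two parts, each of measure $\tfrac12$. So $W$ agrees with $W_{\mathrm{bip}}$ up to a measure-preserving relabeling.

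The converse, that $W_{\mathrm{bip}}$ attains $\tfrac14$, is the computation in Step 1.

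\textbf{Main obstacle.} I expect the equality analysis to need the most care. Specifically:
\begin{itemize}
\item Invoking the Janson characterization for $1-W$ requires checking that $P_1$ coincides with $t_{\mathrm{ind}}(K_{1,2},1-W)$ under the paper's labeled convention.
\item The dust set must be handled correctly: $W\equiv1$ on $A_0$ makes it act as infinitesimal parts, and it contributes to $m<1$ but not to $S_2$ or $S_3$.
\item The sets $A_i$ must be converted to an explicit measure-preserving map onto $[0,\tfrac12]\sqcup[\tfrac12,1]$.
\end{itemize}
If citing Goodman in graphon form is considered insufficient, I would instead derive it from $t(K_3,W)+t(K_3,1-W)=1-3\int d_W+3\int d_W^2$ together with Jensen's inequality.
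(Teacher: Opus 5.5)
Your proof is correct, but it takes a different route from the paper's.

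\textbf{The paper's route.} It never uses Goodman's inequality or the cluster-graphon characterization. It drops the factor $W(y,z)\le 1$ to get
\[
t_{\mathrm{ind}}(K_{1,2},W)\le \int_0^1 d_W(x)\bigl(1-d_W(x)\bigr)\,\mathrm dx = t(K_2,W)-t(K_{1,2},W)\le t(K_2,W)-t(K_2,W)^2\le \tfrac14,
\]
using Cauchy--Schwarz on the degree function. In the equality case it reads off that $W$ is $\tfrac12$-regular and triangle-free. A hands-on Fubini argument on the neighbourhood $A=\{y: W(x,y)>0\}$ of a typical vertex then shows that $W$ is the balanced complete bipartite graphon.

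\textbf{Your route.} You use the three-vertex identity $P_0+P_1+P_2+P_3=1$ together with Goodman. Since $P_2=3t_{\mathrm{ind}}(K_{1,2},W)$ and $P_1=3t_{\mathrm{ind}}(K_{1,2},1-W)$, this gives the stronger, complement-symmetric bound
\[
t_{\mathrm{ind}}(K_{1,2},W)+t_{\mathrm{ind}}(K_{1,2},1-W)\le \tfrac14 .
\]
Equality therefore forces $1-W$ to be induced-cherry-free. You hand the structural step to Janson's characterization, after which the equality case is the elementary optimization $S_2-S_3\le\tfrac14$ over part sizes.

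\textbf{Trade-offs.} Your approach makes the link to Goodman and inducibility explicit, which the paper only mentions in a remark. It also yields a quantitative inequality that penalizes single-edge configurations. The paper's approach is self-contained: its equality analysis needs only Cauchy--Schwarz and Fubini. Yours depends on an external structural theorem, though the paper already cites the same result for the $\beta_2\to-\infty$ ground states. It also depends on the routine transfer of ``up to relabeling'' from $1-W$ to $W$.

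\textbf{Individual steps.} These all hold up:
\begin{itemize}
\item $t_{\mathrm{ind}}=P_2/3$ follows by exchangeability under the paper's labeled convention.
\item The dust set contributes to $m$ but not to $S_2$ or $S_3$, as you say.
\item Equality in $S_2^2\le mS_3\le S_3$ forces $m=1$ and equal nonzero parts, hence exactly two parts of measure $\tfrac12$.
\end{itemize}
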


\begin{remark} 
Lemma~\ref{lem:wedge-max-bipartite} follows from classical results on the inducibility of graphs. The \emph{inducibility} of a finite graph $H$, denoted $\mathrm{ind}(H)$, is the maximum asymptotic density of induced copies of $H$ in a large graph. This notion was introduced systematically by Pippenger and Golumbic~\cite{pippenger1975inducibility}, with earlier related questions, especially for triangles and independent sets, going back to Goodman~\cite{goodman1959sets}. In particular, Goodman's result (see also \cite[Theorem~10]{pippenger1975inducibility}), implies that $\mathrm{ind}(K_{1,2})=\frac34$. With the labeled normalization used in \eqref{eq:wedgeW}, only one of the three possible choices of the center of the induced path is counted. Therefore, $\sup_W t_{\mathrm{ind}}(K_{1,2},W)=\frac14$,  which is precisely the statement in \eqref{eq:maximumwedge}. For completeness, in Appendix~\ref{sec:wedge-max-bipartitepf} we include a proof of this fact, together with a characterization of the extremal configuration.  
\end{remark}

Returning now to the proof of Theorem \ref{thm:wedge-beta-plus-infty}, recall the setup of Theorem \ref{ThmConcentrationBeta} from Section \ref{sec:temperatureasymptotics}. Specifically, in the notation of \eqref{eq:T_betaf_infty}, in this case, $f(W)= t_{\mathrm{ind}}(K_{1, 2}, W)$ and $g(W)= \beta_1 t(K_2, W)$. Hence, by Lemma~\ref{lem:wedge-max-bipartite}, $\mathcal N_f = \argmax_{W} f(W) = \{\tilde{W}_{\mathrm{bip}}\}$, and hence, $\mathcal N^* =  \{\tilde{W}_{\mathrm{bip}}\}$. Then by \eqref{eq:tempertauredistance}, the result in \eqref{eq:wedgeoptimizer} follows. 

Next, to show \eqref{eq:temperaturewedgeW}, recalling \eqref{eq:wedgetemperatureTW} first note that,  
$$\psi_\wedge(\beta_2)\ge T_{\wedge}(W_{\mathrm{bip}}) - \frac12\mathcal I(W_{\mathrm{bip}}) = 
\frac{\beta_1}{2} + \frac{\beta_2}{4} - \frac12\mathcal I(W_{\mathrm{bip}})  .$$
Dividing by $\beta_2$ and letting $\beta_2\to+\infty$, we obtain $\liminf_{\beta_2 \to+\infty}\frac{\psi_{\wedge}(\beta_2)}{\beta_2}\ge\frac14$. Conversely, since $-\frac12\mathcal I(W) 
\leq \frac{1}{2} \log 2$, for any maximizer $W^*$ of $\psi_{\wedge}(\beta_2)$,
$$\frac{\psi_{\wedge}(\beta_2)}{\beta_2} \leq \frac{\beta_1 t(K_2, W^*)}{\beta_2} +  t_{\mathrm{ind}}(K_{1,2},W^*)+\frac{\log 2}{2 \beta_2} \le \frac14+o(1), $$
again by Lemma~\ref{lem:wedge-max-bipartite}. Combining this with the lower bound above shows that $\lim_{\beta_2 \to+\infty}\frac{\psi_\wedge(\beta_2)}{\beta_2}=\frac14$, completing the proof of \eqref{eq:temperaturewedgeW}.  \hfill  $\Box$

\subsection{Proof of Theorem \ref{thm:wedge-ground-states} }
\label{sec:wedge-ground-statespf}

In the regime $\beta_2 \rightarrow - \infty$, we will apply Theorem \ref{ThmConcentrationBeta} with $f(W)= -t_{\mathrm{ind}}(K_{1, 2}, W)$ and $g(W)= \beta_1 t(K_2, W)$ (recall Remark \ref{remark:temperaturenegativeTW}). From \cite[Theorem 7.1]{janson2013graphlimits} we know that $t_{\mathrm{ind}}(K_{1,2},W)=0$ if and only if $W$ is, up to a measure preserving relabeling, a cluster graphon as in \eqref{eq:disjointW}.  
Hence, recalling \eqref{eq:largetemperaturemaximizerf}, 
$$\mathcal N_f = \{ \tilde W : t_{\mathrm{ind}}(K_{1, 2}, W) = 0\} = \tilde{\mathcal W}_{\mathrm{clus}},$$ 
where $\tilde{\mathcal W}_{\mathrm{clus}}$ is the collection of equivalence classes of cluster graphons (as defined before Theorem \ref{thm:wedge-ground-states}). Since a cluster graphon is $\{0, 1\}$-valued and $I_2((0,1))=I_2((1,0))=  0$, 
$$\mathcal I(W)=\int_{[0,1]^2} I_2(W(x,y), 1-W(x,y))\, \mathrm{d}x\, \mathrm{d}y
= 0  ,  $$ 
for any $W \in \mathcal W_{\mathrm{clus}}$. Further, since $0\leq t(K_2,W)\leq 1$, and since the endpoints are attained by $W\equiv 0$ and $W\equiv 1$, respectively, we have,  
\begin{align}\label{eq:wedgeoptimumW}  
\max_W \left\{ \beta_1 t(K_2,W)- \frac{1}{2} \mathcal I(W): t_{\mathrm{ind}}(K_{1,2},W)=0
\right\} =
\begin{cases}
0, & \text{ if } \beta_ 1 \le 0, \\ 
\beta_1 , & \text{ if }  \beta_1 > 0.
\end{cases}
\end{align}
Thus, recalling \eqref{eq:largetemperaturemaximizer}, 
\begin{align*}
\mathcal N^*=\argmax_{\wt{W}\in \mathcal{N}_f} \left( \beta_1 t(K_2, W) - \frac12\mathcal I(W)\right) = \mathcal N^*_{\wedge}, 
\end{align*}
for $\mathcal N^*_{\wedge}$ as defined in \eqref{eq:wedgenegativeN}. This proves \eqref{eq:wedgenegativeoptimizer}.

Next, to show \eqref{eq:wedgenegativeZ}, first note that 
\begin{align}\label{eq:wedgemaximumW}
\psi_{\wedge}(\beta_2) \geq M := \max_W \left\{ \beta_1 t(K_2,W)- \frac{1}{2} \mathcal I(W): t_{\mathrm{ind}}(K_{1,2},W)=0
\right\} . 
\end{align}
For the upper bound, let $\beta_2^{(L)}\to-\infty$, define $\alpha^{(L)} = - \beta_2^{(L)}$, and choose a maximizer $W^{(L)}$ of $\psi_{\wedge}(\beta_2^{(L)})$. By compactness of $\wt{\mathcal W}$, after passing to a subsequence we may assume that $\wt W^{(L)}\to\wt W_*$. Since $$ \alpha^{(L)} f(W^{(L)})+h(W^{(L)}) = \psi_{\wedge}(\beta_2^{(L)}) \geq M, $$ 
where $h(W^{(L)}) = g(W^{(L)}) - \frac{1}{2} \mathcal I(W^{(L)})$ is the tie-breaking function. Note that $h$ is bounded above, hence, we must have $f(W_*)=0$, otherwise $\alpha^{(L)} f(W^{(L)})\to-\infty$, which is a contradiction. This means that $\wt W_*\in\wt{\mathcal W}_{\mathrm{clus}}$. Since $f(W^{(L)})\leq0$, 
$$ \psi_{\wedge}(\beta_2^{(L)}) = \alpha^{(L)} f(W^{(L)})+h(W^{(L)}) \leq h(W^{(L)}). $$ Taking the upper limit and using the upper semicontinuity of $h$ gives, 
$$ \limsup_{L \to\infty}\psi_{\wedge}(\beta_2^{(L)}) \leq h(W_*) \leq \sup_{\wt W\in\wt{\mathcal W}_{\mathrm{clus}}}h(W) = M. $$ This combined with \eqref{eq:wedgeoptimumW} and \eqref{eq:wedgemaximumW}, proves the results in \eqref{eq:wedgenegativeZ}.  \hfill  $\Box$

\subsection{Proof of Theorem \ref{thm:concentrationrainbow} }
\label{sec:concentrationrainbowpf}

We will apply Theorem \ref{ThmConcentrationBeta} with 
$f(\bm W)= t_{\mathrm{rb}\triangle}(\bm W)$ and $g(\bm{W})= \beta_1 t(K_2^{(1)}, \bm W) + \beta_2 t(K_2^{(2)}, \bm W) + \beta_3 t(K_2^{(3)}, \bm W)$, where the parameters are as defined in \eqref{eq:rainbowT}. By Corollary~\ref{cor:graphon_extremality_rainbow}, 
$$\mathcal N_f = \argmax_{\tilde{\bm W} \in \tilde{\mathcal W}_3} f(\tilde{\bm W}) = \wt{\bm W}^{\mathrm{rb}},$$ where $\wt{\bm W}^{\mathrm{rb}}$ is the equivalence class (in $\tilde {\mathcal W}_3$) of the blowup 3-colored probability graphon of the $K_4$ template, as defined in Section \ref{sec:rt}. Hence, recalling \eqref{eq:largetemperaturemaximizer}, $\mathcal N^* =  \wt{\bm W}^{\mathrm{rb}}$. Then by \eqref{eq:tempertauredistance}, the result in \eqref{eq:rainbowdistanceN} follows.

It remains to prove \eqref{eq:rainbowZ}. From \eqref{eq:rbtemperatureTW}, recall that
\begin{align}\label{eq:rbenergy}
\psi_{\mathrm{rb}\triangle}(\beta_4)
=
\sup_{\bm W\in\wt{\mathcal W}_3}
\left\{
\beta_4t_{\mathrm{rb}\triangle}(\bm W)
+
\sum_{a=1}^3\beta_a t(K_2^{(a)},\bm W)
-
\frac12\mathcal I(\bm W)
\right\}.
\end{align}  
For the lower bound, evaluate the variational functional at the extremal
rainbow graphon $\bm W^{\mathrm{rb}}$. By
Corollary~\ref{cor:graphon_extremality_rainbow}, $t_{\mathrm{rb}\triangle}(\bm W^{\mathrm{rb}}) = \frac1{15}$. Consequently,
\begin{align*}
\psi_{\mathrm{rb}\triangle}(\beta_4) \ge T(\bm W^{\mathrm{rb}}) - \frac12\mathcal I(\bm W^{\mathrm{rb}}) = \frac{\beta_4}{15} + \sum_{a=1}^3 \beta_a t(K_2^{(a)},\bm W^{\mathrm{rb}}) - \frac12\mathcal I(\bm W^{\mathrm{rb}}).
\end{align*}
The edge parameters $\beta_1,\beta_2,\beta_3$ are fixed, and the terms on the
second line other than $\beta_4/15$ do not depend on $\beta_4$. Dividing by
$\beta_4$ and letting $\beta_4\to+\infty$ therefore gives
\[
\liminf_{\beta_4\to+\infty}
\frac{\psi_{\mathrm{rb}\triangle}(\beta_4)}{\beta_4}
\ge
\frac1{15}.
\]

For the upper bound, note that, for every $\bm W\in\mathcal W_3$, one has  
$0\le t(K_2^{(a)},\bm W)\le1$, for $a\in[3]$, and $-\frac12\mathcal I(\bm W) \le
\frac12\log3$. Moreover, Corollary~\ref{cor:graphon_extremality_rainbow} gives $t_{\mathrm{rb}\triangle}(\bm W)\le\frac1{15}$. Then recalling \eqref{eq:rbenergy} gives, $\psi_{\mathrm{rb}\triangle}(\beta_4) \le \frac{\beta_4}{15} + \sum_{a=1}^3|\beta_a| + \frac12\log3$. After division by $\beta_4$ and taking limit as $\beta_4\to+\infty$ gives,  
\[
\limsup_{\beta_4\to+\infty}
\frac{\psi_{\mathrm{rb}\triangle}(\beta_4)}{\beta_4}
\le
\frac1{15}.
\]
Combining the lower and upper bounds proves \eqref{eq:rainbowZ}.
\hfill $\Box$

\subsection{ Proof of Theorem \ref{thm:concentration_rainbow_negative} }
\label{sec:concentration_rainbow_negativepf}

Recall from \eqref{eq:tWrainbow_sym} that $t_{\mathrm{rb}\triangle}^{\mathrm{sym}}(\bm W) = 6t_{\mathrm{rb}\triangle}(\bm W)$. Hence, the constraints $t_{\mathrm{rb}\triangle}(\bm W)=0$ and  $t_{\mathrm{rb}\triangle}^{\mathrm{sym}}(\bm W)=0$ are equivalent. Further, recalling \eqref{eq:tWrainbow_sym} both are equivalent to the following: Outside a single null subset of $[0,1]^3$,
\begin{equation}\label{eq:zero-rb-ae}
W_a(x,y)W_b(y,z)W_c(z,x)=0,
\quad
\text{ for every ordered triple of distinct colors } a,b,c\in[3].
\end{equation}

We begin with the following consequence of this condition: 

\begin{lemma} 
For $x, y\in[0,1]$, define  $\mathrm{supp} \bm W(x,y) := \{a\in\{1,2,3\}: W_a(x,y)>0\}$.
Then 
\begin{equation} 
|\mathrm{supp} \bm W(x,y)|\le 2, 
\label{eq:support-at-most-two}  
\end{equation}
for almost every $(x,y)\in[0,1]^2$.
\end{lemma}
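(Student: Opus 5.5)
The plan is a Fubini argument that turns the pointwise condition \eqref{eq:zero-rb-ae} into a statement about whole rows of $\bm W$. Let
$$S:=\{(x,y)\in[0,1]^2:\ |\mathrm{supp}\,\bm W(x,y)|=3\},$$
which is measurable and symmetric. We argue by contradiction and assume $\lambda(S)>0$.

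\emph{Step 1 (fixing an edge in $S$).} By Fubini applied to the null set excluded in \eqref{eq:zero-rb-ae}, for almost every $(x,y)\in[0,1]^2$ the following holds: for almost every $z\in[0,1]$, $W_a(x,y)W_b(y,z)W_c(z,x)=0$ for every ordered triple $(a,b,c)$ of distinct colors. Fix such an $(x,y)$ lying in $S$. Since $W_a(x,y)>0$ for every $a\in[3]$, we may divide by it. This gives, for almost every $z$,
$$W_b(y,z)\,W_c(z,x)=0\qquad\text{for all } b\neq c.$$

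\emph{Step 2 (rows through $x$ are vertex-valued).} Fix such a $z$. Both $\mathrm{supp}\,\bm W(y,z)$ and $\mathrm{supp}\,\bm W(z,x)$ are nonempty, because the coordinates sum to $1$. If either support contained two distinct colors, we could choose $b\in\mathrm{supp}\,\bm W(y,z)$ and $c\in\mathrm{supp}\,\bm W(z,x)$ with $b\neq c$, which contradicts Step 1. Hence both supports are the same singleton $\{b(z)\}$. By symmetry of $\bm W$, this gives $\bm W(x,z)=\bm e_{b(z)}$ for almost every $z$. In particular, $|\mathrm{supp}\,\bm W(x,z)|=1$ for almost every $z$, so the section $S_x:=\{z:(x,z)\in S\}$ is a null set.

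\emph{Step 3 (contradiction).} Since $\lambda(S)>0$, Fubini gives a set $X$ of positive measure such that $\lambda(S_x)>0$ for every $x\in X$. For almost every $x\in X$, the set of $y\in S_x$ for which $(x,y)$ satisfies the almost-everywhere property of Step 1 has full measure in $S_x$, and is therefore nonempty. Choosing such a $y$ and applying Steps 1 and 2 gives $\lambda(S_x)=0$, contradicting $x\in X$. Hence $\lambda(S)=0$, which is \eqref{eq:support-at-most-two}.

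The main obstacle is the measure-theoretic bookkeeping: the null set in \eqref{eq:zero-rb-ae} must be handled uniformly over the finitely many color triples, and the two applications of Fubini (to the $z$-sections in Step 1 and to the $x$-sections in Step 3) must be compatible. This is handled by taking the union of the finitely many exceptional null sets before slicing. The combinatorial core, that three positive colors on one edge force both other edges of every triangle through it to be monochromatic of a common color, is elementary.
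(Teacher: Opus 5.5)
Your proof is correct and follows essentially the paper's argument: both use Fubini to select a vertex $x$ whose section of the fully supported set has positive measure, and then apply the rainbow-free condition \eqref{eq:zero-rb-ae} to triangles through $x$. The only difference is cosmetic. You use a single fully supported edge $(x,y)$ to force the row of $x$ to be a.e.\ deterministic, while the paper uses two fully supported edges $(x,y),(x,z)$ to force $\bm W(y,z)=0$. Your handling of the exceptional null sets is also more explicit than the paper's, which tacitly needs $x$ chosen so that its section of the null set in \eqref{eq:zero-rb-ae} is null.
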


\begin{proof} 
Suppose by contradiction that the set $E:=\{(x,y): |\operatorname{supp}W(x,y)|=3\}$ has positive measure.  By Fubini, there exists $x\in[0,1]$ such that $E_x:=\{y:(x,y)\in E\}$ has positive measure.  Fix such an $x$. Then for almost every $y,z\in E_x$, all three
quantities $W_1(x,y),W_2(x,y),W_3(x,y)$ are positive, and, by symmetry, all
three quantities $W_1(z,x),W_2(z,x),W_3(z,x)$ are positive. Now, fix $b\in\{1,2,3\}$, and let $\{a,c\}=\{1,2,3\}\setminus\{b\}$. Then $W_a(x,y)>0$ and $W_c(z,x)>0$, and hence, \eqref{eq:zero-rb-ae} gives $W_b(y,z)=0$ for almost every $y, z\in E_x$. Since this holds for each $b\in\{1,2,3\}$, 
$$
W_1(y,z)=W_2(y,z)=W_3(y,z)=0
\qquad\text{for almost every } y, z \in E_x,
$$
contradicting the fact that $W(y,z)\in\Delta_3$.  This proves
\eqref{eq:support-at-most-two}.
\end{proof}

We now turn to the entropy bound. By \eqref{eq:support-at-most-two}, for almost every $(x,y)$ the vector $\bm W(x,y)$ is supported on at most two colors.
Fix a pair $\{a,b\}\subset[3]$. Maximizing over probability vectors supported
on this pair gives
\[
\sup_{\substack{p_a,p_b\ge0\\p_a+p_b=1}}
\left\{
\beta_a p_a+\beta_b p_b
-\frac12\left(p_a\log p_a+p_b\log p_b\right)
\right\}
=
\frac12\log\left(e^{2\beta_a}+e^{2\beta_b}\right).
\]
The maximizer is unique and is given by
\[ p_a= \frac{e^{2\beta_a}} {e^{2\beta_a}+e^{2\beta_b}}, \qquad p_b= \frac{e^{2\beta_b}} {e^{2\beta_a}+e^{2\beta_b}}. \]
Set
\begin{align}\label{eq:M} 
M
:=
\frac12\log\left(
\max_{1\le a<b\le3}
\left(e^{2\beta_a}+e^{2\beta_b}\right)
\right).
\end{align}
By \eqref{eq:support-at-most-two}, for almost every $(x,y)$ the probability
vector $\bm W(x,y)$ is supported on at most two colors. If its support is
contained in the pair $\{a,b\}$, then the preceding optimization on that
two-color face gives
\[
\sum_{c=1}^3\beta_cW_c(x,y)
-\frac12 I_3(\bm W(x,y))
\le
\frac12\log\left(e^{2\beta_a}+e^{2\beta_b}\right)
\le M.
\]
Thus,
\[
\sum_{a=1}^3\beta_aW_a(x,y)
-\frac12 I_3(\bm W(x,y))
\le M
\]
for almost every $(x,y)$. Integrating both sides of the above inequality over $[0,1]^2$ gives,  
\begin{align}\label{eq:coloredge}
\sum_{a=1}^3
\beta_a t(K_2^{(a)},\bm W)
-\frac12\mathcal I(\bm W)
\le M
\end{align}
for every $\bm W$ satisfying $t_{\mathrm{rb}\triangle}(\bm W)=0$. Note that for every $(a,b)\in\mathcal P_*$, the constant bichromatic probability
graphon $\bm W^{(ab)}$ defined in \eqref{eq:colorW} attains equality in
\eqref{eq:coloredge}. Hence, $M$ is the exact maximum of the tie-breaker
functional over the zero-rainbow set.

We now identify the equality cases. Suppose that equality holds in
\eqref{eq:coloredge}. Since the integrand is pointwise bounded above by $M$,
equality must hold in the pointwise bound almost everywhere. The optimization
on each two-color face of $\Delta_3$ is strictly concave. Consequently, for
almost every $(x,y)$ there exists a pair
\[
P(x,y)\in\mathcal P_*
:=
\argmax_{1\le a<b\le3}
\left(e^{2\beta_a}+e^{2\beta_b}\right)
\]
such that $$\bm W(x,y)= (q_1^{P(x,y)}, q_2^{P(x,y)}, q_3^{P(x,y)}),$$ where, for $P(x,y)=\{a,b\}$, $q^{ab}_a =
\frac{e^{2\beta_a}}{e^{2\beta_a}+e^{2\beta_b}}$, $q^{ab}_b = \frac{e^{2\beta_b}}{e^{2\beta_a}+e^{2\beta_b}}$, and $q^{ab}_c=0$, 
for $c\notin\{a,b\}$. In particular, $\operatorname{supp}\bm W(x,y)=P(x,y)$, almost everywhere. We claim that, for almost every $(x,y,z)$,
\begin{equation}\label{eq:palette-triangle-equality}
P(x,y)=P(y,z)=P(z,x).
\end{equation}
Indeed, consider three two-element subsets of $[3]$. If they are not all
equal, then one can choose one color from each subset in such a way that the
three chosen colors are distinct. We now apply this observation to $P(x,y)$, $P(y,z)$, and $P(z,x)$. If these three palettes were not all equal, there would exist distinct
$a,b,c\in[3]$ such that $a\in P(x,y), b\in P(y,z), c\in P(z,x)$. Since every coordinate belonging to the corresponding palette has strictly
positive mass, this would imply $W_a(x,y)W_b(y,z)W_c(z,x)>0$, contradicting \eqref{eq:zero-rb-ae}. This proves \eqref{eq:palette-triangle-equality}.

It remains to show that $P$ is almost everywhere constant. Let $X,Y,Z$ be
independent uniform random variables on $[0,1]$. From
\eqref{eq:palette-triangle-equality}, $P(X,Y)=P(X,Z)$, almost surely. For each palette $\alpha\in\{\{1,2\},\{1,3\},\{2,3\}\}$, define
\[
p_\alpha(x)
:=
\lambda\bigl(\{y:P(x,y)=\alpha\}\bigr).
\]
Conditional on $X=x$, the variables $P(x,Y)$ and $P(x,Z)$ are independent
and have common distribution $(p_\alpha(x))_\alpha$. Since they are equal
almost surely, $1 = \sum_\alpha p_\alpha(x)^2$, for almost every $x$. Together with $\sum_\alpha p_\alpha(x)=1$, this implies that, for almost every $x$, there
is a unique palette $\alpha(x)$ such that $P(x,y)=\alpha(x)$, for almost every $y$. Finally, the symmetry $P(x,y)=P(y,x)$ gives $\alpha(x)=\alpha(y)$, for almost every $(x,y)$. Hence, $\alpha$ is almost everywhere constant. Therefore, there exists a single pair $\{a,b\}\in\mathcal P_*$ such that $P(x,y)=\{a,b\}$, for almost every $(x,y)$, and consequently $\bm W(x,y)=\bm W^{(ab)}(x,y)$, almost everywhere. Thus, the maximizers of the tie-breaker over the zero-rainbow set are exactly
\[
\mathcal N^*_{\mathrm{rb}\triangle}
=
\left\{
\tilde{\bm W}^{(ab)}:
(a,b)\in\mathcal P_*
\right\}.
\]
Applying Theorem~\ref{ThmConcentrationBeta} with $\alpha=-\beta_4$, $f(\bm W)=-t_{\mathrm{rb}\triangle}(\bm W)$, and $g(\bm W)=
\sum_{a=1}^3\beta_a t(K_2^{(a)},\bm W)$ then proves \eqref{eq:rainbowfreedistanceN} (recall Remark \ref{remark:temperaturenegativeTW}).

It remains to prove \eqref{eq:rainbowfreeZ}. Set
\[
h(\bm W)
:=
\sum_{a=1}^3\beta_a t(K_2^{(a)},\bm W)
-\frac12\mathcal I(\bm W)
\]
and let $M$ be as in \eqref{eq:M}. For every $(a,b)\in\mathcal P_*$, the graphon $\bm W^{(ab)}$ satisfies $t_{\mathrm{rb}\triangle}(\bm W^{(ab)})=0$ and 
$h(\bm W^{(ab)})=M$. Therefore, for every $\beta_4<0$, $\psi_{\mathrm{rb}\triangle}(\beta_4) \ge M$, and hence, 
\[\liminf_{\beta_4\to-\infty} \psi_{\mathrm{rb}\triangle}(\beta_4) \ge M. \]

For the reverse inequality, let
$\beta_4^{(L)}\to-\infty$, and let
$\bm W^{(L)}$ be a maximizer of
$\psi_{\mathrm{rb}\triangle}(\beta_4^{(L)})$. Since $h$ is bounded above,
write
\[
B:=\sup_{\bm W\in\mathcal W_3}h(\bm W)<\infty.
\]
Using the lower bound $\psi_{\mathrm{rb}\triangle}(\beta_4^{(L)})\ge M$, we obtain
$\beta_4^{(L)} t_{\mathrm{rb}\triangle}(\bm W^{(L)}) + h(\bm W^{(L)})
\ge M$. Since $\beta_4^{(L)}<0$, this implies $$0\le t_{\mathrm{rb}\triangle}(\bm W^{(L)}) \le \frac{B-M}{-\beta_4^{(L)}}.$$ Consequently, $t_{\mathrm{rb}\triangle}(\bm W^{(L)}) \rightarrow0$. By compactness of $\tilde{\mathcal W}_3$, after passing to a subsequence
we may assume that $\tilde{\bm W}^{(L)} \rightarrow \tilde{\bm W}_*$, in cut distance. The continuity of the rainbow triangle homomorphism density
then gives $t_{\mathrm{rb}\triangle}(\bm W_*)=0$. Moreover, using $\beta_4^{(L)}<0$ and $t_{\mathrm{rb}\triangle}(\bm W^{(L)})\ge0$, gives $\psi_{\mathrm{rb}\triangle}(\beta_4^{(L)}) \le h(\bm W^{(L)})$. The edge-density terms are continuous, while
$-\frac12\mathcal I$ is upper semicontinuous. Hence, \(h\) is upper
semicontinuous, and therefore
\[
\limsup_{L\to\infty}
\psi_{\mathrm{rb}\triangle}(\beta_4^{(L)})
\le
\limsup_{L\to\infty}h(\bm W^{(L)})
\le
h(\bm W_*).
\]
Since $\bm W_*$ satisfies the zero-rainbow constraint,
\eqref{eq:coloredge} gives $h(\bm W_*)\le M$. Hence, 
\[
\limsup_{\beta_4\to-\infty}
\psi_{\mathrm{rb}\triangle}(\beta_4)
\le M.
\]
Combining this with the lower bound proves the result in \eqref{eq:rainbowfreeZ}.
\hfill $\Box$

\section{Proofs from Section \ref{sec:symmetry} }

\subsection{Proof of Proposition \ref{ppn:finite_beta_wedge_symmetry_breaking}} 
\label{sec:finite_beta_wedge_symmetry_breakingpf}

To prove (1), we view the induced wedge as a 2-coloring of the triangle $K_3$, where color $1$ represents an edge and color $2$ represents a non-edge. Hence, calculating \eqref{eq:Lambda_pair_def} with $k=2$, $r=1$, and $H_1=K_3$ colored as above gives 
$$\Lambda=|\beta_2| (|E(K_3)| -1) (|E_1(K_3)|+|E_2(K_3)|) = 6|\beta_2|,$$
since $|E(K_3)|=3$, $|E_1(K_3)|=2$, and $|E_2(K_3)|=1$. Hence, by Theorem~\ref{thm:highT_colored} replica symmetry holds in the induced wedge ERGM whenever
$\Lambda<2$, which is precisely $|\beta_2|<\frac{1}{3}$.

To prove (2), first recall that $-\frac12 I(c)$, where $I(c)=I_2(c,(1-c))$ as defined before Proposition \ref{ppn:finite_beta_wedge_symmetry_breaking}, is strictly concave on $(0,1)$.  Hence, for any tangent point $c_0\in(0,1)$, the tangent line gives the global upper bound
$$
-\frac12 I(c)
\le
A(c_0)-B(c_0)c,
$$
where $A(c_0):=-\frac12\log(1-c_0)$ and $B(c_0):=\frac12\log\frac{c_0}{1-c_0}$. Consequently,
$$
\beta_2 c^2(1-c)-\frac12 I(c)
\le
\beta_2 c^2(1-c)+A(c_0)-B(c_0)c.
$$
When $3B(c_0)\le \beta_2$, the relevant critical point of the right-hand side is
$$
c^*=\frac{1+\sqrt{1-3B(c_0)/\beta_2}}{3},
$$
and we obtain the upper bound
$$
\psi_{\wedge}^{\mathrm{const}}(\beta_2)
\le
M(\beta_2,c_0)
:=
\beta_2(c^*)^2(1-c^*)+A(c_0)-B(c_0)c^*.
$$
Now, denote by $W_{p, q}$ the balanced 2-block graphon with value $p$ on the two diagonal blocks and value $q$ on the two off-diagonal blocks.  With $p=\frac{17}{250}, q=\frac{199}{200}$, and $c_0=\frac{1557}{2500}$,
one computes
$$
T_{\wedge}(W_{p, q}) = \frac{p^2(1-p)+q^2(1-p)+2pq(1-q)}{4} = \frac{231922367}{1000000000}  ,   
$$
and
$$
-\frac12\mathcal I(W_{p, q})
=
-\frac14\left(I(p)+I(q)\right)
\approx 0.069978,
$$ 
upto six decimal places. Therefore, 
$$
\psi_{\wedge}(\beta_2)-\psi_{\wedge}^{\mathrm{const}}(\beta_2)
\ge
\beta_2 T_{\wedge}(W_{p, q})-\frac12\mathcal I(W_{p, q})-M(\beta_2,c_0) . 
$$
A direct numerical evaluation of the right-hand side gives a positive value/bound at
$\beta_2=3.054$ (as $-M(3.054,c_0)\approx  -0.778164$).  Since its derivative in $\beta_2$ is bounded below by
$$
T_{\wedge}(W_{p, q})-(c^*)^2(1-c^*)
\ge
T_{\wedge}(W_{p, q})-\frac{4}{27}>0,
$$
it remains positive for all larger $\beta_2$.  This proves the symmetry breaking criterion in (2). \hfill $\Box$

\begin{remark}[A weaker symmetry breaking bound]
Another crude approach to establishing symmetry breaking is as follows. Instead of using the tangent-line bound on the entropy, one may use the elementary bounds $\sup_{c\in[0,1]} c^2(1-c)=\frac{4}{27}$ and $\sup_{c\in[0,1]}\left(-\frac12 I(c)\right)=\frac12\log 2$ to obtain 
$$\psi_{\wedge}^{\mathrm{const}}(\beta_2) \leq \frac{4\beta_2}{27}+\frac12\log 2 . $$ 
Also, instead of using a general balanced 2-block graphon, one may use the complete balanced bipartite graphon $W_{\mathrm{bip}}$ in \eqref{eq:bipartiteW} as the nonconstant competitor. In this case, 
$ t_{\mathrm{ind}}(K_{1,2},W_{\mathrm{bip}})=\frac14$ and $\mathcal I(W_{\mathrm{bip}})=0$.  Therefore, 
\begin{align*} 
\psi_{\wedge}(\beta_2)-\psi_{\wedge}^{\mathrm{const}}(\beta_2) \geq \frac{\beta_2}{4} - \left(\frac{4\beta_2}{27}+\frac12\log 2\right) = \frac{11\beta_2}{108}-\frac12\log 2. 
\end{align*} 
This lower bound is positive whenever $\beta_2>\frac{54}{11}\log 2\approx 3.403$. As expected, this threshold is weaker than the value $3.054$ obtained in Proposition~\ref{ppn:finite_beta_wedge_symmetry_breaking}. 
\end{remark}

\subsection{Proof of Corollary \ref{cor:rainbow_highT_sharp}}
\label{sec:rainbow_highT_sharp_pf}
For $\bm p,\bm q\in\Delta_3$, define
\begin{align*}
G_1(\bm p,\bm q):=\frac12(p_3q_2+p_2q_3), \quad  G_2(\bm p,\bm q) :=\frac12(p_1q_3+p_3q_1), \text{ and } \quad  G_3(\bm p,\bm q) :=\frac12(p_1q_2+p_2q_1)  .  
\end{align*}
By the formulas in Example~\ref{ExampEulerLagrangeRainbowTriang},
\begin{equation}\label{eq:rainbow_G_representation}
\widehat\Delta_{\bm K_3^{\mathrm{rb}}}^{(a)}(\bm W)(x,y)
=
\int_0^1
G_a\bigl(\bm W(x,z),\bm W(y,z)\bigr)
\,\mathrm dz.
\end{equation}
We claim that
\begin{equation}\label{eq:rainbow_pair_lipschitz_sharp}
\max_{a\ne b}
\left|
(G_a-G_b)(\bm p,\bm q)
-
(G_a-G_b)(\bm p',\bm q')
\right|
\le
2\delta,
\end{equation}
where
$\delta:=\max\{\|\bm p-\bm p'\|_\infty,
\|\bm q-\bm q'\|_\infty\}$.
By color symmetry, it suffices to treat $(a,b)=(1,2)$. Set
\[
F_{12}(\bm p,\bm q)
:=
G_1(\bm p,\bm q)-G_2(\bm p,\bm q)
=
\frac12\left[
 p_3(q_2-q_1)+q_3(p_2-p_1)
\right].
\]
Let $\bm h=\bm p-\bm p'$ and $\bm k=\bm q-\bm q'$. Note that the sum of the coordinates of $\bm h$ and $\bm k$ are both zero. At an intermediate point $(\bm r,\bm s)\in\Delta_3^2$, the
differential is
\begin{align*}
D F_{12}(\bm r,\bm s)[\bm h,\bm k] =\frac12\bigl[h_3(s_2-s_1)+s_3(h_2-h_1) +k_3(r_2-r_1)+r_3(k_2-k_1)\bigr].
\end{align*} 
Here, $DF_{12}(\bm r,\bm s)[\bm h,\bm k]$ denotes the directional
derivative of $F_{12}$ at $(\bm r,\bm s)$ in the direction
$(\bm h,\bm k)$. For any $\bm c\in\mathbb R^3$ and any $\bm h$ with $h_1+h_2+h_3=0$, one has $|\bm c\cdot\bm h| \le
\osc(\bm c)\|\bm h\|_\infty$. The coefficient vector of $\bm h$ above is
$\frac12(-s_3,s_3,s_2-s_1)$, whose oscillation is at most $1$, since 
$\bm s\in\Delta_3$. The same also holds for the coefficient vector of $\bm k$.
The mean-value theorem therefore gives \eqref{eq:rainbow_pair_lipschitz_sharp}.
Integrating \eqref{eq:rainbow_pair_lipschitz_sharp} in
\eqref{eq:rainbow_G_representation} yields
\begin{equation}\label{eq:rainbow_derivative_lipschitz_sharp}
\left\|
\widehat{\bm\Delta}_{\bm K_3^{\mathrm{rb}}}(\bm W)
-
\widehat{\bm\Delta}_{\bm K_3^{\mathrm{rb}}}(\bm V)
\right\|_{\mathrm{osc},\infty}
\le
2\|\bm W-\bm V\|_\infty.
\end{equation}
The edge terms in \eqref{eq:rainbowT} have constant functional derivatives, so
they cancel when the fixed-point map is evaluated at $\bm W$ and $\bm V$.
Using Lemma~\ref{lem:softmax_osc_lip} and
\eqref{eq:rainbow_derivative_lipschitz_sharp},
\begin{align*}
\|\SoftMax(2\widehat{\bm\Psi}(\bm W))
-
\SoftMax(2\widehat{\bm\Psi}(\bm V))\|_\infty
&\le
\frac{|\beta_4|}{2}
\left\|
\widehat{\bm\Delta}_{\bm K_3^{\mathrm{rb}}}(\bm W)
-
\widehat{\bm\Delta}_{\bm K_3^{\mathrm{rb}}}(\bm V)
\right\|_{\mathrm{osc},\infty}\\
&\le
|\beta_4|\|\bm W-\bm V\|_\infty.
\end{align*}
Thus the fixed-point map is a strict contraction whenever $|\beta_4|<1$.
Every maximizer satisfies this fixed-point equation by
Theorem~\ref{thm:EL-prob-graphon}, so the maximizer is unique. Relabeling
invariance then implies that it is constant, exactly as at the end of the proof
of Theorem~\ref{thm:highT_colored}. \hfill $\Box$

\small

\subsubsection*{Acknowledgement}  
B. B. Bhattacharya was supported by NSF CAREER grant DMS 2046393 and a Sloan Research Fellowship. 

\subsubsection*{AI usage disclosure}
The research themes, ideas, theorem statements, and proof strategies in this work originated with the authors, who assume full responsibility for all content. Generative AI tools (OpenAI ChatGPT and Codex, using GPT-5.5 and GPT-5.6 Sol, and Anthropic Claude, primarily using Claude Fable 5 and Claude Opus 5) were used to cross-check some of the mathematical arguments, refine grammar and typography, and prepare the figures presented in this paper.

\bibliographystyle{abbrvnat}  
\bibliography{graphon_bib_final}

\normalsize

\appendix

\section{Some Useful Results }

In this section we collect the statements of some well-known results for ease of referencing. 

\subsection{Varadhan's Lemma}

In the proofs, we make use of Varadhan's lemma and one of its standard consequences. We begin with the direct form of Varadhan's lemma. For simplicity, we state it only in the uniformly bounded setting.

\begin{theorem}[Varadhan's Lemma: Theorem II.7.1(a) in \cite{ellis2012entropy}]
\label{thm::EVaradhanLemma}    
Let $\cX$ be a complete, separable metric space. Let $\{Q_n\}_{n \in \N}$, be a sequence of Borel probability measures that satisfy an LDP with speed $\{a_n\}_{n \in \N}$ and good rate function $I$. Let $F: \cX \to \R$ be a bounded, continuous function. Then,
$$\sup_{x \in \cX}\left(F(x) - I(x)\right) < \infty,$$
and
\begin{equation*}
\lim_{n\to\infty} \frac{1}{a_n}\log\int_{\cX} \exp\left(a_nF(x)\right)\,Q_n(dx) = \sup_{x \in \cX} \left(F(x) - I(x)\right).
\end{equation*}
\end{theorem}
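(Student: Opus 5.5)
The statement to be proved is Varadhan's lemma in the uniformly bounded setting. I would prove it by establishing matching lower and upper bounds for $\Lambda_n := \frac{1}{a_n}\log\int_{\cX} e^{a_nF}\,dQ_n$. First, finiteness of $\sup(F-I)$ is immediate: $I\ge 0$, so $\sup(F-I)\le \sup F<\infty$.

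\emph{Lower bound.} Fix $x_0\in\cX$ with $I(x_0)<\infty$; if no such point exists, the supremum is $-\infty$ and the bound is trivial. Fix $\varepsilon>0$. By continuity of $F$, the set $U:=\{x: F(x)>F(x_0)-\varepsilon\}$ is an open neighbourhood of $x_0$. Then $\int e^{a_nF}dQ_n \ge e^{a_n(F(x_0)-\varepsilon)}Q_n(U)$. The LDP lower bound gives $\liminf_n \frac{1}{a_n}\log Q_n(U)\ge -\inf_U I\ge -I(x_0)$. Hence $\liminf_n\Lambda_n\ge F(x_0)-I(x_0)-\varepsilon$. Letting $\varepsilon\downarrow 0$ and taking the supremum over $x_0$ gives the lower bound.

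\emph{Upper bound.} This step is the main obstacle, because $\cX$ need not be compact. Let $M:=\sup|F|$ and fix $\varepsilon>0$. Since $I$ is good, the level set $K:=\{I\le L\}$ is compact, where I choose $L$ large enough that $L> 2M+\sup(F-I)$. Each $x\in K$ has a closed neighbourhood $C_x$ with $\sup_{C_x}F\le F(x)+\varepsilon$ (continuity of $F$) and $\inf_{C_x}I\ge I(x)-\varepsilon$ (lower semicontinuity of $I$ together with the closure). By compactness, finitely many interiors of such neighbourhoods cover $K$, say $C_1,\dots,C_m$. The complement $C_0:=\cX\setminus\bigcup_i \mathrm{int}\,C_i$ is closed and disjoint from $K$, so $\inf_{C_0}I\ge L$.

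Now split $\int e^{a_nF}dQ_n\le \sum_{i=0}^m e^{a_n\sup_{C_i}F}Q_n(C_i)$, and use the principle of the largest term, $\limsup\frac1{a_n}\log\sum_{i}b_{n,i}=\max_i\limsup\frac1{a_n}\log b_{n,i}$ for finitely many terms. The LDP upper bound on closed sets gives:
\begin{itemize}
\item for $i\ge1$, with $x_i$ the centre of $C_i$, a contribution at most $F(x_i)+\varepsilon-I(x_i)+\varepsilon\le\sup(F-I)+2\varepsilon$;
\item for $i=0$, a contribution at most $M-L<\sup(F-I)$ by the choice of $L$.
\end{itemize}
Letting $\varepsilon\downarrow0$ yields $\limsup_n\Lambda_n\le\sup(F-I)$. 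Combined with the lower bound, this gives the claimed limit.
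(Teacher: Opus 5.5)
Your proof is correct apart from one slip in choosing $L$, noted below. It takes a different route from the paper only in the sense that the paper proves nothing here. It quotes Ellis, Theorem II.7.1(a), and only checks that Ellis's ``entropy function'' is a good rate function.

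Your argument is the standard self-contained proof: an open-neighbourhood lower bound, then a finite cover of a compact level set plus the largest-term principle for the upper bound. What it buys over the citation is transparency. It shows that completeness and separability of $\mathcal X$ are never used, and that goodness of $I$ enters only in the upper bound. For bounded continuous $F$, even goodness can be avoided. Slice the range of $F$ into finitely many closed slabs $F_j=\{c_j\le F\le c_j+\delta\}$. Then $\int e^{a_nF}\,dQ_n\le\sum_j e^{a_n(c_j+\delta)}Q_n(F_j)$. Since $F\ge c_j$ on $F_j$, the closed-set upper bound gives $\limsup_n \Lambda_n\le\max_j\sup_{F_j}(F-I)+\delta$ directly, with no compact level sets.

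\textbf{The slip.} The $C_0$ term contributes at most $M-L$, so you need $L>M-\sup(F-I)$. Your condition $L>2M+\sup(F-I)$ only gives $M-L<-M-\sup(F-I)$, and that lies below $\sup(F-I)$ only when $\sup(F-I)\ge -M/2$. For example, $F\equiv -M$ and $L=3M/2$ satisfy your condition but give $M-L=-M/2>-M=\sup(F-I)$.

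Since $L$ is free, just take $L>2M$. Applying the LDP upper bound to the closed set $\mathcal X$ gives $\inf I=0$, hence $\sup(F-I)\ge -M$. The same fact shows that your case ``no $x_0$ with $I(x_0)<\infty$'' never arises.

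Also, when building $C_x$, take a closed ball strictly inside the open set where $I>I(x)-\varepsilon$ and $F<F(x)+\varepsilon$. Taking the closure of that open set could add points where $I\le I(x)-\varepsilon$, because lower semicontinuity does not pass lower bounds to limit points.
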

\begin{proof}
As highlighted above, this is a restatement of \cite[Theorem II.7.1(a)]{ellis2012entropy}. We note that the text uses the term \emph{entropy function} for $I$. However, the definition for entropy function given in \cite[Definition II.3.1]{ellis2012entropy} exactly matches the standard definition of a good rate function.
\end{proof}

One useful application of Varadhan's lemma, given in \cite[Theorem II.7.2]{ellis2012entropy}, allows us to derive an LDP for exponentially tilted sequences of probability measures. Additionally, this theorem establishes the exponential decay of the probability that a random element with the resulting tilted distribution deviates from the solutions to a variational problem. A $k$-colored ERGM may be seen as  exponential tilt of a uniformly $k$-colored graph with respect to a bounded Hamiltonian (sufficient statistic). Since we assume throughout that the sufficient statistic is bounded, we choose to simplify the restatement of the theorem below by restricting our considerations to exponential tilts with respect to a bounded function.

\begin{theorem}[Theorem II.7.2 in \cite{ellis2012entropy}]\label{thm:TiltedLDP}
Let $\mathcal{X}$ be a complete separable metric space and $\{Q_n; n = 1, 2, \dots\}$ a sequence of probability measures on $\mathcal{B}(\mathcal{X})$. Assume that $\{Q_n\}$ satisfies a large deviation principle with speed $\{a_n\}$ and good rate function $\cI$. Let $F$ be a continuous and uniformly bounded function from $\mathcal{X}$ to $\mathbb{R}$.
For $n = 1, 2, \dots$ and $A \in \mathcal{B}(\mathcal{X})$ define probability measures
\begin{equation*}
    Q_{n,F}(A) = \frac{\int_A \exp(a_n F(x)) Q_n(dx)}{\int_{\mathcal{X}} \exp(a_n F(x)) Q_n(dx)}.
\end{equation*}
We will sometimes call $Q_{n,F}$ the tilting of $Q_{n}$ with respect to $F$.
The following conclusions hold.
\begin{enumerate}
    \item[(a)] The sequence $\{Q_{n,F}; n = 1, 2, \dots \}$ satisfies a large deviation property with constants $\{a_n\}$ and entropy function
    $$
    \cI_F(x) = \cI(x) - F(x) - \inf_{y \in \mathcal{X}} \{\cI(y) - F(y)\}.
    $$
    \item[(b)] Let $K$ be a closed set in $\mathcal{X}$ which does not contain a minimum point of $\cI_F$. Then there exists a number $N = N(K) > 0$ such that
    $$
    Q_{n,F}(K) \leq e^{-a_n N}
    $$
    for all sufficiently large $n$.
\end{enumerate}
In this sense, the asymptotic behavior of $\{Q_{n,F}\}$ is determined by the minimum points of $\cI_F$. In particular, if $\cI_F$ has a unique minimum point $x_0$, then $Q_{n,F} \Rightarrow \delta_{x_0}$.
\end{theorem}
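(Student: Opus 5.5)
The statement is Theorem~\ref{thm:TiltedLDP}, the tilted LDP of Ellis, which the paper restates from \cite{ellis2012entropy}. I would prove it with the standard Laplace--Varadhan argument. I would establish the upper bound for closed sets and the lower bound for open sets directly. Part (b) then follows from goodness of the rate function.

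\textbf{Normalizing constant and goodness.} Write $Z_n=\int_{\mathcal X}e^{a_nF}\,dQ_n$. Varadhan's Lemma (Theorem~\ref{thm::EVaradhanLemma}) gives
\[
\tfrac1{a_n}\log Z_n\to\sup_y\{F(y)-\cI(y)\}=-\inf_y\{\cI(y)-F(y)\}=:-c.
\]
Since $F$ is bounded, $c$ is finite. Hence $\cI_F=\cI-F-c$ is nonnegative. Its level sets are closed subsets of $\{\cI\le \alpha+\sup|F|+c\}$, which are compact. So $\cI_F$ is a good rate function, and it is lower semicontinuous because $\cI$ is and $F$ is continuous.

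\textbf{Upper bound.} Fix a closed set $K$ and $\eta>0$. For each $x$ there is a closed ball $B_x$ on which $F\le F(x)+\eta$ and, by lower semicontinuity, $\inf_{B_x}\cI\ge\min(\cI(x),1/\eta)-\eta$. I would cover the compact set $K\cap\{\cI\le L\}$ by finitely many interiors of such balls. The remainder of $K$ lies in a closed set on which $\cI>L$, so by the LDP upper bound its $Q_n$-mass decays at rate at least $L$. On each ball,
\[
\int_{B_x}e^{a_nF}\,dQ_n\le e^{a_n(F(x)+\eta)}\,Q_n(B_x),
\]
and I would apply the LDP upper bound to $Q_n(B_x)$. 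Taking the maximum of finitely many terms, then letting $L\to\infty$ and $\eta\to0$, gives
\[
\limsup\tfrac1{a_n}\log\int_K e^{a_nF}\,dQ_n\le-\inf_K(\cI-F).
\]
Subtracting $\frac1{a_n}\log Z_n$ yields the upper bound for $Q_{n,F}$ with rate $\cI_F$. This localization and covering step is the main technical obstacle. It is the same argument as the upper half of Varadhan's lemma, restricted to $K$, so I would model it closely on that proof.

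\textbf{Lower bound.} For an open set $G$ and a point $x\in G$, choose an open neighborhood $U\subset G$ of $x$ on which $F>F(x)-\eta$. Then
\[
\int_G e^{a_nF}\,dQ_n\ge e^{a_n(F(x)-\eta)}\,Q_n(U),
\]
and the LDP lower bound gives $\liminf\frac1{a_n}\log Q_n(U)\ge-\cI(x)$. Optimizing over $x$ and letting $\eta\to0$, then subtracting the normalizer, proves (a).

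\textbf{Part (b).} Let $K$ be closed with no minimizer of $\cI_F$. Then $\inf_K\cI_F>0$. Indeed, otherwise there is a sequence $x_j\in K$ with $\cI_F(x_j)\to0$. It lies in a compact level set, so a subsequence converges to some $x\in K$ (since $K$ is closed). Lower semicontinuity gives $\cI_F(x)=0$, which contradicts the assumption on $K$. Taking $N=\frac12\inf_K\cI_F$, the upper bound gives $Q_{n,F}(K)\le e^{-a_nN}$ for all large $n$.

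\textbf{Final claim.} If the minimizer $x_0$ is unique, apply (b) to the complements of open balls around $x_0$. This gives $Q_{n,F}\Rightarrow\delta_{x_0}$.
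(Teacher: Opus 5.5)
Your proof is correct: the paper does not prove this statement but quotes it from Ellis, and your argument is the standard proof of it. That argument consists of localized Varadhan bounds on closed and open sets, subtraction of the normalizer $\frac1{a_n}\log Z_n\to-\inf(\cI-F)$, and a compactness argument giving $\inf_K\cI_F>0$ for (b).

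The only slip is in (b): if $\inf_K\cI_F=+\infty$, then $N=\frac12\inf_K\cI_F$ is not a finite number, and $\limsup\frac1{a_n}\log Q_{n,F}(K)=-\infty$ does not give $Q_{n,F}(K)=0$. Take $N=\min\{1,\frac12\inf_K\cI_F\}$ instead.
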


\subsection{Generalized H\"{o}lder's Inequality}

We state here the H\"{o}lder's inequality that we use in the proof of Theorem \ref{thm:explicit_solvable_monotone}. The two-function case, including the equality condition, is given in
\cite[Section 2.3]{lieb2001analysis}. The stated finite-factor version
follows by induction.

\begin{lemma}[H\"older's inequality]
\label{lem:holder_equality}
Let $(\Omega,\mathcal F,\mu)$ be a measure space, let $p_1,\ldots,p_m\in(1,\infty)$ satisfy $\sum_{i=1}^m \frac{1}{p_i}=1$, and let $f_i\in L^{p_i}(\mu)$, for $i \in [m]$. Then
\[
\left|
\int_\Omega \prod_{i=1}^m f_i\,\mathrm d\mu
\right|
\leq
\prod_{i=1}^m \|f_i\|_{p_i}.
\]
Further, if $\|f_i\|_{p_i}>0$ for every $i \in [m]$, then equality holds if and only if
\[
\frac{|f_1|^{p_1}}{\|f_1\|_{p_1}^{p_1}}
=
\cdots
=
\frac{|f_m|^{p_m}}{\|f_m\|_{p_m}^{p_m}}
\qquad \mu\text{-almost everywhere},
\]
and the product $\prod_{i=1}^m f_i$ has constant sign almost
everywhere on the set where it is nonzero. In particular, when the
functions $f_1,\ldots,f_m$ are non-negative, the sign condition is
automatic. If $\|f_i\|_{p_i}=0$ for at least one $i \in [m]$, equality holds
trivially.
\end{lemma}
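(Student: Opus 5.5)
We argue by induction on $m$, taking the two-function case $m=2$ from \cite[Section 2.3]{lieb2001analysis} as the base. This covers both the inequality and its equality characterization: equality iff $|f_1|^{p_1}/\|f_1\|_{p_1}^{p_1}=|f_2|^{p_2}/\|f_2\|_{p_2}^{p_2}$ a.e. and $f_1f_2$ has constant sign where nonzero. First we separate off the sign issue: $\bigl|\int\prod f_i\,\mathrm d\mu\bigr|\le\int\prod|f_i|\,\mathrm d\mu$, with equality iff $\prod f_i$ has constant sign a.e. on its nonvanishing set. So it suffices to treat non-negative functions, where we show $\int\prod|f_i|\le\prod\|f_i\|_{p_i}$ with equality iff the normalized powers coincide. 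The degenerate case where some $\|f_i\|_{p_i}=0$ is trivial, since then $\prod f_i=0$ a.e.

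For the inductive step with $m\ge3$, set $G:=\prod_{i=2}^m|f_i|$ and define $q$ by $\frac1q=\sum_{i=2}^m\frac1{p_i}=1-\frac1{p_1}$, so $q\in(1,\infty)$ is the conjugate exponent of $p_1$. The two-function Hölder inequality gives
\[
\int|f_1|G\,\mathrm d\mu\le\|f_1\|_{p_1}\|G\|_q .
\]
Next write $\|G\|_q^q=\int\prod_{i\ge2}|f_i|^q\,\mathrm d\mu$. The exponents $r_i:=p_i/q$ satisfy $r_i>1$ (because $m\ge3$) and $\sum_{i\ge2}1/r_i=1$. Applying the induction hypothesis to the functions $|f_i|^q\in L^{r_i}$ yields $\|G\|_q^q\le\prod_{i\ge2}\|f_i\|_{p_i}^q$. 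Combining the two bounds proves the inequality.

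For the equality characterization, assume all $\|f_i\|_{p_i}>0$. Equality in the chained bound forces equality at both stages; note that $\|G\|_q>0$ must hold, since otherwise the left side would vanish while the right side is positive. Equality in the inductive stage gives a common normalized density $h:=|f_i|^{p_i}/\|f_i\|_{p_i}^{p_i}$ for $i\ge2$ (note $(|f_i|^q)^{r_i}=|f_i|^{p_i}$). Substituting $|f_i|=\|f_i\|_{p_i}h^{1/p_i}$ shows that $|G|^q$ is a constant multiple of $h^{\sum_{i\ge2}q/p_i}=h$, hence $|G|^q/\|G\|_q^q=h$. Equality in the two-function stage gives $|f_1|^{p_1}/\|f_1\|_{p_1}^{p_1}=|G|^q/\|G\|_q^q=h$, which is the desired chain of equalities.

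Conversely, if all the normalized powers equal some $h$, then $\int h\,\mathrm d\mu=1$ and
\[
\prod_i|f_i|=\Bigl(\prod_i\|f_i\|_{p_i}\Bigr)h^{\sum_i 1/p_i}=\Bigl(\prod_i\|f_i\|_{p_i}\Bigr)h .
\]
Integrating gives equality, and together with the sign condition this yields equality for the signed integral.

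The main obstacle is the bookkeeping in the equality step: checking that the normalized power of $G$ coincides with the common density $h$ coming from the inductive hypothesis. The exponent identity $\sum_{i\ge2}q/p_i=1$ handles this.
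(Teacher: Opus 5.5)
Your proposal is correct and takes the same route as the paper. The paper cites the two-function case, including its equality condition, from Lieb--Loss and says only that the finite-factor version follows by induction. You supply that induction, and you handle the exponents $r_i=p_i/q$ and the identity $\sum_{i\ge 2} q/p_i=1$ correctly in both the inequality and the equality characterization.

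One small fix in ordering: bound $\|G\|_q$ through the inductive hypothesis first, so that $G\in L^q$ is known before you apply the two-function inequality to $|f_1|$ and $G$.
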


\section{ Positive Tensor Decompositions }

In this section we give a simple criterion for when a 3D coefficient tensor can be expressed as a non-negative rank-one tensor.

\begin{lemma} Let $\bm B=(\beta_{abc})_{a,b,c\in[k]}$ be a real symmetric $k\times k\times k$ tensor. Then there exists a vector $\bm\alpha=(\alpha_1,\ldots,\alpha_k)\in\mathbb R_{\geq0}^k$,  such that $\beta_{abc}=\alpha_a\alpha_b\alpha_c$, for $a,b,c\in[k]$, if and only if
$$\beta_{aaa}\geq0  \quad \text{ and } \quad  \beta_{abc} = (\beta_{aaa}\beta_{bbb}\beta_{ccc})^{\frac{1}{3}},$$ 
for $a,b,c\in[k].$ In this case, the vector $\bm\alpha$ is uniquely determined by 
$\alpha_a=\beta_{aaa}^{\frac{1}{3}}$ for $a\in[k]$. 
\label{lem:monotoneabc}
\end{lemma}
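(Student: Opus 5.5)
The two directions are handled separately. Sufficiency is immediate from the hypothesis, and necessity comes from restricting the rank-one identity to diagonal entries. Uniqueness then follows because cube roots are unique on $\mathbb R_{\geq 0}$.

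\emph{Necessity.} Suppose $\beta_{abc}=\alpha_a\alpha_b\alpha_c$ for some $\bm\alpha\in\mathbb R_{\geq0}^k$. Taking $a=b=c$ gives $\beta_{aaa}=\alpha_a^3\geq 0$. Since $t\mapsto t^3$ is a bijection of $\mathbb R_{\geq0}$, this forces $\alpha_a=\beta_{aaa}^{1/3}$, where we use the real non-negative cube root. Substituting back,
\[
\beta_{abc}=\beta_{aaa}^{1/3}\beta_{bbb}^{1/3}\beta_{ccc}^{1/3}=(\beta_{aaa}\beta_{bbb}\beta_{ccc})^{1/3},
\]
because the non-negative cube root is multiplicative on non-negative reals. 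The same computation shows that $\bm\alpha$ is uniquely determined, which is the final assertion of the lemma.

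\emph{Sufficiency.} Assume $\beta_{aaa}\geq0$ and $\beta_{abc}=(\beta_{aaa}\beta_{bbb}\beta_{ccc})^{1/3}$ for all $a,b,c\in[k]$. Define $\alpha_a:=\beta_{aaa}^{1/3}\in\mathbb R_{\geq0}$. Multiplicativity of the cube root again gives $\alpha_a\alpha_b\alpha_c=(\beta_{aaa}\beta_{bbb}\beta_{ccc})^{1/3}=\beta_{abc}$, so $\bm B=\bm\alpha\otimes\bm\alpha\otimes\bm\alpha$.

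There is no genuine obstacle in this argument. The one point requiring care is the convention that $(\cdot)^{1/3}$ denotes the non-negative real cube root of a non-negative number. With that convention the product identity $(xyz)^{1/3}=x^{1/3}y^{1/3}z^{1/3}$ holds for all $x,y,z\geq 0$, and it is this identity that both directions rely on. Symmetry of $\bm B$ is not used in the argument; it only ensures the statement matches the symmetric-tensor setting of Example~\ref{example:triangleinteractions}.
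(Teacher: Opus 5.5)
Your proof is correct and follows the paper's argument essentially verbatim. Necessity comes from setting $a=b=c$ to get $\beta_{aaa}=\alpha_a^3\geq 0$ and substituting $\alpha_a=\beta_{aaa}^{1/3}$; sufficiency comes from defining $\alpha_a:=\beta_{aaa}^{1/3}$; and uniqueness follows from $\beta_{aaa}=\alpha_a^3$. Your remarks that the non-negative cube root is multiplicative and that the symmetry of $\bm B$ is never used are accurate.
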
  

\begin{proof} Suppose first, for all $a,b,c\in[k]$, that 
$\beta_{abc}=\alpha_a\alpha_b\alpha_c$, for some $\bm\alpha = (\alpha_1, \alpha_2, \ldots, \alpha_k) \in\mathbb R_{\geq0}^k$. Taking $a=b=c$, we obtain $\beta_{aaa}=\alpha_a^3\geq0$. Hence, $\alpha_a=\beta_{aaa}^{\frac{1}{3}}$. Substituting this expression into $\beta_{abc}=\alpha_a\alpha_b\alpha_c$, gives 
$$\beta_{abc} = \beta_{aaa}^{\frac{1}{3}}\beta_{bbb}^{\frac{1}{3}}\beta_{ccc}^{\frac{1}{3}} = ( \beta_{aaa}\beta_{bbb}\beta_{ccc} )^{\frac{1}{3}}.$$ This proves the necessity of the stated conditions. 

Conversely, suppose that $\beta_{aaa}\geq0$, for all $a\in[k]$, and that $\beta_{abc} = \left(\beta_{aaa}\beta_{bbb}\beta_{ccc}\right)^{\frac{1}{3}}$, for all $a,b,c\in[k]$. Define $\alpha_a:=\beta_{aaa}^{\frac{1}{3}}$ for $a\in[k]$. Then $\alpha_a\geq0$, for all $a$, and $$ \alpha_a\alpha_b\alpha_c = \beta_{aaa}^{\frac{1}{3}}\beta_{bbb}^{\frac{1}{3}}\beta_{ccc}^{\frac{1}{3}} = \beta_{abc}. $$ This proves the sufficiency of the result. The uniqueness of $\bm\alpha$ follows from the identity $\beta_{aaa}=\alpha_a^3$. 
\end{proof}

\section{Maximizing Rainbow Triangles } 

In this section, we recall the results of Balogh et al.~\cite{baloghRainbowTrianglesThreecolored2017}
on the maximum density of rainbow triangles and reformulate them in language of probability graphons. To this end, given a $3$-coloring $\mathscr X\in\mathcal C_n^3$, let $F(\mathscr X)$ denote the number of rainbow triangles in $\mathscr X$, and define $F(n):=\max_{\mathscr X\in\mathcal C_n^3}F(\mathscr X)$. With this notation, Balogh et al.~\cite{baloghRainbowTrianglesThreecolored2017}
proved the following result.

\begin{theorem}[{\cite[Theorem~6]{baloghRainbowTrianglesThreecolored2017}}]
\label{thm:rainbow}
For $F(n)$ as defined above,
\[
\lim_{n\to\infty}
\frac{F(n)}{\binom n3}
=
\frac25.
\]
Moreover, the unique limit homomorphism maximizing the density of rainbow
triangles is generated by the sequence of iterated blowups of the properly
$3$-colored $K_4$ described in
Definition~\ref{defn:rainbowmaximumgraph}.
\end{theorem}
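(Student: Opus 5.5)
The statement is \cite[Theorem~6]{baloghRainbowTrianglesThreecolored2017}, and for the upper bound and uniqueness I would essentially reproduce their flag-algebra argument. The plan has three parts: existence of the limit together with the lower bound, the matching upper bound, and the uniqueness of the extremal limit object.

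\emph{Existence of the limit and lower bound.} First, $F(n)/\binom n3$ is nonincreasing in $n$. Averaging over the $n$ induced colorings on $n-1$ vertices gives $F(n)\le \frac{n}{n-3}F(n-1)$, so the limit exists. For the lower bound I use the balanced iterated blowup of Definition~\ref{defn:rainbowmaximumgraph}. Every triangle with vertices in three distinct top-level parts is rainbow. A triangle with exactly two vertices in one part has two cross edges of the same color, so it is not rainbow. This gives the recursion
\[
F(n)\ \ge\ 4F\!\left(\lfloor n/4\rfloor\right)+\binom43\left(\frac n4\right)^3+O(n^2),
\]
so $F(n)\ge \frac{n^3}{16}\cdot\frac{1}{1-1/16}(1+o(1))=\frac{n^3}{15}(1+o(1))$. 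Dividing by $\binom n3\sim n^3/6$ yields $\liminf F(n)/\binom n3\ge \frac25$. This is the same computation as \eqref{eq:fixed_blowup_solution}, after multiplying by $6$.

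\emph{Upper bound.} Here I would use Razborov's flag algebra method on $3$-edge-colored complete graphs. Write the rainbow triangle density as a linear combination of densities of all $3$-colored $K_N$ for a moderate $N$, say $N=5$ or $6$. Then choose types $\sigma$ on a few vertices and positive semidefinite matrices $Q_\sigma$ so that
\[
\tfrac25 - d(\text{rainbow }K_3)\ \ge\ \sum_\sigma \big[\!\big[ \mathbf f_\sigma^{\top} Q_\sigma \mathbf f_\sigma\big]\!\big]_\sigma ,
\]
where the inequality holds coefficientwise in the $K_N$-basis. I would find $Q_\sigma$ numerically by semidefinite programming and then round to exact rational matrices.

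\emph{Uniqueness.} The approach is stability followed by a self-similarity argument. Complementary slackness in the rounded SDP forces the extremal limit $\bm W$ to have zero density of every $K_N$-configuration whose coefficient in the certificate is strictly positive. From these vanishing densities I would deduce a partition $[0,1]=A_1\sqcup\cdots\sqcup A_4$ into four parts, each of measure $\frac14$, with $\bm W$ monochromatic between $A_i$ and $A_j$ according to the properly colored $K_4$ template. Restricting $\bm W$ to each diagonal block and rescaling then gives an object whose density is at most $\frac1{15}$. Since the total density is $\frac1{16}+\frac1{16}\sum_i(\text{block density})$, equality forces every block to be extremal. Hence $\bm W$ is a fixed point of the map ``place the template off-diagonal and $\bm W$ in the four diagonal blocks''. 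This map is a contraction by factor at most $\frac14$ in cut distance, so it has a unique fixed point, namely $\tilde{\bm W}^{\mathrm{rb}}$.

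I expect the main obstacle to be producing the exact rational flag-algebra certificate for the upper bound and extracting enough zero-density configurations from it to force the four-part structure, particularly the exact balance of the parts. In practice I would rely on the computer-assisted certificate of \cite{baloghRainbowTrianglesThreecolored2017} for this step.
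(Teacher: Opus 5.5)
\textbf{Overall.} The paper does not prove this statement; it imports it from \cite{baloghRainbowTrianglesThreecolored2017}, and your outline also defers the decisive computer-assisted step to that source. The elementary parts are correct: averaging over vertex deletions gives monotonicity of $F(n)/\binom n3$, and the $K_4$ recursion gives $\liminf\ge\frac25$. The problems are in the steps you sketch yourself.

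\textbf{Structural step.} This step cannot work as you describe it. You want the four-part structure from the configurations that complementary slackness forces to have zero density, but zero densities alone cannot determine it.
\begin{itemize}
\item Color $1$ joins parts $\{1,2\}$ and $\{3,4\}$ of the template at every level. Taking nested pairs of parts shows that a monochromatic $K_N$ has positive density in $\bm W^{\mathrm{rb}}$.
\item Hence a certificate tight at $\bm W^{\mathrm{rb}}$ has zero slack on every monochromatic $K_N$. Then the constant graphon $\bm W\equiv(1,0,0)$, which has no rainbow triangles at all, also has zero density of every positive-slack configuration.
\item An unbalanced four-part template blowup with copies of $\bm W^{\mathrm{rb}}$ on the diagonal blocks has exactly the same positive-density configurations as $\bm W^{\mathrm{rb}}$. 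So no list of forbidden configurations can yield parts of measure $\frac14$.
\end{itemize}
The partition and its balance must come from extremality itself, for example via the kernels of the $Q_\sigma$ or a vertex-moving argument. That stability argument is the real content of the cited proof, and your plan does not supply it.

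\textbf{Tightness is not needed.} Do not count on a certificate tight at exactly $\frac25$: with an iterated-blowup extremizer every configuration of the nested structure must carry zero slack. The cited work combines flag algebras with stability arguments and obtains the limit from its exact recurrence. Your self-similarity idea already removes the need for tightness. Write $e_3(\bm a)=\sum_{i<j<l}a_ia_ja_l$ and $\rho=\sup t_{\mathrm{rb}\triangle}$. Suppose an extremal $\bm W$ is a template blowup with parts $a_1,\dots,a_4>0$ and blocks $\bm W_i$. Then
\[
\rho=e_3(\bm a)+\sum_i a_i^3\,t_{\mathrm{rb}\triangle}(\bm W_i)\le e_3(\bm a)+\rho\sum_i a_i^3 .
\]
On the simplex, $e_3(\bm a)\le\frac1{15}\bigl(1-\sum_i a_i^3\bigr)$ is equivalent to $\sum_i a_i(4a_i-1)(a_i-1)\le0$. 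For $a_i>0$ equality holds only at $a_i\equiv\frac14$. This yields $\rho=\frac1{15}$, balanced parts, and extremality of every block at once. The SDP is then needed only for the structure, not for the value.

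\textbf{Two slips in the self-similarity step.}
\begin{itemize}
\item The block term in the density is $\frac1{64}\sum_{i=1}^4 t_{\mathrm{rb}\triangle}(\bm W_i)$, not $\frac1{16}\sum_i$.
\item Equality only makes each block extremal, not equal to $\bm W$, so $\bm W$ is not yet a fixed point of your map. Apply the contraction to the compact set $\mathcal E$ of extremal classes instead. This gives $\operatorname{diam}\mathcal E\le\frac14\operatorname{diam}\mathcal E$, hence $\mathcal E=\{\tilde{\bm W}^{\mathrm{rb}}\}$.
\end{itemize}
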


The preceding result is formulated for the density of unordered rainbow
triangles. We now translate it into the fixed decorated graphon normalization
used in this paper. Recall that
$\tilde{\bm W}^{\mathrm{rb}}$ denotes the equivalence class of the
iterated $K_4$-blowup probability graphon defined in Section~\ref{sec:rt}. Let $\mathscr X\in\mathcal C_n^3$, and let $F(\mathscr X)$ be its number of unordered rainbow triangles. Under the diagonal-block convention in \eqref{eq:Wa}, the exact identity \eqref{eq:empirical_rainbow_exact} gives $t_{\mathrm{rb}\triangle}(\bm W^{\mathscr X}) = \frac{F(\mathscr X)}{n^3} + \frac{1}{27n^2}$. Consequently,
\[
t_{\mathrm{rb}\triangle}^{\mathrm{sym}}
   (\bm W^{\mathscr X})
=
6t_{\mathrm{rb}\triangle}(\bm W^{\mathscr X})
=
6\frac{F(\mathscr X)}{n^3}
+
\frac{2}{9n^2}.
\]
Since $F(\mathscr X)\le\binom n3$, it follows, uniformly over all
$\mathscr X\in\mathcal C_n^3$, that
\begin{equation*}
\frac{F(\mathscr X)}{\binom n3}
=
t_{\mathrm{rb}\triangle}^{\mathrm{sym}}
   (\bm W^{\mathscr X})
+
O\left(\frac1n\right)
=
6t_{\mathrm{rb}\triangle}(\bm W^{\mathscr X})
+
O\left(\frac1n\right).
\end{equation*}
Thus the graphon functional corresponding to the normalization of
Theorem~\ref{thm:rainbow} is
$t_{\mathrm{rb}\triangle}^{\mathrm{sym}}$, rather than the single fixed
decorated density $t_{\mathrm{rb}\triangle}$. With this normalization we have the following result, which is the colored graphon analogue of Theorem \ref{thm:rainbow}.

\begin{corollary}\label{cor:graphon_extremality_rainbow} 
Let $t_{\mathrm{rb}\triangle}$ be the fixed decorated density defined in
\eqref{eq:tWrainbow}. Then
\[
\sup_{\tilde{\bm W}\in\tilde{\mathcal W}_3}
t_{\mathrm{rb}\triangle}(\bm W)
=
\frac1{15}.
\]
Equivalently,
\[
\sup_{\tilde{\bm W}\in\tilde{\mathcal W}_3}
t_{\mathrm{rb}\triangle}^{\mathrm{sym}}(\bm W)
=
\frac25.
\]
Moreover, $\tilde{\bm W}^{\mathrm{rb}}$ is the unique maximizer in the
fixed-color quotient $\tilde{\mathcal W}_3$. 
\end{corollary}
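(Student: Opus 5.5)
The plan is to transfer Theorem~\ref{thm:rainbow} from finite colorings to the graphon space, using three ingredients: the density of colorings in $\tilde{\mathcal W}_3$, the continuity of $t_{\mathrm{rb}\triangle}$ in $\delta_\square$, and the exact identity \eqref{eq:empirical_rainbow_exact}. Throughout, one works with $t^{\mathrm{sym}}_{\mathrm{rb}\triangle}=6t_{\mathrm{rb}\triangle}$. Since this identity is exact, the two suprema and their maximizer sets coincide, and it suffices to prove $\sup t^{\mathrm{sym}}_{\mathrm{rb}\triangle}=\frac25$ and to identify the maximizer.

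For the lower bound, use Remark~\ref{remark:trbW}. By \eqref{eq:fixed_blowup_solution}, $t_{\mathrm{rb}\triangle}(\bm W^{(t)})\to\frac1{15}$. By continuity of $t(\bm H,\cdot)$ on $(\tilde{\mathcal W}_3,\delta_\square)$, this gives $t_{\mathrm{rb}\triangle}(\tilde{\bm W}^{\mathrm{rb}})=\frac1{15}$.

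For the upper bound, take an arbitrary $\bm W$. Approximate it in cut distance by empirical graphons $\bm W^{\mathscr X_n}$ with $\mathscr X_n\in\mathcal C_n^3$; for instance, sample $W$-random colorings, which converge almost surely by the general convergence theory of \cite{abraham2023probabilitygraphons}. Then the uniform-in-$\mathscr X$ relation
\[
F(\mathscr X)/\binom n3=6t_{\mathrm{rb}\triangle}(\bm W^{\mathscr X})+O(1/n)
\]
derived just above, together with continuity, gives
\[
6t_{\mathrm{rb}\triangle}(\bm W)=\lim_n F(\mathscr X_n)/\binom n3\le\limsup_n F(n)/\binom n3=\frac25 .
\]

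Uniqueness is the main obstacle, because the statement in Theorem~\ref{thm:rainbow} is phrased in terms of limit homomorphisms (flag-algebra limit objects) rather than probability graphons. First, let $\bm W$ be any maximizer and take $W$-random colorings $\mathscr X_n\to\bm W$. These are asymptotically extremal, since $F(\mathscr X_n)/\binom n3\to\frac25$. Next, the sequence $\mathscr X_n$ defines a limit homomorphism $\phi$ in the flag-algebra sense. Its densities of all $3$-colored complete graphs $\bm K_m$ equal the corresponding induced colored densities of $\bm W$, and these are finite linear combinations of the $t(\bm H,\bm W)$. By Theorem~\ref{thm:rainbow}, $\phi$ equals the limit homomorphism generated by the iterated blowups, and the same identification applied to $\bm W^{(t)}$ shows that this limit homomorphism is the one of $\tilde{\bm W}^{\mathrm{rb}}$. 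So $\bm W$ and $\bm W^{\mathrm{rb}}$ have equal densities for every colored complete graph, hence for every colored graph $\bm H$ (by inclusion--exclusion, since $\sum_a W_a=1$ lets non-edges be expanded). Finally, left convergence and cut-distance convergence are equivalent, and homomorphism densities determine the point of $\tilde{\mathcal W}_3$ (see \cite{abraham2023probabilitygraphons}). This gives $\delta_\square(\bm W,\bm W^{\mathrm{rb}})=0$, so $\tilde{\bm W}^{\mathrm{rb}}$ is the unique maximizer in the fixed-color quotient. The delicate points are making the dictionary between flag-algebra limits and probability graphons precise and checking that colors are not permuted: the limit in Theorem~\ref{thm:rainbow} carries fixed color labels, so the identification holds without quotienting by $S_3$.
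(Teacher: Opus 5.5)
Your proposal is correct and follows the paper's proof. The lower bound comes from $t_{\mathrm{rb}\triangle}(\bm W^{(t)})\to\frac1{15}$. The upper bound comes from sampling $\bm W$-random colorings and comparing with $F(N)/\binom N3\to\frac25$. Uniqueness comes from feeding an asymptotically extremal sampled sequence into the uniqueness part of Theorem~\ref{thm:rainbow}. Your explicit dictionary between limit homomorphisms and points of $\tilde{\mathcal W}_3$ spells out a step the paper leaves implicit: colored complete-graph densities determine every $t(\bm H,\cdot)$, and the balanced iterated blowups on $4^t$ vertices have empirical graphon exactly $\bm W^{(t)}$.

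The one place to tighten is the color labels. The paper reads the cited uniqueness as holding only up to a global permutation $\tau\in S_3$ of the colors. That is the natural reading, since the rainbow density is color-symmetric. The paper then shows $\tau\bm W^{\mathrm{rb}}\sim\bm W^{\mathrm{rb}}$: every permutation of the three perfect matchings of $K_4$ is induced by a vertex permutation of $K_4$, and applying it at every level of the recursion gives a measure-preserving relabeling of $[0,1]$. Your claim that the cited limit ``carries fixed color labels'' is true, but only because of exactly this invariance. A color-symmetric extremal problem can have a unique label-fixed extremizer only if that extremizer is color-invariant. So you should supply this one-line check rather than rely on the phrasing of the citation.
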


\begin{proof}  
We first prove the lower bound. By
\eqref{eq:fixed_blowup_solution}, the finite-level graphons
$\bm W^{(t)}$ associated with the iterated $K_4$ construction satisfy
\[
t_{\mathrm{rb}\triangle}(\bm W^{(t)})
=
\frac1{15}
-
\frac{1}{540\,16^{t-1}},
\qquad t\ge1.
\]
Hence, $t_{\mathrm{rb}\triangle}(\bm W^{(t)}) \rightarrow
\frac1{15}$, as $t\to\infty$, and therefore, $\sup_{\tilde{\bm W}\in\tilde{\mathcal W}_3} t_{\mathrm{rb}\triangle}(\bm W)
\ge \frac1{15}$.

We next prove the reverse inequality. Let
$\bm W\in\mathcal W_3$, and, for every $N$, sample a random
$3$-coloring $\mathscr X_N$ of $K_N$ from $\bm W$. By the sampling
convergence theorem for probability graphons (\cite[Theorem 6.13]{abraham2023probabilitygraphons}),
\[
\frac{F(\mathscr X_N)}{\binom N3}
\rightarrow
t_{\mathrm{rb}\triangle}^{\mathrm{sym}}(\bm W)
=
6t_{\mathrm{rb}\triangle}(\bm W)
\]
in probability as $N\to\infty$. On the other hand, by the definition of
$F(N)$,
\[
\frac{F(\mathscr X_N)}{\binom N3}
\le
\frac{F(N)}{\binom N3}
\]
almost surely. Theorem~\ref{thm:rainbow} gives $$\frac{F(N)}{\binom N3} \rightarrow\frac25.$$ This implies that $t_{\mathrm{rb}\triangle}^{\mathrm{sym}}(\bm W) \le \frac25$, and, hence, $t_{\mathrm{rb}\triangle}(\bm W) \le
\frac16\cdot\frac25 =
\frac1{15}$. Combining the upper and lower bounds proves the two asserted extremal values.

It remains to identify the equality case. Let $\bm W$ be a maximizer of the
fixed decorated density. Then $t_{\mathrm{rb}\triangle}^{\mathrm{sym}}(\bm W) =
6t_{\mathrm{rb}\triangle}(\bm W) =
\frac25$. Therefore, $\bm W$ also maximizes the color-symmetrized rainbow triangle density. Choose a realization of the sampled colorings
$\{\mathscr X_N\}_{N\ge1}$ such that $\tilde{\bm W}^{\mathscr X_N} \rightarrow
\tilde{\bm W}$ and $$\frac{F(\mathscr X_N)}{\binom N3} \rightarrow
\frac25.$$ Thus, the sampled sequence is asymptotically extremal. By the uniqueness of
the extremal limit homomorphism in
\cite[Theorem~6]{baloghRainbowTrianglesThreecolored2017}, its limit is,
up to a global permutation of the three colors,
the iterated $K_4$-blowup limit
$\tilde{\bm W}^{\mathrm{rb}}$. Consequently,
$\tilde{\bm W}$ agrees with
$\tilde{\bm W}^{\mathrm{rb}}$ up to a global color permutation.

We finally verify that these global color permutations do not produce
different elements of the fixed-color quotient in this particular case. The
three colors in the properly colored $K_4$ template correspond to its three
perfect matchings. Every permutation of these three perfect matchings is
induced by a permutation of the four vertices of $K_4$. Applying the
corresponding vertex permutation at every level of the recursive construction
produces a measure-preserving relabeling of $[0,1]$. Therefore, for every
$\tau\in S_3$,
\[
\tau\bm W^{\mathrm{rb}}
\sim
\bm W^{\mathrm{rb}}
\]
in the fixed-color quotient
$\tilde{\mathcal W}_3$. It follows that
$\tilde{\bm W}^{\mathrm{rb}}$ is the unique maximizing graphon class.  
\end{proof}

\section{Proof of Lemma \ref{lem:wedge-max-bipartite}} 
\label{sec:wedge-max-bipartitepf}

Recall from \eqref{eq:wedgeW} that 
\begin{align}\label{eq:inducedwedge}
t_{\mathrm{ind}}(K_{1, 2}, W) = \int_{[0, 1]^3} W(x, y) W(y, z) (1-W(x, z)) \mathrm d x \mathrm d y \mathrm d z  & \leq \int_{[0, 1]^3} W(x, y) (1-W(x, z)) \mathrm d x \mathrm d y \mathrm d z   \nonumber \\  
& = t(K_2, W) - t(K_{1, 2} , W)  .  
\end{align}
Denote by $d_W(x) = \int_0^1 W(x, y) \mathrm d{y}$ the degree function of $W$. 
By the Cauchy-Schwarz inequality $t(K_{1, 2}, W) = \int_0^1 d_W(x)^2 \mathrm{d}x \ge ( \int_0^1 d_W(x) \mathrm{d}x)^2= t(K_2, W)^2$. Hence, from \eqref{eq:inducedwedge}, 
\begin{align*}
t_{\mathrm{ind}}(K_{1, 2}, W) & \leq t(K_2, W) - t(K_2, W)^2 \leq \frac{1}{4} , 
\end{align*}
which proves \eqref{eq:maximumwedge}. 

To prove the equality statement, first note that if $W = W_{\mathrm{bip}}$ is the balanced complete bipartite graphon as in \eqref{eq:bipartiteW}, then $t_{\mathrm{ind}}(K_{1,2},W_{\mathrm{bip}})=\frac{1}{4}$ by direct computation. Conversely, suppose that $t_{\mathrm{ind}}(K_{1,2},W)=\frac{1}{4}$. Then all intermediate inequalities must be equalities, which means: (1) $t(K_2, W) = \frac{1}{2}$, (2)  $d_W(x) = \frac{1}{2}$ for almost every $x \in [0, 1]$, and (3) $t(K_3, W) = 0$, which means,  
\begin{align}\label{eq:triangle}
W(x,y)\,W(x,z)\,W(y,z)=0 \qquad\text{for almost every }(x,y,z)\in[0,1]^3. 
\end{align}
Fix $x$ in a full-measure set where $d_W(x)=\frac{1}{2}$ and \eqref{eq:triangle} almost everywhere. Define 
$$A:=\{y\in[0,1]: W(x,y)>0\}.$$ Since 
\begin{align}\label{eq:degreeW}
\int_A W(x,y)\,\mathrm{d}y = d_W(x) = \frac{1}{2} 
\end{align}
and $0\le W\le1$, we must have $\lambda(A)\ge \frac{1}{2}$. On the other hand, if $y,z\in A$, then $W(x,y)>0$ and $W(x,z)>0$, so the
triangle-free condition \eqref{eq:triangle} gives $W(y,z)=0$ for almost every $(y,z)\in A\times A$.
Thus, $W=0$, almost everywhere on $A\times A$. For almost every $y\in A$, since $d_W(y)=\frac{1}{2}$
and $y$ has no edges inside $A$, all of its degree must come from $A^c$.
Hence
$$
\frac12=d_W(y)\le \lambda(A^c)=1-\lambda(A),
$$
so $\lambda(A)\le\frac{1}{2}$. Therefore $\lambda(A)=\frac{1}{2}$. Using this and \eqref{eq:degreeW} it follows that $W(x,y)=1$ for almost every $y\in A$. Now, fix $y\in A$ in a
full-measure subset. Since $W=0$ almost everywhere on $A\times A$, $d_W(y)=\frac{1}{2}$,
and $\lambda(A^c)=\frac{1}{2}$, we must have
$$
W(y,z)=1
\qquad\text{for almost every }z\in A^c.
$$
By symmetry, this shows that almost every $z\in A^c$ is connected to almost every
point of $A$. Since $d_W(z)=\frac{1}{2}$ and $\lambda(A)=\frac{1}{2}$, there can be no
edges inside $A^c$. Hence,  
$$
W=1 \text{ almost everywhere on }A\times A^c \text{ and } A^c\times A  \text{ and }  W=0 \text{almost everywhere on }A\times A \text{ and } A^c\times A^c.
$$
This shows that $W$ is equivalent to the balanced complete balanced bipartite graphon, as claimed.  \hfill $\Box$

\end{document}